\documentclass[11pt]{amsart}
\usepackage{amsmath}
\usepackage{amssymb}
\usepackage[active]{srcltx}
\usepackage[lite]{amsrefs}
\usepackage{amscd}
\usepackage{enumerate}

\newtheorem{bigtheorem}{Theorem}
\newtheorem{theorem}{Theorem}[section]

\newtheorem{defn}[theorem]{Definition}

\newtheorem{prop}[theorem]{Proposition}
\newtheorem{lemma}[theorem]{Lemma}
\newtheorem{fact}[theorem]{Fact}
\newtheorem{cor}[theorem]{Corollary}
\newtheorem{claim}[theorem]{Claim}
\newtheorem*{claim*}{Claim}
\newtheorem*{open-question*}{An open Question}
\newtheorem*{question*}{Question}
\theoremstyle{remark}

\newtheorem*{note*}{Note}

\theoremstyle{definition}

\renewcommand{\geq}{\geqslant}
\renewcommand{\leq}{\leqslant}

\newcommand{\CB}{\mathcal B}

\newcommand{\sub}{\subseteq}
\newcommand{\si}{\sigma}
\newcommand{\la}{\langle}
\newcommand{\ra}{\rangle}
\newcommand{\tp}{\mathrm{tp}}

\newcommand{\be}{\begin{enumerate}}
\newcommand{\ee}{\end{enumerate}}

\newcommand{\CI}{\mathcal I}

\newcommand{\CN}{{\mathcal N}}

\newcommand{\CM}{{\mathcal M}}

\newcommand{\CF}{\mathcal F}

\newcommand\vlabel{\label}

\newcommand{\RR}{{\mathbb R}}




\newcommand{\CU}{\mathcal U}



\newcommand{\scht}{:}

\DeclareMathOperator{\dcl}{dcl}
\DeclareMathOperator{\shcl}{shcl}
\DeclareMathOperator{\lgdim}{lgdim}
\DeclareMathOperator{\Cl}{Cl}
\DeclareMathOperator{\Sh}{Sh}
\DeclareMathOperator{\bd}{bd}
\DeclareMathOperator{\fr}{fr}

\author{Pantelis Eleftheriou}
\address{University of Waterloo}
\email{pelefthe@uwaterloo.ca}

\author{Ya'acov Peterzil}
\address{University of Haifa}
\email{kobi@math.haifa.ac.il}

\author{Janak Ramakrishnan}
\address{Universidade de Lisboa}
\email{janak@janak.org}
\thanks{The authors thank M\'ario Edmundo and the FCT grant PTDC/MAT/101740/2008 and M\'ario Edmundo for bringing them together in Lisbon, where this work was begun.}

\title{Interpretable groups are definable}


\subjclass[2010]{Primary 03C64; Secondary 03C60 22E15 20A15}
\keywords{o-minimality, interpretable groups, definable groups, elimination of imaginaries}

\begin{document}

\begin{abstract}
We prove that in an arbitrary o-minimal structure, every interpretable group is
definably isomorphic to a definable one. We also prove that every definable group
lives in a cartesian product of one-dimensional definable group-intervals (or
one-dimensional definable groups). We discuss the general open question of elimination of imaginaries in an o-minimal structure.
\end{abstract}

 \maketitle

\section{Introduction}
Elimination of Imaginaries, namely the ability to associate a definable set to
every quotient of another definable set by a definable equivalence relation,  plays a major role in
modern model theory. In the study of o-minimal structures this issue is often
avoided by making the auxiliary assumption that the structure expands an ordered
group. Indeed, this assumption resolves the matter because o-minimal expansions of
ordered groups  eliminate imaginaries in a very strong form (see \cite[Proposition
6.1.2]{vdd}), namely every $A$-definable equivalence relation has an $A$-definable
set of representatives, after naming a non-zero element of the group. In particular,
the structure has definable Skolem functions. Since most interesting examples of
o-minimal structures do expand ordered groups and even ordered fields, this
assumption seems reasonable for most purposes.

Recently, following the work on Pillay's Conjecture, it was shown in \cite[Corollary
8.7]{HP} that even when starting with a definable group $G$ in expansions of real
closed fields, the group $G/G^{00}$ is definable in an o-minimal structure over
$\RR$ which is not known \emph{a priori} to expand an ordered group.

 In this paper we are concerned with the two issues raised above. First, we
 are interested to know to what extent o-minimal structures in general eliminate
 imaginaries. Second, we show that in many cases the assumption that the underlying structure
 expands a one-dimensional group is indeed harmless because this group is already
 definable in our structure (actually, we might need several different such groups).

Let us be more precise now. Definable equivalence relations can be treated either
within the many-sorted structure $\CM^{eq}$, or explicitly, as definable objects in
$\CM$. In order to apply o-minimality, we mostly work in the latter context, so we
clarify some definitions.  We assume we work in an arbitrary o-minimal structure.
\begin{defn} Let $X, Y$ be definable sets, $E_1,E_2$ two definable equivalence relations
on $X$ and $Y$, respectively. A function $f:X/E_1\to Y/E_2$ is called {\em
definable} if the set $\{\la x, y\ra\in X\times Y:f([x])=[y]\}$ is definable.
\end{defn}

\begin{defn}
Let $E$ be a definable equivalence relation on a definable set $X$, where both $X$
and $E$ are defined over a parameter set $A$. We say that the quotient \emph{$X/E$
can be eliminated over $A$} if there exists an $A$-definable injective
map $f:X/E\to M^k$, for some $k$. We say in this case that $f$ {\em eliminates $X/E$
over $A$}.
\end{defn}

It was already observed in \cite{Pillay1} that quotients cannot in general be
eliminated in o-minimal structures, over arbitrary parameter sets. Indeed, consider
the expansion of the ordered real numbers by the equivalence relation on $\RR^2$
given by: $\la x,y\ra \sim \la z,w\ra$ if and only if $x-y=z-w$. This quotient
cannot be eliminated over $\emptyset$. However, once we name any element $a$, the map
$f(\la y,z\ra/\sim)=a+y-z$ is definable and eliminates this quotient (over $a$).

It is therefore reasonable to ask:

\begin{question*}
Given an o-minimal structure $\CM$ and a definable equivalence relation $E$ on a
definable set $X$, both defined over a parameter set $A$, is there a definable map
which eliminates $X/E$, possibly over some $B\supseteq A$?
\end{question*}

We give a positive answer to this question when $\dim(X/E)=1$ (see Corollary
\ref{EI1}), but the general question remains open.

\begin{defn}\label{def-int-gp} An
\emph{interpretable group} is a group whose universe is a quotient $X/E$ of a
definable set $X$ by a definable equivalence relation $E$, and whose group operation
is a definable map.
\end{defn}

As we will see below, we prove in this paper that interpretable groups can be
eliminated.

\subsection{Groups in o-minimal structures}

The analysis of definable groups in o-minimal structures depends to a large extent
on a theorem of Pillay, \cite{Pillay2}, about the existence of a definable basis for
a group topology. The theorem holds for definable groups, but until now it was not
clear how to treat interpretable groups.  In \cite[Proposition 7.2]{ed-solv},
Edmundo was able to circumvent part of this problem by showing that if a group $G$
is already definable in an o-minimal structure $\CM$ then $\CM$ has elimination of
imaginaries for definable subsets of $G$. This makes it possible to handle
interpretable groups which are obtained as quotients of one definable group by
another. But interpretable groups in general remained out of reach (see Appendix in
\cite{HP} for the technical difficulties which may arise).

The following theorem, which we prove here, reduces the study of interpretable groups to definable ones.

\begin{bigtheorem}\label{intdefthm}
Every interpretable group is definably isomorphic to a definable group.
\end{bigtheorem}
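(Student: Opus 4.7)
The plan is to induct on $n = \dim(G)$, where $G = X/E$ is the given interpretable group, producing a definable injection $G \hookrightarrow M^k$; pulling back the (already definable) group operation along such an injection then exhibits $G$ as definably isomorphic to a definable group in $M^k$. For the base case $n = 1$, Corollary \ref{EI1} supplies the required injection immediately, so the whole problem reduces to establishing the inductive step.

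For $n > 1$, the main ingredient is a definable injection on a ``generic'' portion of $G$ that can then be spread over all of $G$ by translation. Concretely, I would first apply cell decomposition to $X$ and extract a cell $Y \subseteq X$ of dimension $n$ on which the restriction $\pi|_Y$ of the quotient map $\pi : X \to G$ has generically singleton fibers; after shrinking, $\pi|_Y$ becomes a definable bijection onto an interpretable subset $G_0 \subseteq G$ of full dimension. The group structure then lets me cover $G$ by finitely many left translates $g_1 G_0, \dots, g_r G_0$ (dimension considerations in the interpretable o-minimal setting rule out an infinite cover), and on each translate the bijection with $G_0$ is transported via left multiplication. Gluing these partial injections---keeping track of the chart index $i$ through an auxiliary $1$-dimensional interpretable quotient, which is eliminated by Corollary \ref{EI1}---delivers the desired definable injection $G \hookrightarrow M^k$.

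The main obstacle, in my view, is definably selecting a unique representative in $Y$ for each point of $G_0$, since an arbitrary o-minimal structure lacks general definable choice. I would attack this coordinatewise along the cell $Y$: by o-minimal monotonicity, each one-dimensional slice of a fiber of $\pi|_Y$ has a canonical leftmost point (after naming a parameter in the definable completion when needed), so the higher-dimensional selection reduces to an iterated $1$-dimensional problem, with Corollary \ref{EI1} invoked to eliminate the residual interpretable slices where a naive leftmost point fails to exist. The leftover lower-dimensional behavior---strata where the fibers of $\pi|_Y$ jump in dimension, or where the translates $g_i G_0$ meet inconsistently---should be absorbed by applying the main induction hypothesis to the smaller-dimensional interpretable groups and quotients that arise there, closing the induction.
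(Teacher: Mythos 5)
Your proposal has a genuine gap at its central step: you cannot, in an arbitrary o-minimal structure, definably pick a representative from each $E$-class by taking ``leftmost points'' coordinatewise. This fails already in the simplest example in the paper's introduction: on $\mathbb R$ with the equivalence relation $\langle x,y\rangle\sim\langle z,w\rangle$ iff $x-y=z-w$, the classes are lines and have no canonical leftmost point; one must \emph{name a parameter} to eliminate the quotient, and there is no uniform way to do so without a definable group around. Your phrase ``after naming a parameter in the definable completion when needed'' is precisely where the problem lives --- in a general o-minimal structure there is no mechanism producing such a parameter definably, and the iterated one-dimensional reduction you describe would require definable choice, which is exactly what is missing. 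Corollary~\ref{EI1} does eliminate \emph{one-dimensional quotients} (quotients whose dimension is $1$), not the one-dimensional \emph{fibers} of an arbitrary quotient map, so invoking it to clean up ``residual slices'' does not resolve the selection problem either.

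The paper's proof is therefore much heavier than yours and for a reason: the group structure is the engine that \emph{creates} definable choice, and this is the heart of the argument, not a side technicality. The crucial insight is Theorem~\ref{inter-group1} (and its predecessor Corollary~\ref{group-1}): in an interpretable group, every one-dimensional definable subset must be gp-short. This is forced by the group multiplication together with Theorem~\ref{mainthm} (via Corollary~\ref{injective}), which says that a definable map strictly monotone in several variables cannot have all its domain intervals gp-long. Once one-dimensional subsets of $G$ are gp-short, one gets definable choice locally (Fact~\ref{SDC-short}), and one can then represent $G$ inside a cartesian product of gp-short intervals via Proposition~\ref{observ-eq}. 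Even so, the paper does not attack arbitrary $G$ directly; it first establishes a definable group topology (analogue of Pillay's theorem), proves strong definable choice for definably compact $G$ (Theorem~\ref{SDC}, following Edmundo), handles non-definably-compact $G$ by extracting a one-dimensional torsion-free subgroup (Lemma~\ref{noncompact}), and then runs your outer induction on dimension --- but through normal subgroups and the abelian/definably simple dichotomy (Lemmas~\ref{main-inter1}--\ref{main-inter4}), not through translates of a chart. Your outline contains no replacement for the gp-short/gp-long pre-geometry, the Trichotomy input, or the definable-compactness dichotomy, and without them the selection of representatives cannot be carried out.
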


In order to describe the main ideas of the proof we need to return to the second
problem mentioned at the beginning of this introduction.

\subsection{Group-intervals}

As we already remarked, it is often convenient to assume that the o-minimal
structure expands an ordered group. Beyond the experimental fact that most examples
have this property, there is another justification for this assumption, related to
the Trichotomy Theorem (\cite{pest-tri}), as we discuss next.

Recall that a point $x\in M$ is called {\em nontrivial} if there exist open nonempty
intervals $I,J\sub M$, with $x\in I$, and a definable continuous function $F:I\times
J\to M$ such that $F$ is continuous and strictly monotone in each variable
separately (the original definition required $I=J$ but it is easy to see that the
two are equivalent).

The Trichotomy Theorem implies that if $x\in M$ is non-trivial then there exists an
open interval $I'\ni x$ that can be endowed with a definable partial group operation
$+$, making $I'$ into a \emph{group-interval} (a technical definition will appear in Definition \ref{def-gp-interval} below, but
for now, we may think of a group-interval as an open interval $(-a,a)$ in an ordered
divisible abelian group, endowed with the partial group operation). The definition
of the group operation on $I'$ may require additional parameters.


Consider for example the expansion of the ordered real numbers by the ternary
operation $x+y-z$, defined for all $x, y, z$ with $|x-y|, |y-z|, |x-z|\leq 1$. In
this structure and in elementarily equivalent ones every point $a$ is non-trivial
and contained in an $a$-definable group-interval. Note however that the
group-intervals can be `far apart', meaning that there are no definable bijections
between them.

In our current paper we propose a systematic treatment  of the
group-intervals which arise from the Trichotomy Theorem and
suggest a technique of ``stretching'' these intervals as much as
possible. We call an interval {\em group-short} (Definition \ref{def-gp-short}) if it can be be
written as a finite union of points and open intervals, each of
which endowed with the structure of a group-interval. After
\cite{el-sbd}, we develop a pre-geometry based on the closure
relation: $a\in cl(A)$ if there is a gp-short interval containing
$a$ whose endpoints are in $\dcl(A)$. Our main theorem here (Theorem \ref{mainthm}) is:

\begin{bigtheorem}\label{bingpthm}
Let $I,J\sub M$ be open intervals and assume that there exists a definable
$F:I\times J\to M$ which is continuous and strictly monotone in each variable. Then
either $I$ or $J$ (but possibly not both) is group-short.
\end{bigtheorem}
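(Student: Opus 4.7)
The plan is to work with the pre-geometry from \cite{el-sbd} based on the closure relation $\cl$, where $a\in\cl(A)$ iff $a$ is contained in a group-short interval with endpoints in $\dcl(A)$. In these terms, the theorem asserts that one of $I,J$ is contained in $\cl(A)$ for a suitable finite parameter set $A$. The hypothesis on $F$ guarantees, via the Trichotomy Theorem, that every point of $I$ and every point of $J$ is nontrivial, so locally each sits inside some definable group-interval; the content of the theorem is that finitely many such intervals (with a common parameter set) can be glued to cover one of $I$ or $J$.

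First I would fix a parameter set $A$ over which $F,I,J$ are defined, and pick $(a,b)\in I\times J$ generic over $A$. Set $c=F(a,b)$. Because $F$ is continuous and strictly monotone in each variable, the partial maps $F(\cdot,b)$ and $F(a,\cdot)$ are definable homeomorphisms from neighborhoods of $a$ and $b$ onto neighborhoods of $c$ in $M$. Applying the Trichotomy Theorem at the nontrivial point $c$ produces a group-interval $K\ni c$, and pulling the operation of $K$ back through $F(\cdot,b)$ (resp.\ $F(a,\cdot)$) installs a local group-interval structure around $a$ in $I$ (defined over $A\cup\{b\}$) and around $b$ in $J$ (defined over $A\cup\{a\}$). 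These two local structures are linked by $F$: translation in one transfers, through $F$, to translation in the other.

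Next, I would try to \emph{stretch} the local group-interval structure around $a$ inside $I$, following the technique announced in the introduction. If the stretching reaches both endpoints of $I$, then $I$ is group-short (and a standard definability/compactness argument removes the dependence on the auxiliary parameter $b$). Otherwise there is a maximal subinterval $I'\subseteq I$ on which the group-interval structure extends, bounded by some $a'\in I$ at which the structure fails to extend. The key step is then to show that this failure point $a'$, together with $F$, forces $J$ to be group-short: via the exchange-like behavior of $\cl$, one argues that the obstruction at $a'$ propagates through $F(a',\cdot)$ to give a global group-interval covering of $J$ with finitely many pieces whose endpoints all lie in $\dcl(A\cup\{a'\})$, and once again a definability argument shows that some such covering exists with endpoints in $\dcl(A)$.

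The main obstacle I expect is precisely this last transfer: rigorously converting a breakdown of stretching in $I$ into a successful covering of $J$. This requires fine control of the partial group operations induced by $F$ near the boundary, so that the behavior of $F(a',\cdot)$ as $a'$ tends to the obstruction can be analyzed via a cell decomposition of $I\times J$ adapted to $F$ and to the domains of the induced partial operations. The delicate point is ensuring that the cover of $J$ obtained this way is \emph{finite}, which will likely follow from o-minimal finiteness applied to the family of cells on which the transferred operations are defined, together with an argument that eliminates the need for parameters outside $\dcl(A)$.
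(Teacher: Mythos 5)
Your proposal does not actually complete the proof, and the gap you flag at the end is fatal rather than a technicality. You describe "stretching" a $b$-definable local group-interval around a generic $a\in I$ and then, at the obstruction point $a'$, somehow transferring the failure through $F(a',\cdot)$ to produce a finite group-short covering of $J$; but no mechanism for this transfer is given, and none is evident. In particular, the local group-interval you install around $a$ depends on $b$, and there is no reason its maximal extension, or the obstruction to extending it, should be definable over $A$ alone. More importantly, there is no dichotomy of the form "either $I$ can be stretched to be group-short, or else $J$ is group-short"; the failure of a stretching process in $I$ is a local statement about $I$ and does not, by itself, say anything about $J$.

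The paper's proof proceeds by a genuinely different mechanism. Assuming for contradiction that both $I$ and $J$ are gp-long, it considers the family of compositions $F(x,y,t)=f_xf_y^{-1}(t)$ and proves (Corollary~\ref{main3}) that, away from a set of small gp-long dimension, this family of unary functions is at most one-dimensional: two curves agreeing at one point agree on an interval. This local-modularity phenomenon is the real content, and it rests on the pre-geometry of short closure developed in Section~5, specifically on the fact (Lemma~\ref{loc-const}) that an $A$-definable function from a set of long dimension $k>0$ into a gp-short set must be locally constant at long-generic points, and on its consequences Lemmas~\ref{main-lemma} and~\ref{main-lemma2}. Once one-dimensionality of the curve family is in hand, a definable partial group operation is built on the short closure $\Sh(x_0)$ of a long-generic $x_0\in I$ via the rule $x_1+x_2=x_3 \Leftrightarrow f_{x_2}^{-1}f_{x_3}=f_{x_0}^{-1}f_{x_1}$ (Claim~\ref{group-claim}), and a compactness argument extends this to a definable generalized group-interval structure on a gp-long subinterval of $I$, which is the desired contradiction. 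Your sketch neither invokes the long-dimension pre-geometry, nor the one-dimensionality of the family $\{f_xf_y^{-1}\}$, nor the group-configuration construction, and these are exactly what make the argument go through where the direct "stretch-and-transfer" idea stalls. To repair the proposal you would essentially have to rediscover these tools.
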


\subsection{} Let us now sketch the proof
of Theorem \ref{intdefthm}. We start with an interpretable group $G$. Using
elimination of one-dimensional quotients (Corollary \ref{EI1}) and Theorem \ref{bingpthm}, we prove
first that every one-dimensional subset of $G$ is group-short (Theorem \ref{inter-group1}).
We also show, in Proposition \ref{observ-eq}, that there are definable maps $f^i:G\to M$, $i=1,\ldots,
k$ and a definable set $X\sub \Pi_{i=1}^kf^i(G)$ such that $G$
is in definable bijection with $X/E$ for some definable
equivalence relation $E$. Our final goal is to prove that each
definable set $f^i(G)$ is a finite union of group-short intervals
(in which case we can eliminate $X/E$).

To achieve that, we endow $G$ with a group topology with a definable
basis. This is done by identifying a neighborhood of a generic
point in $G$ with an open subset of $M^{\dim(G)}$. Just as with
definable groups, we can use the distinction between definably
compact interpretable groups and those which are not definably
compact.  In the first case, we prove in Theorem \ref{SDC} definable choice for
definable subsets of $G$ using Edmundo's ideas \cite{ed-solv}. As
a result, it follows that each $f^i(G)$ is group-short. In the
general case, we use induction on dimension, together with the
standard analysis of groups definable in o-minimal structures as quotients
of semisimple groups, torsion-free abelian groups, etc. This finishes our final goal and the proof of Theorem \ref{intdefthm}.

At the end
of the argument we show not only that $G$ is in definable
bijection with a definable group, but also prove:

\begin{bigtheorem}
If $G$ is a definable group then there is a definable injection
$f:G \to \Pi_{i=1}^k J_i$, where each $J_i\sub M$ is a definable
group-interval.

There are also definable one-dimensional groups $H_1,\ldots, H_k$
and a definable set-injective map $h:G \to \Pi_{i=1}^k H_i$ (with
no assumed connection between the group operations of $G$ and of
the $H_i$'s).
\end{bigtheorem}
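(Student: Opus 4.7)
The plan is to read off both assertions from the proof of Theorem~\ref{intdefthm}, specialized to the case that $G$ is already definable. That proof produces definable functions $f^1,\ldots,f^k\colon G\to M$, a definable $X\sub\prod_{i=1}^k f^i(G)$ and a definable equivalence relation $E$ on $X$ with $G$ in definable bijection with $X/E$, and moreover shows (via Theorem~\ref{inter-group1} and the final case analysis) that each $f^i(G)$ is group-short. After the elimination of $X/E$ that completes that proof, one has a definable injection $G\hookrightarrow M^N$ whose image, in each coordinate, is a finite disjoint union of points and open definable group-intervals.

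For the first assertion, fix one such coordinate projection $\pi\colon G\to S$ and decompose $S=A_1\sqcup\cdots\sqcup A_n$ with each $A_j$ a singleton or an open group-interval. Fix a definable group-interval $K$ (which exists once $G$ is infinite, by the group-shortness above) and pick distinct elements $c_1,\ldots,c_n\in K$. Define an ``index'' map $\alpha\colon G\to K$ by $\alpha(x)=c_j$ when $\pi(x)\in A_j$. For each $j$ with $A_j$ open, set $J_j:=A_j$ and let $\beta_j\colon G\to J_j$ agree with $\pi$ on $\pi^{-1}(A_j)$ and take a fixed value in $J_j$ elsewhere. The tuple $(\alpha(x),\{\beta_j(x)\}_j)$ determines $\pi(x)$: the index $\alpha$ selects the piece, and $\beta_j$ gives the value on open pieces (on singleton pieces the value is already determined by $\alpha$). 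Performing this replacement in every coordinate yields a definable injection $f\colon G\to\prod_\ell J_\ell$ into a product of definable group-intervals.

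For the second assertion it is enough, by composing with $f$, to produce for each definable group-interval $J$ a definable one-dimensional group $H$ together with a definable injection $J\hookrightarrow H$. This is a standard enlargement: one glues finitely many copies of $J=(-a,a)$ along subintervals using the partial group operation of $J$ as transition map, to obtain a definable one-dimensional manifold on which a total definable group law extends the partial operation of $J$. Depending on whether the operation ``closes up'', one obtains either a compact (``circle''-type) one-dimensional group or, by iterating the doubling, an unbounded one. Applying this coordinate-by-coordinate then yields the desired set-injection $h\colon G\to\prod_\ell H_\ell$; we stress that $h$ is merely a map of sets, with no claim of compatibility between the group operations of $G$ and of the $H_\ell$'s.

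The main difficulty lies in the last step: rigorously producing the definable one-dimensional group $H$ containing a given definable group-interval $J$. The gluing is transparent set-theoretically, but one must verify that the transition maps and the resulting global group law are definable, so that $H$ is a genuine definable (rather than merely local) group. This rests on Pillay's theorem on definable group topologies alluded to in the introduction and on the known structure of one-dimensional definable groups in o-minimal structures.
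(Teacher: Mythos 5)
Your treatment of the first assertion is sound and is essentially the paper's route: from the proof that $G$ (now assumed definable) is definably isomorphic to a gp-short definable group, one gets a definable injection into a product of gp-short intervals, and the remaining task is to encode a finite disjoint union of points and group-intervals into a product of group-intervals. Your ``index map'' $\alpha$ into a fixed group-interval $K$ together with the coordinatewise maps $\beta_j$ is a perfectly good way to do this; the paper accomplishes the same step with the corner-coding described in Note~3 after Definition~\ref{def-gp-short}. These are interchangeable devices and the difference is cosmetic.

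The second assertion, however, has a genuine gap exactly where you flag it. Having reduced the problem to embedding a given definable group-interval $J$ into a definable one-dimensional group $H$, you invoke a gluing of copies of $J$ and then appeal to ``Pillay's theorem on definable group topologies'' and to the structure theory of one-dimensional definable groups. Neither reference does the work required: Pillay's theorem takes a \emph{definable group} as input and produces a topology on it, whereas here the input is a local (partial) operation and the output must be a total definable group law, with associativity and definability of the transition data verified rather than assumed. The structure theory of one-dimensional definable groups classifies groups you already have; it does not manufacture one from a local group. The paper closes this gap with a short explicit construction, Lemma~\ref{intervals-groups}: for a bounded positive group-interval $I=(0,a)$ one takes $H=[0,a/2)$ with addition modulo $a/2$ (a definably compact one-dimensional group) and embeds $I$ by $x\mapsto x/4$; for an unbounded one, one takes two signed copies of $I$ together with a zero, $H=I\times\{-1\}\cup\{0\}\cup I\times\{+1\}$, with the evident ``signed difference'' operation (a linearly ordered torsion-free group). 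Both group laws can be written down and checked directly, with no gluing step and no appeal to Pillay's theorem; this is the ingredient your proposal is missing.
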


Note that the group-intervals (or the groups) in the above result
are not assumed to be orthogonal to each other, namely,  there
could be definable maps between some of them. However, the theorem
might help in reducing problems about definable groups, such as
Pillay's Conjecture, to structures which expand ordered groups, or
at least group-intervals. As a first attempt, it would be
interesting to see if one can prove, using Theorem 3, an analogue
of the Edmundo-Otero theorem, \cite{EO}, on the number of torsion
points in definably compact abelian groups in arbitrary o-minimal
structures.

Theorem 3 answers positively a question which Hrushovski asked the
second author in past correspondence.
\\

{\bf On the structure of the paper:} In Section 2 we recall the Marker-Steinhorn
theorem and apply it for our purposes. In Sections 3 and 4 we study various
properties of  group-intervals and then use these, in Section 5, to develop the
pre-geometry of the short closure. In Section 6 we prove Theorem 2 and in Section 7
we discuss quotients and their various properties. Finally, in Section 8 we analyze
interpretable groups and prove Theorem 1 and Theorem 3.

\section{Model theoretic preliminaries}

Fix $\CM=\la M,<,\ldots\ra$ an arbitrary (dense) o-minimal structure, with or
without endpoints. The following observation is is easy:
\begin{fact}\vlabel{observation}
Assume that  $M_1\neq \emptyset$ is a subset of $M$ with the following properties:

(i) $\dcl_{\CM}(M_1)=M_1$.

(ii) The restriction of $<$ to $M_1$ is a dense linear ordering.

(iii) If $M_1$ has a maximum (minimum) point then $\CM$ has a
maximum (minimum) point.

Then $\CM_1=\la M_1,<\ldots\ra$ is an elementary substructure of
$\CM$.
\end{fact}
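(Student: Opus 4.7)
The natural approach is via the Tarski--Vaught test. Fix a formula $\phi(x,\bar a)$ with $\bar a\in M_1$ and assume $\CM\models\exists x\,\phi(x,\bar a)$; the task is to produce a witness inside $M_1$. Using o-minimality, I would decompose the definable set $\phi(\CM,\bar a)\subseteq M$ as a finite union of points and open intervals whose endpoints all lie in $\dcl_{\CM}(\bar a)\cup\{-\infty,+\infty\}$. Hypothesis (i) places $\dcl_{\CM}(\bar a)\subseteq\dcl_{\CM}(M_1)=M_1$, so every finite endpoint of every cell already lives in $M_1$, and it is enough to show that each nonempty cell meets $M_1$.

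The case of a singleton $\{p\}$ is immediate from (i). For a bounded open cell $(c,d)$ with $c,d\in M_1$, the density assumption (ii) supplies a point of $M_1$ strictly between $c$ and $d$. The remaining cells are those unbounded on at least one side. Consider $I=\{x\in M:x>c\}$ with $c\in M_1$ (the symmetric case is handled identically using the ``minimum'' half of (iii)). If $M_1$ has no maximum, then by definition there is $b\in M_1$ with $b>c$. If instead $M_1$ does have a maximum $m_1$, then (iii) forces $\CM$ to have a maximum $m$; since $m\in\dcl_{\CM}(\emptyset)\subseteq M_1$ we get $m\leq m_1$, and maximality of $m$ in $\CM$ gives $m_1\leq m$, so $m_1=m$. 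Nonemptiness of $I$ in $\CM$ yields $c<m=m_1$, so $m_1\in I\cap M_1$. Finally, the cell $M$ itself is handled by $M_1\neq\emptyset$.

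I do not expect any real obstacle; the argument is a routine Tarski--Vaught verification, and condition (iii) is exactly designed to absorb the only subtle situation, namely an unbounded cell whose only witnesses in $\CM$ lie past the top (or below the bottom) of $M_1$. Once the endpoints of every cell are pinned down by (i) and handled by (ii) and (iii), elementarity drops out.
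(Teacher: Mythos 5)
Your proof is correct. The paper gives no argument for this fact (it is stated as ``easy''), and your Tarski--Vaught verification, decomposing the witness set $\phi(\CM,\bar a)$ into $\bar a$-definable points and open intervals via o-minimality, is exactly the routine argument the authors had in mind: hypothesis (i) pins the finite endpoints inside $M_1$, (ii) handles bounded cells, and (iii) absorbs the unbounded ones, which is indeed the only place a witness could otherwise escape $M_1$. The one step you leave implicit is that $\CM_1$ is an $L$-substructure of $\CM$ to begin with, so that Tarski--Vaught applies; this too is immediate from (i), since the values of function and constant symbols on tuples from $M_1$ lie in $\dcl_{\CM}(M_1)=M_1$.
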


\begin{prop} \vlabel{prelim} Assume that for all $a,b,c\in M$,  there is no definable
bijection between intervals of the form $(a,b)$ and $(c,+\infty)$,  and there is
also no definable bijection between intervals of the form $(-\infty,a)$ and
$(b,+\infty)$.  Let $\CM\prec\CN$ and let $M_1=\{x\in N:\exists m\in M\,\, m\geq
x\, \}$ be the ``downward closure'' of $\CM$ in $\CN$. Then $M_1$ is a substructure
of $\CN$, and
\begin{enumerate}
\item  $\CM_1\prec \CN$.

\item If $X\sub N^k$ is an $\CN$-definable set then $X\cap M_1^k$ is
$\CM_1$-definable.

\end{enumerate}

\end{prop}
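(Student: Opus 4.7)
The plan is to verify the hypotheses of Fact~\ref{observation} for the pair $M_1 \subseteq N$ to obtain~(1), and then to deduce~(2) from the Marker--Steinhorn theorem. Density of $<$ on $M_1$ and the max/min correspondence with $\CM$ are routine from the definition of $M_1$: a candidate for $\max(M_1)$ would dominate all of $M$ yet itself be bounded above by some element of $M$, forcing it to lie in $M$ and to be the max of $\CM$. The substance of~(1) is the closure condition $\dcl_{\CN}(M_1) \subseteq M_1$.

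I would prove this closure by induction on $k$: for every $\emptyset$-$\CL$-definable function $f\colon M^k \to M$ and every $\bar a \in M_1^k$, one has $f(\bar a) \in M_1$. For $k = 1$, apply the monotonicity theorem in $\CM$ to write the domain as a finite disjoint union of $\emptyset$-definable intervals on which $f$ is continuous and monotone (or constant); for $a \in I^{\CN} \cap M_1$ the image $J = f(I)$ is another such interval with endpoints in $M \cup \{\pm\infty\}$. When the right endpoint of $J$ is finite in $M$ (or $J$ is left-half-infinite), $J^{\CN} \subseteq M_1$ and we are done. When $J$ is right-unbounded, the cases ``$I$ bounded'' and ``$I$ left-half-infinite'' are exactly the two families directly forbidden by the hypothesis, and the remaining cases (``$I$ right-half-infinite'' or $I = M$) are handled by restricting $f$ to a sub-interval $(c, m)$ with $m \in M$ and $a < m$, an $m$ which exists because $a \in M_1$; this reduces the situation to a bounded image. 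For $k \to k+1$, apply cell decomposition and refine so that, on the cell $C \ni \bar a$, $f$ is continuous and monotone in $x_{k+1}$ with $\emptyset$-definable bounding curves $h_1, h_2$ and fiber-limits $\ell_1, \ell_2$. By the inductive hypothesis each of $h_i(a_1, \ldots, a_k)$ and $\ell_i(a_1, \ldots, a_k)$ lies in $M_1 \cup \{\pm\infty\}$. The same slice-type analysis as in the one-dimensional case applies to $f(a_1, \ldots, a_k, \cdot)$, with the forbidden configurations eliminated by transferring to a slice over $\bar m' \in C' \cap M^k$ and invoking the hypothesis inside $\CM$, and the benign configurations reduced to the inductive hypothesis applied to the $M$-definable functions $\bar x' \mapsto f(\bar x', m)$ for $m \in M$.

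For part~(2), part~(1) gives $\CM_1 \prec \CN$, so the Marker--Steinhorn theorem applies once we know that every $1$-type over $M_1$ realized in $\CN$ is definable over $M_1$. Such realized types are of two kinds: the principal types of elements of $M_1$, and the ``$+\infty$'' type $\{x > m : m \in M_1\}$ realized by every $c \in N \setminus M_1$ (since such $c$ satisfies $c > m$ for every $m \in M$, and hence $c >$ all of $M_1$). The principal types are trivially definable, and the $+\infty$ type is definable by o-minimality: for each formula $\phi(x, \bar y)$, the condition ``$\phi(x, \bar a)$ holds cofinally in $x$'' is first-order $\bar a$-definable. Marker--Steinhorn then gives that every $\CN$-definable subset of $M_1^k$ is $\CM_1$-definable.

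The main obstacle is the case analysis underlying the $\dcl$-closure. The hypothesis is asymmetric --- it forbids only two families of ``stretching'' bijections --- so one must check that no problematic cell configuration slips through the net. The delicate situations are those in which both the domain and the image of a monotone slice are right-unbounded; here the key point is always that $a \in M_1$ supplies an element $m \in M$ with $a < m$, which lets one restrict to an $M$-bounded sub-box where the hypothesis can be invoked after transferring the configuration to a slice over a point of $M^k$.
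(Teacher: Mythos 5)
Your proposal follows essentially the same strategy as the paper: verify $\dcl_\CN(M_1)\subseteq M_1$ via the monotonicity theorem, then apply Marker--Steinhorn to get (2). The key insight---that $a\in M_1$ provides an upper bound $m\in M$, which lets one push the image into $M_1$---is precisely the one the paper uses. However, there is a gap in your one-variable case analysis. You claim that when $J$ is right-unbounded and $I$ is right-half-infinite (or all of $M$), restricting $f$ to a sub-interval $(c,m)$ with $m\in M$, $a<m$ ``reduces the situation to a bounded image.'' This is only true if $f$ is increasing on $I$. If $f$ is strictly decreasing, the supremum of $J$ is approached at the \emph{left} endpoint of $I$, so the restriction to $(c,m)$ still has right-unbounded image $(f(m),+\infty)$. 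Far from being benign, this restriction is itself a definable bijection between a bounded interval and a right-unbounded one, directly violating the hypothesis; so the decreasing-with-right-unbounded-image configuration cannot occur at all---but your argument does not notice this and, as written, would stall. The paper sidesteps this by splitting explicitly into increasing and decreasing cases, observing for decreasing $f$ that $\lim_{t\to a_1^+}f(t)$ must be finite.

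As for the rest: your route through cell decomposition to reduce the $k$-variable case to the one-variable case (where the paper cites the analogous reduction in \cite{pet-sbd}), and your explicit verification that the cut at $+\infty$ over $M_1$ is the only nonprincipal $1$-type and is definable (where the paper simply notes that convexity gives Dedekind completeness and invokes \cite{ms}), are both correct alternatives, if more labored than the citations the paper relies on.
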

\proof (1) By the choice of $M_1$ as the downward closure of an elementary
substructure, $M_1$ satisfies (ii) and (iii) of \ref{observation}. It is therefore
sufficient to prove that $\dcl_{\CN}(M_1)=M_1$. The proof is similar to \cite[Lemma
2.3]{pet-sbd}

As in \cite{pet-sbd}, induction allows us to treat only the case of $b\in
\dcl_{\CN}(a)$, for $a\in M_1$. We must show that $b\in M_1$, so it is sufficient to
find an element $m\in M$, with $b\leq m$. If $b\in \dcl_{\CN}(\emptyset)$ then it is
already in $M$ so we are done. Otherwise, there is a $\emptyset$-definable, continuous,
strictly monotone function $f:(a_1,a_2)\to M$, for $a_1,a_2\in M\cup\{\pm \infty\}$,
such that $a\in (a_1,a_2)$ and $b=f(a)$.

Assume first that $f$ is strictly increasing on $(a_1,a_2)$ and consider two cases:
If $a_2=+\infty$ then, by our construction of $M_1$,  there exists $m\in
(a_1,+\infty)$ with $a\leq m$. Hence $b=f(a)\leq f(m)\in M$. If $a_2\in M$ then, by
our assumptions, the limit $\ell=\lim_{t\to a_2^-}$ is in $M$ so we have $b\leq
\ell$.

Assume now that $f$ is strictly decreasing. Then, by our assumptions on $\CM$,  the limit $\ell=\lim_{t\to a_1^+}f(t)$ is not
$+\infty$. It follows that $\ell\in M$ and by monotonicity, $b\leq \ell$.
 We therefore
showed that $\dcl_{\CN}(M_1)=M_1$.

(2) Since $\CM_1$ is convex in $\CN$ it is clearly Dedekind
complete in $\CN$ and hence we can apply the Marker-Steinhorn
theorem, \cite{ms}, on definability of types which says exactly
what we need. \qed

\section{group-intervals}
\begin{defn}\label{def-gp-interval}
 By a {\em positive group-interval} $I=\la (0,a),0,+,<\ra$ we mean an open interval with a
binary partial continuous operation $+: I^2\to I$, such that

\renewcommand{\labelenumi}{(\roman{enumi})}
\begin{enumerate}
\item $x+y=y+x$ (when defined), $(x+y)+z=x+(y+z)$ when defined, and
$x<y \rightarrow x+z<y+z$ when defined.
\item For every $x\in I$ the domain of $y\mapsto x+y$ is an
interval of the form $(0,r(x))$.
\item For every $x\in I$, we have $\lim_{x'\to 0}x'+x=x$ (this
replaces the statement $0+x=x$) and $\lim_{x'\to r(x)}x+x'=a$
(this replaces $x+r(x)=a$).
\end{enumerate}
\renewcommand{\labelenumi}{(\arabic{enumi})}

We say that $I$ is a {\em bounded positive group-interval} if the operation $+$ is
only partial. Otherwise we say that it is {\em unbounded} (in which case the
interval is actually  a semigroup).

 We similarly define the notion of a {\em negative
group-interval} $\la (a,0),+,<\ra$ and also a {\em group-interval} $\la (-a,a),+,<\ra$ (in this case we
also require that for every $x\in (-a,a)$ there exists a group inverse). We say that
an open interval $I$ is a {\em generalized group-interval} if it is one of the above
possibilities.

\end{defn}

Our use of the symbols $0,a,-a$ is only suggestive. The endpoints of the
interval can be arbitrary elements in $M\cup\{\pm \infty\}$,  so when we write
that an interval $(b,c)$ is, say, a bounded  group-interval, we think
of the elements $b$ and $c$ as $a$ and $-a$, respectively, from the definition.

\begin{note*}
If the interval $(a,b)$ can be endowed with
a definable $+$ which makes it into a generalized group-interval
then there is an $ab$-definable family of such operations (we just
take the operation $+$ and vary the parameters which defined it,
and further require the domain to be $(a,b)$ and the operation to
satisfy (i), (ii) and (iii) from the definition).
\end{note*}

 The following is easy to verify:
\begin{fact}\vlabel{easy1}

(i) If $(a,b)$ can be endowed with the structure of a bounded
group-interval then we can also endow it with a structure of a
bounded positive group-interval (making $a$ into $0$).

(ii) Conversely, if $(a,b)$ can be endowed with a structure of a bounded positive
group-interval then it can also be endowed with the structure of a bounded group-interval.

(iii) If $I$ is a generalized group-interval then any nonempty
open subinterval of $I$ can be endowed with the structure of a
generalized group-interval.
\end{fact}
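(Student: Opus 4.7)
The plan is to handle the three parts in turn, in each case viewing the bounded generalized group-interval as a fragment of an ambient divisible ordered abelian group and exploiting o-minimality to supply any needed definable reference points.

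For part (i), I would begin with the bounded group-interval $+$ on $(a,b)$ with identity $e$. The restriction of $+$ to the positive part $(e,b)$ already satisfies the axioms of a bounded positive group-interval, with $e$ as the virtual $0$. To endow the full interval $(a,b)$ with such a structure, now with $a$ as virtual $0$, I would build a definable order-preserving bijection $\psi\colon(a,b)\to(e,b)$ and pull back the operation. An explicit candidate is $\psi(x)=D^{-1}(x)+d$, where $D(y)=y+y$ is the doubling map and $d$ is the supremum of $\{y>e:y+y\text{ is defined}\}$; this $d$ lies in $M\cap(e,b)$ because the group-interval is bounded and because every bounded definable subset of $M$ has its supremum in $M$ by o-minimality.

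For part (ii) I would reverse the construction. Given the bounded positive group-interval on $(a,b)$, I pick the analogous definable interior point $m\in(a,b)$, namely the supremum of the doubling domain, and define a new operation $\oplus$ having $m$ as identity; the inverse of $x$ is $r(x)$, the right-endpoint function from axiom (ii) of Definition~\ref{def-gp-interval}. The operation is characterised by $z=x\oplus y$ iff $z+m=x+y$ in the ambient group, which I would unpack into an explicit definable formula by case analysis on whether $x,y$ lie above or below $m$, using the existing $+$ together with the definable ``subtraction by $m$''.

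For part (iii) I would run a case analysis on the type of $I$ (bounded or unbounded, positive, negative, or full) and on the position of the open subinterval $I'\subseteq I$. When $I'$ lies on one side of the virtual $0$ of $I$, restrict the operation and then apply a definable translation within $I$ that moves an endpoint of $I'$ to the virtual $0$ slot, essentially as in (i); when $I'$ straddles the identity of a group-interval, first apply (i) to pass through a positive structure on $I'$ and then (ii) to return to a group-interval structure. The unbounded cases are easier, since the doubling map is everywhere defined and no auxiliary boundedness point is needed.

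The main obstacle throughout will be keeping definability visible: every auxiliary point ($e$, $d$, $m$, and the translation parameters in (iii)) must be recoverable definably from the original data, and each of the limit axioms of Definition~\ref{def-gp-interval} must survive the transfer. Definability is ensured by o-minimality applied to bounded definable subsets of $M$, and the limit axioms reduce to continuity of $\psi$, $\psi^{-1}$, and the analogous maps at the relevant endpoints, which is routine but notationally involved because of the partiality of the operations.
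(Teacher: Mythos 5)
The paper gives no proof of Fact~\ref{easy1} (it is stated as ``easy to verify''), so there is nothing to compare against directly; I will just assess your proposal on its own terms. Your overall strategy --- conjugate the given partial operation by a definable order-preserving bijection built from the halving map, the fixed definable interior point $d$ (resp.\ $m$), and translations --- is the natural one, and for parts (i) and (ii) it goes through. One thing that deserves to be said out loud rather than left implicit: $D^{-1}$, and hence $\psi$, presupposes that the doubling map $D(y)=y+y$ is a \emph{bijection} from its definable domain onto all of $(a,b)$. This is a divisibility statement about the local group, not an axiom in Definition~\ref{def-gp-interval}; it is true in the o-minimal setting (since $D$ is definable, continuous, strictly increasing, and the limit clause (iii) forces its image to exhaust $(a,b)$), but it should be recorded as a lemma rather than used silently.

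Part (iii) as written contains a genuine circularity. In the straddling case you propose to ``first apply (i) to pass through a positive structure on $I'$,'' but the hypothesis of (i) is that $I'$ is \emph{already} a bounded group-interval --- which is precisely what we are trying to prove for the subinterval $I'$. Two ways to repair this, both within the spirit of your construction. First fix: apply (i) to the ambient $I$ rather than to $I'$; after $I$ has been re-coordinatized as a bounded positive group-interval, every open subinterval $I'\subset I$ automatically lies on one side of the new virtual $0$ (the left endpoint), and your ``translate an endpoint of $I'$ to the virtual $0$ slot'' argument then applies uniformly, with no separate straddling case at all. Second fix: use the group-interval structure of $I$ to define the midpoint $m'$ of $I'$ (the unique point with $m'-p=q-m'$), translate $I'$ by the inverse of $m'$ to obtain a symmetric subinterval $(-s,s)\subset I$; the restriction of $+$ to $(-s,s)$ (with values constrained to lie in $(-s,s)$) is now a bounded group-interval because of the symmetry, and one pulls back along the translation. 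Either fix removes the circularity; the remaining bookkeeping, including the unbounded cases, is as you describe.
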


\begin{theorem}\vlabel{groups} Assume that $\CM$ is an  o-minimal structure
 and let $I_t=(a_0,a_t)$, $t\in T$, be a
definable family of intervals, all with the same left endpoint. Let
$I=(a_0,a)=\bigcup_t I_t$. If each interval $I_t$ can be endowed with the structure
of a generalized group-interval then there is $a_1$, $a_0\leq a_1<a$ such that
$(a_1,a)$ admits the structure of a generalized group-interval.
\end{theorem}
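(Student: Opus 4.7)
The plan is to produce the required generalized group-interval structure on $(a_1,a)$ by first constructing a uniform operation in a saturated elementary extension $\CN\succ\CM$ via a single index $t^*$ realizing the cut at $a$, and then descending back to $\CM$ using a Marker--Steinhorn style definability argument.

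\textbf{Setup and saturation.} Using o-minimal cell decomposition applied to $T$ and to $t\mapsto a_t$, I would first reduce to the case where $T$ is an open interval, $t\mapsto a_t$ is continuous and strictly increasing, and $\sup_t a_t=a$. By the note following Definition~\ref{def-gp-interval}, I can then fix a $t$-definable family of group-interval operations $+_t$ on $I_t$. Pick $a_1\in(a_0,a)$ lying inside $I_t$ for all $t$ in a cofinal subset. Pass to a sufficiently saturated $\CN\succ\CM$ and choose $t^*\in T(\CN)$ with $a_{t^*}$ realizing the cut $\{a_t:t\in T(\CM)\}<x<a$ (resp.\ cofinal above $T(\CM)$ if $a=+\infty$); such $t^*$ exists by saturation since $a$ is not attained in $\CM$. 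Then every $x\in M$ with $a_1<x<a$ satisfies $x<a_t$ for some $t\in T(\CM)$, hence $x<a_{t^*}$, so $(a_1,a)_\CM\subseteq I_{t^*}=(a_0,a_{t^*})_\CN$, and $+_{t^*}$ is a generalized group-interval operation on $I_{t^*}$.

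\textbf{Descent to $\CM$.} The graph of $+_{t^*}$ is $\CN$-definable, and its intersection with $M^3$ is a candidate ternary relation on $(a_1,a)_\CM$. I would apply Proposition~\ref{prelim} (Marker--Steinhorn) to show this restriction is $\CM$-definable, giving an $\CM$-definable partial operation $*$ on $(a_1,a)_\CM$. The group-interval axioms---commutativity, associativity, continuity, strict monotonicity, and the boundary condition (iii)---transfer from $+_{t^*}$ by elementarity, combined with the fact that for $x,y\in (a_1,a)_\CM$ the value $x+_{t^*}y$ lies below the $\CM$-cut of $a$ (since gaps in $I_{t^*}$ above $M$-points sit in $(a,a_{t^*})$, which is avoided by monotonicity on pairs with both coordinates in the standard part). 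The behaviour of $*$ at the right boundary $a$ then follows from the corresponding behaviour of $+_{t^*}$ at $a_{t^*}$ together with the density of $(a_1,a)_\CM$ in $(a_1,a_{t^*})_\CN$.

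\textbf{Main obstacle.} The principal difficulty is the descent step. First, Proposition~\ref{prelim} requires the absence of definable bijections between bounded and unbounded intervals in $\CM$, which may fail; in that case I would split into two subcases and use any such bijection directly to transfer a group-interval structure from one side to the other, reducing to a situation where Marker--Steinhorn applies. Second, one must verify that $*$ is a genuine generalized group-interval operation and not merely a partial operation with erratic domain---in particular that for each fixed $x\in (a_1,a)_\CM$ the domain of $y\mapsto x*y$ has the form $(a_1,r(x))$ required by (ii), and that the limit condition (iii) at $a$ holds. Both issues trace back to controlling the interaction between the cut position of $a_{t^*}$ and the continuity of $+_{t^*}$, which I expect to be the core technical content of the proof.
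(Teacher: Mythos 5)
Your core idea matches the paper's: pass to a saturated $\CN\succ\CM$, take a nonstandard index $t^*$ so that $I_{t^*}$ covers the whole $\CM$-interval, and then descend the group-interval operation via a Marker--Steinhorn argument. You have also correctly identified the descent as the hard step, and your plan for handling the hypothesis of Proposition~\ref{prelim} (split off the case where a bad bijection exists, use it directly) is the same as the paper's opening move. However, the descent as you sketch it has two concrete gaps.

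\textbf{First gap: intersecting with $M^3$ is not what Proposition~\ref{prelim} provides.} Proposition~\ref{prelim} gives definability of $X\cap M_1^k$ over the \emph{downward closure} $\CM_1$ of $M$ in $\CN$, not of $X\cap M^k$ over $\CM$. The distinction matters: $M$ need not be Dedekind complete in $N$, but the convex set $M_1$ is, which is exactly what Marker--Steinhorn needs. Moreover, once you take the restriction to $M_1^3$, you still have to verify the group-interval axioms \emph{in the structure $\CM_1$} and then transfer the resulting first-order statement back to $\CM$ using $\CM\prec\CM_1$; this last elementarity step is missing from your sketch. There is a further payoff to working with $M_1$ rather than $M$: condition (ii) of Definition~\ref{def-gp-interval} requires the domain of $y\mapsto x*y$ to be an interval $(-\infty,r(x))$, and the paper's verification of this uses precisely that $M_1$ is downward closed in $N$ (so $(-\infty,r_{t^*}(x))\cap M_1$ is either all of $M_1$ or an $M_1$-interval with endpoint in $M_1$). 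Intersecting with $M$ alone, $r_{t^*}(x)$ could fall in a gap of $M$ and the domain would fail to be an $\CM$-interval.

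\textbf{Second gap: the density claim is false.} You appeal to ``the density of $(a_1,a)_\CM$ in $(a_1,a_{t^*})_\CN$'' to establish the boundary condition (iii). But $a_{t^*}$ was chosen to lie above the cut determined by $\{a_t:t\in T(\CM)\}$, so $(a_1,a_{t^*})_\CN$ contains a whole block of nonstandard points above the $M$-sup, in which $(a_1,a)_\CM$ has no points at all. The paper handles (iii) differently: the limit $\lim_{x'\to r_G(x)}x+_G x'=+\infty$ is deduced not from density but from the fact that each $+_t$ satisfies $\lim_{x'\to r_t(x)}x+_t x'=a(t)$ together with $\sup_t a(t)=a$.

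One further remark, which is more organizational than a gap: the paper achieves the hypothesis of Proposition~\ref{prelim} cleanly by first replacing $\CM$ with the structure $\CI$ induced on $I=(a_0,a)$, identifying $a_0$ and $a$ with $\pm\infty$. After this reduction (justified by \cite[Lemma~2.3]{pest-tri}), the ``no bad bijection'' assumption becomes literally the hypothesis of Proposition~\ref{prelim}, and the downward-closure construction lines up correctly with the right endpoint $a$. Without this replacement your $M_1$ would also contain points below $a_0$ and the bookkeeping becomes awkward. I would recommend incorporating this reduction explicitly.
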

\proof  First note that if there exists some $a_1\in [a_0,a)$ and a definable
continuous injection sending $(a_1,a)$ onto a subinterval $(a_2,a_3)\sub (a_0,a)$,
with $a_2<a_3<a$, then $(a_2,a_3)$ is contained in one of the intervals $I_t$ and
hence, by \ref{easy1}(iii), it inherits a structure of a generalized group-interval
itself. Clearly then $(a_1,a)$ can also be endowed with such a structure. We assume
then that there is no such definable injection in $\CM$.

 Consider now the structure $\CI$ which $\CM$ induces on
the interval $I=(a_0,a)$. By that we mean that the $\emptyset$-definable sets in
$\CI$ are the intersection of $\emptyset$-definable subsets of $M^n$ with $I^n$. By
\cite[Lemma 2.3]{pest-tri}, every $\CM$-definable subset of $I^n$ is definable in
$\CI$ (the result is proved for closed intervals but the result for open intervals
immediately follows). The points $a_0$ and $a$ are now identified with $-\infty$ and
$+\infty$ in the sense of $\CI$, respectively. We may assume from now on that
$\CM=\CI$.

Our above assumptions on $\CI$ translate to the fact that $\CM$
satisfies the assumptions of Proposition \ref{prelim}. Namely,
that there are no $-\infty\leq a_1,a_2<+\infty$ and $a_3\in M$ for
which $(a_1,+\infty)$ is in definable bijection with an interval
of the form $(a_2,a_3)$.

Using our Note above, we may assume that there is a $\emptyset$-definable family of
(partial) operations $+_t:I_t\times I_t\to I_t$ making each $I_t$ into a generalized
group-interval. Indeed, to see that, we use the note to  ``blow up'' each $I_t$ to a
$t$-definable family of group-intervals $\{I_{s,t}=\la I_t, +_{s,t}\ra :s\in S_t\}$,
all of them with domain $I_t$. By compactness, we can show that as we vary $t\in T$
the family of $S_t$'s and $+_{s,t}$ can be given uniformly. We now replace the
original family $\{I_t\}$, with the family $\{I_{s,t}:t\in T\,\, \& \, s\in S_t\}$,
on which the group operations are given uniformly. Furthermore, we may assume that
all intervals are either positive group-intervals, negative group-intervals, or
group-intervals {\em uniformly} (we partition the family into the various sets and
choose one whose union is still of the form $(-\infty,+\infty)$). For simplicity we
still denote the intervals by $I_t$ and the parameter set by $T$.

We first consider the case where each $I_t$ is a positive group-interval (bounded or unbounded).

  Each
interval $I_t=(-\infty, a(t))$ is a positive group-interval
(recall that in $\CM$ the point $-\infty$ plays the role of $0$).
Furthermore, we have $\bigcup_{t\in T} I_t=(-\infty,\infty)$.
Consider now a sufficiently saturated elementary extension $\CN$
of $\CM$ and take $a'<+\infty$ in $N$ such that $a'>m$ for all
$m\in M$. By our assumptions, there is $t_0\in T(\CN)$ such that
$(-\infty,a')\sub I_{t_0}$ and therefore there is a  positive
group-interval operation $+_{t_0}$ on the interval $(-\infty,a')$,
which is definable in $\CN$.

We now let $\CM_1$ be the downward closure of $M$ in $\CN$ as in
Proposition \ref{prelim}. By the same proposition, the
intersection of the graph of $+_{t_0}$ with $M_1^3$, call it $G$,
is a definable set in the structure $\CM_1$.

Let's see first that in $\CM_1$, the set $G$ is the graph of a
positive group-interval operation on $(-\infty,\infty)$ (with
$-\infty$ playing the role of $0$).

(1) $G$ is the graph of a partial function from $M_1^2$ into
$M_1$: this is clear since for every $(x,y)\in N^2$ there is at
most one $z\in N$ such that $(x,y,z)\in G$. Call it $+_G$.

(2) $+_G$ is continuous, since the order topology of $M_1$ is the subspace topology of
$N$ and $M_1$ is convex in $N$.

(3) $+_G$ is associative and commutative when defined, as inherited from $\CN$.

(4) $+_G$ respects order: again, inherited from $\CN$.

(5) For every $x\in (-\infty,\infty)$, the domain of $y\mapsto x+_G y$ is a convex
set in $M_1$ of the form $(-\infty,r_G(x))$: Indeed, the domain of $y\mapsto
x+_{t_0}y$ in $\CN$ is an interval $(-\infty ,r_{t_0}(x))$. Hence,  the domain of
$y\mapsto x+_G y$ is the intersection of $(-\infty ,r_{t_0}(x))$ with $M_1$. Since
$M_1$ is closed downwards in $\CN$, this intersection is $(-\infty, r_G(x))$, where
$r_G(x)=+\infty$ if $r_{t_0}(x)$ is greater than all elements of $M_1$ and otherwise
it is some element of $M_1$.

(6) Consider $\lim_{x'\to 0}x'+_G x$. Since this limit was $-\infty$ in $\CN$ (i.e.
$0$ in the original structure), it remains so in $\CM_1$, because $M_1$ was
downwards closed in $\CN$. It is left to see that  $\lim_{x'\to r_G(x')}x+_G
x'=\infty$ (i.e. $a$ in the original structure). This follows from the fact that for
every $t$ we have $\lim_{x'\to r_{t}(x')}x+_{t} x'=a(t)$, and $\sup_t a(t)=\infty$.

We therefore showed that $+_G$ makes $(-\infty,+\infty)$ a
positive group-interval in the structure $\CM_1$.

Since $\CM\prec \CM_1$ we can now write down the (first-order) properties which make
$+_G$ into an operation of a positive group-interval in $M_1$ and obtain an
operation $+$ on $M$, which is definable in $\CM$. This completes the case where
each $I_t$ is a positive group-interval.

Assume now that each $I_t$ is a group-interval. If each $I_t$ is bounded then, as we
noted earlier we can transform it into a positive bounded group-interval and finish
as above. If $I_t$ is unbounded then $a_0=-\infty_t$ and $a(t)=+\infty_t$. We can
now fix some $a_1\in (a_0,a)$ and restrict our attention to those $t$'s for which
$a_1\in I_t$. For each such $t$ we can endow $(a_1,a(t))$ with the structure of an
unbounded positive group-interval, and then finish as above.

Finally, if each $I_t$ is a negative group-interval (so
$I_t=(a_0,a(t))=(-\infty,a(t))$), then we can again assume that there is an $a_1$
which belongs to all $I_t$, and replace each $I_t$ with the interval $(a_1,a(t))$,
endowed with the structure of a bounded positive group-interval. This ends the proof
of the theorem.
 \qed
\\

\noindent {\bf Note}: We don't claim that the operation $+_G$ that we obtain in
$\CM_1$ belongs to the family $\{+_t:t\in T\}$ that we started with. E.g., in the
structure $\la \mathbb R,<,+ \ra$, take $+_t$ to be the restriction of the usual $+$
in $\mathbb R$ to an interval $I_t=(0,t)$. Each $I_t$ is a bounded positive
group-interval but the union $(0,+\infty)$ can only be endowed with the structure of
an unbounded positive group-interval.
\\

We end this section with an observation about group-intervals and definable groups.
\begin{lemma}\vlabel{intervals-groups}
Let $\la I,+\ra$ be a generalized group-interval. Then there
exists a definable one-dimensional group $\la H,\oplus\ra$ and a
definable $\si:I\to H$, such that $\si(x+y)=\si(x)\oplus \si(y)$,
when $x+y$ is defined. Said differently, every generalized group-interval can be embedded into a definable one-dimensional group.

If $I$ is a bounded generalized group-interval the $H$ is
definably compact and if $I$ is unbounded then $H$ is linearly
ordered.
\end{lemma}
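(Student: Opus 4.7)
The plan is to construct $H$ explicitly as a definable subset of $M^2$ with a definable group operation extending $+$. By Fact~\ref{easy1}, I first reduce to the case where $I = \langle (0,a), + \rangle$ is a positive group-interval (bounded or unbounded); for a bilateral group-interval $(-a, a)$, the positive half is a positive group-interval and the existence of inverses allows extending an embedding of the positive half to all of $(-a, a)$ via $-x \mapsto \sigma(x)^{-1}$.

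For the unbounded positive case (so $+$ is total on $I^2$), $I$ is a cancellative ordered commutative semigroup. I would form $H$ as the ``ordered group of formal differences,'' realized as a definable $1$-dimensional subset of $M^2$ consisting of a ``positive copy'' of $I$, a ``negative copy'' of $I$, and one identity point $e$. The operation $\oplus$ extends $+$ on the positive copy; on mixed signs it is defined via cancellation: $x_+ \oplus y_-$ equals $z_+$, where $y + z = x$ when $x > y$ (the element $z$ existing by surjectivity of left translations, itself a consequence of the axiom $\lim_{x' \to r(x)} x + x' = a = +\infty$), symmetrically when $y > x$, and equals $e$ when $x = y$. The linear order with $x_- < e < y_+$ and the induced orders on each copy makes $H$ a linearly ordered definable group, with $\sigma(x) = x_+$ the desired homomorphism.

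For the bounded positive case ($+$ partial), $H$ should be definably compact, constructed as the ``circle'' obtained by identifying the two endpoints of $I$. I realize $H$ as a definable $1$-dimensional subset of $M^2$ consisting of $I$ together with one additional identity point $e$. Define $\oplus$ by wrap-around: for $x,y \in I$ with $y < r(x)$, set $x \oplus y = x + y$; for $y = r(x)$, set $x \oplus y = e$; for $r(x) < y < a$, set $x \oplus y = z$, where $z = \lim_{y' \to r(x)^-} z(y')$ and $z(y')$ is the unique element of $I$ with $y' + z(y') = y$ (furnished by cancellation). This limit exists by o-minimality and lies in $I$, since under the wrap-around picture $z$ corresponds to $x+y-a \in (0, \min(x,y))$.

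The main obstacle is verifying associativity of $\oplus$ in the bounded case, where the definition splits into three subcases across the wrap-around; compatibility must be checked using the limit axiom $\lim_{x' \to r(x)} x + x' = a$ together with cancellation of $+$. Once associativity is in hand, continuity, the remaining group axioms, and the homomorphism property of $\sigma$ follow from the positive group-interval axioms. The topological conclusions (linear order for unbounded $I$, definable compactness for bounded $I$) are immediate from the constructions.
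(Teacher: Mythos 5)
Your unbounded case reproduces the paper's construction exactly: $H$ is a positive copy of $I$, a negative copy, and an identity point, with mixed-sign operation defined by cancellation, and $\si$ the inclusion of the positive copy. Nothing to add there.

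Your bounded case, however, differs from the paper's and is harder to carry out. You form the circle $H=I\cup\{e\}$ of ``circumference'' $a$ and embed $I$ by the identity map. This forces a wrap-around definition when $x+y$ would exceed $a$: since $x+y$ is then \emph{not} an element of $I$, you cannot simply write $x+y-a$, and you resort to a limit $z=\lim_{y'\to r(x)^-}z(y')$. You then (rightly) flag associativity across the wrap-around as the main obstacle, and leave it unresolved. The paper avoids both difficulties by a small but decisive change: it takes $H$ to be $\{0\}\cup(0,a/2)$ with addition modulo $a/2$, where $a/2$ is the midpoint of $I$ (the element with $\lim_{t\to a/2}t+t=a$). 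For $x,y\in(0,a/2)$ one always has $x+y$ \emph{genuinely defined} in $(0,a)$, and the wrap-around $x\oplus y=(x+y)-a/2$ uses the actual element $a/2\in I$ and cancellation in $I$ -- no limits. Moreover the embedding is not the inclusion but $x\mapsto x/4$ (the unique $y$ with $y+y+y+y=x$); since $x/4,y/4<a/4$ one has $x/4+y/4<a/2$, so no wrap-around ever occurs in $\si(x)\oplus\si(y)$, and the homomorphism property is immediate from $(x+y)/4=x/4+y/4$. So while your construction is morally the same (compactify by identifying the endpoints), the paper's halved interval with the $x/4$ embedding eliminates exactly the technical obstructions -- the limit-based wrap-around and the case analysis for $\si$ being a homomorphism -- that you identify as unfinished. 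You should either adopt that device or actually carry out the associativity check and the existence/continuity argument for the limit in your version; as written the bounded case is incomplete.
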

\proof Assume that $I=(0,\infty)$ is an unbounded positive group-interval. Then we
let $H=I\times \{-1\}\cup\{0\}\cup I\times\{+1\}$ (with $-1,0,+1$ suggestive symbols
for elements in $M$). We define $a\oplus 0=x$ for every $a\in H$ and define $\la
x,i\ra\oplus \la y,j\ra$ to be $\la x+y,i\ra$ if $i=j$. If $i\neq j$ and $x<y$ we
let $\la x,i\ra\oplus \la y,j\ra=\la z,j\ra$, with $z\in I$ the unique element such
that $x+z=y$. if $y<x$ then $\la x,i\ra\oplus \ra y,j\ra=\la z,i\ra$, with $z$ the
unique element in $I$ such that $y+z=x$. The group $H$ we obtain is linearly ordered
and torsion-free. Obviously $I$ is embedded in $H$.

Assume now that $I=(0,a)$ is a positive bounded group-interval and let $a/2\in I$
denote the unique element  in $I$ such that $\lim_{t\to a/2}t/2+t/2=a$. We consider
$H$ the half-open interval $[0,a/2)$ with addition ``modulo $a/2$''. Namely, for
$x,y\in [0,a/2)$,
$$x\oplus y=\left \{\begin{array}{ll}
  x+y & \mbox{ if } x+y\in [0,a/2) \\
  x+y-a/2 &  \mbox{ if } x+y\geq a/2
\end{array} \right.
$$

The group $H$ is a one-dimensional definably compact group. To see that $I$ is
embedded in $H$, consider the map $x\mapsto x/4$ sending $I$ into $(0,a/4)$ (by
$x/4$ we mean the unique element $y\in I$ such that $y+y+y+y=x$. It is easy to check
that this is an embedding of $I$ into $H$.\qed

\section{Gp-short and gp-long intervals}

\subsection{Definitions and basic properties}

We assume here that $\CM$ is an arbitrary sufficiently saturated o-minimal structure.

\begin{defn}\label{def-gp-short} An interval $I\sub M$ is called a {\em group-short (gp-short) interval} if it can
be written as a finite disjoint union of points and open intervals, each of which
can be endowed with the structure of a generalized group-interval. An interval which
is not group-short is called a {\em gp-long interval}.

Although there is no global notion of distance in $M$, in abuse of notation we
say that the {\em distance between $a,b\in M$ is gp-short} if either $a=b$, or  the
interval $(a,b)$ (or $(b,a)$) is gp-short. Otherwise, we say that this distance is
gp-long.
\end{defn}

Note that points, being trivial closed intervals, are gp-short.

\begin{defn} A definable set $S\sub M^n$ is called a {\em gp-short set} if there
are gp-short intervals $I_1,\ldots, I_k$ such that $S$ is in definable bijection
with a subset of $\Pi_j I_j$.
\end{defn}

\noindent{\bf Note } \begin{enumerate} \item It is not hard to see that the above
definition coincides with the previous one in the case of intervals. Namely, if an
interval $I$ is also a gp-short set then it can be written as a finite union of
points and open group-intervals.  \item As before, if $(a,b)$ is a gp-short interval
then it can be endowed with an $ab$-definable family of subintervals, with
operations on them, witnessing the fact that $(a,b)$ is gp-short. Indeed, we start
with particular parameterically definable such witnesses and let the parameters
(including the end points of the sub-intervals) vary.

\item If $S$ is a finite union of gp-short intervals then it is in definable
bijection with a definable subset of their cartesian product, after possibly naming
finitely many points. For example, the disjoint union of $I$ and  $J$ is in
definable bijection with the set $$(I\times \{b\}\sqcup \{a\}\times J) \cup
\{\la a',b\ra\},$$ for any distinct $a,a'\in I$ and $b\in J$.

It follows that a finite union of gp-short sets  is  a gp-short set.

\item It is of course possible that the only gp-short sets in $\CM$ are finite,
namely  there are no definable generalized group-intervals in $\CM$. The Trichotomy
Theorem, \cite{pest-tri}, tells us that in this case the definable closure is
trivial and every point in $\CM$ is trivial. This is equivalent to the fact
(\cite{MRS}) that $\CM$ has quantifier elimination down to $\emptyset$-definable
binary relations.

\item Clearly, if $I$ is a gp-short interval and $f:I\to M$ is a definable continuous
injection then $f(I)$ is also a gp-short interval.
\end{enumerate}

\begin{fact}\vlabel{SDC-short} If $I_1,\ldots, I_k$ are gp-short intervals then, after
fixing finitely many parameters $A$,  the product $X=\Pi_j I_j$ has strong definable
choice. Namely, if $\{S_t:t\in T\}$ is a $B$-definable family of subsets of $X$ then
there is an $AB$-definable function $\sigma:T\to X$ such that for every $t\in T$, we
have $\sigma(t)\in S_t$ and if $S_{t_1}=S_{t_2}$ then $\si(t_1)=\si(t_2)$.
\end{fact}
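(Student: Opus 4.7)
My plan is a two-step reduction: first replace each gp-short $I_j$ by a single generalized group-interval, then induct on the number of factors $k$, with the substance of the argument concentrated in the base case $k=1$. I would begin by writing each $I_j$ as a finite disjoint union of points and open generalized group-intervals and adding the endpoints and the parameters of the group-interval operations to $A$. The product $X=\prod_j I_j$ then decomposes as a fixed finite disjoint union of cells of the form $\prod_j C_j$, with each $C_j$ either a singleton or an open generalized group-interval. Fix a lex-ordering on these cells; given a family $\{S_t\}$, pick the lex-least cell $C(t)$ that $S_t$ meets and work inside $S_t\cap C(t)$. The singleton factors are trivial, so it suffices to establish strong definable choice on a product of generalized group-intervals.

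For the induction step on $k$, given a $B$-definable family $\{S_t\}_{t\in T}$ with $S_t\sub\prod_{j=1}^k J_j$, I would project to the first factor to obtain the family $\pi(S_t)\sub J_1$. Applying the $k=1$ case furnishes a canonical $AB$-definable $\si_1(t)\in\pi(S_t)$. The fibers $S_t^{\si_1(t)}=\{(x_2,\dots,x_k):(\si_1(t),x_2,\dots,x_k)\in S_t\}\sub\prod_{j\geq 2} J_j$ form an $AB$-definable family, and the inductive hypothesis supplies a canonical tail $(\si_2(t),\dots,\si_k(t))$. The function $\si(t)=(\si_1(t),\dots,\si_k(t))$ then has the required equivariance, since $S_{t_1}=S_{t_2}$ forces $\pi(S_{t_1})=\pi(S_{t_2})$, hence $\si_1(t_1)=\si_1(t_2)$, the fibers coincide, and induction closes the argument.

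The base case $k=1$ is where I expect the main work. Given $S\sub J$ with $J$ a single generalized group-interval, $S$ is uniformly in its parameters a finite union of points and open sub-intervals of $J$; select the leftmost component. If it is a point, take it; if it is an open sub-interval $(a,b)\sub J$, I would construct a canonical midpoint $c\in(a,b)$ via the embedding $\si\colon J\hookrightarrow H$ into a one-dimensional definable (hence divisible) group supplied by Lemma \ref{intervals-groups}: let $d\in H$ be the unique element with $\si(a)\op d=\si(b)$, let $d/2\in H$ satisfy $(d/2)\op(d/2)=d$, and set $c=\si^{-1}(\si(a)\op(d/2))$. The main obstacle is in this construction: when $H$ is definably compact the equation $m\op m=d$ has two solutions in $H$, and one must canonically select the one lying in $\si(J)$, which I would do using the order on $J$ inherited from $M$; separately, components of $S$ incident to an endpoint of $J$ need a rule analogous to that of van den Dries \cite[Proposition 6.1.2]{vdd} for strong definable choice in o-minimal expansions of ordered groups, using a named auxiliary point of $J$ together with the group-interval operation. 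Once these canonical choices are codified, the base case, and thus the fact, follows.
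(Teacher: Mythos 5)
Your argument is correct, and its skeleton---decompose each $I_j$ into points and generalized group-intervals over added parameters, select the lex-least cell that $S_t$ meets, induct on the number of coordinates by projecting and fibering, and pick a canonical point of the leftmost component in the base case---is precisely the ``standard proof of definable choice using the group operations on each interval'' that the paper discharges by a pointer to van den Dries. Where you depart is in the one-dimensional base case: you route through Lemma \ref{intervals-groups}, embedding $J$ into a one-dimensional group $H$, computing $d=\si(a)^{-1}\oplus\si(b)$ and halving it in $H$, and then you have to handle the genuine ambiguity that in the definably compact case $m\oplus m=d$ has two roots, and also argue that the chosen root pulls back into $\si(J)$. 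The paper's intent is to skip this detour entirely and halve inside the group-interval itself: the map $x\mapsto x+x$ on a positive group-interval is strictly increasing on its domain (since $x<y$ gives $x+x<x+y<y+y$ by (i) of Definition \ref{def-gp-interval}), and it maps $\{x:x<r(x)\}$ onto all of $J$ because $r(x)\to a$ as $x\to 0$ and $x+x\to a$ as $x$ approaches the fixed point of $r$; hence halving is already unique and total in $J$, with no need for $H$ and no torsion to quotient away. Your approach works once you patch the root-selection, so there is no gap, but the direct route is shorter, stays in the linearly ordered setting throughout (which also streamlines the endpoint cases you flag), and does not lean on the auxiliary group construction, which is why the paper can cite van den Dries without further comment.
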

\proof We write each $I_j$ as a finite union of points and generalized group-intervals (possibly over extra parameters), and then repeat standard proof of
definable choice in expansions of ordered groups (see \cite{vdd}), using the group
operations on each interval.\qed

\begin{fact} \vlabel{gp-short2} Assume that $S\sub M^n$ is a gp-short set and $f:S\to M^k$ is a
definable map. Then $f(S)$ is also a gp-short set.
\end{fact}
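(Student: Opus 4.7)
The plan is to use strong definable choice for products of gp-short intervals (Fact~\ref{SDC-short}) to build a definable section of $f$, which will directly exhibit $f(S)$ as being in definable bijection with a subset of a product of gp-short intervals.

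By the definition of gp-short set, there is a definable bijection $\phi: S \to S'$ with $S' \sub X := \Pi_{j=1}^m I_j$, where each $I_j$ is a gp-short interval. Replacing $f$ by $f\circ \phi^{-1}$, we may assume from the start that $S\sub X$. Consider the definable family $\{f^{-1}(y) : y\in f(S)\}$ of subsets of $X$, indexed by $y\in f(S)$. By Fact~\ref{SDC-short}, after naming finitely many parameters, there is a definable function $\sigma: f(S)\to X$ with $\sigma(y)\in f^{-1}(y)$ for every $y\in f(S)$.

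For distinct $y_1,y_2\in f(S)$ the fibers $f^{-1}(y_1)$ and $f^{-1}(y_2)$ are disjoint, so $\sigma(y_1)\neq \sigma(y_2)$; thus $\sigma$ is a definable injection $f(S)\hookrightarrow X=\Pi_j I_j$. This realises $f(S)$ as definably bijective with a subset of a product of gp-short intervals, so $f(S)$ is a gp-short set. There is no real obstacle here beyond verifying that Fact~\ref{SDC-short} applies to the family of fibers, which is automatic from the definability of $f$; the SDC fact already absorbed the work of combining the partial group operations on the individual $I_j$ into a coherent choice procedure on the product.
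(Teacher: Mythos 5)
Your argument is essentially the paper's own proof: both reduce to $S\subseteq\Pi_j I_j$, invoke Fact~\ref{SDC-short} to produce a definable section of $f$ over $f(S)$, and observe that its (injective) image in $\Pi_j I_j$ gives the required definable bijection witnessing that $f(S)$ is gp-short. No gaps; the phrasing via $X_0=\sigma(f(S))\subseteq S$ in the paper is the same construction written from the other side.
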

\proof By definition, we may assume that $S\sub \Pi_j I_j$, for $I_1,\ldots, I_k$
gp-short intervals. By Fact \ref{SDC-short}, there is a definable set $X_0\sub S$
such that $f|X_0$ is a bijection between $X_0$ and $f(X_0)=f(S)$. By definition,
$f(S)$ is a gp-short set.\qed

We now collect a list of important properties.
\begin{fact}\vlabel{int-fact0}
Let $\{I_t:t\in T\}$ be a definable family of intervals. Then
\renewcommand{\labelenumi}{(\roman{enumi})}
\begin{enumerate}
\item The set of all $t\in T$ such that $I_t$ is gp-long is type-definable \item The
set of all $t\in T$ such that $I_t$ is gp-short is $\bigvee$-definable. \item If
$\bar a\in M^m$ is a tuple and the formula $\varphi(\bar x,\bar a)$ defines a
gp-short set in $M^n$, then there is a $\emptyset$-definable formula $\psi(\bar x)$ such
that $\psi(\bar a)$ holds and if $\psi(\bar b)$ holds then the set defined by
$\varphi(\bar x,\bar b)$ is gp-short.
\end{enumerate}
\renewcommand{\labelenumi}{(\arabic{enumi})}
\end{fact}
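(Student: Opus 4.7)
The plan is to prove (2) by writing gp-shortness as an explicit countable disjunction of first-order formulas; then (1) is immediate by negation, and (3) follows by one more level of the same bookkeeping.

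For (2), fix a language formula $\theta(\bar u;x,y,z)$ and an integer $k\geq 1$. Let $\psi_{k,\theta}(t)$ be the first-order formula asserting: there exist points $c_0\leq c_1\leq \cdots \leq c_k$, with $c_0,c_k$ the endpoints of $I_t$, and parameters $\bar u_1,\ldots,\bar u_k$ such that for each $i$ the relation defined by $\theta(\bar u_i;x,y,z)$ on $(c_{i-1},c_i)^3$ is the graph of a partial operation making $(c_{i-1},c_i)$ a generalized group-interval in the sense of Definition \ref{def-gp-interval}. The algebraic clauses (commutativity, associativity, monotonicity) are plainly first-order, and in an o-minimal structure both the continuity of $+$ and the boundary-limit conditions in clause (iii) translate into standard $\forall\epsilon\,\exists\delta$ statements in the order. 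By the definition of gp-short, $I_t$ is gp-short exactly when $\bigvee_{k,\theta}\psi_{k,\theta}(t)$ holds, which is the desired $\bigvee$-definability. Item (1) is then the complement of a $\bigvee$-definable set, hence a partial type over the parameters defining $\{I_t:t\in T\}$.

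For (3), I would apply the same recipe one level up. Unpacking Definition \ref{def-gp-interval}'s successor (the notion of gp-short set), the formula $\varphi(\bar x,\bar b)$ defines a gp-short set exactly when there exist an integer $\ell\geq 1$, gp-short intervals $I_1,\ldots,I_\ell$, and a definable injection $\varphi(M^n,\bar b)\hookrightarrow \prod_{j=1}^{\ell}I_j$. Choosing parameters for the endpoints of each $I_j$, parameters witnessing each $I_j$'s gp-shortness as in (2), and parameters for the injection, and existentially quantifying all of them, produces for each choice of $\ell$, tuple $(k_1,\ldots,k_\ell)$, and defining language formulas a single first-order formula $\Theta(\bar y)$ such that $\Theta(\bar b)$ forces $\varphi(M^n,\bar b)$ to be gp-short. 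The disjunction of all these $\Theta$'s is $\bigvee$-definable and is precisely the set of $\bar b$ for which $\varphi(M^n,\bar b)$ is gp-short; since $\bar a$ lies in this set, some single disjunct holds at $\bar a$ and may be taken as $\psi$.

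The main obstacle is the careful — but essentially routine — verification that all of Definition \ref{def-gp-interval}, and in particular the limit conditions in (iii), can be captured by first-order formulas uniformly in the parameters defining the operation. Once this is accepted, everything else is bookkeeping: the genuine data (partition points, operation parameters, injection parameters) sit inside $M$ and are handled by existential quantifiers, while the combinatorial data (the integer $k$, the choice of defining formula $\theta$) contribute only a countable disjunction, which is exactly what $\bigvee$-definability allows.
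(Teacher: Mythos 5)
Your proposal is correct and follows essentially the same approach as the paper: gp-shortness is expressed as a countable disjunction of first-order formulas (each disjunct fixing the number of pieces and the defining formula of the group-interval operation), and (3) is handled by one more level of the same bookkeeping with injections into products of intervals. The only presentational difference is that the paper writes the partial type for gp-longness directly and obtains (2) by complementation, whereas you go the other way; also your notation $\bigvee_{k,\theta}$ should implicitly range over tuples of formulas $(\theta_1,\dots,\theta_k)$, one per piece, though this changes nothing about countability and can be absorbed into a single formula with selector parameters.
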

\proof For every natural number $K$ and for every definable family of $K$ functions
$F_1(x,y,\bar w_1),\ldots, F_K(x,y,\bar w_K)$, we can write a formula which says:
For every possible writing of $I_t$ as a union of $K$ intervals $I_1,\ldots, I_K$
and for every $\bar w_1,\ldots,\bar w_K$, it is not the case that $F_1(-,-,\bar
w_1),\ldots, F_K(-,-,\bar w_K)$ are operations making $I_1,\ldots, I_K$,
respectively, into group-intervals (here we need to go through the various possibilities of positive,
negative group-intervals etc). When varying over all possible $K$'s and all possible
families, we obtain a type-definable definition for the set of $t$'s for which $I_t$
is gp-long. The complement of this set is $\bigvee$-definable.

For (iii), note that if $(c,d)$ is a gp-short interval, then there is a formula
$\rho(c,d)$ saying that $(c,d)$ is the finite union of points and intervals, each of
which is a generalized group-interval. Let $\theta(\bar x,\bar x',\bar e)$ be an
$\bar e$-definable bijection between $\varphi(M^n,\bar a)$ and $\Pi_j I_j$ for some
gp-short intervals $I_j$'s. Let $\rho_j$ be the formula witnessing that $I_j$ is
gp-short for each $j=1,\ldots,m$. Then the desired formula $\psi(\bar y)$ says that
there exist parameters $\bar w$ such that $\theta(\bar x,\bar x',\bar w)$ defines a
bijection between $\varphi(M^n,\bar y)$ and $\Pi_j (c_j,d_j)$ for some gp-short
intervals $(c_j,d_j)$, witnessed by formulas $\rho_j$ for $j=1,\ldots,m$. \qed

\begin{theorem} \vlabel{gp-short}
Let
 $\{S_t:t\in T\}$ be a definable family of
gp-short, definably connected subsets of $M^n$ and assume that there is $a_0\in M^n$
such that for every $t\in T$, $a_0\in \Cl(S_t)$. Then $S=\bigcup_t S_t$ is a gp-short
set.
\end{theorem}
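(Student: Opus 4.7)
The plan is to reduce to the one-dimensional case and then analyze the union via its trivial points. For each coordinate projection $\pi_i\colon M^n\to M$, Fact~\ref{gp-short2} gives that $\pi_i(S_t)$ is gp-short; being the continuous image of a definably connected set, it is a gp-short subinterval of $M$ with $\pi_i(a_0)\in\Cl(\pi_i(S_t))$. Since $S\subseteq\prod_{i=1}^n\bigl(\bigcup_t\pi_i(S_t)\bigr)$ and a subset of a product of gp-short sets is gp-short (immediate from the definition, by concatenating bijections into products of gp-short intervals), it is enough to treat $n=1$. So I assume $\{J_t\}$ is a definable family of gp-short intervals in $M$ with $a_0\in\Cl(J_t)$ for all $t\in T$.

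I partition $T$ by the position of $a_0$ relative to $J_t$; in the interior case I split $J_t=J_t^-\cup\{a_0\}\cup J_t^+$ as two sub-intervals of $J_t$ (hence gp-short), with $a_0$ as the right, respectively left, endpoint of each. By symmetry I reduce to $J_t=(a_0,b_t)$. Set $c=\sup_t b_t$; if this supremum is attained then the union is already in a single gp-short $J_{t_0}$, so I assume it is not, and write $J:=\bigcup_t J_t=(a_0,c)$.

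The structural key is that the set of trivial points of $M$ is $\emptyset$-definable, and any gp-short interval contains only finitely many: each gp-interval piece in its decomposition consists entirely of non-trivial points (the local group operation is exactly a witness of non-triviality), leaving only the finitely many isolated singletons in a gp-short decomposition to be possibly trivial. Therefore $J\cap\mathrm{Triv}(M)$ contains no open sub-interval (such an interval would sit inside some $J_t$, contradicting its gp-shortness), and by o-minimality $J\cap\mathrm{Triv}(M)$ is a finite set $\{q_1<\cdots<q_N\}$, partitioning $J$ into finitely many open intervals of non-trivial points together with the points $q_i$.

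It thus suffices to show that each non-trivial sub-interval $I'\subseteq J$ is gp-short. Fix $x_0\in I'$. Non-triviality of every point of $I'$ and Fact~\ref{easy1}(iii) imply that $(x_0,y)$ can be endowed with generalized group-interval structure for all $y\in I'$ close to $x_0$ on the right. Let $y^*=\sup\{y\in I' : (x_0,y)\text{ can be so endowed}\}$. Theorem~\ref{groups} applied to the definable family $\{(x_0,y) : x_0<y<y^*\}$ of generalized group-intervals with common left endpoint $x_0$ yields a generalized group-interval structure on a right-tail $(a_1,y^*)$, while $(x_0,a_1)$ inherits a sub-gp-interval structure, so $(x_0,y^*)$ is gp-short. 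The expected main obstacle is showing that the iteration $x_0\mapsto y^*\mapsto y^{**}\mapsto\cdots$ terminates after finitely many steps inside $I'$: this should follow from the uniform first-order definability of having a gp-interval structure (Fact~\ref{int-fact0}(iii)) together with saturation of $\CM$, which bound the number of such structural break points inside $I'$. A symmetric argument handles the portion of $I'$ to the left of $x_0$, so $I'$ is gp-short; then $J$ is a finite union of gp-short intervals and the points $q_i$, hence gp-short.
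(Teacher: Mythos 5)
Your reduction to $n=1$ is correct and matches the paper's (project onto each coordinate, each $\pi_i(S_t)$ is a gp-short interval with $\pi_i(a_0)$ in its closure, and a subset of a product of gp-short sets is gp-short). The splitting of each $J_t$ at $a_0$ and the reduction to $J_t=(a_0,b_t)$ is also fine. After that, however, your route diverges from the paper's, and the key step has a genuine gap.

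You reduce to the claim ``every maximal interval of non-trivial points $I'\subseteq J$ is gp-short,'' and then try to establish this by iterating $x_0\mapsto y^*\mapsto\cdots$. The problem is that, as stated, this claim is \emph{false} in general: the paper's own example immediately after Theorem~\ref{gp-short} (the structure on $\mathbb R$ given by restricting $+$ to $\{(a,b):|a-b|\leq 1\}$, in a saturated elementary extension) has every point non-trivial, so the whole line is a single ``non-trivial interval,'' yet it is gp-long. The reason the theorem is nevertheless true is the hypothesis that all the $J_t$'s share the endpoint $a_0$ and are each gp-short; this must enter the argument for $I'$, not just for the finiteness of the trivial-point set. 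You gesture at this (``uniform first-order definability \ldots together with saturation \ldots bound the number of such structural break points''), but the cited Fact~\ref{int-fact0}(iii) does not give such a bound, and without bringing in the $J_t$'s the iteration simply need not terminate.

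Two subsidiary problems in the same step. First, the set $\{y\in I':(x_0,y)\text{ admits a GI structure}\}$ is only $\bigvee$-definable (Fact~\ref{int-fact0}(ii)); it is not clear that its supremum $y^*$ is an element of $M$, nor that the family $\{(x_0,y):x_0<y<y^*\}$ you feed into Theorem~\ref{groups} is definable. The theorem genuinely requires a definable family, and the paper goes to some trouble to produce one: it starts from the \emph{definable} family $\{J_t\}$, uses compactness on the inconsistency of the type ``$J_t$ is gp-long'' over $T$ to extract a uniform bound $K$ on the number of GI pieces, decomposes $J_t$ uniformly as $I_{t,1}\cup\cdots\cup I_{t,K}\cup F_t$, and then applies Theorem~\ref{groups} to the definable family $\{(a',a_{t,K+1})\}$ of last pieces (with a short induction on $K$ for the case $\sup_t a_{t,K}=b$). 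That compactness step is exactly what is missing from your termination argument, and it is also the cleanest way to obtain a definable family for Theorem~\ref{groups}. Second, a smaller point: you assert that the set of trivial points is $\emptyset$-definable; this is plausible but not one of the facts recorded in the paper, and nothing you cite yields it directly.

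Incidentally, your trivial-points observation does make the intermediate pieces $(q_i,q_{i+1})$ with $q_{i+1}<c$ trivially gp-short (they lie inside some $J_t$), so really only the right tail $(q_N,c)$ needs work; distinguishing that case would have made the remaining gap more visible. To repair the argument you would, in effect, have to re-derive the uniform bound $K$ from the hypothesis as the paper does, at which point the trivial-points detour is no longer saving anything.
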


\newcommand{\R}{\mathbb R}
Before we prove the result we note that the requirement about $a_0$ is necessary:
Consider the structure on $\mathbb R$ with the restriction of the graph of $+$ to
all $a,b\in \R$ such that $|a-b|\leq 1$. In this structure (and in elementary
extensions) there is a group-interval around every point so the whole structure is a
union of gp-short intervals. However, the union (i.e. the universe) is not gp-short.

 \proof Let $\pi_i:M^n\to M$ be the projection onto the $i$-th coordinate.
  It is sufficient to show that each $\pi_i(S)$ is gp-short. Because $S_t$ is
definably connected, its projection $\pi_i(S_t)=I_t$ is an interval, which by Fact
\ref{gp-short2} is gp-short. Furthermore,  $\pi_i(a_0)\in \Cl(I_t)$. Hence, we may
assume from now on that $S_t=I_t$ is a gp-short interval in $M$ and $a_0\in
\Cl(I_t)$ for every $t$.  It is sufficient to prove that $I=\bigcup_t \Cl(I_t)$ is
gp-short, so by replacing $I_t$ with $\Cl(I_t)$ (still gp-short) we may assume that
$a_0\in I_t$ for all $t$. Let $I=(a,b)$.

\begin{claim*}
There is $b_1<b$ such that the interval $(b_1,b)$ is gp-short.
\end{claim*}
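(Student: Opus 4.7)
The plan is to prove the claim by induction on a uniform bound on the number of ``pieces'' in the gp-short decomposition of each $I_t$. First I would use saturation together with Fact \ref{int-fact0}(ii)/(iii) to obtain such a uniform $K$: since $t \mapsto (I_t \text{ gp-short})$ is $\bigvee$-definable and holds for every $t \in T$, saturation forces the witnessing formulas to coalesce into a finite sub-cover, i.e.\ a uniform $K$ and uniformly definable pieces. After definably partitioning $T$ and replacing it by a subfamily which is still cofinal in $c_t := \sup I_t$, I may assume that every $I_t$ decomposes as a finite disjoint union of points and exactly $k$ generalized group-intervals $(d_t^1, c_t^1) \sqcup \cdots \sqcup (d_t^k, c_t^k)$ with $c_t^k = c_t$, given uniformly in $t$, and that $\sup_t c_t = b$.

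I then induct on $k$. For the base case $k = 1$, each $I_t$ is essentially the single generalized group-interval $(d_t, c_t)$, and $a_0 \in I_t$ forces $d_t \leq a_0 \leq c_t$. By Fact \ref{easy1}(iii), each truncation $(a_0, c_t)$ is itself a generalized group-interval, and these intervals all share the left endpoint $a_0$ while $\bigcup_t (a_0, c_t) = (a_0, b)$. Theorem \ref{groups} now produces $b_1 < b$ such that $(b_1, b)$ carries the structure of a generalized group-interval, hence is gp-short.

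For the inductive step $k \geq 2$, look at the rightmost piece $(d_t^k, c_t)$. Either \textbf{(A)} there is some $b^{\ast} < b$ and a definable subfamily, still with $\sup_t c_t = b$, on which $d_t^k < b^{\ast}$; then by Fact \ref{easy1}(iii) each $(b^{\ast}, c_t)$ is a generalized group-interval, these share left endpoint $b^{\ast}$, and Theorem \ref{groups} yields the desired $b_1$. Or \textbf{(B)} no such $b^{\ast}$ exists, in which case $d_t^k \to b$ along the cofinal $t$. Because $I_t$ is an interval in a dense order and there are no further group-interval pieces between the $(k-1)$-th and $k$-th one, $c_t^{k-1} = d_t^k$ (any ``gap'' would have to be an open subinterval containing no pieces, which is impossible). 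Setting $e_t = c_t^{k-1}$, the family $\{(a_0, e_t)\}$ consists of gp-short intervals with at most $k-1$ generalized group-interval pieces each, still having $a_0$ in their closure, with $\sup_t e_t = b$. The induction hypothesis applied to this family supplies a gp-short tail $(b_1, b)$, finishing the argument.

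The principal obstacle I anticipate is the saturation/definability bookkeeping required to pass from ``each $I_t$ is gp-short'' to a uniform $k$-piece decomposition with the pieces definable in $t$; the core geometric idea — peel off the rightmost piece and either apply Theorem \ref{groups} directly or recurse into a family with one fewer piece — is clean, and the case split is driven entirely by the asymptotic behaviour of $d_t^k$ as $c_t \to b$.
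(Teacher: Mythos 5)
Your proof is correct and follows essentially the same route as the paper's: a uniform bound $K$ on the number of group-interval pieces is extracted by saturation (Fact \ref{int-fact0}), and then one splits on whether the left endpoints of the rightmost pieces stay bounded away from $b$ (in which case you truncate to a common left endpoint and invoke Theorem \ref{groups}) or tend to $b$ (in which case you discard the rightmost piece and recurse with $K-1$). The paper folds the base case and inductive step into a single argument, with $a'=\sup_t a_{t,K}$ driving the case split, but the mechanism is identical to yours.
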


Since each $I_t$ is a gp-short interval the type $p(t)$, which says that $I_t$ is
gp-long (see Fact \ref{int-fact0}), is inconsistent. It follows that there exists a
fixed number $K$ such that every $I_t$ can be written as the union of at most $K$
generalized group-intervals and $K$ many points.

 We write
$I_t=I_{t,1}\cup\cdots\cup I_{t,K}\cup F_t$, such that each
$I_{t,i}=(a_{t,i},a_{t,i+1})$, $i=1,\ldots, K$, can be endowed with the structure of
a generalized group-interval,  and $F_t$ finite. The end points of the $I_t$'s are
 definable functions of $t$ and $b=\sup_t a_{t,K+1}$.

Let $a'=\sup_t \, a_{t,K}$ and assume first that $a'<b$.

 We can restrict ourselves to those $t\in T$ such that $a_{t,K+1}>a'$ and
consider each interval $(a',a_{t,K+1})$ as a sub-interval of $(a_{t,K},a_{t,K+1})$.
We already noted that $(a',a_{t,K+1})$  admits the structure of a generalized
group-interval. So, we write $a_t$ for $a_{t,K+1}$ and consider the family of all
generalized group-intervals $(a',a_t)$.
  By Theorem
\ref{groups} there exists $b_1<b$ such that $(b_1,b)$ admits an operation of a
generalized group-interval.

 Assume now that $a'=\sup\, a_{t,K}=b$. In this case, we can replace each $I_t$
 by $I_t'=I_{t,1}\cup\cdots\cup I_{t,K-1}$, and still have $\bigcup_t I'_t=I$, and
 finish by induction on $K$.

Just as we found $b_1$ above,  we can find $a_1>a$ such that $(a,a_1)$ admits a
definable generalized group-interval. Choose $t_1$ such that
$I_{t_1}\cap(a,a_1)\ne\emptyset$ and $t_2$ such that
$I_{t_2}\cap(b_1,b)\ne\emptyset$. Since $a_0\in I_{t_1}\cap I_{t_2}$, the union of
the two intervals is again an interval, containing $(a_1,b_1)$, and therefore
$(a_1,b_1)$ is gp-short.
 We can therefore conclude that $(a,b)$ is gp-short.\qed

As a corollary  we obtain:
\begin{cor}\vlabel{c-def} Let $(a,b)$ be an interval which is gp-short.
\begin{enumerate}
\item\label{int-around-c-item} Assume that $c\in (a,b)$. Then there exists
a $c$-definable interval $I\supset (a,b)$ such that $I$ is
gp-short (possibly witnessed by extra
parameters).
\item\label{int-to-a-item} There is an $a$-definable ($b$-definable) interval
$I\supseteq (a,b)$ which is gp-short (possibly witnessed by extra
parameters).

\end{enumerate}
\end{cor}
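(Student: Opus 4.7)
My plan is to reduce both parts to direct applications of Theorem~\ref{gp-short}, using Fact~\ref{int-fact0}(iii) to produce the needed definable family of gp-short intervals. The key observation is that Fact~\ref{int-fact0}(iii) yields a $\emptyset$-definable formula $\psi(y_1,y_2)$ such that $\psi(a,b)$ holds and $\psi(a',b')$ implies that $(a',b')$ is gp-short. Once I have $\psi$, varying the endpoints while retaining a common point (either the interior point $c$ in part (\ref{int-around-c-item}), or an endpoint in part (\ref{int-to-a-item})) sets up the hypotheses of Theorem~\ref{gp-short}.

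For part~(\ref{int-around-c-item}), I apply Fact~\ref{int-fact0}(iii) to the formula defining the gp-short interval $(a,b)$ to obtain such a $\psi$, and consider the $c$-definable family
\[
\mathcal{F}_c = \{(a',b') : \psi(a',b') \text{ and } a' < c < b'\}.
\]
Each member is a definably connected gp-short interval with $c$ in its closure, so by Theorem~\ref{gp-short} the union $I := \bigcup \mathcal{F}_c$ is gp-short. Since $\psi(a,b)$ holds and $a<c<b$, we have $(a,b)\in \mathcal{F}_c$, hence $(a,b)\subseteq I$. Being a union of intervals through the common point $c$, the set $I$ is itself an interval, defined over $c$ (using the $\emptyset$-definable $\psi$).

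For part~(\ref{int-to-a-item}), I use the same $\psi$ but now fix the left endpoint. Consider the $a$-definable family
\[
\mathcal{F}_a = \{(a,b') : b' > a \text{ and } \psi(a,b')\}.
\]
Every member is gp-short and has $a$ in its closure, so Theorem~\ref{gp-short} applies and yields that $I := \bigcup \mathcal{F}_a$ is gp-short. Since $\psi(a,b)$ holds, $(a,b)\in \mathcal{F}_a$, so $(a,b)\subseteq I$, and $I$ is a single $a$-definable interval. The $b$-definable version is symmetric, fixing the right endpoint.

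The only potentially delicate step is ensuring that the family fed into Theorem~\ref{gp-short} really is a definable family of gp-short sets in $\CM$ over the claimed parameters; this is precisely the content of Fact~\ref{int-fact0}(iii). The remaining hypotheses of Theorem~\ref{gp-short}---definable connectedness and the existence of a common point in the closure of each member---are immediate for a family of intervals sharing either an interior point or a common endpoint.
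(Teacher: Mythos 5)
Your proof is correct and takes essentially the same route as the paper. The paper invokes an informal earlier Note to place $(a,b)$ into a $\emptyset$-definable family of gp-short intervals and then restricts to members through $c$ (resp.\ with left endpoint $a$) before applying Theorem~\ref{gp-short}, while you achieve the same thing by citing Fact~\ref{int-fact0}(iii) explicitly to produce the formula $\psi$; the two sources of the definable family coincide, and the application of Theorem~\ref{gp-short} is identical.
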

 \proof (1) By our earlier note, $(a,b)$ belongs to a $\emptyset$-definable family
 of gp-short intervals. Using the parameter $c$, we obtain a $c$-definable family
 of gp-short intervals, all containing $c$. By Theorem \ref{gp-short}, their union
 is gp-short (and clearly definable over $c$).

(2) We do the same, but now obtain an $a$-definable ($b$-definable) family of
intervals all with the same left-endpoint $a$ (right endpoint $b$). We now use
Theorem \ref{gp-short}.
 \qed

\begin{lemma}\vlabel{gp-short1} Let $\{S_t:t\in T\}$ be a definable family of
gp-short sets and assume that $T$ is a gp-short subset of $M^k$. Then the union
$S=\bigcup_{t\in T} S_t$ is gp-short.
\end{lemma}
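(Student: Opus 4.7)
The plan is to uniformize the gp-short witnesses for the fibers $S_t$, reduce the lemma via a projection argument to a one-dimensional statement about unions of gp-short intervals, and then prove that one-dimensional statement by a monotonicity case analysis invoking Theorem~\ref{gp-short}.

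First I uniformize. Applying Fact~\ref{int-fact0}(iii) to the defining formula of $\{S_t\}$, for each $t\in T$ there is a $\emptyset$-definable condition $\psi_t$ with $\psi_t(t)$ holding and with $\psi_t(b)$ forcing $S_b$ to be gp-short via a uniformly definable bijection into a product of gp-short intervals. Since the $\psi_t$'s cover the definable set $T$ and $\CM$ is sufficiently saturated, compactness produces finitely many $\psi_{t_1},\dots,\psi_{t_N}$ already covering $T$. Because finite unions of gp-short sets are gp-short, I may pass to a single piece and, splitting further, assume there is a uniformly definable family of bijections $\theta_t\colon S_t\to X_t\subseteq\prod_{j=1}^m J_{j,t}$, with each $J_{j,t}$ a generalized group-interval varying definably with $t$, and the relevant piece of $T$ still gp-short.

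Next I reduce to a one-dimensional fact. The definable set $Y=\{(t,s):t\in T,\ s\in S_t\}$ projects definably onto $S$, so by Fact~\ref{gp-short2} it suffices to prove $Y$ is gp-short. The map $(t,s)\mapsto (t,\theta_t(s))$ definably embeds $Y$ into
\[
T\times \prod_{j=1}^m\Bigl(\bigcup_{t\in T} J_{j,t}\Bigr).
\]
Since $T$ is gp-short and products and subsets of gp-short sets are gp-short, it is enough to show that each $\bigcup_{t\in T} J_{j,t}\subseteq M$ is gp-short. Thus the whole lemma reduces to the following one-dimensional statement: for every uniformly definable family $\{J_t:t\in T\}$ of generalized group-intervals in $M$ with $T$ gp-short, $\bigcup_{t\in T} J_t$ is gp-short.

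For this one-dimensional claim, after splitting I may assume $T=(a,b)$ is a single generalized group-interval. By cell decomposition and piecewise monotonicity of the endpoints $c(t),d(t)$ of $J_t=(c(t),d(t))$ I may further assume $c,d$ are continuous and monotone on $T$, and treating each component of the union separately I may assume $\bigcup_t J_t=(a',b')$ is a single open interval. If $c$ and $d$ have opposite monotonicity then $\{J_t\}$ is nested; fixing any $t_0\in T$ and $x_0\in J_{t_0}$, every $J_t$ on the cofinal tail along which $J_t\supseteq J_{t_0}$ contains $x_0$, and Theorem~\ref{gp-short} applied to that subfamily (whose union is already $(a',b')$ by nestedness) gives the result. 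If $c$ and $d$ are monotone in the same direction (say both increasing; the decreasing case is symmetric), then by Fact~\ref{gp-short2} the images $c(T)=(a',\lim_{t\to b^-} c(t))$ and $d(T)=(\lim_{t\to a^+} d(t),b')$ are gp-short intervals; when $\lim_{t\to a^+}d(t)\leq \lim_{t\to b^-}c(t)$, the set $c(T)\cup d(T)$ differs from $(a',b')$ by at most one point, so $(a',b')$ is a finite union of gp-short sets; when $\lim_{t\to a^+}d(t)>\lim_{t\to b^-}c(t)$, every point in the nonempty interval between the two limits lies in every $J_t$, and Theorem~\ref{gp-short} applies to the full family.

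The main obstacle is precisely this same-monotonicity subcase of the one-dimensional step, where no single point of $M$ belongs to $\Cl(J_t)$ for every $t$; it is here that the gp-shortness of $T$, beyond its mere definability, is crucial, with $c(T)$ and $d(T)$ serving as two gp-short ``rails'' that together span $(a',b')$ from the two ends.
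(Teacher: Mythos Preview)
Your reduction has a genuine gap at the uniformization step. Fact~\ref{int-fact0}(iii), applied to the defining formula of $\{S_t\}$, gives for each $t$ a $\emptyset$-definable $\psi$ with $\psi(t)$ holding such that $\psi(b)$ guarantees the \emph{existence} of parameters $\bar w$ making a fixed formula $\theta(-,-,\bar w)$ a bijection from $S_b$ into a product of group-intervals. Compactness then covers $T$ by finitely many such $\psi$'s. But on each piece you only know that for every $t$ \emph{some} witness $\bar w$ works; there is no definable assignment $t\mapsto\bar w(t)$, and in an arbitrary o-minimal structure no Skolem function supplies one (the parameters $\bar w$ range over powers of $M$, not over a gp-short set where Fact~\ref{SDC-short} would help). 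Without such a choice, your ``uniformly definable family of bijections $\theta_t$'' is not available, and $(t,s)\mapsto(t,\theta_t(s))$ is not a definable map. The paper bypasses this entirely: since all $S_t$ already sit in a common $M^n$, it suffices that each coordinate projection $\pi_i(S)=\bigcup_t\pi_i(S_t)$ be gp-short; after uniformly decomposing the $S_t$ into connected pieces, each $\pi_i(S_t)$ is automatically a gp-short interval by Fact~\ref{gp-short2}, with endpoints definable and (after a further partition of $T$) continuous in $t$. No bijections, no choice.

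Your one-dimensional step is correct but more elaborate than needed. The paper avoids both the monotonicity split and the appeal to Theorem~\ref{gp-short}: with $T$ merely definably connected and the endpoints $a_t,b_t$ continuous in $t$, set $a_1=\sup_t a_t$ and $b_1=\inf_t b_t$; the images $I_1=\{a_t:t\in T\}$ and $I_2=\{b_t:t\in T\}$ are gp-short intervals by Fact~\ref{gp-short2}, and $\bigcup_t I_t$ equals $I_1\cup[a_1,b_1]\cup I_2$ up to finitely many points. When $a_1<b_1$ the middle interval lies inside every $I_t$, hence is gp-short; when $a_1\geq b_1$ the two images already cover.
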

\proof We may assume that $T$ is definably connected. By partitioning each $S_t$,
uniformly in $t$, into its definably connected components we can also assume that
each $S_t$ is definably connected. It is enough to see that the projection of $S$
onto each coordinate is gp-short. Let $\pi_1:M^n\to M$ be the projection onto the
first coordinate and let $I_t=\pi_1(S_t)$. By Fact \ref{gp-short2}, each $I_t$ is a
gp-short interval, so it is enough to prove that $\bigcup_{t\in T}I_t$ is gp-short.
Write $I_t=(a_t,b_t)$ with $a_t$ and $b_t$ definable functions of $t$. Again, after
a finite partition, we may assume that $t\mapsto a_t$ and $t\mapsto b_t$ are
continuous on $T$.

 Let $(a,b)=\bigcup_t I_t$, let $a_1=\sup_t \, a_t$ and $b_1=\inf_t \, b_t$.  The image of $T$ under
$t\mapsto a_t$ is an interval $I_1$ and the image of $T$ under  $t\mapsto b_t$ is
another interval $I_2$ (since $T$ is definably connected and the functions are
continuous). The interval $(a,b)$ equals, up to finitely many points,  $I_1\cup
[a_1,b_1]\cup I_2$.

If
 $a_1<b_1$ then the interval
$(a_1,b_1)$ is gp-short since it is contained in all $I_t$'s. By Fact
\ref{gp-short2},  $I_1$ and $I_2$ are gp-short, hence $(a,b)$ is gp-short. We
therefore showed that $\pi_1(S)$ is gp-short, and prove similarly that each
$\pi_i(S)$ is gp-short.\qed

\section{Short closure and gp-long dimension}

\subsection{Defining  short closure}
We follow here ideas from \cite{el-sbd}.

\begin{defn} For $a\in M$ and $A\sub M$ we say that {\em $a$ is in the short closure of
$A$}, written as $a\in \shcl(A)$, if either $a\in \dcl(A)$ or there is $b\in \dcl(A)$
such that the distance between $a$ and $b$ is gp-short. Equivalently, the closed
interval $[a,b]$ (or $[b,a]$) is gp-short.

\end{defn}

 Note that $\dcl(A)\sub \shcl(A)$.

 Clearly, if $\CM$ expands an ordered group then
$M=\shcl(\emptyset)$, so our definition really aims for those o-minimal structures
which do not expand ordered groups.

\begin{fact}\vlabel{shcl-fact} For every $a\in M$ and $A\sub M$,
$a\in \shcl(A)$ if and only if  there exists an $A$-definable, closed, gp-short
interval containing $a$.
\end{fact}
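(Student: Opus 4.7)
The plan is to prove the two directions separately; the backward direction is essentially immediate from the definitions together with Fact~\ref{easy1}(iii), while the forward direction requires invoking Corollary~\ref{c-def}(2) in order to ``shift'' the parameter defining the witnessing interval from $a$ into $\dcl(A)$.

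For the backward direction, suppose $I$ is an $A$-definable closed gp-short interval containing $a$, say $I=[c,d]$ with $c,d\in\dcl(A)$. If $a\in\{c,d\}$ then $a\in\dcl(A)\subseteq\shcl(A)$ and we are done. Otherwise $c<a<d$; the sub-interval $[a,d]$ is obtained by intersecting the finite disjoint union of points and open generalized group-intervals witnessing that $[c,d]$ is gp-short with $[a,d]$, and using Fact~\ref{easy1}(iii) each piece that survives remains a point or an open generalized group-interval. Hence $[a,d]$ is gp-short, and since $d\in\dcl(A)$ this gives $a\in\shcl(A)$ by the definition of short closure.

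For the forward direction, first handle the trivial case $a\in\dcl(A)$ by taking the degenerate $A$-definable closed interval $\{a\}$, which is gp-short. Otherwise pick $b\in\dcl(A)$ with $[a,b]$ gp-short; without loss of generality $a<b$. Apply Corollary~\ref{c-def}(\ref{int-to-a-item}) to the gp-short interval $(a,b)$, using the parameter $b$, to produce a $b$-definable gp-short interval $J\supseteq (a,b)$. Let $I$ be the closure of $J$ together with the endpoints of $J$ (still a finite union of points and open generalized group-intervals, hence still gp-short, and still $b$-definable). Then $a$ is a limit point of $(a,b)\subseteq J$, so $a\in I$, and $I$ is $A$-definable closed and gp-short, as required.

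The only mild obstacle is making sure that the interval produced by Corollary~\ref{c-def}(\ref{int-to-a-item}) actually contains the point $a$: since Corollary~\ref{c-def}(\ref{int-to-a-item}) is stated for the $b$-definable extension of $(a,b)$, the point $a$ may a priori be only the left endpoint of $J$ rather than an interior point, but this is precisely why we close $J$ on the left before declaring victory. One should also note that all of the parameters appearing in the witnesses to gp-shortness are absorbed harmlessly: the definition of gp-short interval permits the witnessing decomposition to use arbitrary parameters, and we only need $I$ itself (not its decomposition) to be defined over $A$.
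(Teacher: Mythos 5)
Your proof is correct and takes essentially the same route as the paper: the only content is the forward direction, and both you and the paper obtain the $A$-definable gp-short interval by invoking Corollary~\ref{c-def}\eqref{int-to-a-item} with the $\dcl(A)$-endpoint $b$ as parameter, then closing it to pick up $a$ on the boundary. You are somewhat more explicit than the paper (the paper dismisses the ``if'' direction as clear and writes the output of the corollary directly as a closed interval $[c,b]$), and your observation that one must take the closure of $J$ to guarantee $a\in I$, as well as your use of Fact~\ref{easy1}(iii) to justify the backward direction, are exactly the details the paper is silently relying on.
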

\proof The ``if'' direction is clear, so we only need to prove the ``only if''.
Assume that we have $[a,b]$ gp-short with $b\in \dcl(A)$. By Corollary
\ref{c-def}\eqref{int-to-a-item}, there is a $b$-definable gp-short interval $[c,b]$
which contains $[a,b]$, so $a\in [c,b]$.\qed

\begin{lemma}\vlabel{pregeoemtry}
The gp-short closure  is a pre-geometry. Namely:

(i) $A\sub \shcl(A)$.

(ii) $A\sub B$ $\Rightarrow$ $\shcl(A)\sub \shcl(B)$.

(iii) $\shcl(\shcl(A))=\shcl(A)$.

(iv) $\shcl(A)=\cup\{\shcl(B):B\sub A\mbox{ finite }\}.$

(v) (Exchange) $a\in \shcl(bA)\setminus \shcl(A) \,\rightarrow \, b\in \shcl(aA)$.
\end{lemma}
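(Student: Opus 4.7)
The plan is to derive (i), (ii), and (iv) directly from the corresponding properties of $\dcl$ and the definition of $\shcl$, and to focus the work on (iii) and (v). Clauses (i) and (ii) are immediate from $\dcl(A)\subseteq\shcl(A)$ and monotonicity of $\dcl$; for (iv), if $a\in\shcl(A)$ then either $a\in\dcl(A)$ or there is a witness $b\in\dcl(A)$ with $[a,b]$ gp-short, and in either case the witness lies in $\dcl(B)$ for some finite $B\subseteq A$ by finite character of $\dcl$, while the reverse inclusion is (ii).

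For (iii), I would first prove the intermediate step $\dcl(\shcl(A))\subseteq\shcl(A)$. Given $a=g(c_1,\ldots,c_n)$ with $g$ an $A$-definable function and $c_i\in\shcl(A)$, use Fact \ref{shcl-fact} to pick $A$-definable closed gp-short intervals $J_i\ni c_i$. The product $J_1\times\cdots\times J_n$ is an $A$-definable gp-short set by Definition \ref{def-gp-short}, so by Fact \ref{gp-short2} the image $g(J_1\times\cdots\times J_n)\subseteq M$ is $A$-definable and gp-short. Its definably connected components are $A$-definable points or intervals, each of them gp-short as a definable subset of a gp-short set; the component containing $a$ witnesses $a\in\shcl(A)$. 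For the general case, take $a\in\shcl(\shcl(A))$ and pick $b\in\dcl(\shcl(A))$ with $[a,b]$ gp-short; by the first step $b\in\shcl(A)$, so Fact \ref{shcl-fact} gives an $A$-definable closed gp-short interval $[p,q]\ni b$. Depending on whether $a$ lies in $[p,q]$, to the left of $p$, or to the right of $q$, one of the intervals $[p,q]$, $[a,p]$, $[q,a]$ has an endpoint in $\dcl(A)$ and witnesses $a\in\shcl(A)$. The latter two cases use that a sub-interval of a gp-short interval is gp-short, which follows by intersecting the decomposition of Definition \ref{def-gp-short} with the sub-interval and invoking Fact \ref{easy1}(iii).

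For exchange (v), split on whether $a\in\dcl(bA)$. If so, then since $a\notin\dcl(A)$ (as $\dcl(A)\subseteq\shcl(A)$), the $\dcl$-exchange property of o-minimal structures gives $b\in\dcl(aA)\subseteq\shcl(aA)$. Otherwise, $a\in\shcl(bA)\setminus\dcl(bA)$ supplies some $c\in\dcl(bA)$ with $[a,c]$ gp-short and $c\neq a$; we must have $c\notin\dcl(A)$ (else $a\in\shcl(A)$), so $\dcl$-exchange yields $b\in\dcl(cA)$, say $b=f(c)$ for an $A$-definable partial function $f$. Since $c\in\shcl(aA)$, Fact \ref{shcl-fact} provides an $aA$-definable closed gp-short interval $J\ni c$; its image $f(J)$ is $aA$-definable and gp-short by Fact \ref{gp-short2}, contains $b$, and, arguing exactly as in (iii), has an $aA$-definable gp-short connected component containing $b$. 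Hence $b\in\shcl(aA)$.

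The main obstacle is this last step of (v): the appeal to $\dcl$-exchange to promote $c\in\dcl(bA)$ to $b\in\dcl(cA)$ is the crucial switch, and transporting a gp-short interval around $c$ to a gp-short neighborhood of $b$ through $f$ is what completes the argument. Both steps depend on Fact \ref{gp-short2} (images of gp-short sets are gp-short), which substitutes here for the translation symmetry one would have in an expansion of an ordered group.
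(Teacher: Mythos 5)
Your proof is correct, and it takes a genuinely different route from the paper's in the two non-trivial clauses, while resting on the same two engines: $\dcl$-exchange and closure of gp-shortness under definable operations. For (iii), the paper wraps $b$ in a gp-short interval $J_{\bar a}$ definable over $\bar a$, then replaces it by the union $\bigcup_{\bar s\in S}J_{\bar s}$ over the gp-short $A$-definable box $S=\prod_i I_i$; that the union is gp-short is exactly Lemma \ref{gp-short1}. You instead first prove the intermediate inclusion $\dcl(\shcl(A))\subseteq\shcl(A)$ by pushing a product of $A$-definable gp-short intervals through an $A$-definable function via Fact \ref{gp-short2}, and then finish the remaining case by comparing the position of $a$ against the endpoints of the $A$-definable gp-short interval around $b$ -- a small case split in place of \ref{gp-short1}. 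For (v), the paper applies $\dcl$-exchange to \emph{both} endpoints $b_1,b_2$ of the $Ab$-definable gp-short interval around $a$, re-centers via Corollary \ref{c-def} to conclude $b_1,b_2\in\shcl(aA)$, and closes the loop by transitivity (iii). You split on whether $a\in\dcl(bA)$, apply exchange to a single witness $c$, and transport an $aA$-definable gp-short interval around $c$ through the function $f$ witnessing $b\in\dcl(cA)$, again using Fact \ref{gp-short2}, with no direct appeal to Corollary \ref{c-def} or to (iii). The trade-off: your argument is a touch longer on case analysis but draws only on Facts \ref{shcl-fact}, \ref{gp-short2} and \ref{easy1}(iii), whereas the paper's is more compressed once Lemma \ref{gp-short1} and Corollary \ref{c-def} are in hand. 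One small presentational note: in the image steps you should say $f(J\cap\mathrm{dom}\,f)$ rather than $f(J)$ and pass to the closure of the relevant connected component, so that Fact \ref{shcl-fact} literally applies; neither affects the substance.
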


\proof (i) (ii) are  clear. (iii) Assume that $a_i\in \shcl(A)$ for $i=1,\ldots, n$.
By Fact \ref{shcl-fact}, for every $i$, there is a gp-short interval $I_i$
containing $a_i$.
 Assume now that $b\in \shcl(a_1,\ldots, a_n)$. We want to
show that $b\in \shcl(A)$. Let $S=I_1\times \cdots \times I_n$.

 By \ref{shcl-fact}, there is a gp-short interval $J_{\bar a}\ni b$, defined over
$a_1,\ldots, a_n$, which we may assume belongs to a $\emptyset$-definable family of
gp-short intervals. Consider the set of all intervals $J_{\bar s}$, for $\bar s\in
S$. By Fact \ref{gp-short1}, the union $J=\bigcup_{\bar s\in S} J_{\bar s}$ is
gp-short (and contains $b$).  Since $S$ is $A$-definable so is $J$.

(iv) is clear from the definition.

(v) Assume that $a\in \shcl(bA)\setminus \shcl(A).$ Then there is an
$Ab$-definable gp-short interval $[b_1,b_2]$ containing $a$. Since $a\notin
\shcl(A)$, it follows that $b_i\notin \dcl(A)$ for $i=1,2$, so Exchange for $\dcl$
implies that $b\in \dcl(b_iA)$. By \ref{c-def}, there is an $a$-definable gp-short
interval containing $[b_1,b_2]$ and hence $b_1,b_2\in \shcl(aA)$. By transitivity of
$\shcl$, proved in (i), we have $b\in \shcl(aA).$\qed

\subsection{Long dimension of tuples}

\begin{defn} A set $B\sub M $ is called {\em $\shcl$-independent over $A\sub M$ } if for every $a\in B$, we have
$a\notin \shcl(B\cup A\setminus \{a\})$. For $(a_1,\ldots,a_n)\in M^n$ and $A\sub M$
we let the  {\em gp-long dimension of $\bar a$ over $A$}, $\lgdim(\bar a/A)$, be the
maximal $m\leq n$ such that $\bar a$ contains a tuple of length $m$ which is
$\shcl$-independent over $A$.
\end{defn}

\begin{note*}
\leavevmode\begin{enumerate}
\item We have $\lgdim(a/A)\leq \dim(a/A)$.

\item Because the dimension is based on a pre-geometry we have  the dimension formula
$$\lgdim(a,b/A)=\lgdim(a/bA)+\lgdim(b/A).$$

\item If $\bar a,\bar b$ realize the same type over $A$ then $\lgdim(\bar
a/A)=\lgdim(\bar b/A)$.

\item If $\CM$ is an expansion of an ordered group then the whole universe is
gp-short and therefore $\lgdim(a/A)=0$ for every $a\in M$, $A\sub M$. On the other
end, it is possible that no group-intervals are definable in $\CM$. In this case,
$\shcl(A)=\dcl(A)$ and by the Trichotomy Theorem, \cite{pest-tri}, the resulting
pre-geometry is trivial.
\end{enumerate}

\end{note*}

\begin{defn}
For $I=(a,b)$ and $c\in I$, we say that $c$ is {\em long-central in $I$} if both
$(a,c)$ and $(c,b)$ are gp-long.
\end{defn}

\begin{fact}\vlabel{shcl1}

Let $A\subset M$ be smaller than the saturation of $M$.

\begin{enumerate}

\item If $I$ is a definable gp-long interval, then there is $a\in I$
such that $a\notin \shcl(A)$.

\item Let $a\in M^n$, $\lgdim(a/A)=k$, and $p(x)=\tp(a/A)$. Then for every
$B\supseteq A$ there exists $b\models p$ such that $\lgdim(b/B)=k$.

\item Let $I=(d_1,d_2)$ be a gp-long interval and $a\in I$
long-central. Given any $\bar b\in M^n$, there exist $c_1,c_2$, $d_1\leq
c_1<c_2\leq d_2$, such that $a$ is long-central in $(c_1,c_2)$ and $\lgdim(\bar
b/A)=\lgdim(\bar b/Ac_1c_2).$

\end{enumerate}
\end{fact}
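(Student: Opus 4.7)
For part (1), the natural approach is compactness. I would assume $I\subseteq \shcl(A)$ and derive a contradiction. By Fact~\ref{shcl-fact}, each point of $I$ lies in some $A$-definable closed gp-short interval. Using the formula-level witness of gp-shortness provided by Fact~\ref{int-fact0}(iii), the partial type $\Sigma(x)=\{x\in I\}\cup\{\lnot\varphi(x,\bar c):\bar c\in A,\ \varphi(\cdot,\bar c)\text{ is a gp-short interval}\}$ is inconsistent. Since $|A|$ is below the saturation, compactness yields finitely many $A$-definable gp-short intervals covering $I$; by the Note following Definition~\ref{def-gp-short}, a finite union of gp-short intervals is a gp-short set, so $I$ is gp-short, contradicting the hypothesis.

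For part (2), I would build $b$ coordinate by coordinate, extending the compactness idea from~(1). Reorder $a=(a_1,\dots,a_n)$ so that $a_1,\dots,a_k$ are $\shcl$-independent over $A$ and $a_{k+1},\dots,a_n\in\shcl(Aa_1\cdots a_k)$. Inductively, for $i\leq k$, choose $b_i$ realizing $\tp(a_i/Aa_1\cdots a_{i-1})$ transported along an $A$-automorphism $\sigma$ sending $a_j\mapsto b_j$ for $j<i$, and additionally satisfying $b_i\notin\shcl(Bb_1\cdots b_{i-1})$. The existence of such $b_i$ is the technical heart: if the corresponding partial type is inconsistent, compactness produces an $Ab_1\cdots b_{i-1}$-definable set $\theta(M)\ni\sigma(a_i)$ covered by finitely many $Bb_1\cdots b_{i-1}$-definable gp-short intervals. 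By o-minimality, the definably connected component of $\theta(M)$ containing $\sigma(a_i)$ is an $Ab_1\cdots b_{i-1}$-definable interval contained in that union, hence itself gp-short, placing $\sigma(a_i)\in\shcl(Ab_1\cdots b_{i-1})$; applying $\sigma^{-1}$ gives $a_i\in\shcl(Aa_1\cdots a_{i-1})$, contradicting $\shcl$-independence. For $i>k$, realize the remaining type freely. This yields $\lgdim(b/B)\geq k$, and the reverse inequality $\lgdim(b/B)\leq\lgdim(b/A)=\lgdim(a/A)=k$ follows since $\tp(b/A)=\tp(a/A)$ determines $\lgdim$ and enlarging parameters only decreases it.

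For part (3), the plan is to apply~(1) twice, with the element $a$ added to the parameter set to force gp-longness of the new intervals. First apply~(1) to the gp-long definable interval $(d_1,a)$ with parameter set $A\bar b a$, obtaining $c_1\in(d_1,a)$ with $c_1\notin\shcl(A\bar b a)$. The interval $(c_1,a)$ is then gp-long, because otherwise Corollary~\ref{c-def}\eqref{int-to-a-item} would provide an $a$-definable gp-short interval containing $c_1$, placing $c_1\in\shcl(\{a\})\subseteq\shcl(A\bar b a)$, a contradiction. Analogously apply~(1) to $(a,d_2)$ with parameter set $A\bar b c_1 a$ to obtain $c_2\in(a,d_2)$ with $(a,c_2)$ gp-long and $c_2\notin\shcl(A\bar b c_1 a)$. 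Using the inclusions $\shcl(A)\subseteq\shcl(A\bar b)\subseteq\shcl(A\bar b a)$ and $\shcl(Ac_1)\subseteq\shcl(A\bar b c_1)\subseteq\shcl(A\bar b c_1 a)$, we obtain $\lgdim(c_1/A\bar b)=\lgdim(c_1/A)=1$ and $\lgdim(c_2/A\bar b c_1)=\lgdim(c_2/Ac_1)=1$, so the dimension formula gives $\lgdim(c_1c_2/A\bar b)=2=\lgdim(c_1c_2/A)$, equivalently $\lgdim(\bar b/Ac_1c_2)=\lgdim(\bar b/A)$.

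The main obstacle is part~(2), where one must realize a full type over $A$ while simultaneously imposing $\shcl$-genericity over the larger set $B$. The whole strategy hinges on Fact~\ref{int-fact0}(iii), which elevates the $\bigvee$-definable notion of gp-short to a first-order witness so that compactness can be invoked; parts~(1) and~(3) are then clean applications of this machinery.
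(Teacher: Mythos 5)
Your proof is correct and follows essentially the same route as the paper's: a compactness argument on a partial type for (1), an inductive construction of the realizing tuple for (2), and two applications of (1) followed by the dimension formula for (3). The paper leaves the induction step in (2) implicit (it only spells out the one-variable base case), and your explicit coordinate-by-coordinate construction, with the $\shcl$-independent coordinates handled first, is precisely what that induction amounts to.
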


 \proof

(1) Consider the type over $A$:
$$p(x):\{x\in I\} \cup \{x\notin (a_1,a_2):\,\, a_1,a_2\in \dcl(A)\,\, \& (a_1,a_2)
\mbox{ gp-short }\}$$ (note that in the definition of the type we are just going
over all $a_1,a_2\in \dcl(A)$ such that $(a_1,a_2)$ is gp-short. We don't claim any
uniformity here).

If $p(x)$ is inconsistent then $I$ is contained in a finite union of gp-short
intervals, which is impossible.

(2) We prove the result for $a\in M$, with $\lgdim(a)=1$. The case of $M^n$ is done
by induction. The set $p(M)$ can be written as the intersection of open intervals,
defined over $A$, which are necessarily gp-long. By (1), each such interval contains
a point $b\notin \shcl(B)$. By compactness we can find $b\models p$ with $b\notin
\shcl(B)$.

(3) Write $I=(d_1,d_2)$. Using (1), we first choose $c_1\in (d_1,a)$ such that
$c_1\notin \shcl(A\bar ba)$. In particular, $(c_1,a)$ is gp-long. Next, choose
$c_2\in (a,d_2)$ such that $c_2\notin \shcl(Ac_1\bar ba)$. It follows that
$\lgdim(c_1c_2/A\bar b)=2$ and therefore, by the dimension formula,  $\lgdim(\bar
b/Ac_1c_2)=\lgdim(\bar b/A)$.
 \qed

\subsection{Long dimension of definable sets}

\begin{defn} For $X\sub M^n$ definable over a small $A\sub M$, we let
$$\lgdim_A(X)=\max\{\lgdim(a/A):a\in X\}.$$

By Fact \ref{shcl1}(2), if $X$ is definable over $A$ and $A\sub B$ then
$\lgdim_B(X)=\lgdim_A(X)$, so we can let $\lgdim(X):=\lgdim_A(X)$ for any $A$ over which
$X$ is definable.

We say that $a\in X$ is {\em long-generic over $A$} if $\lgdim(a/A)=\lgdim(X)$.

\end{defn}

 An immediate corollary of the definition and the above observation is:
\begin{cor}\vlabel{union} If $X=\bigcup_{i=1}^n X_i$ is a finite union of definable sets then
$\lgdim(X)=\max_i \, \lgdim X_i$.
\end{cor}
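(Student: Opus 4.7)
The plan is to reduce the statement to the trivial fact that a maximum over a finite union equals the maximum of the maxima, once we make sure all the relevant long dimensions are being computed with respect to the same parameter set.

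First I would choose a single parameter set $A\sub M$ (small) over which each of $X_1,\ldots,X_n$ is definable; then $X=\bigcup_i X_i$ is also $A$-definable. By the observation immediately following the definition of $\lgdim(X)$ (which uses Fact \ref{shcl1}(2)), the value $\lgdim_A(\cdot)$ does not depend on the particular choice of $A$ defining the set, so it is legitimate to compute $\lgdim(X_i)=\lgdim_A(X_i)$ and $\lgdim(X)=\lgdim_A(X)$ all relative to the same $A$.

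With this in place, the equality follows directly from the definition:
\[
\lgdim(X) \;=\; \max\{\lgdim(a/A):a\in X\} \;=\; \max_{i}\max\{\lgdim(a/A):a\in X_i\} \;=\; \max_i \lgdim(X_i),
\]
where the middle equality is just the distributivity of $\max$ over a finite union of index sets. For the $\geq$ direction alone one only needs monotonicity ($X_i\sub X$ implies $\lgdim(X_i)\leq\lgdim(X)$), and for the $\leq$ direction one uses that any long-generic $a\in X$ lies in some $X_i$.

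There is no real obstacle here: the only substantive point is the invocation of Fact \ref{shcl1}(2) to ensure that $\lgdim$ is a well-defined invariant of the definable set rather than of a particular presentation, so that comparing $\lgdim(X)$ with the various $\lgdim(X_i)$ is meaningful. Finiteness of the union is essential so that the maximum on the right-hand side is actually attained.
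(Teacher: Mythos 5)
Your proof is correct and matches the paper's intent exactly: the paper calls this an ``immediate corollary of the definition and the above observation'' (namely that $\lgdim_A$ is independent of $A$, via Fact \ref{shcl1}(2)), which is precisely the argument you spell out.
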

\begin{fact} A definable $X\sub M^n$ is gp-short if and only if $\lgdim(X)=0$.
\end{fact}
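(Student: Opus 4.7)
The proof splits into two implications, and the natural strategy is to exploit the pre-geometry of $\shcl$ (Lemma~\ref{pregeoemtry}) together with Fact~\ref{shcl1}(1), which says that a definable gp-long interval must always contain a point outside $\shcl(A)$.

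For the forward direction, suppose $X$ is gp-short and $A$-definable. I would first enlarge $A$ (which does not change $\lgdim(X)$) so that it contains parameters witnessing the gp-shortness: a definable bijection $f\colon X\to Y \sub I_1\times\cdots\times I_k$ with each $I_j$ a gp-short interval, together with parameters witnessing the decomposition of each $I_j$ as a finite disjoint union of points and generalized group-intervals. The key step is to show that every $b\in I_j$ lies in $\shcl(A)$: in the decomposition, $b$ is either a listed point in $\dcl(A)$, or it sits inside a generalized group-interval whose endpoints are in $\dcl(A)$, and by Fact~\ref{easy1}(iii) the sub-interval from $b$ to the nearest such endpoint is itself a generalized group-interval, hence gp-short, placing $b\in\shcl(A)$. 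For $a\in X$ with $f(a)=(b_1,\ldots,b_k)$, each coordinate of $a=f^{-1}(b_1,\ldots,b_k)$ then lies in $\dcl(Ab_1\cdots b_k)\sub\shcl(A)$ by transitivity of $\shcl$ (Lemma~\ref{pregeoemtry}(iii)), so $\lgdim(a/A)=0$ and $\lgdim(X)=0$.

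For the reverse direction, suppose $\lgdim(X)=0$ with $X$ $A$-definable, so every coordinate of every $a\in X$ lies in $\shcl(A)$. I would pass to each projection $\pi_i(X)\sub M$, which is $A$-definable and, by o-minimality, a finite union of points and maximal open subintervals, each itself $A$-definable. If any such maximal open subinterval $J$ were gp-long, Fact~\ref{shcl1}(1) would produce $e\in J\sub \pi_i(X)$ outside $\shcl(A)$, contradicting the hypothesis. Hence each such $J$ is gp-short, so $\pi_i(X)$ is a finite union of points and gp-short intervals, which is a gp-short set (by the note following Definition~\ref{def-gp-short}). Since $X\sub \pi_1(X)\times\cdots\times\pi_n(X)$, and finite products of gp-short sets (and their subsets) are gp-short by combining witnessing bijections, $X$ is gp-short.

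The main obstacle is the forward direction, specifically the step asserting that every element of an $A$-definable gp-short interval lies in $\shcl(A)$: one must carefully absorb into $A$ all parameters witnessing the decomposition into generalized group-intervals so that the relevant endpoints are truly in $\dcl(A)$, and handle the (routine but slightly delicate) case where an endpoint is $\pm\infty$ by using any other $\dcl(A)$-point in the relevant piece. The reverse direction, by contrast, is essentially a one-line application of Fact~\ref{shcl1}(1).
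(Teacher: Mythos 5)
Your proof is correct and takes essentially the same approach as the paper's: both directions reduce to looking at the projections $\pi_i(X)\subseteq M$, and the reverse direction is, exactly as you say, a one-line application of Fact~\ref{shcl1}(1). The one place you diverge is the forward direction: you unpack the definition of gp-short set via the witnessing bijection $f\colon X\to Y\subseteq\Pi_j I_j$ and then trace each coordinate of $a\in X$ through $f$, the decomposition of the $I_j$'s, and transitivity of $\shcl$. The paper sidesteps all of this by quoting Fact~\ref{gp-short2} (images of gp-short sets under definable maps are gp-short), which gives immediately that each $\pi_i(X)$ is a gp-short, $\emptyset$-definable subset of $M$ and hence lies entirely inside $\shcl(\emptyset)$. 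So the ``main obstacle'' you identify in the forward direction dissolves once you notice you can project directly rather than go through $f$; otherwise your argument (including the careful handling of unbounded group-intervals via an interior $\dcl(A)$-point) is sound and matches the paper in substance.
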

\proof Without loss of generality $X$ is definably connected, defined over
$\emptyset$. If $X $ is gp-short then its projection on each coordinate is gp-short
so every tuple in $X$ is contained in $\shcl(\emptyset)$. Conversely, if some
projection of $X$ is gp-long then, by Fact \ref{shcl1}(1), this projection contains an
element of long dimension $1$, so $X$ contains a tuple of positive long dimension
over $\emptyset$.\qed

\begin{defn}  A $k$-long box is a cartesian product of
$k$ gp-long open intervals.

If $B=\Pi_{i=1}^n (c_i,d_i)$ is an $n$-long box in $M^n$, we say that $\bar
a=(a_1,\ldots, a_n)\in B$ is {\em long-central in $B$} if for every $i=1,\ldots, n$,
$a_i$ is long-central in $(c_i,d_i)$.

\end{defn}

Clearly, if $B$ is an $n$-long box defined over $A$, $a\in B$ and $\lgdim(a/A)=n$
then $a$ is long-central in $B$.

 The following is easy
to verify:
\begin{fact}\vlabel{short-in-long}Let $B\sub M^n$ be an $n$-long box and let $a$ be
long-central in $B$. If $C\sub B$ is some $A$-definable, definably connected,
gp-short set containing $a$, then the topological closure of $C$ in $M^n$ is
contained in $B$.
\end{fact}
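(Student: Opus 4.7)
The plan is to argue by contradiction: suppose some point $p=(p_1,\ldots,p_n)\in\overline{C}\setminus B$, and exhibit a gp-long subinterval of $B$ that is forced to be gp-short, contradicting long-centrality of $a$.

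First I reduce to coordinates. Since $p\notin B$, some coordinate satisfies $p_i\leq c_i$ or $p_i\geq d_i$. Let $\pi_i\colon M^n\to M$ be the $i$-th projection. By Fact \ref{gp-short2}, $\pi_i(C)$ is gp-short, and because $C$ is definably connected and $\pi_i$ is continuous, $\pi_i(C)$ is a definably connected subset of $M$, hence an interval; it contains $a_i$. By continuity of $\pi_i$, we have $\pi_i(\overline{C})\subseteq \overline{\pi_i(C)}$, so $p_i\in\overline{\pi_i(C)}$.

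Say WLOG $p_i\leq c_i$ (the other case is symmetric). Then $\overline{\pi_i(C)}$ is a closed interval in $M$ containing both $p_i$ and $a_i$, so $[c_i,a_i]\subseteq \overline{\pi_i(C)}$, and in particular $(c_i,a_i)\subseteq \overline{\pi_i(C)}$. The key observation is that the closure $\overline{\pi_i(C)}$ is still gp-short: writing $\pi_i(C)$ as a finite disjoint union of points and open generalized group-intervals, taking closure only adds finitely many endpoints, so the result is a finite union of points and open generalized group-intervals (still gp-short). Then by Fact \ref{easy1}(iii), any open subinterval of a generalized group-interval is again a generalized group-interval, so the subinterval $(c_i,a_i)$ of the gp-short interval $\overline{\pi_i(C)}$ is itself gp-short.

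This contradicts the hypothesis that $a_i$ is long-central in $(c_i,d_i)$, which requires $(c_i,a_i)$ to be gp-long. The case $p_i\geq d_i$ is handled identically using $(a_i,d_i)$. The only non-routine step is the closure-of-gp-short-is-gp-short observation, but this is immediate from the definition since closure only adjoins boundary points. Consequently $\overline{C}\subseteq B$, as claimed.
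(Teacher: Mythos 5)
Your argument is correct, and it proves the stated Fact cleanly; the paper itself only asserts that the verification is "easy to verify" and provides no proof. A minor simplification is available: once you know $\pi_i(C)$ is a definably connected (hence an interval) gp-short subset of $(c_i,d_i)$ whose closure contains $c_i$, it follows immediately that its \emph{left endpoint} is $c_i$, so in fact $(c_i,a_i)\subseteq\pi_i(C)$ directly (no need to pass to the closure of $\pi_i(C)$); then gp-shortness of $(c_i,a_i)$ follows from the observation that an open subinterval of a gp-short interval is gp-short (which, as you note, is a straightforward consequence of Fact~\ref{easy1}(iii) applied to each group-interval piece). Your closure-of-gp-short-is-gp-short observation is also correct but slightly redundant. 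One small wording point: where you write that $(c_i,a_i)$ is "the subinterval of the gp-short interval $\overline{\pi_i(C)}$" and invoke Fact~\ref{easy1}(iii), you should spell out that $\overline{\pi_i(C)}$ is a finite union of points and generalized group-intervals, and that $(c_i,a_i)$ intersected with each piece is again of that form; Fact~\ref{easy1}(iii) applies to a single generalized group-interval, not directly to a gp-short interval.
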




\begin{fact}\vlabel{nbox}
Assume that $X\sub M^n$ is an $A$-definable set, $a\in X$ and $\lgdim(a/A)=n$. Then
there exists $A_1\supseteq A$ and an $A_1$-definable $n$-long box $B$, such that
$a\in B$, $\Cl(B)\sub X$ and $\lgdim(a/A_1)=n$. In particular, $X\sub M^n$ has long
dimension $n$ if and only if it contains an $n$-long box.
\end{fact}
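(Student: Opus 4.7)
The strategy is to find an $A$-definable open cell $C \sub X$ containing $a$ (available since $\lgdim(a/A) = n$ implies $\dim(a/A) = n$, so by cell decomposition $a$ lies in an open $A$-definable cell inside $X$), and then carve an $n$-long box out of $C$ coordinate by coordinate, using Fact~\ref{shcl1}(3) at each step to secure gp-longness and preservation of $\lgdim$. Writing $C$ in standard form with continuous $A$-definable cell functions $g_i^\pm$, I would inductively choose $c_i, d_i \in M$ for $i = 1, \ldots, n$ while maintaining at each stage: (i) each $(c_j, d_j)$, $j \le i$, is gp-long with $a_j$ long-central; (ii) $\lgdim(a/A_i) = n$, where $A_i := A \cup \{c_1, d_1, \ldots, c_i, d_i\}$; and (iii) $\prod_{j \leq i}[c_j, d_j] \times \{(a_{i+1}, \ldots, a_n)\} \sub C$. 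At $i = n$, condition (iii) becomes $\prod_j [c_j, d_j] \sub C$, so $B := \prod_j (c_j, d_j)$ is the required $n$-long box with $a \in B$ and $\Cl(B) \sub X$, and $A_1 := A_n$ works.

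For the inductive step at level $i$, I would consider
\[
V_i := \bigl\{x_i \in M : \prod\nolimits_{j<i}[c_j, d_j] \times \{x_i\} \times \{(a_{i+1}, \ldots, a_n)\} \sub C\bigr\}.
\]
This is an $A_{i-1} \cup \{a_{i+1}, \ldots, a_n\}$-definable open subset of $M$ (open by continuity of the $g_j^\pm$ combined with definable compactness of the closed box), and it contains $a_i$ by (iii) at stage $i-1$. Let $(\gamma_i, \delta_i)$ be its connected component at $a_i$; its endpoints lie in $\dcl(A_{i-1} \cup \{a_{i+1}, \ldots, a_n\}) \cup \{\pm\infty\}$ by o-minimality. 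Hypothesis (ii) at stage $i-1$ gives $a_i \notin \shcl(A_{i-1} \cup \{a_j : j \neq i\})$, so neither $(\gamma_i, a_i)$ nor $(a_i, \delta_i)$ can be gp-short (else $a_i$ would enter the short closure via the respective endpoint). Hence $(\gamma_i, \delta_i)$ is gp-long with $a_i$ long-central, and I can apply Fact~\ref{shcl1}(3) over $A_{i-1}$ to this interval with $\bar b := a$: this produces $c_i, d_i$ strictly on either side of $a_i$ in $(\gamma_i, \delta_i)$ with $(c_i, d_i)$ gp-long (from long-centrality) and $\lgdim(a/A_i) = \lgdim(a/A_{i-1}) = n$. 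Since $[c_i, d_i] \sub (\gamma_i, \delta_i) \sub V_i$, condition (iii) extends to stage $i$, closing the induction.

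For the ``in particular'' clause, the forward direction is immediate from the main statement; the converse uses Fact~\ref{shcl1}(1) coordinate by coordinate inside any given $n$-long box to pick a long-generic point over the relevant parameters. The main subtlety I anticipate is the verification at each inductive step that $(\gamma_i, \delta_i)$ is gp-long with $a_i$ long-central: this hinges on the endpoints of the connected components of $V_i$ lying in $\dcl$ of parameters that deliberately exclude $a_i$, so that gp-shortness of either half-interval would contradict the inductive $\lgdim$-hypothesis via the definition of $\shcl$.
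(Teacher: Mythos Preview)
Your argument is correct and shares the paper's core mechanism: pass to an $A$-definable open cell containing $a$ and then secure one coordinate at a time via Fact~\ref{shcl1}(3). The paper organizes the induction in the opposite order. It treats the \emph{last} coordinate first: with $f,g$ the boundary functions of the cell over $\pi(X)\subseteq M^n$, it uses Fact~\ref{shcl1}(3) to choose $e_1<a_{n+1}<e_2$ inside $(f(\pi(a)),g(\pi(a)))$ with $a_{n+1}$ long-central in $(e_1,e_2)$ and $\lgdim(a/Ae_1e_2)=n+1$, and then applies the inductive hypothesis (on $n$) to the $Ae_1e_2$-definable set $\{x\in\pi(X):f(x)<e_1<e_2<g(x)\}$. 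The payoff of that ordering is that this set is visibly open (two strict inequalities of continuous functions), so no tube-lemma step is needed; your $V_i$, by contrast, is defined by a universal quantifier over the already-chosen closed box $\prod_{j<i}[c_j,d_j]$, which is why you need the extra appeal to definable compactness for openness. Both routes arrive at the same place, and your appeal is legitimate since closed bounded definable sets in o-minimal structures satisfy the required extremum/tube-lemma property.
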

\proof We  use induction on $n$.

For $n=1$, if $X\sub M$ is $A$-definable then $a$ belongs to one of its definably
connected components, which is an $A$-definable interval containing $a$. Since
$\lgdim(a/A)=1$, $a$ must be long-central in it. We can then apply Fact \ref{shcl1}(3).

For $a\in M^{n+1}$, we may assume that $X$ is an $n+1$-cell and let $\pi:M^{n+1}\to
M^n$ be the projection onto the first $n$ coordinates.  We let $f,g:\pi(X)\to M$ be
the $A$-definable boundary functions of the cell $X$, with $f<g$ on $\pi(X)$.
Because $\lgdim(a)=n+1$, the interval $(f(\pi(a)), g(\pi(a)))$ is gp-long. Applying
\ref{shcl1}(3), we can find $e_1,e_2$, with $f(\pi(a))<e_1<a_{n+1}<e_2<g(\pi(a))$,
such that $a_{n+1}$ is long-central in $(e_1,e_2)$ and such that
$\lgdim(a/Ae_1e_2)=n+1$. Consider the first order formula over $Ae_1e_2$, in the
variables $x=(x_1,\ldots, x_n)$, which says that $f(x)<e_1<e_2<g(x)$. This is an
$Ae_1e_2$-definable property of $\pi(a)$, so by induction there exists an $n$-long
box $B\sub \pi(X)$, defined over $A_1\supseteq A$, and containing $\pi(a)$, with
$\lgdim(\pi(a)/A_1e_1e_2)=n$, such that for all $x\in B$, we have
$f(x)<e_1<e_2<g(x)$. The box $B\times (e_1,e_2)$ is the desired $n+1$-long box.\qed





 We can now conclude:
\begin{fact}\vlabel{shcl-types} Assume that $\lgdim(a/A)=n$, for $a\in M^n$ and let
$p(x)=\tp(a/A)$. Then there exists an $n$-long box $B\sub M^n$, defined over
$A_1\supseteq A$, such that $a\in B\sub p(M)$, and $\lgdim(a/A_1)=n$.
\end{fact}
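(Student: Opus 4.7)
The plan is to obtain the desired box in two stages. First I will use a compactness argument built on Fact \ref{nbox} to produce an $n$-long box $B$ with $a \in B \sub p(M)$ in which $a$ is long-central; then I will shrink $B$ coordinate by coordinate using Fact \ref{shcl1}(3) to pass to a sub-box whose defining parameters preserve the long-dimension of $a$.

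For the compactness step, consider the partial type $\Sigma(\bar y)$ in variables $\bar y = (y_1^-, y_1^+, \ldots, y_n^-, y_n^+)$ over $A \cup \{a\}$ asserting: (i) $y_i^- < a_i < y_i^+$ for each $i$; (ii) the type-definable conditions (by Fact \ref{int-fact0}(i)) that both $(y_i^-, a_i)$ and $(a_i, y_i^+)$ are gp-long; and (iii) $\forall \bar x \bigl( \bigwedge_i (y_i^- < x_i < y_i^+) \to \phi(\bar x) \bigr)$ for each $\phi(\bar x) \in p$. To verify finite satisfiability, given a finite subset mentioning $\phi_1, \ldots, \phi_k \in p$, apply Fact \ref{nbox} to $X = \bigcap_j \phi_j(M)$, which is $A$-definable, contains $a$, and has $\lgdim(X) = n$: this yields an $n$-long box $B_0$ over some $A' \supseteq A$ with $a \in B_0$, $\Cl(B_0) \sub X$, and $\lgdim(a/A') = n$. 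The latter forces $a$ to be long-central in $B_0$, since if some half-interval $(d_{2i-1}, a_i)$ were gp-short then Corollary \ref{c-def}(2) would exhibit an $A'$-definable gp-short interval containing $a_i$, placing $a_i \in \shcl(A')$ and contradicting $\lgdim(a/A') = n$. Thus the endpoints of $B_0$ realize the finite subset of $\Sigma$. By the sufficient saturation of $\CM$, $\Sigma$ is realized by some $\bar e \in M^{2n}$, and $B := \prod_i (e_i^-, e_i^+)$ is an $n$-long box with $a$ long-central and $B \sub p(M)$.

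For the shrinking step, iterate Fact \ref{shcl1}(3) over $i = 1, \ldots, n$: at stage $i$, the interval $(e_i^-, e_i^+)$ is gp-long with $a_i$ long-central in it, so the fact applied with $\bar b = a$ and parameter set $A \cup \{c_1, \ldots, c_{2i-2}\}$ produces $c_{2i-1}, c_{2i} \in (e_i^-, e_i^+)$ with $a_i$ long-central in $(c_{2i-1}, c_{2i})$ and $\lgdim(a / A c_1 \cdots c_{2i}) = \lgdim(a / A c_1 \cdots c_{2i-2})$. Starting from $\lgdim(a/A) = n$, induction preserves this value throughout, so setting $A_1 = A \cup \{c_1, \ldots, c_{2n}\}$ and $B' = \prod_i (c_{2i-1}, c_{2i}) \sub B \sub p(M)$ yields the desired $A_1$-definable $n$-long box. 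The main delicate point is packaging all requirements into a single finitely-satisfiable partial type, noting that gp-longness and long-centrality are type-definable rather than definable; everything else then follows formally from Fact \ref{nbox} and Fact \ref{shcl1}(3).
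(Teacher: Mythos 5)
Your proof is correct, and it takes a genuinely different route from the paper's. The paper packages all three requirements --- $B\sub p(M)$, $B$ is an $n$-long box, and $\lgdim(a/xyA)=n$ as a condition on the box endpoints $x,y$ --- into a single partial type $q(x,y)$ over $Aa$, and cites Fact \ref{nbox} once for finite satisfiability; a realization of $q$ immediately gives the box with all properties. Your version splits the work: the type $\Sigma(\bar y)$ asserts only $B\sub p(M)$ and long-centrality of $a$, and you then iterate Fact \ref{shcl1}(3) coordinate by coordinate to pass to a sub-box whose defining parameters preserve $\lgdim(a/\cdot)=n$. The trade-off is that your $\Sigma$ is more transparently a partial type, since gp-longness of the half-intervals is directly type-definable by Fact \ref{int-fact0}(i), whereas the paper's clause ``$\lgdim(a/xyA)=n$'' relies on the $\bigvee$-definability of $\shcl$, which is not spelled out there; you pay for this transparency with the extra shrinking stage, which is straightforward since Fact \ref{shcl1}(3) preserves both long-centrality and the long-dimension of the full tuple $a$, and passing to a sub-box cannot lose the containment $B'\sub p(M)$. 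Both arguments bottom out in Fact \ref{nbox} for consistency, so the mathematical content is closely parallel; your decomposition is slightly longer but keeps every compactness step on explicitly justified ground.
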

\proof  Write the type $p(x)$ as the collection of $A$-formulas $\{\phi_i(x):i\in
I\}$ and let $X_i=\phi_i(\CM)$.  We let $B(x,y)=\Pi_{j=1}^n(x_j,y_j)$ be a
variable-dependent $n$-box, and consider the type $q(x,y)$ which is the union:
\begin{equation*}
\{\Cl(B(x,y))\sub X_i: i\in I\}\cup \mbox{``$B(x,y)$ is an $n$-long box''} \cup
``\lgdim(a/xyA)=n''.
\end{equation*}
By Fact \ref{int-fact0}, $q(x,y)$ is indeed a type over $A$. By Fact \ref{nbox}, the type is
consistent, so we can find a box as needed.\qed

\begin{lemma}\vlabel{gp-cl2} Assume that $\{X_t:t\in T\}$ is a definable family of
subsets of $M^n$, with $X=\bigcup_{t\in T} X_t$. Assume that
 $\lgdim(T)\leq \ell$ and for every  $t\in T$, we have $\lgdim(X_t)\leq k$. Then
 $\lgdim(X)\leq k+\ell$.
\end{lemma}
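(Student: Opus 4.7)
My plan is to argue by directly bounding $\lgdim(a/A)$ for an arbitrary long-generic $a \in X$, using the dimension formula for $\shcl$ recorded after the definition of $\lgdim$ (namely $\lgdim(a,b/A) = \lgdim(a/bA) + \lgdim(b/A)$). Choose a (small) parameter set $A$ over which both the family $\{X_t : t \in T\}$ and the index set $T$ are definable. Fix any $a \in X$; it suffices to show $\lgdim(a/A) \leq k + \ell$.

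Since $a \in X = \bigcup_{t \in T} X_t$, there is some $t \in T$ with $a \in X_t$. First, because $t \in T$ and $T$ is $A$-definable with $\lgdim(T) \leq \ell$, the definition of $\lgdim$ for definable sets gives $\lgdim(t/A) \leq \ell$. Second, since the family is $A$-definable, the fiber $X_t$ is $At$-definable; as $a \in X_t$ and $\lgdim(X_t) \leq k$, we obtain $\lgdim(a/At) \leq k$. Applying the dimension formula,
\[
\lgdim(a,t/A) \;=\; \lgdim(a/tA) + \lgdim(t/A) \;\leq\; k + \ell.
\]
Since $a$ is contained in the tuple $(a,t)$, we have $\lgdim(a/A) \leq \lgdim(a,t/A) \leq k + \ell$. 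As $a \in X$ was arbitrary, this gives $\lgdim(X) = \lgdim_A(X) \leq k + \ell$.

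The only slightly delicate point, which is not really an obstacle here, is that $\lgdim(X)$ is defined via a supremum over $A$-definable parameters, while the argument produces a witness $t$ that depends on $a$ and in general lies outside $A$; this is handled cleanly by passing through the pair $(a,t)$ and then projecting to $a$, which is exactly what the dimension formula enables. Thus the proof reduces to one invocation of the definitions plus one invocation of the $\lgdim$ dimension formula.
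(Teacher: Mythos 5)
Your proof is correct and follows essentially the same route as the paper's: both pick $t$ with $a\in X_t$, apply the $\lgdim$ dimension formula to the pair $(a,t)$, and conclude $\lgdim(a)\leq k+\ell$. The only cosmetic difference is that the paper uses the symmetry $\lgdim(x/t)+\lgdim(t)=\lgdim(t/x)+\lgdim(x)$ while you invoke monotonicity of $\lgdim$ under passing to a subtuple, but these are two phrasings of the same two-line computation.
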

\proof  Without loss of generality, $X$ is $\emptyset$-definable. Take $x\in X$ with
$\lgdim(x/\emptyset)=\lgdim(X)$, and choose $t\in T$ so that $x\in X_t$. We then have
$$\lgdim(xt/\emptyset)=\lgdim(x/t)+\lgdim(t)=\lgdim(t/x)+\lgdim(x).$$
 By our assumptions, $\lgdim(x/t)\leq k$ and $\lgdim(t)\leq \ell$, hence
 $\lgdim(t/x)+\lgdim(x)=\lgdim(xt)\leq k+\ell$. It follows that $\lgdim(x)\leq k+\ell$
 so $\lgdim(X)\leq k+\ell$.
\qed

\section{Functions on gp-long and gp-short intervals, and the main theorem}

\begin{lemma}\vlabel{simple}
1. Let $I$ be a gp-long interval, and assume that $f:I\to M$ is
$A$-definable, continuous and strictly monotone. Let $t_0$ be
long-central in $I$. For every $t\in M$, let
$$\Sh(t)=\{ x\in M:\mbox{ the distance between $x$ and $t$ is
gp-short} \}.$$ Then $\Sh(t_0)\sub I$ and $f(\Sh(t_0))=\Sh(f(t_0)).$



\end{lemma}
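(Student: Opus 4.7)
The plan is to prove the two claims $\Sh(t_0)\sub I$ and $f(\Sh(t_0))=\Sh(f(t_0))$ separately, relying on the underlying fact that a subinterval of a gp-short interval is itself gp-short. This latter fact follows by intersecting any gp-short decomposition of the ambient interval with the subinterval and invoking Fact \ref{easy1}(iii) to equip the resulting open pieces with generalized group-interval structure. Equivalently, any interval containing a gp-long interval is gp-long.

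For the first inclusion, I write $I=(d_1,d_2)$, so long-centrality gives that both $(d_1,t_0)$ and $(t_0,d_2)$ are gp-long. If $x\leq d_1$, the interval from $x$ to $t_0$ contains $(d_1,t_0)$ and is therefore gp-long, so $x\notin \Sh(t_0)$; the case $x\geq d_2$ is symmetric. For the forward inclusion $f(\Sh(t_0))\sub \Sh(f(t_0))$, I assume WLOG that $f$ is increasing. If $x\in \Sh(t_0)$, continuity and strict monotonicity make $f$ restrict to a continuous bijection from the gp-short open interval between $x$ and $t_0$ onto the open interval between $f(x)$ and $f(t_0)$, which is therefore gp-short by Fact \ref{gp-short2}.

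For the reverse inclusion, I write $f(I)=(e_1,e_2)$. The key observation is that both $(e_1,f(t_0))$ and $(f(t_0),e_2)$ are gp-long: $f$ is a definable continuous bijection from $(d_1,t_0)$ onto $(e_1,f(t_0))$, so if the latter were gp-short then by Fact \ref{gp-short2} applied to $f^{-1}$ the former would be gp-short too, contradicting long-centrality of $t_0$; similarly for the upper half. Now given $y\in \Sh(f(t_0))$, the interval between $y$ and $f(t_0)$ is gp-short and hence cannot contain either $(e_1,f(t_0))$ or $(f(t_0),e_2)$, which forces $y\in (e_1,e_2)=f(I)$. Setting $x:=f^{-1}(y)\in I$, a final application of Fact \ref{gp-short2} to $f^{-1}$ on the gp-short interval between $y$ and $f(t_0)$ yields that the interval between $x$ and $t_0$ is gp-short, so $x\in \Sh(t_0)$ and $f(x)=y$.

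The only nontrivial step is establishing $y\in f(I)$ in the reverse inclusion, i.e.\ showing $y$ cannot lie past the endpoints of the image interval. This is precisely where long-centrality of $t_0$ is essential: it ensures that the two halves of $f(I)$ meeting at $f(t_0)$ are gp-long and therefore cannot fit inside the gp-short interval between $y$ and $f(t_0)$.
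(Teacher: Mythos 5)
Your proof is correct and takes essentially the same route as the paper's: the paper also establishes the forward inclusion via continuity, observes that $f(I)$ is gp-long with $f(t_0)$ long-central in it, and then ``applies the same reasoning to $f^{-1}$'' to obtain the reverse inclusion. You have simply unpacked that terse final step into an explicit argument, which is a faithful elaboration rather than a new method.
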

\proof  It is clear that $\Sh(t_0)\sub I$. Because $f$ is continuous it sends
elements whose distance is gp-short to elements of gp-short distance, namely
$f(\Sh(t_0))\sub \Sh(f(t_0))$. Because $f$ is strictly monotone, $J=f(I)$ is also
gp-long and $f(t_0)$ is long-central in $J$. We now apply the same reasoning to
$f^{-1}|J$ and conclude that $f(\Sh(t_0))=\Sh(f(t_0))$.



\begin{lemma}\vlabel{loc-const} Assume that $f:X\to M$ is an  $A$-definable function with $\lgdim(X)=k>0$.
If $f(X)$ is gp-short then there are finitely many $y_1,\ldots, y_m\in M$, all in
$\dcl(A)$, such that $\lgdim(X\setminus f^{-1}(\{y_1,\ldots, y_m\})<k$. In
particular, $f$ is locally constant at every long-generic point in $X$.
\end{lemma}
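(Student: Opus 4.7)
The plan is to prove both claims of the lemma simultaneously by establishing that for every long-generic $a \in X$ over $A$, the function $f$ is constant on an open neighborhood of $a$ in $X$ and the value $b=f(a)$ lies in an $A$-definable \emph{finite} subset of $M$. First reduce: since $f(X) \sub M$ is $A$-definable and gp-short, by o-minimality it is a finite union of $\dcl(A)$-points (which are added directly to the desired $\{y_j\}$) and $A$-definable gp-short open intervals. It therefore suffices, for one such interval $(c,d)$, to show that every long-generic $a \in f^{-1}((c,d))$ has $f(a)$ in a fixed finite $A$-definable subset of $(c,d)$. By cell decomposition of the graph of $f$, assume $f$ is continuous on $X$; after suitable further reductions using the cell structure of $X$ and projection to the shcl-independent coordinates of $a$, we pass to the case $X \sub M^k$ open with $\dim X = \lgdim(X) = k$.

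Fix such a long-generic $a \in X$ with $b = f(a) \in (c,d) \sub \shcl(A)$; then $\lgdim(b/A) = 0$ and the dimension formula gives $\lgdim(a/Ab) = k$. By Fact \ref{nbox} there is a $k$-long box $B \ni a$ with $\Cl(B) \sub X$, defined over some $A_1 \supseteq Ab$, such that $a$ is long-central in $B$ and $\lgdim(a/A_1) = k$. The key step is to show $f \equiv b$ on an open neighborhood of $a$ in $B$, proceeding by induction on $k$. For $k=1$: $f|_B$ is a continuous map from a gp-long interval into the gp-short set $(c,d)$; by o-minimality $f|_B$ is piecewise monotone, and a strictly monotone gp-long piece would send a gp-long interval to a gp-long interval (by Fact \ref{gp-short2} applied to the inverse), contradicting the gp-short range. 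Thus every gp-long piece of the partition is constant; since $a$ is long-central it lies in such a piece, on which $f|_B \equiv b$. For $k > 1$: write $B = B' \times I_k$ and $a = (a', a_k)$. By the dimension formula, $a'$ is long-generic in $B'$ over $A_1 a_k$, so applying the induction hypothesis to $f(\,\cdot\,, a_k): B' \to (c,d)$ produces an open neighborhood $V' \ni a'$ with $f(x', a_k) = b$ for $x' \in V'$. For each $x' \in V'$ that is long-generic over $A_1 a_k$, a similar dimension-formula computation yields $\lgdim(a_k/A_1 x') = 1$, and the one-dimensional argument in direction $k$ then produces an open interval about $a_k$ on which $f(x', \,\cdot\,) \equiv b$. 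Let $R \sub V'$ be the $A_1 a_k$-definable set of all such $x'$; since $\lgdim(a'/A_1 a_k) = k-1$, the point $a'$ cannot lie in the strictly lower-dimensional definable boundary of $R$, hence $a' \in \Int(R)$. A continuous selection of the endpoints of the $y$-intervals over a smaller open $V'' \ni a'$ then produces a common open interval $(e'_1, e'_2) \ni a_k$ on which $f \equiv b$ throughout $V'' \times (e'_1, e'_2)$.

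Given this local constancy, $f^{-1}(b) \cap X$ contains an open neighborhood of $a$, so $\dim f^{-1}(b) = k = \dim X$. By the standard o-minimal fiber-dimension bound, the $A$-definable set $Y_0 = \{y \in M : \dim f^{-1}(y) = k\}$ satisfies $\dim Y_0 \leq \dim X - k = 0$ and is hence finite and contained in $\dcl(A)$. So $b \in Y_0$, placing $f(a)$ in a fixed finite $A$-definable subset of $(c,d)$. Taking the union of these finite sets over all the gp-short intervals of $f(X)$, together with the $\dcl(A)$-points of $f(X)$, produces the required $\{y_1, \ldots, y_m\}$; since $X \setminus f^{-1}(\{y_1, \ldots, y_m\})$ contains no long-generic point of $X$ over $A$, it has $\lgdim < k$. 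The main obstacle is the inductive gluing step of paragraph two: converting coordinate-wise one-dimensional local constancy into genuine open-neighborhood constancy in $M^k$. The critical lever is that a long-generic point must avoid every $A$-definable set of strictly smaller dimension, which forces it into the interior of the auxiliary set $R$.
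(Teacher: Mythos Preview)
Your proof works but is far more elaborate than necessary. The paper's argument is essentially two lines: having computed (as you do) that $\lgdim(a/Ab)=k$ via the $\shcl$-dimension formula, one notes $\dim(a/Ab)\geq k$; since $f^{-1}(b)$ is $Ab$-definable and contains $a$, genericity of $a$ over $Ab$ forces $a$ into the interior of $f^{-1}(b)$ relative to $X$, which is exactly local constancy. The first clause of the lemma then follows just as in your third paragraph. Your entire second paragraph---the induction on $k$, the one-dimensional monotonicity base case, the coordinate-wise gluing via the auxiliary set $R$---is superfluous. You already have the key equality $\lgdim(a/Ab)=k$ at the top of that paragraph; the observation you are missing is that once $a\in M^k$ this gives $\dim(a/Ab)=k$ outright, and a point that is generic over the parameters of a definable set containing it must lie in that set's interior.

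Separately, your reduction ``after suitable further reductions \ldots\ pass to the case $X\sub M^k$ open with $\dim X=\lgdim(X)=k$'' is asserted without justification. Projecting to the $\shcl$-independent coordinates of $a$ does not obviously carry $f$ along, so this step is not as routine as you make it sound and deserves an actual argument.
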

\proof The set of all points in $X$  at which $f$ is locally constant is definable
over $A$ and has finite image. It is therefore sufficient to prove that $f$ is
locally constant at every $a\in X$, with $\lgdim(a/A)=k$.

If  $b=f(a)$ then $k=\lgdim(ab/A)=\lgdim(a/Ab)+\lgdim(b/A)$. But $b\in f(X)$, a
gp-short set defined over $A$ and therefore $\lgdim(b/A)=0$. It follows that
$\lgdim(a/Ab)=k$, so in particular, $\dim(a/Ab)=k$. It follows that there is a
neighborhood of $a$, in the sense of $X$, on which $f(x)=b$.\qed

As a corollary we have:

 \begin{lemma}\vlabel{loc-const2} Assume that $X\sub M^{k+1}$ is
definable, $\dim(X) =k+1$, $\lgdim(X)=k$ and the projection  $\pi(X)$ onto the last
coordinate is gp-short. Then $X$ contains a definable set of the form $B\times J$,
for $B\sub X$ a $k$-long box and $J\sub M$ an open gp-short interval.
\end{lemma}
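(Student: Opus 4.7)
The plan is to locate the required product $B \times J$ inside a $(k+1)$-dimensional cell of $X$ that contains a long-generic point. Fix a parameter set $A$ defining $X$ and a point $(a, b) \in X$ with $a \in M^k$, $b \in M$, and $\lgdim((a, b)/A) = k$. Since $b \in \pi(X)$ and $\pi(X)$ is an $A$-definable gp-short set, $\lgdim(b/A) = 0$; the dimension formula then yields $\lgdim(a/Ab) = k$. After extending $A$ if necessary (preserving long dimension), I arrange that $(a, b)$ lies in a $(k+1)$-cell $C$ of a cell decomposition of $X$ over $A$, of the standard form $C = \{(x, y) : x \in D,\ f(x) < y < g(x)\}$ with $D \sub M^k$ an open $k$-cell and $f, g$ continuous. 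Then $a \in D$ with $\lgdim(a/A) = k$, so $\lgdim(D) = k$.

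To build the interval $J$, note that $\pi(X)$ decomposes over $A$ into finitely many points and open gp-short intervals, so $b$ lies in an $A$-definable open gp-short interval $(p, q) \sub \pi(X)$. I pick $c_1, c_2$ with $\max\{f(a), p\} < c_1 < b < c_2 < \min\{g(a), q\}$ and set $J := (c_1, c_2)$, an open gp-short subinterval of $(p, q)$ containing $b$. Because $c_1, c_2 \in (p, q) \sub \shcl(A)$, the shcl pregeometry gives $\shcl(A c_1 c_2) = \shcl(A)$, so $\lgdim(a/A c_1 c_2) = k$.

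To build the box $B$, consider the $Ac_1c_2$-definable set
$$D' := \{x \in D : f(x) < c_1 \text{ and } g(x) > c_2\}.$$
It contains $a$, so $\lgdim(D') = k$. Applying Fact \ref{nbox}, there exist $A_1 \supseteq A c_1 c_2$ and a $k$-long box $B$ with $a \in B$ and $\Cl(B) \sub D'$. For every $(x, y) \in B \times J$, the defining conditions of $D'$ give $f(x) < c_1 \leq y \leq c_2 < g(x)$, hence $(x, y) \in C \sub X$. Thus $B \times J \sub X$, a $k$-long box times an open gp-short interval.

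The main obstacle is the step of arranging the long-generic point $(a, b)$ to lie inside a $(k+1)$-cell of $X$: this requires leveraging the precise interaction among the hypotheses $\dim X = k+1$, $\lgdim X = k$, and $\pi(X)$ gp-short, likely via the local constancy of $\pi$ at long-generic points supplied by Lemma \ref{loc-const}, combined with the cell structure of $X$.
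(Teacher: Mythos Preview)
Your argument is essentially the paper's, reorganised. The paper also places $(a,b)$ in a $(k+1)$-cell with base $D$ and boundary functions $\sigma_1<\sigma_2$, observes that $\sigma_1,\sigma_2$ take values in the gp-short set $\Cl(\pi(X))$, and then invokes Lemma~\ref{loc-const} to conclude that $\sigma_1,\sigma_2$ are locally constant at the long-generic $a$, whence Fact~\ref{nbox} produces the $k$-long box. Your device of choosing $c_1,c_2\in\shcl(A)$ and passing to the sublevel set $D'$ is a direct substitute for that appeal to Lemma~\ref{loc-const}.

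The obstacle you flag is genuine, and the paper does not resolve it either: it simply writes ``Take $\langle a,b\rangle$ generic in $X$, with $\lgdim(a)=k$'' without arguing that such a point exists. In fact the lemma as stated is false. With $k=1$, take $I_1$ gp-long, $I_2,I_3$ gp-short with $I_1\cap I_2=\emptyset$, and set $X=(I_1\times\{c\})\cup(I_2\times I_3)$: then $\dim X=2$, $\lgdim X=1$, and $\pi(X)=\{c\}\cup I_3$ is gp-short, yet $X$ contains no product of a gp-long interval with an open interval. Your proposed fix via Lemma~\ref{loc-const} applied to $\pi$ actually points the wrong way: it says the last coordinate is locally constant near a long-generic point, which forces that point into a $k$-cell rather than a $(k+1)$-cell.

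What saves the paper is that its only application of the lemma (in the proof of Lemma~\ref{main-lemma2}) is to $X$ the continuity locus of a map on $L\times J$ with $\lgdim L=k$. There one may first choose $a\in L$ with $\lgdim(a/A)=k$; the fiber over $a$ in $L\times J$ is all of $J$, so picking $b\in J$ generic over $aA$ makes $(a,b)$ generic in $L\times J$, hence outside the lower-dimensional discontinuity locus and inside a $(k+1)$-cell of $X$, with $\lgdim(a/A)=k$ preserved. With that starting point your argument goes through verbatim.
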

\proof Take $\la a,b\ra$ generic in $X$, with $\lgdim(a)=k$. Since $\dim X=k+1$ and
$\la a,b\ra $ is generic in $X$, there exists an interval
$J=(\sigma_1(a),\sigma_2(a))$, for some $\emptyset$-definable functions $\si_1,\si_2$,  such
that $\{a\}\times J\sub X$. The functions $\sigma_1,\si_2$ take values in the closure
of $\pi(X)$, namely in a gp-short set. By Lemma \ref{loc-const}, the functions are
locally constant on a $0$- definable set $Y\sub M^k$ containing $a$, so we can
finish by Lemma \ref{nbox}.\qed

Here is our main lemma:

\begin{lemma}\vlabel{main-lemma} Assume that $L\sub M^n$ is definable, $\lgdim L=k$, $J\sub M$ a gp-short open
interval and $F:L\times J\to M$ definable over $A$. If $F(L\times J)$ is gp-short
then there exist an $A$-definable set $S\sub L$ with $\lgdim(S)<k$ and
finitely many $A$-definable partial functions $g_1,\ldots,g_K:J\to M$ such that for
all $\ell\in L\setminus S$, and all $x\in J$, we have
$$\bigvee_{i=1}^K f(\ell,x)=g_i(x).$$
\end{lemma}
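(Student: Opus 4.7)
The plan is to apply Lemma \ref{loc-const} directly to the $A$-definable function $F : L \times J \to M$, taking the $g_i$'s to be \emph{constant} partial functions (which sidesteps any delicate uniform analysis of the fiber families), and then project the resulting low-long-dimension exceptional set from $L \times J$ down to $L$.

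The first step is to verify that $\lgdim(L \times J) = k$. The upper bound follows from Lemma \ref{gp-cl2} applied to the decomposition $L \times J = \bigcup_{x \in J}(L \times \{x\})$: since $J$ is gp-short we have $\lgdim(J) = 0$, and each slice $L \times \{x\}$ has long dimension $k$, giving $\lgdim(L \times J) \leq k$. For the lower bound, take $\ell$ long-generic in $L$ over $A$ and any $x \in J$; the dimension formula together with $\lgdim(x/A\ell) = 0$ (since $J$ is gp-short) yields $\lgdim(\ell, x / A) = k$.

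With this in hand, Lemma \ref{loc-const} applied to $F : L \times J \to M$ (noting that $F(L \times J)$ is gp-short by hypothesis) yields finitely many $y_1, \ldots, y_K \in \dcl(A)$ such that $Z := (L \times J) \setminus F^{-1}(\{y_1, \ldots, y_K\})$ satisfies $\lgdim(Z) < k$. I take $g_i : J \to M$ to be the constant function $g_i(x) = y_i$ (these are $A$-definable since $y_i \in \dcl(A)$), and set $S := \pi_L(Z) \subseteq L$, where $\pi_L$ is the projection to $L$. To see $\lgdim(S) < k$: for any $\ell' \in S$, pick $x'$ with $(\ell', x') \in Z$; then the dimension formula for the $\lgdim$-pregeometry gives $\lgdim(\ell'/A) \leq \lgdim(\ell', x'/A) \leq \lgdim(Z) < k$, so $\lgdim(S) < k$. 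For the required equation, any $\ell \in L \setminus S$ satisfies $(\ell, x) \notin Z$ for every $x \in J$, hence $F(\ell, x) \in \{y_1, \ldots, y_K\} = \{g_1(x), \ldots, g_K(x)\}$.

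The only real ``insight'' is the observation that the $g_i$'s may be taken constant; once this is noticed, the lemma reduces to a direct application of Lemma \ref{loc-const} combined with the trivial projection bound on $\lgdim$. I do not foresee any substantial obstacle -- the whole argument is immediate from the pre-geometry properties of the short closure.
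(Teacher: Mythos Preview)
Your argument is correct and is in fact simpler than the paper's. The paper proceeds fiber-by-fiber: for each $x \in J$ it applies Lemma~\ref{loc-const} to the slice $f^x : L \to M$, $f^x(\ell) = F(\ell,x)$, obtaining an $Ax$-definable set $L_x \subseteq L$ on which $f^x$ is locally constant (so $f^x(L_x)$ is finite of uniformly bounded size $K$), and then lets $g_1(x),\dots,g_K(x)$ enumerate $f^x(L_x)$; the exceptional set is $S = \bigcup_{x\in J}(L\setminus L_x)$, and $\lgdim(S) < k$ follows from Lemma~\ref{gp-cl2}. In the paper's version the functions $g_i$ genuinely vary with $x$.

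Your route --- a single application of Lemma~\ref{loc-const} to the whole product $L \times J$ --- sidesteps the fiberwise analysis and yields \emph{constant} $g_i$'s, which is formally a sharper conclusion. The projection bound $\lgdim(\pi_L(Z)) \leq \lgdim(Z)$ you invoke is immediate from the dimension formula and is the same content the paper packages into Lemma~\ref{gp-cl2}. For the downstream use of this lemma (in the proof of Lemma~\ref{main-lemma2}) the extra strength is not needed, but your argument is cleaner and avoids the mild care required to make the sets $L_x$ and the enumerations $g_i(x)$ uniform in $x$.
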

 \proof For every $x\in J$, let $f^x:L\to M$ be defined by $f^x(\ell)=f(\ell,x)$.
By Lemma \ref{loc-const}, there exists an $Ax$-definable set $L_x\sub X$ such that
$f^x$ is locally constant on $L_x$ (in particular, $f^x(L_x)$ is finite) and
$\lgdim(X\setminus L_x)<k$. By o-minimality, there exists a uniform bound $K$ on the
size of $f^x(L_x)$, so we can  define (possibly partial) functions $g_i:J\to M$,
$i=1,\ldots, K$, such that for every $\ell\in L_x$, we have $f(\ell,x)=g_i(x)$ for
some $i=1,\ldots, K$.
 If we let  $S=\bigcup_{x\in
J}(X\setminus L_x)$ then,  by Lemma \ref{gp-cl2}, we have $\lgdim(S)<k$. \qed

As a corollary we have:
\begin{lemma}\vlabel{main-lemma2} Let $L\sub M^2$ be a definable set, with $\lgdim(L)=2$,
 and $J=(a,b)$ a gp-short interval. Assume that $f:L\times J\to M$ is a
definable function and that $a_0$ is generic in $J$ (in the usual sense).

Then there exist a $2$-long box $B\sub L$, an open interval $J'\sub J$ containing
$a_0$, and a definable partial, two-variable function $g:M^2\to M$, such that for
every $\ell\in B$ and $x\in J'$, we have
$$f(\ell,x)=g(f(\ell,a_0),x).$$
\end{lemma}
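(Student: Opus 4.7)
The plan is to factor $f(\ell,x)$ through the map $(\ell,x)\mapsto (\phi(\ell),x)$, where $\phi(\ell) := f(\ell,a_0)$. I will produce a $2$-long box $B\subseteq L$ and an open interval $J'\ni a_0$ on which $f(\cdot,x)$ is constant on each fiber of $\phi|_B$, so that $g(y,x)$ can be defined unambiguously.

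The central technical device will be a \emph{fiberwise sub-argument}: given a definable set $L' \subseteq L$ on which $\phi$ is constantly equal to some $y$ and which has long-dimension $k\in\{1,2\}$, I will produce a definable subset $L''\subseteq L'$ of $\lgdim\, k$, an interval $J_y\ni a_0$, and a definable partial function $g_y : J_y\to M$ with $f(\ell,x)=g_y(x)$ for all $(\ell,x)\in L''\times J_y$. The construction: use Corollary~\ref{c-def} to pick a gp-short interval $I\ni y$ definable over the ambient parameters. For each $\ell\in L'$, let $(\tau(\ell),\sigma(\ell))$ be the connected component of $\{x\in J : f(\ell,x)\in I\}$ containing $a_0$ (nonempty since $f(\ell,a_0)=y\in I$). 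Because $J$ is gp-short, so are the images of $\sigma,\tau$, and Lemma~\ref{loc-const} forces these functions to take only finitely many values off a subset of lower $\lgdim$. Restricting $L'$ to a definable $L_1\subseteq L'$ of $\lgdim\, k$ on which $\sigma\equiv\sigma_0$ and $\tau\equiv\tau_0$ gives a uniform sub-interval $J_1=(\tau_0,\sigma_0)\ni a_0$ with $f(L_1\times J_1)\subseteq I$ gp-short. Lemma~\ref{main-lemma} then applies to produce finitely many partial functions $g^1,\ldots,g^K : J_1\to M$ and a subset $S\subseteq L_1$ with $\lgdim(S)<k$ such that for every $\ell\in L_1\setminus S$ and $x\in J_1$, $f(\ell,x)=g^i(x)$ for some $i$. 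A cell-decomposition refinement then restricts to a definable $L''\subseteq L_1\setminus S$ of $\lgdim\, k$ and a sub-interval $J_y\subseteq J_1$ containing $a_0$ on which the index is a single constant $i_0$; set $g_y := g^{i_0}$.

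Now fix $\ell_0\in L$ with $\lgdim(\ell_0/Aa_0)=2$, let $y_0:=\phi(\ell_0)$, and split on $d := \lgdim(y_0/Aa_0)\in\{0,1\}$. If $d=0$, then $y_0\in\shcl(Aa_0)$ and $\lgdim(\ell_0/Aa_0y_0)=2$, so the fiber $L_{y_0}=\phi^{-1}(y_0)$ is an $Aa_0y_0$-definable set of $\lgdim\, 2$; applying the sub-argument with $k=2$ directly yields $B$, $J'=J_{y_0}$, and we set $g(y,x):=g_{y_0}(x)$, which is constant in $y$ (permissible since $\phi\equiv y_0$ on $B$). If $d=1$, then $\lgdim(\phi(L))=1$ and by the dimension formula $\lgdim(L_{y_0})=1$. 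In this case I apply the sub-argument uniformly in $y$ ranging over a gp-long subinterval $Y'\subseteq\phi(L)$ containing $y_0$, obtaining a definable family $(L''_y, J'_y, g_y)_{y\in Y'}$ of $\lgdim\,1$ subsets of $L_y$, intervals, and functions. A second application of Lemma~\ref{loc-const}, this time to the endpoints of $J'_y$ and to the index function $y\mapsto i_y$ (each taking values in gp-short sets), produces a gp-long sub-interval $Y''\subseteq Y'$ on which $J'_y\equiv J'$ and $i_y\equiv i_0$ are constant. The union $B_\cup := \bigcup_{y\in Y''} L''_y\subseteq L$ is definable, and by the long-dimension formula $\lgdim(B_\cup) = \lgdim(Y'')+1 = 2$, so by Fact~\ref{nbox} it contains a $2$-long box $B$. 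Finally $g(y,x):=g_y(x)$ defines a partial function $M^2\to M$, and for $\ell\in B$ with $y=\phi(\ell)\in Y''$ and $x\in J'$ we compute $f(\ell,x) = g_y(x) = g(\phi(\ell),x) = g(f(\ell,a_0),x)$, as required.

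The main obstacle is ensuring uniformity of the sub-argument in the parameter $y$ in the case $d=1$: every step (the choice of the gp-short interval $I_y\ni y$, the definition of $\sigma_y,\tau_y$, the invocation of Lemma~\ref{main-lemma}, and the selection of the index $i_y$) must be carried out in a family definable in $y$. This is handled by Fact~\ref{int-fact0} on the uniform definability of the gp-short property, together with careful parameter bookkeeping. The other slightly delicate point is the long-dimension calculation showing that $B_\cup$, built as the union of $\lgdim\,1$ fiberwise pieces over a $\lgdim\,1$ parameter set, has $\lgdim\,2$ --- this is exactly the dimension formula from the pre-geometry of $\shcl$ developed in Section 5.
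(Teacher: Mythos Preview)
Your approach is correct in outline but takes a longer route than the paper. The key simplification you miss is Theorem~\ref{gp-short}: after reducing to continuous $f$ (via Lemma~\ref{loc-const2}), each $f(\{\ell\}\times J)$ for $\ell\in L^y$ is a gp-short interval containing the common point $y$, so their union $f(L^y\times J)$ is automatically gp-short and Lemma~\ref{main-lemma} applies on the \emph{full} interval $J$ with no preliminary restriction. This eliminates your $\sigma,\tau$ construction entirely. The paper also treats all fibers $L^y$ uniformly in $y$ from the start --- the functions $g_{i,y}$ produced by Lemma~\ref{main-lemma} assemble into two-variable partial functions $g_i(y,x)$ --- so no case split on $d=\lgdim(y_0/Aa_0)$ is needed: one shows $S=\bigcup_y S^y$ has $\lgdim<2$ by the dimension formula, takes a $2$-long box $B$ in $L\setminus S$, and fixes the single index $i$ only at the very end. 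Your route works, but it carries extra bookkeeping (uniformity in $y$ of the chosen gp-short interval $I_y$, of the restricted sub-interval $J_{1,y}$, of the index $i_y$, and the tacit verification that $\lgdim(L_y)=1$ holds on a gp-long set of $y$'s) that the single appeal to Theorem~\ref{gp-short} sidesteps.
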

\proof  Assume that all data is definable over $\emptyset$. By Lemma
\ref{loc-const2}, we may assume that $f$ is continuous (we apply the lemma to the
set of all points in $L\times J$ at which $f$ is continuous). Fix $a_0\in J$, and
for every $y\in M$ let
$$L^y=\{\ell\in L:f(\ell,a_0)=y\}.$$ Notice that for every $\ell\in L^y$, the image
$J^y=f(\{\ell\}\times J)$ is a gp-short interval containing $y$. It follows from
Theorem \ref{gp-short} that the union $\bigcup_{\ell\in L^y} J^y=f(L^y\times J)$ is
gp-short. We can therefore apply Lemma \ref{main-lemma} to $f|L^y\times J$. Hence,
there exists a number $k_y$, definable functions $g_{1,y}(x), \ldots, g_{k_y,y}(x)$
and a definable set $S^y\sub L^y$ with $\lgdim S^y<\lgdim L^y$,  such that for all
$\ell \in L^y\setminus S^y$ and all $x\in J$ we have
$$\bigvee_{i=1}^{k_y}f(\ell,x)=g_{i,y}(x)=g_{i,f(\ell,a_0)}(x).$$

By o-minimality, the number $k_y$ is bounded uniformly in $y$, so we can find $K$,
and  definable partial two-variable functions $g_{i}(x,y)$, $i=1,\ldots, K$, such
that for every $y\in M$, $\ell\in L^y\setminus S^y$ and $x\in J$,
$$\bigvee_{i=1}^{K}f(\ell,x)=g_{i}(y,x)=g_{i}(f(\ell,a_0),x).$$

If we let  $S=\bigcup_{y}S^y$ then, by the dimension formula (similar to
Lemma \ref{gp-cl2}) $\lgdim (S)<\lgdim(\bigcup_y L^y)=\lgdim L$ and therefore $L\setminus
S$ contains a $2$-long box $B$. Finally, for $i=1,\ldots, K$, let $X_i$ be all
$(\ell,x)\in B\times J$ such that $f(\ell,x)=g_i(f(\ell,a_0),x)$. Each $X_i$ is
$\emptyset$-definable, $B\times J=\bigcup_i X_i$, so at least one of these $X_i$'s
contains a point $\la \ell,a_0\ra$ with $\lgdim(\ell/a_0)=2$. It follows that this
$X_i$ contains a box of the form $B'\times J'$ with $\lgdim(B')=2$ and $J'\ni a_0$
an open interval. We now have, for every $\ell\in B'$ and $x\in J'$,
$$f(\ell,x)=g_i(x,f(\ell,a_0)).$$\qed

Until now we did not use at all the Trichotomy Theorem for o-minimal structures. The
next result requires it.

\begin{cor} \vlabel{main3} Assume that $I_1,I_2,I_3$ are gp-long intervals.
Let $f:I_1\times I_2\times I_3\to M$ be a definable  function. Then there are
gp-long intervals $I_1'\sub I_1, I_2'\sub I_2$ and $I_3'\sub I_3$ such that for all
$\la a_1,b_1\ra,\la a_2,b_2\ra \in I_1'\times I_2'$,
\begin{equation}\label{eqq1} \begin{array}{c} \exists x\in I_3'\,( f(a_1,b_1,x)=f(a_2,b_2,x) )\\  \Leftrightarrow \\ \forall
x\in I_3' \,(f(a_1,b_1,x)=f(a_2,b_2,x))\end{array}\end{equation}

Namely, the family of functions $f(a,b,-)|I_3'$, for $\la a,b\ra\in I_1'\times
I_2'$, is at most 1-dimensional.
\end{cor}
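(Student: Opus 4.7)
The plan is to apply Lemma~\ref{main-lemma2}, which delivers the desired one-dimensional reduction provided the inner interval is gp-short, and then to promote the resulting statement from a gp-short sub-interval of $I_3$ to a gp-long one. As the authors flag, this is the first use of the Trichotomy Theorem in the paper.

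First I would use the Trichotomy Theorem to extract a gp-short open $J \subset I_3$. If $I_3$ contains a non-trivial point $x_0$, Trichotomy produces a definable open group-interval $J \ni x_0$ inside $I_3$, which is gp-short by definition. The degenerate case in which every point of $I_3$ is trivial is treated separately: then no definable group-interval meets $I_3$, so every definable function $f(a,b,-)$ on $I_3$ is, piecewise, either constant in $x$ (in which case agreement at one $x$ forces agreement everywhere on the piece) or strictly monotone onto a gp-long target (from which one reads off the one-dimensionality directly), and the corollary follows.

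Now apply Lemma~\ref{main-lemma2} with $L := I_1 \times I_2$ (of $\lgdim = 2$) and $J$: this produces a $2$-long box $I_1' \times I_2' \subset I_1 \times I_2$, an open $J' \subset J$ containing a long-generic $x_0$, and a definable partial $g : M^2 \to M$ satisfying $f(a,b,x) = g(f(a,b,x_0), x)$ on $(I_1' \times I_2') \times J'$. Since $g(\cdot, x_0) = \mathrm{id}$, shrinking $J'$ around $x_0$ makes $y \mapsto g(y,x)$ injective for each $x \in J'$. Hence on $J'$ the functions $f(a_1,b_1,-)$ and $f(a_2,b_2,-)$ agree at some $x \in J'$ iff they agree on all of $J'$ iff $f(a_1,b_1,x_0) = f(a_2,b_2,x_0)$---this is the conclusion of the corollary, but with $J'$ (gp-short) replacing the required gp-long $I_3'$.

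The main obstacle, on which I would spend most of the proof, is enlarging $J'$ to a gp-long $I_3' \subset I_3$. My approach is to define
\[
\tilde I_3' := \{x \in I_3 : \forall (a,b),(a',b') \in I_1' \times I_2',\ f(a,b,x_0) = f(a',b',x_0) \Rightarrow f(a,b,x) = f(a',b',x)\},
\]
the set of $x$ at which the $x_0$-equivalence still detects equality of $f$-values; this set is definable, open, and contains $J'$. Let $I_3'$ be the connected component of $\tilde I_3'$ containing $J'$. The heart of the argument is to show $I_3'$ is gp-long: assuming for contradiction that $I_3'$ is gp-short, one iterates the Trichotomy Theorem along $I_3$ and reapplies Lemma~\ref{main-lemma2} at endpoints of $I_3'$ (combined with Lemma~\ref{loc-const} applied to the endpoint functions, and Theorem~\ref{gp-short} to patch overlapping gp-short extensions into a gp-short cover of $I_3$), contradicting $I_3$ being gp-long. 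The corollary then follows with these $I_1', I_2', I_3'$.
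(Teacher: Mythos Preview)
Your overall shape---Trichotomy to find a gp-short $J$, then Lemma~\ref{main-lemma2}, then enlarge---matches the paper's, but the enlargement step has a genuine gap, and the paper's mechanism for it is quite different from what you sketch.

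First, a smaller point: your set $\tilde I_3'$ only encodes one implication (equality at $x_0$ forces equality at $x$). For the corollary you need the reverse as well, so that agreement at an arbitrary $x\in I_3'$ can be pulled back to $x_0$ and then pushed everywhere. As written, the connected component $I_3'$ of $\tilde I_3'$ need not satisfy~\eqref{eqq1}. Similarly, the handling of the trivial case is too loose: ``strictly monotone onto a gp-long target'' does not by itself give one-dimensionality of the family. What triviality of $a_0$ actually yields is that near $a_0$ the function $f(a_1,a_2,-)$ equals a $\emptyset$-definable one-variable $h(-)$ (the same $h$ for all $(a_1,a_2)$) or is constant in $x$; that is what makes \eqref{eqq1} immediate.

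The main gap is the argument that $I_3'$ is gp-long. Your proposed contradiction---reapply Lemma~\ref{main-lemma2} at an endpoint of $I_3'$ and patch---does not work, because each application of Lemma~\ref{main-lemma2} produces its own $2$-long box $I_1''\times I_2''$, in general unrelated to your fixed $I_1'\times I_2'$. There is no reason the new $J''$ extends the set where the \emph{original} $x_0$-equivalence governs $f$, so nothing glues. Iterating and invoking Theorem~\ref{gp-short} cannot repair this: you would be covering $I_3$ by gp-short intervals on each of which a \emph{different} equivalence relation controls $f$, which says nothing about a single pair $I_1',I_2'$.

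The paper avoids this entirely by a genericity argument. After obtaining $I_1',I_2'$ from Lemma~\ref{main-lemma2}, it invokes Fact~\ref{shcl1}(3) to re-choose the endpoints of $I_1',I_2'$ so that $a_0$ remains $\shcl$-generic in $I_3$ over those endpoints. The property ``for all $\ell_1,\ell_2\in I_1'\times I_2'$, $f(\ell_1,-)$ and $f(\ell_2,-)$ agree near $x$ iff $f(\ell_1,x)=f(\ell_2,x)$'' is a first-order condition on $x$ over those parameters; since it holds at the $\shcl$-generic $a_0$, it holds on a gp-long subinterval $I_3'\subset I_3$. A final continuity-plus-connectedness step then upgrades ``agree near $x$'' to ``agree on all of $I_3'$''. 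The missing idea in your proposal is exactly this use of Fact~\ref{shcl1}(3) to keep $a_0$ long-generic over the parameters defining $I_1',I_2'$, which is what makes the enlargement to a gp-long $I_3'$ automatic.
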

\proof Without loss of generality, $f$ is continuous. We assume that all data are
definable over $\emptyset$. Fix $\la a_1,a_2,a_0\ra \in I_1\times I_2\times I_3$
with $\lgdim(a_1,a_2,a_0/\emptyset)=3$.

Assume first that there is no gp-short open interval containing $a_0$. In this case, by the Trichotomy Theorem, $a_0$ is a trivial point, so the function
$f(a_1,a_2,-)$ is either constant around $a_0$, namely equals  some $g(a_1,a_2)$,
or equals  some $\emptyset$-definable $1$-variable function $h(-)$. In either case there
are gp-long $I_j'\sub I_j$, $j=1,2,3$, for which we either have
$f(a_1',a_2',x)=g(a_1',a_2')$ or $f(a_1',a_2',x)=h(x)$, for all $(a_1',a_2',x)\in
I_1'\times I_2'\times I_3'$. In either case (\ref{eqq1}) holds.

We can therefore assume that there is some gp-short interval $J$ around $a_0$.  By
Lemma \ref{main-lemma2}, there are gp-long intervals $I_1'\sub I_1$ and $I_2'\sub
I_2$  such that
\\

\noindent ($\star$) {\em for every $\ell_1,\ell_2\in I_1'\times I_2'$,
the functions $f(\ell_1,-)=f(\ell_2,-)$ agree in some neighborhood of $a_0$ if and
only if $f(\ell_1,a_0)=f(\ell_2,a_0)$}.
\\

By Fact \ref{shcl1}(3),  we can choose the intervals $I_1'=(a',b')$ and
$I_2'=(a'',b'')$ so that $a_0\notin \shcl(a'b'a''b'')$.
 Because $a_0$ is $\shcl$-generic in $I_3$, and (*) is a first order formula about $a_0$ over $a'b'a''b''$, there is
 a gp-long interval $I_3'\sub I_3$ containing $a_0$ such that for all $x\in I_3'$ we have

 If $\ell_1,\ell_2\in I_1'\times I_2'$, then
 $f(\ell_1,-)$ and $f(\ell_2,-)$ agree on a
neighborhood of $x$ if and only if $f(\ell_1,x)=f(\ell_2,x)$.

But now, by continuity and definable connectedness of $I_3'$ if $f(\ell_1,-)$ and
$f(\ell_2,-)$ agree anywhere in $I_3'$ then they must agree everywhere on
$I_3'$.\qed

We now reach our main theorem of this section:
\begin{theorem}\vlabel{mainthm} Let $f:I\times J\to M$ be a definable  function which is
strictly monotone in each  variable separately. Then either $I$ or $J$ is gp-short.
\end{theorem}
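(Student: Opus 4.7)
The plan is to argue by contradiction: assume both $I$ and $J$ are gp-long. After shrinking $I,J$ to open gp-long sub-intervals (using Fact~\ref{shcl1}) and possibly replacing $f$ by $-f$ or swapping variables, we may take $f$ to be continuous and strictly increasing in each variable separately, with all data over a small parameter set $A$. Choose $(a_0,b_0)\in I\times J$ with $\lgdim(a_0,b_0/A)=2$, so $a_0$ is long-central in $I$ and $b_0$ is long-central in $J$.

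The key step is to manufacture an auxiliary 3-variable function from $f$ and apply Corollary~\ref{main3}. Define, on the open subset of $I\times I\times J$ on which it makes sense,
\[
F(a_1,a_2,b)\;=\;\text{the unique }c\in J\text{ with }f(a_1,c)=f(a_2,b),
\]
i.e.\ $F(a_1,a_2,b)=f(a_1,\cdot)^{-1}\bigl(f(a_2,b)\bigr)$. Note $F(a,a,b)=b$, so the diagonal slice lies in the domain. By the strict monotonicity and continuity of $f$ in each variable, $F$ is continuous and strictly monotone in each of $a_1$, $a_2$, $b$ separately (decreasing in $a_1$, increasing in $a_2$ and $b$). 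Using Fact~\ref{shcl1}(3) and the openness of $\mathrm{dom}(F)$ at $(a_0,a_0,b_0)$, we secure gp-long sub-intervals $I^{(1)},I^{(2)}\subseteq I$ (both containing $a_0$) and $J^{(3)}\subseteq J$ (containing $b_0$) such that $F$ is totally defined on $I^{(1)}\times I^{(2)}\times J^{(3)}$, while preserving long-generic status of $(a_0,a_0,b_0)$.

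Apply Corollary~\ref{main3} to $F$: we obtain gp-long $I_1'\subseteq I^{(1)}$, $I_2'\subseteq I^{(2)}$, and $I_3'\subseteq J^{(3)}$ so that the family $\{F(a_1,a_2,-)\rest I_3'\}_{(a_1,a_2)\in I_1'\times I_2'}$ is at most 1-dimensional. Since $I_1'\times I_2'$ has long-dimension $2$, this yields a definable equivalence relation $\sim$ on $I_1'\times I_2'$ whose generic classes have long-dimension $1$, with $(a_1,a_2)\sim (a_1',a_2')$ precisely when the partial self-maps $f(a_1,\cdot)^{-1}\co f(a_2,\cdot)$ and $f(a_1',\cdot)^{-1}\co f(a_2',\cdot)$ agree on $I_3'$. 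The diagonal $\{(a,a):a\in I_1'\cap I_2'\}$ is one such class (producing the identity). Using the strict monotonicity of $f$ in the first variable, each $\sim$-class is the graph of a strictly monotone bijection between gp-long sub-intervals of $I_1'$ and $I_2'$, so these fibers give a 1-parameter family of ``translations'' on a gp-long sub-interval of $I$.

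From this 1-parameter family one extracts a generalized group-interval structure on a gp-long sub-interval of $I$, as follows: fix $a_0\in I_1'\cap I_2'$, and for each $a\in I_1'$ let $\tau_a:I_2'\dashrightarrow I_2'$ be the translation sending $a_0$ to $a$ along the $\sim$-class through $(a,a_0)$. The composition $\tau_a\co\tau_{a'}$ is again a translation in this family (by functoriality of the construction, traceable through the definition of $F$), and defining $a*a'=\tau_a(a')$ produces a definable partial binary operation on a gp-long sub-interval $I^*\subseteq I$ that one verifies to be continuous, associative, commutative, and monotone, i.e.\ a generalized group-interval structure. This contradicts $I$ being gp-long, proving the theorem. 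The main obstacle is precisely this final step: verifying all the group-interval axioms for $*$ from the $\sim$-class structure, which requires carefully tracking how strict monotonicity of $f$ in each variable forces the algebraic identities on the translations, in the style of the classical ``group-chunk'' arguments used in the Trichotomy Theorem.
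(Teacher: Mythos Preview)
Your overall strategy matches the paper's: assume both $I,J$ gp-long, build a three-variable function from $f$, apply Corollary~\ref{main3} to collapse a $2$-parameter family of self-maps to a $1$-parameter one, then extract a group-interval on a gp-long piece of $I$ for a contradiction. Your $F(a_1,a_2,b)=f_{a_1}^{-1}f_{a_2}(b)$ is the natural ``dual'' of the paper's $F(x,y,t)=f_xf_y^{-1}(t)$; this is cosmetic. However, two steps you pass over are exactly where the work is.

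First, the domain issue. Openness of $\mathrm{dom}(F)$ at the diagonal point $(a_0,a_0,b_0)$ only yields a \emph{small} open box, not a product of three gp-long intervals, so Corollary~\ref{main3} is not yet applicable. The paper handles this separately (Claim~\ref{claim1}): using a long-generic point in the graph of the relation $y\in f_x(J)$, it finds a gp-long $K$ and a gp-long $I_1\subseteq I$ with $K\subseteq f_x(J)$ for all $x\in I_1$, so that $F$ is total on $I_1\times I_1\times K$. You need an analogous statement; ``openness plus Fact~\ref{shcl1}(3)'' does not produce it.

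Second, and more seriously, the closure-under-composition step cannot be dismissed as ``functoriality''. Knowing that $\{f_{a_1}^{-1}f_{a_2}\rest I_3'\}$ is $1$-dimensional does not by itself show that a composite $\tau_a\circ\tau_{a'}$ lies again in the family, nor that the required $x_3$ even exists. The paper's mechanism is concrete: it restricts to the $\bigvee$-definable convex set $\Sh(x_0)$ and uses Lemma~\ref{simple} to guarantee that each $f_x$, $f_y^{-1}$ restricts to a bijection between the relevant $\Sh$-classes, so existence and uniqueness are automatic there. The associativity/closure argument (Claim~\ref{group-claim}) then goes through an \emph{auxiliary} parameter $y_1\in\Sh(y_0)\subseteq I_2'$: given $x_1,x_2\in\Sh(x_0)$, one first solves $f_{x_1}f_{y_0}^{-1}(t_0)=f_{x_0}f_{y_1}^{-1}(t_0)$ for $y_1$, then $f_{x_3}f_{y_0}^{-1}(t_0)=f_{x_2}f_{y_1}^{-1}(t_0)$ for $x_3$, and invokes the $1$-dimensionality (equation~\eqref{eqq2}) to upgrade pointwise equalities to functional ones, from which $f_{x_2}^{-1}f_{x_3}=f_{x_0}^{-1}f_{x_1}$ follows by substitution. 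Your sketch, working only with the $\sim$-classes inside $I_1'\times I_2'$ and a single base point $a_0$, does not supply this intermediate step; without it, the operation $a*a'=\tau_a(a')$ need not be associative, nor land back in the domain. The ``group-chunk'' intuition is right, but the specific device here is the pair $(\Sh,\text{intermediate }y_1)$, not generic strict monotonicity.
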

\proof We start by assuming, for contradiction, that both $I$ and $J$ are gp-long.
Write $I=(a,b)$ and $J=(c,d)$. The general idea is  that outside of subsets of
$I\times J$ of long dimension smaller than $2$, we have a phenomenon similar to local
modularity (every definable family of curves is one-dimensional) and therefore we
can apply the standard machinery of local modularity to produce a definable group.

 For $x\in I$, we write $f_x(y):=f(x,y)$. By partitioning $I\times J$ into finitely many sets,
 and by applying Fact \ref{nbox}, we may assume that $f$ is continuous and for every $x\in I$,
 $f_x$ is strictly monotone, say increasing.

\begin{claim}\vlabel{claim1}
There exists a gp-long interval $K$ and a gp-long interval $I_1 \sub I$ such that
for all $x\in I_1$, we have $K\sub f_x(J)$.
\end{claim}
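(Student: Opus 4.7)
The plan is to locate a long-generic point in the ``graph-image'' set
\[
U = \{(x,z) \in I \times M : z \in f_x(J)\}
\]
and extract a $2$-long box inside $U$ by means of Fact \ref{nbox}. The sub-box's two sides will provide the desired $I_1$ and $K$.

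First, I would pick $(x_0,y_0) \in I \times J$ long-generic over the parameter set $A$ defining $f$, $I$, $J$, so that $\lgdim(x_0,y_0/A)=2$. Such a pair exists because both $I$ and $J$ are gp-long, hence each has long dimension $1$, and the dimension formula yields $\lgdim(I\times J)=2$. Set $z_0 := f(x_0,y_0)$. Since $f_{x_0}$ is strictly monotone (so injective) on $J$, we have $y_0 = f_{x_0}^{-1}(z_0) \in \dcl(x_0,z_0) \subseteq \shcl(x_0,z_0)$, whence by the dimension formula
\[
\lgdim(x_0,z_0/A) = \lgdim(x_0,y_0,z_0/A) = \lgdim(x_0,y_0/A) = 2.
\]

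Next, I observe that $U$ is $A$-definable and that $(x_0,z_0)\in U$, so by the previous step, $\lgdim(U)=2$. Applying Fact \ref{nbox} to $U$ at $(x_0,z_0)$, I obtain a $2$-long box $B = I_1 \times K$ with $(x_0,z_0)\in B$ and $\Cl(B)\subseteq U$. Unwinding the definition of $U$, this says that for every $x\in I_1$ and every $z\in K$, $z\in f_x(J)$; equivalently, $K\subseteq f_x(J)$ for all $x\in I_1$. Both $I_1$ and $K$ are gp-long by construction, and $I_1\subseteq I$ since $U\subseteq I\times M$.

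The only real point to verify is the equality $\lgdim(x_0,z_0/A)=2$, which rests entirely on strict monotonicity in the second variable (so as to recover $y_0$ from $(x_0,z_0)$). I don't anticipate any significant obstacle beyond this.
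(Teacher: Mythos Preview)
Your argument is correct and follows essentially the same route as the paper: locate a point of long dimension $2$ in the set $U=\{(x,z):z\in f_x(J)\}$ and invoke Fact~\ref{nbox} to extract a $2$-long box $I_1\times K\subseteq U$. The only cosmetic difference is that the paper picks the image point directly---choosing $x_0$ $\shcl$-generic in $I$ and then $z_0$ $\shcl$-generic in the gp-long interval $f_{x_0}(J)$---whereas you start from a long-generic pair $(x_0,y_0)\in I\times J$ and transfer the long dimension to $(x_0,z_0)$ via interdefinability; both reach the same point and the same application of Fact~\ref{nbox}.
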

\proof  Take $x_0\in I$ to be $\shcl$-generic. The interval $f(x_0,J)$ is gp-long so
we can find $y_0$ in it which is $\shcl$-generic over $x_0$, and so
$\lgdim(x_0,y_0/\emptyset)=2$. The set $\{(x,y)\in I\times M:y\in f(x,J)\}$ is
$\emptyset$-definable and contains $\la x_0,y_0\ra$, hence there is a cartesian
product $I_1\times K$ of two gp-long intervals which is contained in it.\qed


To simplify notation, we assume that for all $x\in I$, we have $K\sub f_x(J)$.  We
can now consider the family of functions $\{f_xf_y^{-1}|K:x,y\in I\}$ as a
collection of continuous functions from $K$ into $M$. Let
$$F(x,y,t)=f_xf_y^{-1}(t).$$ The function $F$ is a map from $I\times I\times K$. We apply Corollary
\ref{main3}, and find $I_1\sub I$, $I_2\sub I$, and $I_3\sub K$ all gp-long such
that for every $x,x'\in I_1$, $y,y'\in I_2$ and $t\in I_3$, if $F(x,y,t)=F(x',y',t)$
then for all $t'\in K$ we have $F(x,y,t')=F(x',y',t')$. Namely, for all $x_1,x_1'\in
I_1$ and $x_2,x_2'\in I_2$,
\begin{equation}\label{eqq2}
\exists t\in I_3\,\, f_{x_1}f_{x_2}^{-1}(t)=f_{x_1'}f_{x_2'}^{-1}(t) \Leftrightarrow
\forall t\in I_3 \,\,f_{x_1}f_{x_2}^{-1}(t)=f_{x_1'}f_{x_2'}^{-1}(t)\end{equation}

We  now fix $\la x_0,y_0,t_0\ra$ long-generic in $I_1\times I_2\times I_3$ and let
$w_0=f_{x_0}f_{y_0}^{-1}(t_0)$. We also let $a_0=f_{y_0}^{-1}(t_0)$. By Lemma
\ref{simple}, if $t\in \Sh(t_0)$ then we have $f^{-1}_{y_0}(t)\in \Sh(a_0)$, hence by the same lemma,  the map $y\mapsto f_{y}^{-1}(t)$ sends $\Sh(y_0)$ bijectively onto
$\Sh(a_0)$. Similarly, for every $y\in \Sh(y_0)$, the map $t\mapsto f_{y}^{-1}(t)$
sends $\Sh(t_0)$ bijectively onto $\Sh(a_0)$ and  for every $x\in \Sh(x_0)$, the map
$a\mapsto f_x(a)$ sends $\Sh(a_0)$ bijectively onto $\Sh(w_0)$. Thus, if $x_1,x_2\in
\Sh(x_0)$ then the function $f_{x_1}^{-1}f_{x_2}$ is a permutation of $\Sh(a_0)$.

\begin{claim}\vlabel{group-claim}
(1) For every $x_1,x_2\in \Sh(x_0)$ there is a unique $x_3\in \Sh(x_0)$
such that $f_{x_2}^{-1}f_{x_3}=f_{x_0}^{-1}f_{x_1}$, as functions from $\Sh(a_0)$ to
$\Sh(a_0)$.

(2) For every $x_1,x_3 \in \Sh(x_0)$ there exists a unique $x_2 \in \Sh(x_0)$ such
that $f_{x_2}^{-1}f_{x_3}=f_{x_0}^{-1}f_{x_1}$.
\end{claim}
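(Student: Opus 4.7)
The plan is to construct $x_3$ (for part (1)) by first prescribing the single value $f_{x_3}(a_0)$ so that the desired identity holds at the point $a=a_0$, and then to upgrade this pointwise agreement to full agreement on $\Sh(a_0)$ via the rigidity principle (\ref{eqq2}). The prescription is possible because, by strict monotonicity of $f$ in its first variable and Lemma \ref{simple}, the $\emptyset$-definable map $x\mapsto f_x(a_0)$ restricts to a bijection $\Sh(x_0)\to \Sh(w_0)$.

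Concretely for (1): given $x_1,x_2\in\Sh(x_0)$, set $a'=f_{x_0}^{-1}f_{x_1}(a_0)\in\Sh(a_0)$ and $b=f_{x_2}(a')\in\Sh(w_0)$. The above bijection supplies a unique $x_3\in\Sh(x_0)$ with $f_{x_3}(a_0)=b$, which is equivalent to $f_{x_2}^{-1}f_{x_3}(a_0)=f_{x_0}^{-1}f_{x_1}(a_0)$. Substituting $t=f_{x_3}(a)$ (so that $a$ ranges over $\Sh(a_0)$ iff $t$ ranges over $\Sh(w_0)$), the target identity $f_{x_2}^{-1}f_{x_3}(a)=f_{x_0}^{-1}f_{x_1}(a)$ rewrites as $f_{x_0}f_{x_2}^{-1}(t)=f_{x_1}f_{x_3}^{-1}(t)$. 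This last equation holds at the single anchor $t=b\in\Sh(w_0)$, and (\ref{eqq2}) — provided that anchor sits in $I_3$ — then forces the equation on all of $K$, and hence on $\Sh(w_0)$. Uniqueness of $x_3$ is immediate: any other witness $x_3'$ must satisfy $f_{x_3'}(a_0)=b$, and the bijection $x\mapsto f_x(a_0)$ forces $x_3=x_3'$.

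Part (2) is dual. Given $x_1,x_3\in\Sh(x_0)$, put again $a'=f_{x_0}^{-1}f_{x_1}(a_0)\in\Sh(a_0)$; we seek $x_2\in\Sh(x_0)$ with $f_{x_2}(a')=f_{x_3}(a_0)$. The map $x\mapsto f_x(a')$ is $\emptyset$-definable and strictly monotone, so by Lemma \ref{simple} it is a bijection of $\Sh(x_0)$ onto $\Sh(f_{x_0}(a'))=\Sh(f_{x_1}(a_0))=\Sh(w_0)$. Since $f_{x_3}(a_0)\in \Sh(w_0)$, the required $x_2$ exists and is unique. The pointwise identity at $a_0$ is then promoted to all of $\Sh(a_0)$ by exactly the same invocation of (\ref{eqq2}) as in (1).

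The main obstacle is bookkeeping: arranging that the anchor value at which pointwise agreement is verified actually lies in $I_3$, so that (\ref{eqq2}) is applicable. This is handled by freely shrinking $I_1,I_2,I_3$ (preserving gp-longness), replacing $I_1,I_2$ by their intersection so that $\Sh(x_0)\subset I_1\cap I_2$, and exploiting the genericity of $x_0,y_0,t_0$ to ensure that $w_0$ is long-central in (a gp-long subinterval of) $I_3$, whence $\Sh(w_0)\subset I_3$. With this arrangement in place, the anchor $b\in\Sh(w_0)$ lies in $I_3$ and the argument closes.
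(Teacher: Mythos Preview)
Your approach has a genuine gap in invoking (\ref{eqq2}). That rigidity statement is established only for pairs with first subscript in $I_1$, second subscript in $I_2$ (the two gp-long subintervals of $I$ produced by Corollary~\ref{main3}), and for anchor values $t\in I_3\subset K$. Your rewritten equation $f_{x_0}f_{x_2}^{-1}(t)=f_{x_1}f_{x_3}^{-1}(t)$ has all four subscripts in $\Sh(x_0)\subset I_1$, whereas (\ref{eqq2}) needs the second subscripts $x_2,x_3$ to lie in $I_2$; and your anchor $b\in\Sh(w_0)$ lies in the \emph{image} of $t_0$ under $f_{x_0}f_{y_0}^{-1}$, not in $I_3$ itself. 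Neither defect is repaired by your bookkeeping paragraph: $I_1$ and $I_2$ arise as gp-long neighborhoods of the $\shcl$-independent generics $x_0$ and $y_0$ respectively, so there is no reason for $\Sh(x_0)$ to be contained in $I_2$ (the intersection $I_1\cap I_2$ may well be empty or gp-short); and no genericity argument places $w_0$ inside $I_3\subset K$, since $w_0$ is a value of $F$, not an argument.

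The paper's proof sidesteps both obstacles by introducing an auxiliary $y_1\in\Sh(y_0)\subset I_2$. One first solves $f_{x_1}f_{y_0}^{-1}(t_0)=f_{x_0}f_{y_1}^{-1}(t_0)$ for $y_1$ (using the bijection $y\mapsto f_y^{-1}(t_0)$ from $\Sh(y_0)$ onto $\Sh(a_0)$), and then $f_{x_3}f_{y_0}^{-1}(t_0)=f_{x_2}f_{y_1}^{-1}(t_0)$ for $x_3$ (using $x\mapsto f_x(a_0)$ from $\Sh(x_0)$ onto $\Sh(w_0)$). Each equality now has first subscript in $\Sh(x_0)\subset I_1$, second subscript in $\Sh(y_0)\subset I_2$, and is evaluated at $t_0\in I_3$, so (\ref{eqq2}) applies directly and extends each to all of $\Sh(t_0)$. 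Composing the two resulting function identities and cancelling $f_{y_0}^{-1}$ and $f_{y_1}^{-1}$ yields $f_{x_2}^{-1}f_{x_3}=f_{x_0}^{-1}f_{x_1}$ on $\Sh(a_0)$. The auxiliary $y_1$ is precisely the device that keeps the argument inside the hypotheses of (\ref{eqq2}); your direct route does not.
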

\proof We prove (1) -- the proof of (2) is similar. Consider first $f_{x_1}f_{y_0}^{-1}(t_0)\in \Sh(w_0)$. By the above
observations, there exists a unique $y_1\in \Sh(y_0)$ such that
$$f_{x_1}f_{y_0}^{-1}(t_0)=f_{x_0}f_{y_1}^{-1}(t_0).$$
By the same reasoning, there exists a unique $x_3\in \Sh(x_0)$, such that
$$f_{x_3}f^{-1}_{y_0}(t_0)=f_{x_2}f_{y_1}^{-1}(t_0).$$
By (\ref{eqq2}), the above two equalities at the point $t_0$ translate to equality
of functions on $\Sh(t_0)$. Using composition and substitution we obtain
$$f_{x_2}^{-1}f_{x_3}=f_{x_0}^{-1}f_{x_1}.$$\qed

We are now ready to define a group-operation on $\Sh(x_0)$ with identity $x_0$: For $x_1,x_2, x_3\in
\Sh(x_0)$, $$ x_1+x_2=x_3 \Leftrightarrow f_{x_2}^{-1}f_{x_3}=f_{x_0}^{-1}f_{x_1},
\mbox{ as functions on $\Sh(a_0)$}.$$
Claim \ref{group-claim}(1) implies that $+$ is associative, Claim
\ref{group-claim}(2) guarantees an inverse (with $x_3=x_0)$, and commutativity
follows from one-dimensionality of $\Sh(x_0)$ and o-minimality.

Although $\Sh(x_0)$ is not a definable set (it is $\bigvee$-definable)  the same
operation can be defined on $I_1'\supseteq \Sh(x_0)$ (but might only be partial
there) by: $\la x_1,x_2,x_3\ra\in R$ if and only if
$f_{x_2}^{-1}f_{x_3}=f_{x_0}^{-1}f_{x_1}$ agree on some neighborhood of $a_0$. This
is a definable relation which when restricted to $\Sh(x_0)^3$ gives a group
operation. Using compactness, one can show that that the restriction of the
operation to some gp-long interval $I_1''$ containing $\Sh(x_0)$ yields a
generalized group-interval. This contradicts the definition of a gp-long interval,
so returning to our original assumptions, either $I$ or $J$ must be gp-short.\qed

The following is a generalization we will require later:
\begin{cor} \vlabel{injective} Let $f:I_1\times \dots\times I_{n+1}\to M^n$ be a
definable function which is injective in each variable separately
(namely for every $a_1,\ldots, a_{i-1},a_{i+1},\ldots,a_n$ in
$I_1,\ldots, I_{i-1},I_{i+1}, \ldots, I_n$, respectively, the
map $f(a_1,\ldots, a_{i-1},x,a_{i+1},\ldots, a_n):I_i\to M^n$ is
injective).

Then at least one of the intervals $I_j$ is gp-short.
\end{cor}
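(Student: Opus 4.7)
The plan is to argue by contradiction: assume every $I_j$ is gp-long and reduce to Theorem~\ref{mainthm} by pigeonholing among the $n$ target coordinates of $f = (f_1,\ldots,f_n)$. Using Fact~\ref{nbox}, pick a long-generic tuple $\bar a = (a_1,\ldots,a_{n+1}) \in I_1\times\cdots\times I_{n+1}$ over the parameters defining $f$. For each $j\in\{1,\ldots,n+1\}$, the map $x_j \mapsto f(a_1,\ldots,x_j,\ldots,a_{n+1}): I_j \to M^n$ is a definable injection; applying the Monotonicity Theorem to each of its $n$ coordinates produces at least one index $i_j \in \{1,\ldots,n\}$ for which $f_{i_j}(a_1,\ldots,x_j,\ldots,a_{n+1})$ is strictly monotone in $x_j$ on a neighborhood of $a_j$ (at least one coordinate must fail to be constant near $a_j$, else the injection would collapse).

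Since $n+1 > n$, pigeonhole yields distinct $j_0, j_1$ with $i_{j_0} = i_{j_1} =: i^\ast$. Consider the definable set
$$V = \{\bar x \in I_1\times\cdots\times I_{n+1} : f_{i^\ast} \text{ is strictly monotone in each of } x_{j_0}, x_{j_1} \text{ in a neighborhood of } \bar x\}.$$
Then $\bar a \in V$, so $\lgdim(V) = n+1$, and Fact~\ref{nbox} supplies an $(n+1)$-long box $\prod_k J_k \subseteq V$. Fixing the coordinates $x_k$ for $k \ne j_0, j_1$ at any values in the corresponding $J_k$ produces a definable $h: J_{j_0}\times J_{j_1} \to M$ which is strictly monotone in each variable, with both $J_{j_0}$ and $J_{j_1}$ gp-long. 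Theorem~\ref{mainthm} then forces one of them to be gp-short, the desired contradiction.

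The only delicate point is passing from the pointwise choice of $i_j$ at the generic $\bar a$ to a single gp-long box on which $f_{i^\ast}$ is jointly strictly monotone in both $x_{j_0}$ and $x_{j_1}$; this is exactly what long-genericity combined with Fact~\ref{nbox} delivers, since the witnessing set $V$ is defined over the same parameters as $f$. After that, the argument is a routine pigeonhole reduction to the two-variable Theorem~\ref{mainthm}, and the same scheme covers the base case $n=1$ (where the pigeonhole is trivial and Theorem~\ref{mainthm} applies directly to $f$).
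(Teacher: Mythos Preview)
Your argument is correct and takes a genuinely different route from the paper's proof. The paper proceeds by first establishing an auxiliary lemma: any definable $M$-valued function $h:I_1\times\cdots\times I_m\to M$ on a product of gp-long intervals must, on suitable gp-long sub-intervals $J_i\subseteq I_i$, factor through a single coordinate, i.e.\ $h(x_1,\ldots,x_m)=g(x_j)$ for some $j$. (The lemma is proved by picking a long-generic point and repeatedly invoking Theorem~\ref{mainthm} to rule out dependence on more than one variable.) The paper then applies this lemma to the first coordinate $f_1$ of $f$, so that on a long sub-box $f_1$ depends only on (say) $x_1$; fixing $x_1=a_1$ kills $f_1$ and leaves $(f_2,\ldots,f_n):J_2\times\cdots\times J_{n+1}\to M^{n-1}$ still injective in each variable, and the result follows by induction on $n$.

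Your approach bypasses both the auxiliary lemma and the induction: you go straight to a long-generic $\bar a$, use coordinatewise monotonicity plus injectivity to select for each $j$ a target index $i_j$ in which $f_{i_j}$ is strictly monotone in $x_j$ near $a_j$, pigeonhole two source variables onto a common target coordinate, and then a single application of Fact~\ref{nbox} and Theorem~\ref{mainthm} finishes. This is more economical. Two small points worth making explicit in a write-up: first, the indices $i^\ast,j_0,j_1$ depend on $\bar a$, but since there are only finitely many possible triples the corresponding set $V$ is still definable over the parameters of $f$, so Fact~\ref{nbox} applies; second, membership in $V$ only gives \emph{local} strict monotonicity of $h(\cdot,x_{j_1})$ at each point of $J_{j_0}$, and one should observe (via the Monotonicity Theorem) that local strict monotonicity everywhere on an interval forces global strict monotonicity. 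The paper's lemma, on the other hand, is a reusable structural statement about $M$-valued functions on long boxes, which may be of independent interest even if it is not strictly needed here.
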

\proof Assume towards contradiction that all intervals are
gp-long.

\begin{lemma} If $h:I_1\times \cdots \times I_{m}\to M$ is a definable
function on a product of gp-long intervals, then there exist
gp-long subintervals $J_i\sub I_i$, $i=1,\ldots, m$, there exists
$j\in \{1,\ldots, m\}$, and a definable $g:J_j\to M$, such that
for every $a=(a_1,\ldots, a_{m})\in J_1\times \cdots \times
J_{m}$, we have $h(a)=g(a_j)$.
\end{lemma}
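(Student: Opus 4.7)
The plan is to reduce to Theorem \ref{mainthm} (the $m=2$ version) by extracting a nice $m$-long box on which $h$ is continuous and has uniform monotonicity behavior in each coordinate, then applying the main theorem to every pair of coordinates in that box.

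First, by o-minimal cell decomposition combined with the monotonicity theorem applied iteratively, partition $I_1\times\cdots\times I_m$ into finitely many cells on each of which $h$ is continuous and, for every index $i$, the one-variable restriction $t\mapsto h(b_1,\ldots,b_{i-1},t,b_{i+1},\ldots,b_m)$ (with the remaining coordinates held fixed at any point of the cell) is either constant or strictly monotone, with a type that does not depend on the fixed values $b_k$. Pick a long-generic point $\bar a=(a_1,\ldots,a_m)\in I_1\times\cdots\times I_m$ with $\lgdim(\bar a/\emptyset)=m$. By Fact \ref{nbox}, there is an $m$-long box $B=J_1\times\cdots\times J_m$ containing $\bar a$ and contained in a single cell of our decomposition. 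On $B$, $h$ is continuous and its monotonicity type in each coordinate is uniform.

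Now let $S=\{\,i:h\text{ is strictly monotone in }x_i\text{ on }B\,\}$. The key step is to show $|S|\le 1$. Indeed, suppose $i\ne j$ both lie in $S$. Fix arbitrary values $b_k\in J_k$ for $k\notin\{i,j\}$ and define $\tilde h:J_i\times J_j\to M$ by freezing these coordinates. Then $\tilde h$ is strictly monotone in each of its two variables separately, and both $J_i,J_j$ are gp-long; this contradicts Theorem \ref{mainthm}. So indeed at most one coordinate direction is non-trivial.

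If $S=\emptyset$, then $h$ is constant in each coordinate on $B$; since $B$ is a box, $h$ is constant on $B$, and we may take $j=1$ with $g$ the constant function. If $S=\{j\}$, set $g(t):=h(a_1,\ldots,a_{j-1},t,a_{j+1},\ldots,a_m)$. For any $(b_1,\ldots,b_m)\in B$, walk from $(b_1,\ldots,b_m)$ to $(a_1,\ldots,a_{j-1},b_j,a_{j+1},\ldots,a_m)$ by changing one coordinate $x_k$ (with $k\ne j$) at a time; each such change leaves $h$ unchanged because $h$ is constant in $x_k$ on the relevant line inside $B$. Thus $h(b_1,\ldots,b_m)=g(b_j)$ on $B$, as required. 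The only real obstacle is organizing the cell decomposition so that monotonicity type in each coordinate is uniform on the box; once that is arranged, Theorem \ref{mainthm} does the work pair-by-pair.
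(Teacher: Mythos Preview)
Your proof is correct and follows essentially the same approach as the paper: both extract an $m$-long box around a long-generic point on which $h$ has uniform monotonicity type in each coordinate, and then use Theorem~\ref{mainthm} on pairs of coordinates to rule out more than one strictly monotone direction. The paper organizes this as an induction on $m$, peeling off one coordinate at a time, whereas you obtain the box in one shot via a uniform-monotonicity cell decomposition and Fact~\ref{nbox}; the substance is the same.
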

\proof We take $a=(a_1,\ldots, a_{m})\in I_1\times\cdots\times I_{m}$ which has long
dimension $m$ (over the parameters defining everything). Then there is a long
interval $J_{m}\sub I_{m}$ containing $a_{m}$ and defined over parameters $A$ so
that the long dimension of $a$ over $A$ is still $m$, and such that $h(a_1,\ldots,
a_{m-1},x)$ is either constant (namely of the form $g(a_1,\ldots, a_{m-1})$) or
strictly monotone. In the first case we can use induction on $m$ to finish the
argument. In the second case we prove that $h(a_1,\ldots, a_{m-1})$ is a function of
the last coordinate only: We consider the $m-1$-st coordinate and, using the same
reasoning as before, we find a gp-long interval $J_{m-1}\sub I_{m-1}$ on which the
map $f(a_1,a_2,\ldots, x,a_m)$ is either constant or strictly monotone. But now,
this second possibility is impossible, or else the map $f(a_1,\ldots, a_{m-2},x,y)$
is strictly monotone in both variables on $J_{m-1}\times J_m$, implying, using
Theorem \ref{mainthm}, that one of these intervals is gp-short, contradiction. We
are then left with the case where $f(a_1,\ldots, ,x,a_m)=g(a_1,a_2,\ldots, a_{m-2},
a_m)$ for $x\in J_{m-1}$ and proceed in the same manner.\qed

We now return to our proof of the corollary and to the assumption that all intervals
are gp-long. We write $f(a)=(f_1(a),\ldots, f_n(a)).$ If we apply the last lemma to
$f_1$ then we can find gp-long intervals $J_i\sub I_i$, $i=1,\ldots, n+1$, and a
definable one-variable function $h(x)$ such that
 for every $(x_1\ldots, x_{n+1})\in J_1\times \cdots \times J_{n+1}$, we have, without loss of
generality,
 $f_1(x_1,\ldots, x_{n+1})=h(x_1)$. But now, fix any $a_1\in
J_1$ and consider the function $$F(x_2,\ldots,
x_{n+1})=(f_2(a_1,x_2\ldots,x_{n+1}),\ldots, f_n(a_1,x_2,\ldots,x_{n+1}))$$ from
$J_2\times \cdots \times J_{n+1}$ into $M^{n-1}$. It is easy to see that the
function $F$ is still injective in each coordinate so by induction, one of the
$J_i$, $i=2,\ldots, n+1$, is gp-short, contradiction. We then conclude that one of
the $I_j$'s must be gp-short.\qed

\qed

Here is a first observation about definable groups.

\begin{cor}\vlabel{group-1} Let $G\sub M^n$ be a definable group in
an o-minimal structure. Then every definable $1$-dimensional subset of $G$ is
group-short.
\end{cor}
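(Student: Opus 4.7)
The plan is to assume that $X$ is not gp-short, reduce to the case of a single $1$-cell, and derive a contradiction by applying Corollary \ref{injective} to an iterated product built from the group operation of $G$.

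First I would reduce to the case of a single arc. By cell decomposition, a $1$-dimensional definable $X\subseteq G$ is a finite disjoint union of points and open $1$-cells. Points are gp-short trivially, and, as recorded in the remarks following Definition \ref{def-gp-short}, a finite union of gp-short sets is again gp-short, so it suffices to treat a single open $1$-cell $X_0\subseteq X$. Any such cell is in definable bijection with an open interval $I\subseteq M$ via an appropriate coordinate projection; write $\phi\colon I\to X_0$ for the inverse of this projection. By Fact \ref{gp-short2}, $X_0$ is gp-short iff $I$ is gp-short, so I would suppose for contradiction that $I$ is gp-long.

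The key step is to use the group operation to build a map that satisfies the hypotheses of Corollary \ref{injective}. Since $G\subseteq M^n$, I would define
\[
f\colon I^{n+1}\to M^n,\qquad f(a_1,\dots,a_{n+1}):=\phi(a_1)\phi(a_2)\cdots\phi(a_{n+1}),
\]
where the product is taken in $G$. For each index $i$, once the other coordinates are fixed, the one-variable map $a_i\mapsto f(a_1,\dots,a_{n+1})$ is the composition of $\phi\colon I\to X_0\subseteq G$ with the self-bijection $L_{c_1}\circ R_{c_2}$ of $G$, where $c_1=\phi(a_1)\cdots\phi(a_{i-1})$ and $c_2=\phi(a_{i+1})\cdots\phi(a_{n+1})$. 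Both factors are injections, so $f$ is injective in each variable separately. Corollary \ref{injective} then forces at least one of the $n+1$ copies of $I$ to be gp-short, contradicting gp-longness of $I$, and thereby completing the proof.

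The substantive content has already been absorbed into Theorem \ref{mainthm} and its consequence Corollary \ref{injective}, so there is no real obstacle left. The only piece of insight is to recognise that by iterating the group multiplication exactly $n+1$ times one matches the numerology of Corollary \ref{injective}, after which group cancellation makes injectivity in each coordinate automatic; the remaining chores are the routine reduction to a single arc and the verification that $\phi$ can be taken as a coordinate projection.
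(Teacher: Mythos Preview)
Your proof is correct and follows the same idea as the paper's: apply Corollary \ref{injective} to the $(n{+}1)$-fold group product, using that left and right translations in $G$ are injections to get injectivity in each variable. The paper's own argument is terser---it simply identifies the $1$-dimensional set with a finite union of intervals and invokes Corollary \ref{injective} directly---while you spell out the reduction to a single $1$-cell and the identification with an interval via coordinate projection, but the content is identical.
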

\proof Let $J\sub G$ be a definable $1$-dimensional set, identified with a finite
union of intervals in $M$, and apply Corollary \ref{injective} to the map
$f:J^{n+1}\to G$ given by the group product
$$f(g_1,\ldots, g_{n+1})=g_1g_2\ldots g_{n+1}.$$ It follows that $J$ must be gp-short.\qed

\section{$\CM$-quotients}

\subsection{Dimension of elements in $\CM^{eq}$}

\begin{defn} Let $X_1,\ldots, X_n$ be pairwise disjoint definable subsets of
$M^{k_1}, \ldots, M^{k_n}$, respectively, and let $X=X_1\sqcup\cdots \sqcup X_n$. A
subset $W\sub X$ is called {\em definable} if $W\cap X_i$ is definable for every
$i=1,\ldots, n$. For $\dim W$ we take the maximum of all $\dim W\cap X_i$.

 Note that $X^k$ can be similarly written as a finite pairwise
disjoint union of cartesian products of the $X_i$'s, and a subset of $X^k$ is called
{\em definable} accordingly. If $E$ is a definable equivalence relation then we say
that $X/E$ is an {\em $\CM$-quotient}.

 A subset of $X/E$ is called {\em definable} (we should
say ``interpretable'' but it sounds awkward) if it is the image of a definable
subset of $X$ under the quotient  map.
\end{defn}

Note that $X$ above is in definable bijection with an actual definable subset of
some $M^k$, after naming parameters, but it is often more natural to consider it as
as the union of definable sets in various $M^k$'s.

For $A\sub \CM^{eq}$ a small set of parameters and $a\sub M$, the closure operation
$\dcl(aA)$ still defines a pre-geometry on $M$ so $\dim(a/A)$ makes sense.
\begin{defn} Let $A\sub \CM^{eq}$ be a small set, $X\sub M^k$ an $A$-definable set and $E$ an
$A$-definable equivalence relation on
$X$.

For $g\in X/E$, we define $\dim(g/A)$ to be the maximum among $\dim(x/A)-\dim[x]$,
as $x$ varies in the class $g$. For $Y\sub X/E$ definable over $A$, we let
$$\dim Y=\max\{\dim(g/A):g\in Y\}.$$

If for $g\in Y$ we have $\dim (Y)=\dim(g/A)$ then $g$ is called a {\em generic
element of $Y$ over $A$}.
\end{defn}

One can show that the above definition does not depend on $A$, namely if we
calculate the dimension of $Y$ with respect to a larger set of parameters
$B\supseteq A$ then we obtain the same result. Here are some more basic properties.

\begin{fact}\label{int-dim-props}\begin{enumerate}

 \item For $g,h\in X/E$ and $A\sub \CM^{eq}$, we  have
$$\dim(g,h/A)=\dim(g/hA)+\dim(h/A).$$

\item Assume that $g=[a]$ for $a\in X$, and $\dim(a/A)=k$. Then $\dim(g/A)\leq k$.
\end{enumerate}
\end{fact}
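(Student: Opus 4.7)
The plan is to prove (2) first and then use it, together with additivity of the usual $\dim$ on $\CM$, to deduce (1).

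For (2), I would show the uniform bound $\dim(x/A)\leq \dim g+k$ for every $x\in g$. Since $g=[a]$, the class is an $aA$-definable subset of $M^k$ of dimension $\dim g$, so $\dim(x/aA)\leq \dim g$. Additivity of $\dim$ on $\CM$ then gives
$$\dim(x/A)\leq \dim(xa/A)=\dim(x/aA)+\dim(a/A)\leq \dim g+k,$$
and taking the max over $x\in g$ and subtracting $\dim g$ yields $\dim(g/A)\leq k$.

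For (1) I would prove the two inequalities separately. The $(\leq)$ direction is straightforward: for any $(x,y)\in g\times h$, since $h\in\dcl^{eq}(y)$ we have $\dim(x/yA)\leq \dim(x/hA)$, and combining real additivity with two applications of (2) (once over $hA$, once over $A$) gives
$$\dim(x,y/A)=\dim(x/yA)+\dim(y/A)\leq (\dim(g/hA)+\dim g)+(\dim(h/A)+\dim h);$$
taking the max over $(x,y)\in g\times h$ and subtracting $\dim g+\dim h=\dim(g\times h)$ concludes.

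For $(\geq)$ I would exhibit a pair realizing the bound. Working in a sufficiently saturated $\CM$, first pick $x^*\in g$ realizing $\dim(x^*/hA)=\dim(g/hA)+\dim g$, and then pick $y^*\in h$ simultaneously satisfying $\dim(y^*/A)=\dim h+\dim(h/A)$ and $\dim(y^*/x^*hA)=\dim h$. Each condition excludes a strictly smaller-dimensional $A$- or $x^*hA$-definable subset of $h$, so by saturation their intersection inside $h$ is still of dimension $\dim h$ and hence nonempty. The second condition, combined with real additivity applied to $x^*$ and $y^*$ over $hA$, forces $\dim(x^*/y^*A)=\dim(x^*/hA)$. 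Hence
$$\dim(x^*,y^*/A)=\dim(x^*/y^*A)+\dim(y^*/A)=\dim(g/hA)+\dim g+\dim(h/A)+\dim h,$$
giving $\dim(g,h/A)\geq \dim(x^*,y^*/A)-\dim g-\dim h=\dim(g/hA)+\dim(h/A)$. The main delicate point is the simultaneous generic choice of $y^*$ in the $(\geq)$ argument (the ``independence'' of $y^*$ from $x^*$ over $hA$); everything else is formal manipulation with additivity of $\dim$ on $\CM$.
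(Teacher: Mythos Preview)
The paper takes the opposite route from yours: it cites Gagelman \cite{ga} for (1) and then derives (2) from (1) in one line (since $g\in\dcl^{eq}(aA)$ gives $\dim(g/aA)=0$, the formula yields $\dim(a/gA)+\dim(g/A)=\dim(a/A)=k$, hence $\dim(g/A)\leq k$). Your direct argument for (2) is correct and more elementary than this.

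Your proof of (1) is on the right track but the step you yourself flag is genuinely incomplete. In the $(\geq)$ direction you need $y^*\in h$ with both $\dim(y^*/A)=\dim h+\dim(h/A)$ and $\dim(y^*/x^*hA)=\dim h$, and you assert that each condition excludes only a subset of $h$ of dimension $<\dim h$. This is clear for condition (b), but for condition (a) it is not: failing (a) means $y^*$ lies in some $A$-definable $Z$ with $\dim Z<\dim h+\dim(h/A)$, and $Z\cap h$ is only $hA$-definable; you give no reason why $\dim(Z\cap h)<\dim h$. What is actually needed is the lemma that for $y\in h$ one has $\dim(y/A)=\dim h+\dim(h/A)$ if and only if $\dim(y/hA)=\dim h$. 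This is provable by a short exchange argument (compute $\dim(yy_0/A)$ two ways for a suitable $y_0\in h$, using symmetry of independence in the $\dcl$-pregeometry), but it is not automatic and you do not supply it. Once you have it, your saturation argument works; without it the argument is circular, since the missing lemma is essentially the special case of (1) with one of the two elements real.

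A minor point: in your $(\leq)$ direction, the inequalities $\dim(x/hA)\leq\dim(g/hA)+\dim g$ and $\dim(y/A)\leq\dim(h/A)+\dim h$ come straight from the definition of $\dim(g/hA)$ and $\dim(h/A)$ as maxima, not from (2).
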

\proof (1) is in \cite[Proposition 3.4]{ga}.

 (2). Since $\dim(g/aA)=0$, the dimension formula implies that $\dim(a/gA)+\dim (g/A)=\dim(a/A)=k$ and hence
 $\dim(g/A)\leq k$.\qed

The following is a direct corollary of the dimension formula.

\begin{claim}\vlabel{dimeqrel}
Let $T\subset M^{eq}$ be a definable set, and let $\{X_t\scht t\in T\}$ be a
definable family of pairwise disjoint definable sets in $\CM^{eq}$. If the
dimension
of each $X_t$ is $r$ and $\dim T=e$ then $\dim \bigcup_{t\in T} X_t=r+e$.
\end{claim}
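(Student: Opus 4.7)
The plan is to obtain both inequalities directly from the dimension formula in Fact \ref{int-dim-props}(1), together with the crucial observation that the pairwise disjointness forces $t$ to be interdefinable with (any element of) its fibre: if $g \in X_t$ and the family is $A$-definable, then $t \in \dcl(gA)$, since $t$ is uniquely determined as the index of the $X_t$ containing $g$.

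For the upper bound, fix a small $A \sub \CM^{eq}$ over which the family $\{X_t:t\in T\}$ is defined, and pick any $g \in \bigcup_{t\in T} X_t$; let $t$ be the unique index with $g \in X_t$. Then $\dim(t/gA) = 0$, so the dimension formula gives
\[
\dim(g/A) = \dim(g,t/A) = \dim(g/tA) + \dim(t/A) \leq r + e,
\]
where the last inequality uses $\dim(g/tA) \leq \dim(X_t) = r$ and $\dim(t/A) \leq \dim(T) = e$. Taking the maximum over $g$ yields $\dim \bigcup_{t\in T} X_t \leq r + e$.

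For the lower bound, I would choose a generic witness in two steps: first pick $t\in T$ with $\dim(t/A) = e$, and then pick $g \in X_t$ with $\dim(g/At) = r$. Both choices are available by the definitions of $\dim T$ and $\dim X_t$. Applying the dimension formula once more, together with $t \in \dcl(gA)$,
\[
\dim(g/A) = \dim(g,t/A) = \dim(g/At) + \dim(t/A) = r + e,
\]
so $g$ witnesses $\dim \bigcup_{t\in T} X_t \geq r + e$, completing the proof.

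I don't foresee a real obstacle: the entire argument is a two-line application of Fact \ref{int-dim-props}(1), and the only subtlety worth highlighting explicitly is that pairwise disjointness of the family $\{X_t\}$ is exactly what makes $t \in \dcl(gA)$, which is in turn what collapses $\dim(g,t/A)$ back to $\dim(g/A)$ in both inequalities.
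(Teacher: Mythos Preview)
Your proof is correct and is precisely the intended one: the paper does not spell out a proof of this claim but simply says it is a direct corollary of the dimension formula (Fact \ref{int-dim-props}(1)), and your argument is exactly that corollary written out in full, including the observation that pairwise disjointness gives $t\in\dcl(gA)$.
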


Recall:

\begin{defn} For $X, Y$ definable sets and $E_1,E_2$ definable equivalence relations
on $X$ and $Y$, respectively, a function $f:X/E_1\to Y/E_2$ is called {\em
definable} if the set $\{\la x,y\ra\in X\times Y:f([x])=([y])\}$  is definable.

\end{defn}
We will need the following general fact about definable equivalence relations:

\begin{claim}\vlabel{general-eq}
Let $X\sub M^k$ be an $A$-definable set and $E$ an  $A$-definable equivalence
relation on $X$. Then there exists an $\CM$-quotient $Y/E'$, defined over $A$,  and
an $A$-definable bijection $f:X/E\to Y/E'$ such that $Y$ can be partitioned into
finitely many definable sets $U_1,\ldots, U_m$ with the following properties:
\begin{enumerate}
\item Each $U_i$ is an open subset of $M^{k_i}$.

\item Each $E'$-class is contained in a single $U_i$.

\item For each $i=1,\ldots,m$, there exists $d_i\in \mathbb N$ such that every
 $E'$-class in $U_i$ is a set of dimension $d_i$, and the projection
$\pi_{d_i}:M^{k_i}\to M^{d_i}$ onto the first $d_i$ coordinates is a homeomorphism.
\end{enumerate}
\end{claim}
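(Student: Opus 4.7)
The plan is to produce $Y/E'$ by three $E$-invariant refinements of $X$: discretize by class dimension and canonical coordinate projection; shrink each class to a piece on which the projection is a homeomorphism; partition via cell decomposition and re-embed.

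First, since $x \mapsto \dim[x]$ is definable, $X$ decomposes into $E$-invariant pieces $X^{(d)}$ on which every class has dimension $d$, so I may treat each separately. Next, for each $d$-element subset $S \sub \{1,\ldots,k\}$, the condition $\dim \pi_S([x]) = d$ depends only on $[x]$ and is satisfied by at least one $S$; letting $S(x)$ be the lex-least such gives a further $E$-invariant partition. After reducing to a single $S$ and permuting coordinates of $M^k$, I assume the canonical projection for every class is $\pi_d$, onto the first $d$ coordinates.

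Next I shrink each class $c$ to a canonical open subset $V_c \sub c$ on which $\pi_d$ is a homeomorphism onto the open set $W_c^\circ := \Int(\pi_d(c)) \sub M^d$. Using uniform cell decomposition of the family $\{c\}_c$ together with the o-minimal fact that a definable bijection is generically a homeomorphism, I definably choose $V_c$ (for instance the top-dimensional subcell of $c$ on which $\pi_d$ is both a local homeomorphism and globally injective). Setting $Y_0 = \bigcup_c V_c$ and $E_0 = E \rest Y_0$, the inclusion induces a definable bijection $Y_0 / E_0 \to X/E$, and every $E_0$-class projects via $\pi_d$ homeomorphically onto an open subset of $M^d$.

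Finally, I apply uniform cell decomposition of $Y_0 \sub M^k$ compatible with the family $\{V_c\}$, and partition $E_0$-classes according to which cell of the decomposition contains the top-dimensional part of $V_c$ (choosing the lex-least such cell when ambiguous). This gives a finite partition $Y_0 = \bigsqcup_j C_j$ into cells of $M^k$ with each $V_c$ contained in a single $C_j$. Each $C_j$ is homeomorphic via some coordinate projection $\pi_{T_j}$ to an open subset $U_j \sub M^{\dim C_j}$; since $\pi_d$ is a homeomorphism on each $V_c \sub C_j$, the first $d$ coordinates must be among the nontrivial coordinates of $C_j$, so $T_j \supseteq \{1,\ldots,d\}$. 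After reordering $T_j$ to place $\{1,\ldots,d\}$ first, the first $d$ coordinates of $M^{\dim C_j}$ transport $\pi_d$ correctly, giving the required form with $d_j = d$. The main obstacle is coordinating the two uniform cell decompositions---one applied to the family of classes to shrink each $V_c$, the other applied to the total set $Y_0$ to produce the final partition---so that each class ends up in a single cell of the final decomposition; this is handled by the $E_0$-invariant lex-least choice of cell combined with the standard uniformity of cell decomposition for definable families.
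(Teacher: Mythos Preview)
Your final step has a genuine gap. A single cell decomposition of the ambient set $Y_0$ cannot be ``compatible with'' the infinite family $\{V_c\}$ in the sense you need: in general a class $V_c$ will meet several cells $C_j$. For a concrete instance, take $k=2$, $d=1$, $V_c=(0,1)\times\{c\}$ for $c\in(0,1)$ and $V_c=(0,2)\times\{c\}$ for $c\in(1,2)$; any cell decomposition of $Y_0$ must split the longer classes at $x_1=1$. Your lex-least rule only \emph{labels} each class with a cell; it does not force $V_c\subseteq C_{j(c)}$. If instead you intend to replace $V_c$ by $V_c\cap C_{j(c)}$, then the resulting $C_j\cap Y_0'$ (the points of $C_j$ whose class chose $C_j$) is typically not open in $M^{\dim C_j}$, so your $U_j$ fails condition~(1). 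A secondary issue: the claim $T_j\supseteq\{1,\ldots,d\}$ is false as stated---a cell $C_j$ can be a graph over coordinates disjoint from $\{1,\ldots,d\}$ while $\pi_d$ is still a homeomorphism on each $V_c\subseteq C_j$ (e.g.\ $C_j=\{(g(t),t)\}$ with $g$ strictly monotone).

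The paper closes this gap by induction on $n=\dim X$. After reducing to the case where every class has dimension $d$ and projects homeomorphically via $\pi_d$ (essentially your first two reductions), it cell-decomposes the ambient set and \emph{removes} the union $B$ of lower-dimensional cells. The complement is genuinely open in $M^n$, and the key observation is that a $d$-cell class either keeps full dimension after deleting $B$ (so $\pi_d$ is still a homeomorphism onto an open set) or drops below dimension $d$; the latter classes, together with $B$ itself, form a set of dimension $<n$ and are handled by the inductive hypothesis. It is precisely this induction that reconciles the tension between making the ambient pieces open and keeping each class inside a single piece---your one-shot argument does not.
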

\begin{proof} We prove the result by induction on $n=\dim X$.

First, partition $X$ into a finite union of sets, in each of which every
$E$-class
has the same dimension, and such that every $E$-class is contained in exactly
one of
these sets. Clearly, we can prove the result separately for each of these sets.
Thus, it is enough to prove the claim under the assumption that all classes have
the
same dimension $d$.

Let $\mathcal C$ be a cell decomposition of $X$. Each cell $C_i\in\mathcal C$
homeomorphically projects onto an open subset of $M^{n_i}$ for some $n_i\le n$. Let
$Z_1,\ldots,Z_r$ be these projections. We now consider the restriction of $E$ to
$Z_1$ and prove the claim for $Z_1$ (note that after doing that we plan to discard
all elements in $X\setminus Z_1$ whose classes intersect $Z_1$).

If $\dim(Z_1)<n$, then the claim holds for it by induction. Otherwise, $\dim
Z_1=n$, and thus necessarily, $\dim(Z_1/E)=n-d$.
 Let
$$Z_1'=\{x\in Z_1:\dim([x]\cap Z_1)<d\}.$$
Since $\dim(Z_1'/E)\leq n-d$  and every $[x]\cap Z_1'$ has dimension smaller than
$d$, it follows from Claim \ref{dimeqrel} that $\dim Z_1'<n$.  Moreover, for every
$x$, either $[x]\cap Z_1\subset Z_1'$ or $[x]\cap Z_1'=\emptyset$, so proving the
claim for $Z_1'$ and $\tilde Z_1=Z_1\setminus Z_1'$ is sufficient. By induction, the
claim holds for $Z_1'$.

By Lemma \ref{uniform}, we can uniformly partition all the equivalence classes
in
$\tilde Z_1$ into cells, then  choose a $d$-dimensional cell from each
equivalence
class in $\tilde Z_1$, and replace $\tilde Z_1$ by the union of these cells
(still
calling it $\tilde Z_1$). Note that omitting the remaining part of each class
does
not change the quotient. Next, we partition $\tilde Z_1$ into finitely many
sets, so
that in a single set, the cell of each class is of the same type (by that we
mean
that the projection onto the same $d$ coordinates is a homeomorphism). Since the
partition respects the classes, we may deal with each part separately.

Any set in this partition with dimension less than $n$ is handled by induction, so
we may only consider the sets of dimension $n$. We assume  then that $\tilde Z_1$ is
an $n$-dimensional union of $d$-dimensional cells, all of the same type. By
permutation of variables, we can suppose that projection on the first
$d$-coordinates is a homeomorphism of each class onto an open subset of $M^d$.

Now let $\mathcal D$ be a cell decomposition of $\tilde Z_1$, and let $B$ be the union
$$\bigcup_{D\in\mathcal D,\, \dim D<n} D.$$
Because $\tilde Z_1\sub M^n$ and $\dim \tilde Z_1=n$, the union of all
$n$-dimensional cells in $\mathcal D$ is an open subset of $M^n$, so $\tilde
Z_1\setminus B$ is still open in $M^n$. Thus, for each $x\in \tilde Z_1$, if the set
$[x]\cap (\tilde Z_1\setminus B)$ has dimension smaller than $d$ then it must be
empty (here we use the fact that $[x]\sub \tilde Z_1$ is a $d$-cell). Hence, a
class $[x]$ which intersects $\tilde Z_1\setminus B$ might not be a cell
anymore, but it is still true that its projection onto the first $d$ coordinates is a
homeomorphism onto an open subset of $M^d$. Hence $\tilde Z_1\setminus B$ satisfies
the claim. We now remove from $B$ all classes which are already represented in
$\tilde Z_1\setminus B$ (we still call the new set $B$) and handle $B/E$ by
induction on dimension. We therefore showed that the claim holds for $\tilde Z_1$
and hence also for $Z_1$.

Note that
the above argument only used the fact that $Z_1$ was an $n$-dimensional subset
of
$M^n$ (and that every class in $X$ has dimension $d$).

Next, remove all classes from $Z_2,\ldots,Z_r$ with representatives in $Z_1$. We
still use $Z_2,\ldots, Z_r$ for the remaining sets. Clearly, each class which is
contained in the new  $Z_2\cup \cdots \cup Z_n$ still has dimension $d$. If
$\dim
Z_2=n$ then we handle it exactly as we handled $Z_1$, and if $\dim Z_2<n$ then
we
apply induction. We proceed in the same manner until handle all $Z_i$'s and thus
prove the claim for $X$.\end{proof}

\subsection{Elimination of one dimensional quotients}

 Let $\{X_t:t\in T\}$ be a definable family of sets.
We say that $f:T\to M^n$ is an $\CF$-map if for every $t,s\in T$, if $X_t=X_s$ then
$f(t)=f(s)$. We say that $f$ is $\CF$-injective if in addition, whenever $f(t)=f(s)$
we have $X_t=X_s$.

We will use the following fact \cite[Claim 1.1]{HPP}:
\\

\noindent {\bf DEQ}: \emph{If $E$ is a $\emptyset$-definable equivalence relation on $X$
with finitely many classes then every class is $\emptyset$-definable.}
\\

\begin{theorem}\vlabel{onedim-equiv} Let $\CF=\{X_t:t\in T\}$ be
 a definable family of definable sets in $M^k$, with $T\sub \CM^{eq}$ and $\dim
 T=1$. Then there exists a definable $\CF$-injective map $f:T\to M^m$, for some $m$,
possibly over parameters.
\end{theorem}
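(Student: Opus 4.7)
The plan is to reformulate the conclusion as elimination of the definable equivalence relation $\sim$ on $T$ given by $t \sim s \Leftrightarrow X_t = X_s$, since an $\CF$-injective map is precisely a definable injection of $T/{\sim}$ into some $M^m$. Because $\dim(T/{\sim}) \leq \dim T = 1$, the argument splits according to whether $\dim(T/{\sim})$ is $0$ or $1$. If it is $0$, then $T/{\sim}$ is finite; after naming the parameters of the family, $\sim$ has only finitely many classes over $\emptyset$, and \textbf{DEQ} guarantees that each such class is $\emptyset$-definable, so $T/{\sim}$ injects into $M$ by sending the $i$-th class to a distinct point.

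The main case is $\dim(T/{\sim}) = 1$. Here I would first present $T$ as an $\CM$-quotient $Z/E_0$ with $Z \sub M^n$ definable, lift $\sim$ to the coarser equivalence $\sim_Z$ on $Z$ given by $z \sim_Z z' \Leftrightarrow X_{[z]} = X_{[z']}$, and then apply Claim \ref{general-eq} to the pair $(Z, \sim_Z)$. This replaces $T/{\sim}$ by a definably bijective $\CM$-quotient $W/E'$, where $W = \bigsqcup_i U_i$ with each $U_i \sub M^{k_i}$ open, every $E'$-class in $U_i$ of dimension $d_i$, and the projection to the first $d_i$ coordinates a homeomorphism on each class. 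From $\dim(W/E') = 1$ we get $k_i - d_i \in \{0, 1\}$ for every $i$. The pieces with $k_i = d_i$ have open $E'$-classes in $U_i$, hence only finitely many by definable connectedness, so they reduce back to the $0$-dimensional case.

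The substantive step is the pieces with $k_i = d_i + 1$, where each class is the graph of a continuous function from an open subset of $M^{d_i}$ into $M$. On such a piece the plan is to associate to each class a canonical tuple of invariants: after preliminary cell decomposition making the family uniform, one picks a transversal point $v_0 \in M^{d_i}$ generic over the parameters of the family and sends each class meeting the fibre $\{v_0\} \times M$ to the unique point of that class in this fibre, producing a definable injection from the subquotient of classes meeting the fibre into $M$. Classes missing the fibre have projections contained in a strictly smaller definable subset of $M^{d_i}$ and are handled by iterating the construction, supplementing with canonical boundary data of the projection $V_{[y]} = \pi_{d_i}([y])$ when needed. The key point is that only finitely many iterations are required to cover $U_i/E'$, so the assembly of the injections from each stratum and from each $U_i$ (tagging by the index) produces the desired definable $\CF$-injective map $T \to M^m$. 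The hard part will be justifying this finite termination: in an arbitrary o-minimal structure there is no global definable section of $U_i \to U_i/E'$, and one must patch together local injections coming from different transversals, relying crucially on $\dim T = 1$ to prevent the stratification from becoming infinite -- the analogous statement for higher-dimensional $T$ being essentially the general elimination-of-imaginaries problem that the paper explicitly leaves open.
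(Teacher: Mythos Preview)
Your reformulation --- eliminate $T/{\sim}$ directly via Claim~\ref{general-eq} rather than working with the $X_t$'s --- is a genuinely different route from the paper's. The paper does induction on $k$, the ambient dimension of the $X_t$, and never invokes Claim~\ref{general-eq}; it reduces via uniform cell decomposition to the case where each $X_t$ is a cell of some fixed dimension $r$, then splits into $r=k$ (pass to the bounding $(k{-}1)$-cells) and $r<k$ (split further according to whether a point lies in finitely or infinitely many $X_t$'s). Your use of Claim~\ref{general-eq} buys you pairwise-disjoint $E'$-classes from the start, which sidesteps the paper's $X^0/X'$ split entirely.

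But there is a real gap in your handling of classes missing the fibre over $v_0$. The assertion that their projections lie in a \emph{strictly smaller} definable subset of $M^{d_i}$ is false: removing one point from $M^{d_i}$ does not drop dimension, and the family of omitted classes can remain $1$-dimensional. Concretely, take $d_i=1$, $U_i=\{(x,c):0<x<c<1\}$, with $E'$-classes the horizontal segments $\{(x,c):0<x<c\}$ for each fixed $c$; for any $v_0\in(0,1)$ the classes with $c\le v_0$ miss the fibre, and no finite sequence of transversals covers them. Your ``boundary data'' rescues this particular example (the right endpoint of $V_{[c]}=(0,c)$ is a canonical invariant), but that is exactly an instance of recursing on the family $\{V_{[y]}\}$ of subsets of $M^{d_i}$ --- which is the paper's induction on ambient dimension in disguise, not a repetition of the transversal step.

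What is actually needed, and what the paper makes explicit, is a dichotomy on the map $[y]\mapsto V_{[y]}$: either each fibre is finite, in which case one recurses on the family $\{V_{[y]}\}$ of subsets of $M^{d_i}$ (ambient dimension drops) and then separates the finitely many classes over each $V$ via DEQ; or some fibre is infinite, in which case --- and this is the one place $\dim T=1$ is used --- only finitely many $V$'s have infinite fibre, and for each such $V$ a single transversal point $a\in V$ separates the pairwise-disjoint graphs over it. You have the transversal half of this dichotomy but not the recursive half, and it is their combination that gives termination.
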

\proof
Note that if  $T_1\sqcup T_2=T$ is a partition of $T$ and we let $\CF_1=\{X_t:t\in
T_1\}$ and $\CF_2=\{X_t:t\in T_2\}$ be the corresponding families then it is enough
to obtain $\CF_i$-injective functions for each $i=1,2$. As well, note that
if $\CF_1=\{Y_t\scht t\in T\}$ and $\CF_2=\{Z_t\scht t\in T\}$ are definable
families such that $\langle Y_t,Z_t\rangle = \langle Y_s,Z_s\rangle\iff
X_t=X_s$, then it is enough to obtain $\CF_i$-injective functions for each $i=1,2$.

Let $X=\bigcup_t X_t$. We go by induction on $k$.

If $\dim(X)<k$, then take a cell decomposition $C_1,\ldots,C_m$ of
$X$. Intersecting $X_t$ with each $C_1,\ldots,C_m$ yields a finite set of
families indexed by $T$, $\CF_i=\{X_t\cap C_i\scht t\in T\}$, for
$i=1,\ldots,m$. After a finite partition of $T$ based on whether $X_t\cap C_i=\emptyset$ for $i=1,\ldots,m$, it is then enough to find $\CF_i$-injective
functions, which we have by induction, since each $C_i$ is in definable
bijection with a subset of $M^{k-1}$. Thus, we may suppose that $\dim(X)=k$.

By replacing $T$ with $T/\sim$, with $t\sim s$ if and only if $X_t=X_s$, we may
assume that $X_t=X_s$ if and only if $t=s$, and still $\dim T=1$ (if $\dim T=0$ then
we are done by DEQ).

By Lemma \ref{uniform}, we can uniformly partition each set $X_t$ into a disjoint
union of cells $X_t^1,\ldots, X_t^m$ (in particular, the partition depends only on
the set $X_t$ and not on $t$), and let $\CF_i=\{X_t^i:t\in T\}$, $i=1,\ldots, m$.
Then it is sufficient to define $\CF_i$-injective maps. Indeed, if we have such
$f_i:T\to M^n$ (without loss of generality we can assume that they all go into the
same $n$) then we may now define $h:T\to (M^n)^m$ by $h(t)=(f_1(t),\ldots, f_m(t))$.

By a further partition of $T$,  we may assume that all $X_t$'s are cells in
$M^k$ of the same dimension $r$. We may suppose that we still have $\dim(X)=k$,
since otherwise our above argument works to finish $X$ by induction. We prove
the result by induction on both $r$ and $k$.

\bigbreak
 \noindent{\bf Case 1.} $r=k$.

In this case, each $X_t$ is bounded above and below by two $k-1$ cells $Y_t^1,
Y_t^2$, of dimension $k-1$, which determine the set $X_t$. By considering the
families $\{Y^1_t:t\in T\}$ and $\{Y^2_t :t\in T\}$ and applying induction we will
be  done.

\bigbreak
\noindent{\bf Case 2.} $r<k$.

\vspace{.3cm}

Let $X^0$ be the collection of all  $x\in X$ which belong to
only finitely many $X_s$, $s\in T$. By o-minimality, there is a bound $\ell\in
\mathbb N$ such that for every $x\in X^0$, there are at most $\ell$-many $X_s$ such
that $x\in X_s$. For every $x\in X^0$, the set $\CF^0(x)=\{X_s\scht x\in X_s\}$ is
clearly $x$-definable and finite. By DEQ each $X_s\in \CF^0(x)$ is $x$-definable,
and the finitely many $X_s\in \CF^0(x)$ can be linearly ordered as $X_{s_1},\ldots,
X_{s_\ell}$ with the ordering depending on $x$. This is a well defined finite
ordering of the sets $X_t\in \CF^0(x)$.

For each $t\in T$, we let $X_t^0=X^0_t\cap X_t$ and for $i=1\ldots, \ell$ we let the
set $X_t^i$ be the collection of all $x\in X_t^0$ such that $X_t$ is the $i$-th element in
the ordering of $\CF^0(x)$.  Note that for each $t\in T$, the sets
$X_t^1,\ldots, X^{\ell}_t$ form a partition of $X_t^0$ and that for all $X_t\neq
X_s$ and $i=1,\ldots, \ell$, we have $X_t^i\cap X_s^i=\emptyset$. Let $X_t'$ be the
complement $X_t\setminus X_t^0$. Note that the definitions of $X_t^i$ and $X_t'$ depend
only on the set $X_t$ and not on $t$. Consider the families $\CF'=\{X'_t:t\in T\}$ and
$\CF_i=\{X_t^i:t\in T\}$, with $i=1\ldots,\ell$. By our earlier observations, it is
enough to find $\CF_i$-injective maps and $\CF'$-injective maps.

Let us handle the $\CF_i$-case first.

Because $\dim X_t<k$, the dimension of each $X_t^i$ is also smaller than $k$. We can
assume by further partitioning each $X_t^i$ into cells that each $X_t^i$ is a cell
of dimension $r<k$. But then each $X_t$ is the graph of a continuous function from
some open cell $C_t^i$ in some $r$-cartesian power of $M$ into $M^{k-r}$. By
dividing into cases we may assume that all $C_t^i$ are subsets of $M^r$, the first
$r$-coordinates. So, $X^i_t$ is of the form $$\{\la\bar x,f^i_t(\bar x)\ra:\bar x\in C_t^i\}$$ where $f^i_t$ is a definable function from $C_t^i$ into $M^{k-r}$.

Let $\CF_i^{\text{proj}}=\{C_t^i:t\in T\}$. By induction,
there is an injective $\CF_i^{\text{proj}}$-function $g:T\to M^s$ for some $s$. We divide
$\CF_i^{\text{proj}}$ into two sub-families: (i) Those $C^i_t$'s for which only finitely
many distinct $X^i_t$'s project onto $C_t^i$. In this case,  the function $g$,
together with a choice of one of the finitely many $X^i_t$ which project on $C_t^i$
induce an injective map on the collection of these $X^i_t$.

The rest of the $C^i_t$'s are those for which there are infinitely many $X^i_t$'s
which project onto it. Because $T$ is one-dimensional, there are at most finitely
many such distinct $C^i_t$'s (this step fails in higher dimension). By
handling each one separately, we can assume that all $X_t^i$ project onto the same
$C_t^i$. We now fix an arbitrary point $\bar a\in C_t^i$ and define $g(t)=f^i_t(\bar
a)$. Because we defined the $X^i_t$'s to be pairwise disjoint, this is an
$\CF_i$-injective map, defined over $\bar a$.

\vspace{.3cm}

We now handle $\CF'=\{X_t':t\in T\}$ (recall that $x\in X_t'$ if and only if $x$
belongs to infinitely many $X_t$'s, so also to infinitely many $X_t'$). We let
$X'=\bigcup_t X_t'$ and claim that $\dim(X')<k$.

Assume towards contradiction that $\dim X'=k$ and consider the subset $Y$ of $T
\times M^k$ consisting of all $(t,x)$ such that $x\in X'_t$. By Claim
\ref{dimeqrel}, because each $X_t$ has
dimension $k-1$, and $\dim(T)=1$, the set $Y$ has dimension at most $k$.

Clearly, $X'$ equals the projection of $Y$ onto the second coordinate, so the
dimension of $X'$, which we assume to be $k$, equals the dimension of $Y$. But then,
the projection map from $Y$ onto $X'$ is generically finite to one, so if we pick
any generic $x\in X'$, there are at most finitely many $t$'s such that $x\in X_t'$,
contradicting our definition of $X'$.

Thus $\dim X'<k$, and we can handle $\CF'$ by induction, as pointed out earlier.
\qed

\begin{cor}\vlabel{EI1} If $\dim (X/E)=1$ then $X/E$ is in definable bijection, over parameters, with a definable
set.\end{cor}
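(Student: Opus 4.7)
The plan is to apply Theorem \ref{onedim-equiv} to the tautological family whose indexing set is $T = X/E$ and whose members are the $E$-classes themselves, viewed as definable subsets of $M^k$.

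More precisely, I would set $T := X/E \subseteq \CM^{eq}$, which has $\dim T = 1$ by hypothesis, and define the definable family $\CF = \{X_t : t \in T\}$ by $X_t := \{y \in X : y E x\}$ whenever $t = [x]_E$. This is well-defined since $E$ is an equivalence relation, and the family is definable in the sense required by Theorem \ref{onedim-equiv} because it is given by the definable relation $R(t,y) \iff y \in t$ on $T \times M^k$. Crucially, distinct elements of $T$ correspond to distinct sets in the family: $X_t = X_s$ if and only if $t = s$.

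Now Theorem \ref{onedim-equiv} supplies, possibly over additional parameters, a definable $\CF$-injective map $f : T \to M^m$. By definition, $\CF$-injectivity means $f(t) = f(s) \iff X_t = X_s$, which in our setup is precisely $t = s$; hence $f$ is an injection from $X/E$ into $M^m$. It remains to observe that $f(T)$ is definable: since $f$ is definable as a map out of the quotient (the set $\{\la x,y\ra \in X \times M^m : f([x]_E) = y\}$ is definable), its image is the projection of this definable set to $M^m$, and therefore definable in $\CM$. Thus $f$ is a definable bijection from $X/E$ onto the definable set $f(X/E) \subseteq M^m$.

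There is essentially no new obstacle at this stage; the entire difficulty has been absorbed into Theorem \ref{onedim-equiv}. The only small bookkeeping point is to notice that the tautological family has the property $X_t = X_s \iff t = s$, so that $\CF$-injectivity of the map produced by Theorem \ref{onedim-equiv} immediately gives honest injectivity on $X/E$; this is what allows us to conclude that the image is in definable bijection with the quotient.
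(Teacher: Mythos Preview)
Your proposal is correct and is exactly the intended deduction: the paper states Corollary~\ref{EI1} immediately after Theorem~\ref{onedim-equiv} with no proof, precisely because applying the theorem to the tautological family of $E$-classes indexed by $T=X/E$ yields the result. The observation that $X_t=X_s\iff t=s$ for this family, so that $\CF$-injectivity is genuine injectivity, is the only point to check, and you have it.
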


Here is a simple corollary that we are not going to use.
\begin{cor} Every $\CM$-quotient on a definable set of
dimension two can be eliminated. Namely, if $\dim Y=2$ and $E$ is a definable
equivalence relation on $Y$ then $Y/E$ is in definable bijection (possibly, over
parameters) with a definable set.
\end{cor}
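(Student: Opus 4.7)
The plan is to split $Y$ according to the dimension of the generic $E$-classes. For $d \in \{0,1,2\}$, set $Y_d = \{y \in Y : \dim[y] = d\}$. Since $\dim[y]$ depends only on the class $[y]$, each $Y_d$ is definable and $E$-invariant, so it suffices to eliminate each $Y_d/E$ separately and reassemble them into disjoint coordinate slabs of a common $M^N$.

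For $d = 2$, applying Claim \ref{dimeqrel} to the disjoint family $\{[y] : [y] \in Y_2/E\}$ gives $\dim Y_2 = 2 + \dim(Y_2/E)$; since $\dim Y_2 \leq 2$, the quotient $Y_2/E$ is finite, and DEQ (over the parameters defining $E$) makes every class definable, yielding a definable bijection $Y_2/E \to \{1,\ldots,n\}$. For $d = 1$ the same dimension formula gives $\dim(Y_1/E) \leq 1$, so Corollary \ref{EI1} finishes this piece directly. The case $d = 0$ is where the actual work lies: each class is finite, and by o-minimality there is a uniform bound $\ell \in \N$ with $|[y]| \leq \ell$ throughout $Y_0$. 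Partitioning $Y_0$ into the $E$-invariant pieces $Y_0^i = \{y : |[y]| = i\}$, we may assume every class has exactly $i$ elements. Writing $Y \sub M^k$ and using the lexicographic order on $M^k$, we send each class $g \in Y_0^i/E$ to the unique tuple $(a_1,\ldots,a_i) \in (M^k)^i$ obtained by listing the members of $g$ in lex-increasing order; the relation
\[
\{(y,a_1,\ldots,a_i) : a_1 \mathrel{E} y,\ \ldots,\ a_i \mathrel{E} y,\ a_1 <_{\mathrm{lex}} \cdots <_{\mathrm{lex}} a_i\}
\]
is definable, so the induced map $Y_0^i/E \hookrightarrow M^{ik}$ is a definable injection.

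The main obstacle to anticipate is precisely this $d = 0$ step, since $\dim(Y_0/E)$ can reach $2$ and no two-dimensional analogue of Theorem \ref{onedim-equiv} is available; it is the ambient linear order on $M$, exploited via lex-sorting, that makes the canonical enumeration of finite classes work. Once each $Y_d/E$ is eliminated, reassembly into a single definable injection $Y/E \hookrightarrow M^N$ is routine padding.
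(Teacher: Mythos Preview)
Your proof is correct and follows essentially the same strategy as the paper's: partition $Y$ by the dimension of $E$-classes, handle the $d=2$ case by finiteness of the quotient, the $d=1$ case by Corollary~\ref{EI1}, and the $d=0$ case by choosing representatives via the lexicographic order. The only difference is cosmetic---the paper compresses the $d=0$ case to ``definably choose representatives'' where you spell out the lex-sorting, and your commentary overstates the difficulty of this case (it is in fact the easiest of the three, not the main obstacle).
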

\proof We may assume that all classes have the same dimension. If the classes are
finite then we can definably choose representatives. If the classes have dimension
$1$ then $\dim(x/E)=1$ and we are done by the previous lemma. If the classes have
dimension 2 then there are only finitely many classes.\qed

\subsection{A general observation about $\CM$-quotients}

\begin{prop}\vlabel{observ-eq} Let $X/E$ be an $\CM$-quotient. Then there exists
an $\CM$-quotient $Y/E'$ which is in definable bijection with $X/E$, possibly over
parameters, such that $Y\sub I_1\times \cdots \times I_k$, for some intervals
$I_1,\ldots, I_k\sub M$, and each $I_j$ is the image of $Y/E'$ (equivalently $X/E$)
under a definable map.
\end{prop}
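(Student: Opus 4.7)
The plan is to reduce $X/E$ to a convenient normal form via Claim \ref{general-eq} and then extract the required intervals and definable maps from the resulting cell structure.

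First, apply Claim \ref{general-eq} to replace $X/E$ by an $\CM$-quotient $Y/E'$ in definable bijection with it. After possibly partitioning $Y$ and treating each piece separately (and taking a disjoint union at the end), we may assume that $Y$ is a single open subset of some $M^k$ and that all $E'$-classes have the same dimension $d$, projecting homeomorphically onto the first $d$ coordinates.

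Second, let $I_j := \pi_j(Y) \sub M$ for each $j = 1, \ldots, k$, where $\pi_j$ is the $j$-th coordinate projection. After a further finite partition we may take each $I_j$ to be a single interval, and we have $Y \sub I_1 \times \cdots \times I_k$ by construction. The remaining task is to show that each $I_j$ is the image of $Y/E'$ under a definable map, i.e., an $E'$-invariant function $Y \to M$ with image $I_j$. Since the classes have positive dimension and project nontrivially onto each coordinate, the coordinate projections $\pi_j$ themselves are not $E'$-invariant, so we must find class-invariant replacements with the same images.

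Third, we construct class-invariant maps $f_j: Y/E' \to M$ with image $I_j$. The key is that each class $[y]$ in $Y$ is the graph of finitely many definable continuous functions expressing the last $k - d$ coordinates in terms of the first $d$; applying cell decomposition uniformly to the definable family of classes (via Lemma \ref{uniform} as in the proof of Claim \ref{general-eq}), we extract for each class a finite list of canonical real parameters. The components of this parametrization are already class-invariant maps $Y/E' \to M$, and their images are intervals we may take to be among the $I_j$'s. For those $I_j$ coming from the first $d$ coordinates, where the projection is not class-invariant, we replace $\pi_j$ by a class-invariant function with the same image: the endpoints of $\pi_j([y])$ are class invariants, and any residual one-dimensional family of values may be eliminated using Corollary \ref{EI1} applied to an appropriate section, producing the required definable map.

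The main obstacle is this third step: ensuring that enough $E'$-invariant definable functions exist to exhaust all the target intervals. The subtlety is that a naive choice of class-invariant function (like an endpoint) may produce only a strict subset of the coordinate projection. The resolution is an interplay between further partitioning $Y$ so that each piece has well-behaved class-invariant maps exhausting the coordinate intervals, re-encoding the first $d$ coordinates through the canonical parameters of the cell decomposition, and invoking Corollary \ref{EI1} on any one-dimensional residual quotients that arise — all while maintaining the bijection $Y/E' \cong X/E$.
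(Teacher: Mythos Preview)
Your third step has a real gap, and it stems from a wrong choice of target in the second step. You set $I_j := \pi_j(Y)$ and then try to exhibit each $I_j$ as the image of a class-invariant map. You correctly observe that the endpoints $\sup \pi_j([y])$ and $\inf \pi_j([y])$ are class-invariant, but their images are in general \emph{proper} sub-intervals of $\pi_j(Y)$, and nothing you wrote produces a class-invariant map whose image is exactly $\pi_j(Y)$. The appeal to ``canonical real parameters'' from a uniform cell decomposition does not help: those parameters again give images that need not coincide with $\pi_j(Y)$. The final invocation of Corollary~\ref{EI1} for ``residual one-dimensional quotients'' is not connected to the problem at hand; there is no one-dimensional quotient in sight whose elimination would enlarge the image of a class-invariant function to all of $\pi_j(Y)$.

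The paper does not attempt to realize $\pi_j(Y)$ as such images at all. After reducing to definably connected classes, it takes the class-invariant maps $\sigma^+([x]) = \sup \pi_1([x])$ and $\sigma^-([x]) = \inf \pi_1([x])$ and lets \emph{their} images (or rather, the connected components $J_i^+$, $J_j^-$ of their images) be the intervals of interest. The key dichotomy is then: either $J_j^- \cup J_i^+$ is disconnected, in which case one fixes a point $a$ in the gap, intersects every class with $\pi_1^{-1}(a)$, and drops to $M^{n-1}$ by induction; or $J_j^- \cup J_i^+$ is a single interval, which automatically contains $\pi_1(X_{ij})$ (since $\pi_1([x]) \subseteq [\sigma^-([x]), \sigma^+([x])]$), and one embeds this union into $J_j^- \times J_i^+$ over a named parameter. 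Thus the final number of intervals $k$ may exceed $n$, and none of them need equal a coordinate projection of $Y$. Your use of Claim~\ref{general-eq} at the outset is also unnecessary for this argument; only the reduction to connected classes via Lemma~\ref{uniform} and DEQ is needed.
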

\proof By partitioning each equivalence class into its definably connected
components (see Lemma \ref{uniform}) and choosing one component from each class
uniformly (by DEQ), we may assume that all classes are definably connected.

For every $i=1,\ldots, n$ we let $\pi_i:M^n\to M$ be the projection onto the $i$-th
coordinate. We define $\si_1^+:X/E\to M\cup \{+\infty\}$ as follows: $\si_1^+([x])$
is the supremum of $\pi_1([x])$.  We let $J^+_1,\ldots, J^+_k$ be the definably
connected components of the image of $\si_1^+$. Similarly, we let $\si_1^-([x])\in
M\cup \{-\infty\}$  be the infimum of $\pi_1([x])$ (note that $\pi_1([x])$ is
contained in the interval $[\si^-([x]), \si^+([x])]$). Let $J^-_1,\ldots, J^-_r$ be
the definably connected components of the image of $X/E$ under $\si^-_1$. For $1\leq
i\leq k$ and $1\leq j\leq r$, we let $X_{i,j}=\{x\in X: \si^+_1([x])\in J^+_i \mbox{
and } \si^-_1([x])\in J^-_j.\}$

This is partition of $X$, which is compatible with $E$, namely if $x\in X_{ij}$ and
$x'Ex$ then also $x'\in X_{ij}$. It is clearly enough to prove the result for each
$X_{ij}$ separately. We consider two cases:

\noindent (a) $J_i^-\cup J_j^+$ is not definably connected.

If $X_{ij}$ is nonempty, then every element in $J^-_i$ is smaller than every
 element in $J^+_j$, and we can  fix an arbitrary element $a_{ij}$ with
 $J^-_i<a_{ij}<J^+_j$. By the definition of $\si^+_1$ and $\si^-_1$, and since $[x]$ is definably connected, the element
 $a_{ij}$ is contained in $\pi_1([x])$, for every $x\in X_{ij}$. For each $x\in
 X_{ij}$ we can now replace $[x]$ with $[x]\cap \pi_1^{-1}(a_{ij})$. The union of all
 these new classes, call it $X'_{ij}$, is contained in $\pi_1^{-1}(a_{ij})$. If we let $E'$
 be the restriction of $E$ to $X'_{ij}$ then $X_{ij}/E$ and $X'_{ij}/E'$ are in
 definable bijection. However, $X'_{ij}$ can be identified with a subset of
 $M^{n-1}$, so we can finish by induction.

\noindent (b) $J_i^-\cup J_i^+$ is definably connected.

In this case, we let $J_{ij}=J^-_i\cup J^+_j$. This is an interval which is the
union of two intervals, each of which is the image of $X/E$ under a definable map
({\em a priori}, each interval is the image of a subset of $X/E$, but the map can be
extended trivially to the whole  $X/E$). Because $\pi_1([x])\sub [\si^-([x]),
\si^+([x])]$ the set  $X_{ij}$ is a subset of $J_{ij}\times M^{n-1}$ and $J_{ij}$
itself can be identified with a subset of $J_i\times J_j$, possibly after naming
parameters. We can proceed by considering the projection $\pi_2$ and so on.\qed

\subsection{gp-long dimension and definable quotients}

Before our next, technical lemma we recall the following.
\begin{fact}\label{defcomp*} If $X\sub M^n$ is a definable closed and bounded set, then
$X$ contains a point $x_0$ which is invariant under any automorphism of $\CM$ which
preserves $X$ set-wise. Namely, $x_0\in  \dcl({X})$, where $X$ is now considered as
an element of $\CM^{eq}$.
\end{fact}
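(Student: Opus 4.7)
The plan is to argue by induction on the ambient dimension $n$, at each step making a choice that depends only on $X$.

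For the base case $n=1$, a definable closed and bounded subset of $M$ is, by o-minimality, a finite union of points and closed intervals. Its minimum (equivalently its maximum, or its leftmost endpoint) is then the desired $x_0$: any automorphism of $\CM$ fixing $X$ set-wise must preserve the order and hence must fix the minimum, so $x_0\in\dcl(X)$.

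For the inductive step, let $X\sub M^{n+1}$ be closed and bounded and let $\pi_1\colon M^{n+1}\to M$ be the projection onto the first coordinate. I would first note that $\pi_1(X)\sub M$ is again a definable closed and bounded set; this uses the standard fact that definable continuous images of definably compact sets in o-minimal structures are definably compact (a consequence of cell decomposition). By the base case, there exists $a\in\dcl(\pi_1(X))\sub\dcl(X)$ lying in $\pi_1(X)$. The fiber
\[
X_a=\{\bar y\in M^n:(a,\bar y)\in X\}
\]
is then a closed bounded definable subset of $M^n$, and since it is defined from $X$ and $a$, it lies in $\dcl(X)$ as an element of $\CM^{eq}$. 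By the inductive hypothesis applied to $X_a$, there is $\bar b\in X_a$ with $\bar b\in\dcl(X_a)\sub\dcl(X,a)=\dcl(X)$. The point $x_0=(a,\bar b)\in X$ is then in $\dcl(X)$, as required.

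The only non-trivial ingredient is the preservation of the ``closed and bounded'' property under projection, which is the o-minimal analogue of compactness being closed under continuous images; everything else is a routine verification that the extraction of $a$ and of $\bar b$ is canonical in the sense that it depends only on the set $X$ and not on any auxiliary parameters.
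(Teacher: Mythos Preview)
Your proposal is correct and follows exactly the inductive strategy the paper sketches: take the minimum for $n=1$, and for higher $n$ pass to a canonically chosen fiber. You have simply supplied the details the paper omits, including the one genuine ingredient (that projections of closed bounded definable sets remain closed and bounded, which holds in arbitrary o-minimal structures via curve selection).
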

\proof This is the same as showing that every definable family of closed and bounded
sets has strong definable choice. For $X\sub M$, we just take $x_0$ to be $min X$,
and for $X\sub M^n$ we use induction.\qed

\begin{lemma}\vlabel{shcl-eq1} Let $X\sub M^n$ be an $A$-definable set such that $\lgdim(X)=n$.
Assume that $E$ is an $A$-definable equivalence relation in $X$, such that every
equivalence class is gp-short. Then $\dim(X/E)=n$.

Equivalently, if $a\in X$ is long-generic over $A$ then $[a]$ must be finite.
\end{lemma}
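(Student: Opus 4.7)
The plan is to prove the equivalent statement by contradiction. Suppose $a \in X$ has $\lgdim(a/A) = n$ and $[a]$ is infinite. Since $[a]$ has only finitely many definably connected components (by o-minimality) and is infinite, some component has dimension $\geq 1$; any point $a'$ generic in such a component remains long-generic over $A$, because $a' \in [a]$ (gp-short, $Aa$-definable) gives $\lgdim(a'/Aa) = 0$, symmetrically $\lgdim(a/Aa') = 0$, and the $\shcl$ dimension formula then forces $\lgdim(a'/A) = \lgdim(a/A) = n$. So after replacing $a$ by $a'$ we may assume the definably connected component $C(a) \subseteq [a]$ containing $a$ has $\dim C(a) = d \geq 1$.

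By Fact~\ref{nbox} pick an $A_1$-definable $n$-long box $B \ni a$ with $B \subseteq X$ and $\lgdim(a/A_1) = n$. Shrinking each gp-long coordinate interval to a bounded gp-long sub-interval (possible since otherwise Theorem~\ref{gp-short} applied to the family of all candidate sub-intervals sharing the unbounded endpoint would make the full interval gp-short), I may assume $B$ is bounded in $M^n$. Refine $E$ to the $A_1$-definable equivalence $E'$ on $B$ given by $x E' y$ iff $x E y$ and $x, y$ lie in the same definably connected component of $[x] \cap B$; each $E'$-class $C'(x) := [x]_{E'}$ is then a bounded, definably connected, gp-short subset of $B$, with $\dim C'(a) = d$. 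After further restricting $B$ to the $A_1$-definable set $\{x \in B : \dim C'(x) = d\}$ (which still has $\lgdim = n$, hence contains an $n$-long box by Fact~\ref{nbox}), every $E'$-class in $B$ has dimension exactly $d$, and Fact~\ref{short-in-long} gives $\Cl(C'(a)) \subseteq B$, so each projection $\pi_i(C'(a)) \subseteq M$ is a bounded, definably connected, gp-short subset of $M$, i.e., an interval.

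For $i = 1, \ldots, n$ let $e_i(a) \in M$ denote the left endpoint of $\pi_i(C'(a))$ and set $\bar e(a) := (e_1(a), \ldots, e_n(a)) \in M^n$. Since $a_i$ lies in the gp-short interval $\pi_i(C'(a))$ with left endpoint $e_i(a)$, Corollary~\ref{c-def}(2) yields an $A_1 e_i(a)$-definable gp-short interval containing $\pi_i(C'(a))$, and hence $a_i$; by Fact~\ref{shcl-fact} this gives $a_i \in \shcl(A_1 e_i(a)) \subseteq \shcl(A_1 \bar e(a))$. Thus $\lgdim(a / A_1 \bar e(a)) = 0$, and since $\bar e(a) \in \dcl(A_1 a)$, the $\shcl$ dimension formula yields
$$\lgdim(\bar e(a)/A_1) = \lgdim(\bar e(a), a / A_1) = \lgdim(a/A_1) = n.$$
Because $\pi_i(C'(a))$ depends only on $C'(a)$, the map $\bar e$ factors through $T := B / E'|_B$, and Claim~\ref{dimeqrel} gives $\dim T = \dim B - d = n - d$; hence $\dim \bar e(B) \leq \dim T = n - d$. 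On the other hand $\bar e(a) \in \bar e(B)$ with $\lgdim(\bar e(a)/A_1) = n$ forces $\lgdim \bar e(B) \geq n$, and since $\lgdim \leq \dim$ we get $\dim \bar e(B) \geq n$, so $n \leq n - d$, contradicting $d \geq 1$. Hence $[a]$ must be finite. The main technical hurdle is arranging $B$ bounded and refining $E$ to $E'$ so that the endpoints $e_i(a) \in M$ are well-defined genuine elements of $M$ and $\bar e$ factors cleanly through the imaginary quotient $T$; once this is in place, the dimension count is immediate.
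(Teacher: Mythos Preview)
Your approach is sound and reaches the same contradiction as the paper, but by a genuinely different route. The paper, after reducing to connected classes, uses Fact~\ref{shcl-types} to place $a$ long-centrally in an $n$-long (bounded) box $B$ with $\Cl(B)\subseteq p(M)$ for $p=\tp(a/A)$, then invokes Fact~\ref{defcomp*}: since $\Cl([a])$ is closed and bounded, there is $y\in\Cl([a])$ with $y\in\dcl(\{[a]\})$. Because $y\in\Cl([a])\subseteq p(M)$, one gets $\dim(y/A)=n$ for free, hence $\dim([a]/A)=n$. You instead construct an explicit invariant $\bar e(a)=(\inf\pi_1 C'(a),\ldots,\inf\pi_n C'(a))$ and use the $\shcl$-dimension formula to force $\lgdim(\bar e(a)/A_1)=n$. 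Both arguments extract from the class a real tuple of full dimension; the paper's is shorter because its chosen point lies inside $p(M)$ and so inherits $\dim=n$ directly, while your $\bar e(a)$ need not lie in $p(M)$ and requires the separate $\shcl$-computation.

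Two imprecisions to tighten. First, you write $\dim C'(a)=d$ with $d=\dim C(a)$, but $C'(a)$ is only the component of $[a]\cap B$ through $a$, which may have strictly smaller dimension than $C(a)$. What you actually have (and all you need) is $\dim C'(a)\ge 1$, since after your step~1 the point $a$ is not isolated in $[a]$ and $B$ is open; just set $d:=\dim C'(a)$. Second, the move ``restrict $B$ to $\{x\in B:\dim C'(x)=d\}$ and pass to an $n$-long sub-box'' is ambiguous: if $C'(x)$ is recomputed in the sub-box the class dimensions change, and if not the sub-box is not $E'$-saturated, so Claim~\ref{dimeqrel} does not apply as written. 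The clean fix is either to work on the $E'$-saturated set $S=\{x\in B:\dim C'(x)=d\}$ (so $a\in S$, $\dim S=n$, every $E'$-class in $S$ has dimension $d$, and $\dim(S/E')=n-d$), or to skip the restriction entirely and argue directly that $\bar e(a)\in\dcl(A_1,[a]_{E'})$ forces $\dim([a]_{E'}/A_1)\ge n$, while the definition of dimension in $\CM^{eq}$ gives $\dim([a]_{E'}/A_1)\le n-\dim C'(a)\le n-1$. (Incidentally, the box from Fact~\ref{nbox} already has endpoints in $M$, so your boundedness step, while correct, is not needed.)
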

\proof First note why the result holds when $X/E$ can be eliminated. Indeed, if we
have a definable bijection between $X/E$ and a definable set $Y\sub M^r$ then we
obtain a definable surjection $g:X\to Y$ such that the preimage of every $y$ is
an $E$-class. Because every class is gp-short it follows from the dimension formula
that $\lgdim(X)=\lgdim(Y)$ and in particular $\dim Y=n$.

We now return to our setting. Let $a\in X$ be long-generic over $A$. If we show that
$\dim([a]/A)=n$ (here we view $[a]$ as an element of $\CM^{eq}$) then clearly, $\dim
(X/E)\geq n$, so we must have $\dim (X/E)=n$. Without loss of generality
$A=\emptyset$.

Using uniform cell decomposition, we partition $X$ into finitely many
$\emptyset$-definable sets $X_1,\ldots, X_m$, such that the intersection of every
class with each $X_i$ is definably connected (possibly empty). The element $a$
belongs to one of these $X_i$, in which case $\lgdim X_i=n$. We therefore may assume
that each $E$-class in $X$ is definably connected.

Let $p(x)=\tp(a/\emptyset)$. By Fact \ref{shcl-types}, there exists an $n$-long-box
$B$ with $\Cl(B)\sub p(M)$, such that $a$ is long-central in $B$. Since $[a]$ is
gp-short, we have $\Cl([a])\sub \Cl(B)\sub p(M)$, so in particular the set $\Cl([a])$
is closed and bounded in $M^n$. By Fact \ref{defcomp*}, there exists $y\in \Cl([a])$
such that $y\in \dcl(\{\Cl([a])\}).$ But clearly, $\Cl([a])$ is invariant under any
automorphism preserving $[a]$, hence $y\in \dcl(\{[a]\}).$

But then, by the dimension formula $\dim(y/\emptyset)\leq \dim([a]/\emptyset)$ and
since $y\models p$, we have $\dim(y/\emptyset)=n$ (here we use the fact that $a$ was
a generic element of $M^n$). Hence, $\dim([a]/\emptyset)=n$, so we are done.

To see that $[a]$ must be finite consider all the set $Y\sub X$ of all $x\in X$ such
that $[x]$ is infinite. By definition of dimension we must have $\dim(Y/E)<\dim
Y\leq n$. Hence, by what we have just showed, $\lgdim(Y)<n$. It follows that $Y$
cannot contain any long-generic element of $X$. \qed



\section{Interpretable groups}

We assume now that $G$ is an interpretable group (as in Definition \ref{def-int-gp}).

\subsection{One dimensional sets in interpretable groups}

\begin{defn} Let $Y\sub X$ be a definable set such that
$\dim(Y/E)=1$. Then $Y/E$ is called {\em gp-short} if it is in definable bijection
with a definable gp-short subset of $M^n$. Otherwise, we call it {\em gp-long}.
\end{defn}

\begin{theorem}\vlabel{inter-group1} Let $G=X/E$ be an interpretable group. Then every
one-dimensional subset of $G$ is gp-short.
\end{theorem}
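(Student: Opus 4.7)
The plan is to reduce to showing that an open interval $I\subset M$ carrying a definable injection $\phi:I\to G$ is gp-short. By Corollary \ref{EI1}, any one-dimensional $Y/E\subset G$ is in definable bijection (over parameters) with a definable subset of $M^n$, which by o-minimal cell decomposition is a finite union of points and open intervals in $M$; the theorem then reduces to handling each interval piece.

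To show such an $I$ is gp-short, I plan to produce a gp-short open neighborhood around each generic point $x_0\in I$ and cover $I$ by these. Fix $x_0$ and left-translate $\phi$ to $\phi'(x)=\phi(x_0)^{-1}\phi(x)$, so that $\phi'(x_0)=e$; gp-shortness is a property of sub-intervals of $I$, independent of the choice of $\phi$. Consider the multiplication $\mu:I\times I\to G$, $\mu(x,y)=\phi'(x)\phi'(y)$, which is injective in each variable by cancellation and whose image $\phi'(I)^2\subset G$ has dimension at most $2$. The generic fibers of $\mu$ over $\phi'(I)^2$ are finite (otherwise $\phi'(I)^2$ would be at most one-dimensional and Corollary \ref{EI1} would apply directly); using DEQ to select definable representatives and o-minimal cell decomposition to project, one obtains a definable bijection $\psi:\phi'(I)^2\to W\subset M^2$. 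Composing gives a definable map $F=\psi\circ\mu:I\times I\to M^2$, still injective in each variable.

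Next, refine to an open sub-box $I_0\times I_0$ with $x_0\in I_0$ on which $F$ is continuous and each coordinate $F_i$ is monotone in each variable separately (possible for generic $x_0$ by o-minimal cell decomposition). If some $F_i$ is strictly monotone in both variables on $I_0\times I_0$, then Theorem \ref{mainthm} ($=\ref{bingpthm}$) applied to $F_i:I_0\times I_0\to M$ yields that $I_0$ is gp-short, and we are done. The only alternative consistent with injectivity in each variable is the degenerate decomposition $F(x,y)=(h_1(x),h_2(y))$ (or a coordinate swap) for definable injective $h_1,h_2$. Here the condition $\phi'(x_0)=e\in\phi'(I_0)$ is crucial: evaluating gives $\psi(\phi'(x))=F(x,x_0)=(h_1(x),h_2(x_0))$ and $\psi(\phi'(y))=F(x_0,y)=(h_1(x_0),h_2(y))$, so for every $z\in I_0$ one obtains $(h_1(z),h_2(x_0))=(h_1(x_0),h_2(z))$, forcing both $h_1$ and $h_2$ to be constant on $I_0$ and contradicting injectivity. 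Hence the good case must hold and $I_0$ is gp-short.

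Finally, varying $x_0$ over a cofinite subset of $I$ yields a cover of $I$ (up to finitely many points) by gp-short open sub-intervals. Since $I$ is connected, overlapping pairs share a common point, so Theorem \ref{gp-short} applied iteratively gives that $I$ is a finite union of gp-short sub-intervals and hence itself gp-short. The main technical obstacle is parameter-dependence: the map $F=F_{x_0}$ and its cell decomposition both depend on $x_0$, and one must ensure that $(x_0,x_0)$ lies in the interior of a two-dimensional nice cell for $F_{x_0}$; this is handled by choosing $x_0$ generically and invoking uniformity of o-minimal cell decomposition in the parameter $x_0$.
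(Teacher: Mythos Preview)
Your proposal has two genuine gaps. The more decisive one is the final covering step: even if every generic $x_0\in I$ admits a gp-short open neighbourhood $I_{x_0}$, it does not follow that $I$ is gp-short. The paper's own remark immediately after Theorem~\ref{gp-short} furnishes the counterexample: in $\la\mathbb R,<\ra$ expanded by the restriction of $+$ to $\{(a,b):|a-b|\le 1\}$, every point has a gp-short neighbourhood yet the line is gp-long. Theorem~\ref{gp-short} needs a \emph{common} point in the closures of all members of the family, which your $I_{x_0}$'s lack, and ``iterating over overlapping pairs'' does not terminate. The paper circumvents this by arguing by contradiction from the outset: it assumes $I$ is gp-long and works throughout with \emph{long}-generic points and the $\shcl$-pregeometry, never attempting to patch local gp-short pieces.

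The second gap is the one you flag yourself but do not resolve: you need $(x_0,x_0)$ to sit in the interior of a two-cell of the decomposition adapted to $F_{x_0}$, but $F_{x_0}$ is defined over $x_0$, so $(x_0,x_0)$ has dimension $0$ over the relevant parameters and genericity of $x_0$ in $I$ gives you nothing. More tellingly, your degenerate case $F(x,y)=(h_1(x),h_2(y))$ is exactly the situation in which $\mu$ is injective on $I_0\times I_0$; this does occur (whenever $\dim G\ge 2$ it is entirely possible that $\phi(I)^2$ is two-dimensional with $\mu$ generically injective), and it cannot be dispatched by a two-variable argument. The paper's proof handles precisely this by taking the \emph{maximal} $k$ for which $f_k:I^k\to G$ is finite-to-one on some $k$-long box, and then uses Lemma~\ref{shcl-eq1} to show that the fibres of $f_{k+1}$ over a $(k{+}1)$-long box are not merely infinite but have positive long dimension, which lets one collapse $f_{k+1}(B')$ into a single translate $f_k(B)\cdot a_{k+1}$ and invoke Corollary~\ref{injective}. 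Your $k=2$ setup has no mechanism to climb to higher $k$, and Lemma~\ref{shcl-eq1} is the missing ingredient.
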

\proof Without loss of generality, $G$ is defined over $\emptyset$. We let $I\sub G$
be a $\emptyset$-definable one-dimensional set. By Corollary \ref{EI1}, $I$ is in
bijection with a definable subset of $M$, and so we identify $I$ with this definable
subset and assume that the intersection of every $E$-class with $I$ is a singleton.
We suppose towards contradiction that $I$ is gp-long.

For every $k$ we let $f_k:I^k\to G$ be the function defined by
$f(x_1,\ldots,x_k)=x_1\cdots x_k$ (multiplying in $G$).

We take $k\geq 1$ maximal such that on some $k$-long box $B\sub I^k$ the function
$f_k$ is finite-to-one. By taking a sub-box of $B$ we may assume that $f_k$ is
injective on $B$. We assume that $B$ is definable over $\emptyset$ and let $\bar
a\in B$ be long-generic in $B$.

\begin{claim*} Let $a_{k+1}$ be long-generic in $I$ over $\bar a$. Then there
is  a $k+1$-long box $B'\sub B\times I$ containing $a'=\la \bar a,a_{k+1}\ra$, such
that $f_{k+1}(B')$ is contained in $f_{k+1}(B'\times \{a_{k+1}\})\sub f_k(B)\cdot
a_{k+1}$.
\end{claim*}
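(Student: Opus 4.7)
The plan is to exploit the maximality of $k$: on $B\times I$ the map $f_{k+1}$ cannot be finite-to-one (else it would be so on some $(k+1)$-long box, contradicting the choice of $k$), so the generic fibers of $f_{k+1}|_{B\times I}$ have positive dimension. Since $f_k$ is injective on $B$, each fiber projects injectively onto the last coordinate, so the generic fibers are exactly one-dimensional, and consequently $\dim V = k$ for $V := f_{k+1}(B\times I)$.

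Next I would show that $g := f_{k+1}(a')$ is a generic point of both $V$ and of the translate $W := f_k(B)\cdot a_{k+1}$ even over $A\cup\{a_{k+1}\}$. Using injectivity of the map $(\bar x, x)\mapsto (f_{k+1}(\bar x, x), x)$ from $B\times I$ to $G\times I$, one gets $\dim(g, a_{k+1}/A) = k+1$, whence $\dim(g/Aa_{k+1}) = k = \dim V$; and $g = f_k(\bar a)\cdot a_{k+1}$ is generic in the $k$-dimensional translate $W\sub V$ over $Aa_{k+1}$ because $\bar a$ is long-generic (hence generic) in $B$. By a uniform cell decomposition of $G$ over $Aa_{k+1}$ compatible with both $V$ and $W$, the $k$-cell $C$ containing $g$ is contained in $V\cap W\sub W$, so one can pick a definable open $U\ni g$ in $G$ with $U\cap V\sub W$. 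After refining so that $f_{k+1}$ is continuous on the cell containing $a'$, the preimage $f_{k+1}^{-1}(U)$ is a definable open neighborhood of $a'$ in $B\times I$, mapping under $f_{k+1}$ entirely into $W = f_k(B)\cdot a_{k+1}$.

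Finally, since $\lgdim(a'/A) = k+1$, Fact~\ref{nbox} furnishes a $(k+1)$-long box $B_0\sub B\times I$ around $a'$ with $a'$ long-central. Applying Fact~\ref{shcl1}(3) coordinate-by-coordinate, one shrinks each of the $k+1$ gp-long intervals of $B_0$ around the corresponding component of $a'$, keeping each gp-long, to obtain a smaller $(k+1)$-long box $B'\sub B_0\cap f_{k+1}^{-1}(U)$. Then $f_{k+1}(B')\sub U\cap V\sub f_k(B)\cdot a_{k+1}$, and the intermediate containment $f_{k+1}(B')\sub f_{k+1}(B'\cap (I^k\times\{a_{k+1}\}))$ is immediate: the image under $f_{k+1}$ of this last-coordinate slice of $B'$ is $f_k(\pi(B'))\cdot a_{k+1}\sub f_k(B)\cdot a_{k+1}$, where $\pi$ is the projection to the first $k$ coordinates. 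The main obstacle is coordinating the shrinking step with the openness of $f_{k+1}^{-1}(U)$: the latter relies on continuity of $f_{k+1}$ at $a'$ (obtained from uniform cell decomposition of its graph, possibly after passing to a representative in $X$), while the former uses the long-central position of $a'$ in $B_0$ to preserve the gp-long character of each coordinate interval.
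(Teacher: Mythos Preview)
Your approach has two genuine gaps.

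First, you treat $G$ as if it were a definable subset of some $M^n$: you invoke a ``cell decomposition of $G$'', pick a ``definable open $U\ni g$ in $G$'', and speak of continuity of $f_{k+1}$ as a map into $G$. But at this point in the paper $G=X/E$ is only interpretable; the group topology on $G$ (the $t$-topology) is constructed only in Section~8.2, \emph{after} the present theorem, and there is no cell decomposition for subsets of a quotient. The paper's argument avoids this entirely by working with the equivalence relation $E'$ on $B\times I\subseteq M^{k+1}$ (where $xE'y$ iff $f_{k+1}(x)=f_{k+1}(y)$), never needing any structure on $G$ beyond dimension.

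Second, and more seriously, even if you reformulate purely in terms of the definable set $Z=\{x'\in B\times I: f_{k+1}(x')\in f_k(B)\cdot a_{k+1}\}$, this $Z$ is definable only over $Aa_{k+1}$, and $\lgdim(a'/Aa_{k+1})=k$, not $k{+}1$ (the last coordinate of $a'$ is $a_{k+1}$ itself). Hence Fact~\ref{nbox} cannot produce a $(k{+}1)$-long box inside $Z$, and Fact~\ref{shcl1}(3) does not let you shrink a long box into an arbitrary neighbourhood while keeping all $k{+}1$ sides gp-long. The paper circumvents this by first proving, via Lemma~\ref{shcl-eq1}, that the connected component $[a']^0$ of the $E'$-class is \emph{not} gp-short, so its projection to the last coordinate contains a gp-long interval $J$. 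Crucially, the parameters $A'$ defining $J$ can be chosen (using Fact~\ref{shcl1}(3)) so that $\lgdim(a'/A')=k{+}1$, and the first-order property ``the projection of $[x]^0$ onto the last coordinate contains $J$'' lies in $\tp(a'/A')$. Fact~\ref{shcl-types} then yields a $(k{+}1)$-long box $B'$ contained in the realizations of this type; since $a_{k+1}\in J$, every $x'\in B'$ is $E'$-equivalent to some $(x,a_{k+1})$ with $x\in B$, giving $f_{k+1}(B')\subseteq f_k(B)\cdot a_{k+1}$. The key idea you are missing is the appeal to Lemma~\ref{shcl-eq1} to force the fibres to be gp-long rather than merely infinite.
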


\begin{proof}[Proof of Claim] Define on $B\times I$ the equivalence relation $xE'
y$ iff $f_{k+1}(x)=f_{k+1}(y)$.  By the maximality assumptions on $k$, the union of
all finite $E$-classes must have long dimension smaller than $k+1$. Therefore, since
$\lgdim\la a,a_{k+1}\ra =k+1$, the $E'$-class of $a'=\la a,a_{k+1}\ra$ is infinite.

We claim that $\lgdim [a']>0$. Indeed, assume towards contradiction that $[a']$ is
gp-short. By Fact \ref{int-fact0}(iii), there is a formula $\psi(y)$ over
$\emptyset$ such that $\psi(a')$ holds and if $\psi(b)$ holds then $[b]$ is
gp-short. Thus, there exists, by Lemma \ref{nbox}, a $k+1$-long box $B_0 \sub
B\times I$ containing $a'$ such that for every $x\in B_0$, the $E'$-class $[x]$ is
infinite and gp-short. However, this implies that $\dim(B_0/E')<k+1$, contradicting
Lemma \ref{shcl-eq1}.

We therefore showed that the $E'$-class  of $a'$ is not gp-short. A similar argument
can show a stronger statement, namely that the definably connected component of
$[a']$ which contains $a'$, call it $[a']^0$, is also not gp-short.

Because $f_k|B$ was finite-to-one the projection of each $E'$-class on the
$k+1$-coordinate is a finite-to-one map. It follows that the image of $[a']$ under
this projection is gp-long, call it $J$.

By Fact \ref{shcl1}(3), we may replace $J$ by a possibly smaller gp-long interval
and so assume that the long dimension of $a'$ over the parameter set  $A'$ defining
$J$ is still $k+1$. Let $p(x)=\tp(a'/A')$.
 By \ref{shcl-types},
there exists a $k+1$-long box $B'\sub p(M)$, in which $a'$ is long-central. Because
$B'\sub p(M)$,  for every $x\in B'$ the projection of $[x]^0$ onto the last
coordinate contains $J$. In particular, this projection contains the point
$a_{k+1}$. This means that every $x'\in B'$ has an $E'$-equivalent element of the
form $\la x,a_{k+1}\ra$, with $x\in B$, and hence $f_{k+1}(x')=f_k(x)a_{k+1}$.

This ends the proof of the claim.
\end{proof}

Let's recall what we have so far: (i) The restriction of  $f_k$ to $B$ is an
injective map and (ii) $f_{k+1}(B')\sub f_{k}(B)\cdot a_{k+1}$.

Since $f_{k+1}(x,a_{k+1})=f_k(x)a_{k+1}$, (i) implies that the restriction of
$f_{k+1}$ to $B\times \{a_{k+1}\}$ is also injective. Therefore, we have a definable
bijection
$$\si: f_{k}(B)a_{k+1}\to B$$ (given by  $\si(y)= f_{k}^{-1}(ya_{k+1}^{-1})$) (where $a_{k+1}^{-1}$ is the
group inverse in $G$ of $a_{k+1}$).

By (ii),  we have a map from the $k+1$-long box $B'$ into $B$, defined by
$h(x_1,\ldots, x_{k+1})=\si(f_{k+1}(x_1,\ldots, x_{k+1}))$. Notice that because
$f_{k+1}$ is group multiplication and $\si$ is injective, the map $h$ is injective
in each coordinate separately. By Corollary \ref{injective}, at least one of the
intervals which make up $B'$ must be gp-short, contradicting the fact that $B'$ was a
$k+1$-long box. This shows that $I$ is gp-short, thus ending the proof of Theorem
\ref{inter-group1}.\qed

\subsection{Endowing interpretable groups with a topology}

A fundamental tool in the theory of definable groups in o-minimal structures is
Pillay's theorem, \cite{Pillay2}, on the existence of a definable basis for a group
topology on a definable group $G$, a topology which agrees with the subspace
$M^n$-topology at every generic point of $G$ (with $G\sub M^n$). Moreover, this
topology can be realized using finitely many charts, each definably homeomorphic to
an open subset of $M^k$ (with $k=\dim G$).

Here we prove an analogous result for interpretable groups but the topology we
obtain will initially not have finitely many charts.

We start with some preliminary definitions and results.
\begin{defn} A definable set $Y\sub G$ is
called {\em large in $G$} if $\dim(G\setminus Y)<\dim G$.

\end{defn}

\begin{fact} \vlabel{large} Let $G$ be an interpretable group, $Y\sub G$ a definable set over $A$. If $Y$ is
large in $G$ and $\dim G=n$ then $G$ can be covered by $\leq n+1$ translates of $Y$.
\end{fact}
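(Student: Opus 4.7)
The plan is a dimension-drop iteration. Set $Z_0 = G$, and inductively pick $g_{k+1} \in G$ and set $Z_{k+1} = Z_k \setminus g_{k+1} Y$. If each step strictly decreases dimension, i.e.\ $\dim Z_{k+1} < \dim Z_k$, then since $\dim Z_0 = n$ we reach $Z_{n+1} = \emptyset$ after at most $n+1$ stages, giving $G = g_1 Y \cup \cdots \cup g_{n+1} Y$.

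The heart is the following claim: for any nonempty definable $Z \sub G$ with $\dim Z = m$, and any parameters $A' \sub \CM^{eq}$ over which $Y$ and $Z$ are definable, every $g \in G$ with $\dim(g/A') = n$ satisfies $\dim(Z \setminus gY) \leq m - 1$. Granting this, the iteration is automatic: $Z_k$ is definable from $A \cup \{g_1,\ldots,g_k\}$, and a sufficiently saturated $\CM$ supplies $g_{k+1} \in G$ generic over these parameters, to which the claim applies.

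To prove the claim, introduce the definable set
\[
W \;=\; \{(g,z) \in G \times G : z \in Z,\ g^{-1}z \in G \setminus Y\}.
\]
The map $(g,z) \mapsto (z, g^{-1}z)$ is a definable bijection $W \to Z \times (G \setminus Y)$, so by additivity of dimension on products of $\CM$-quotients (Claim \ref{dimeqrel}),
\[
\dim W \;=\; \dim Z + \dim(G \setminus Y) \;\leq\; m + (n-1).
\]
Now fix $g \in G$ with $\dim(g/A') = n$, and let $z \in Z \setminus gY$ be arbitrary. Then $(g,z) \in W$, so by the dimension formula (Fact \ref{int-dim-props}(1)),
\[
\dim(z/A'g) \;=\; \dim(g,z/A') - \dim(g/A') \;\leq\; \dim W - n \;\leq\; m - 1.
\]
Taking the maximum over such $z$ gives $\dim(Z \setminus gY) \leq m - 1$, as required.

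The step requiring the most care is ensuring that Section 7's dimension theory for $\CM$-quotients---the dimension formula, additivity on products, and the existence of sufficiently generic elements in $\CM^{eq}$---applies uniformly to $G$, $G \times G$, and the interpretable sets built from them. The use of the explicit bijection $W \cong Z \times (G \setminus Y)$ is what lets us reduce everything to Claim \ref{dimeqrel} rather than needing a more delicate fiber-dimension theorem for arbitrary interpretable maps. Once this formalism is in hand, both the claim and the iteration are purely formal.
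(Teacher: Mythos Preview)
Your proof is correct and follows essentially the same iterative strategy as the paper: pick a generic translate at each stage and use the dimension formula to show the dimension of the uncovered part drops by at least one. Your packaging via the auxiliary set $W \cong Z \times (G \setminus Y)$ is a clean variant of the paper's direct argument with a generic pair $(g,h)$ (the paper shows $hg^{-1}$ is generic in $G$ over $A$, hence lies in $Y$), but the underlying mechanism is identical.
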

\proof This is standard. We have $m=\dim(G\setminus Y)\leq n-1$. We take $g$ generic
in $G$ and $h$ generic in $G\setminus Y$ over $g$ (i.e. $\dim(h/gA)=m$). Then, by
the dimension formula, $\dim(hg^{-1}/hA)=\dim(g/A)$ and hence $hg^{-1}$ is generic
in $G$ over $A$. It follows that $hg^{-1}\in Y$ and therefore $h\in Yg$.

We showed that every element in $G$ of dimension $m$ over $A$ belongs to $Y\cup Yg$.
In particular, $\dim (G\setminus (Y\cup Yg))\leq m-1 \leq n-2$. We proceed by
induction. \qed

\subsection*{Defining the topology}
\leavevmode \bigbreak We first obtain $U_1,\ldots, U_k$ as in Claim
\ref{general-eq}. Namely, each $U_i$ is an open subset of $M^{k_i}$ and each class
in $U_i$ has dimension $d_i$ and projects homeomorphically onto the first $d_i$
coordinates. Write $x=\langle x',x''\rangle\in M^{d_i}\times M^{k_i-d_i}$, with
$x'=\pi_{d_i}(x)$. Since every $E$-class projects bijectively into $M^{d_i}$, the
set $\pi^{-1}(x')\cap U_i$ has a single representative for each $E$-class (if
$d_i=0$ then $x=x''\in M^{k_i}$). It is contained in the set $\{ x'\} \times
M^{k_i-d_i}$ and because $U_i$ is open   can be identified with an open subset of
$M^{k_i-d_i}$. Call this $x'$-definable set $U_i(x)$. We say that $V\sub U_i(x)$ is
an $M^{k_i-d_i}$-open set, if under this identification $V$ is open. We have an
obvious definable injection of each $U_i(x)$ into $G$. For $\la x',x''\ra\in U_i$,
let $[x',x'']$ denote $[\la x',x''\ra]$.

\begin{fact}\vlabel{top-facts}\leavevmode In the above setting,
\begin{enumerate}
\item For $g\in U_i$ and $x=\la x',x''\ra $ generic in the class $g$ (over the
element $g\in \CM^{eq}$ and $A\sub M^{eq}$), we have $\dim(x'/gA)=d_i$ and $x''\in
\dcl(x'g)$.

\item Assume that $x$ is generic in $U_i$ and write $x=\la x',x''\ra\in
M^{d_i}\times M^{k_i-d_i}$.
 Then $x''$ is generic in $U_i(x)$ over $x'$.

 \item If $x=\la x',x''\ra $ is generic in $U_i$, $h\in G$, and $y=\la y',y''\ra$ is generic in
 the class $h$ over the elements $x,h$ then $\dim(x''/x'y')=\dim(x''/\emptyset)=k_i-d_i$.
 \end{enumerate}
 \end{fact}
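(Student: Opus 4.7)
The plan is to establish the three parts in sequence, each reducing to an application of the dimension formula on $\CM^{eq}$ (Fact~\ref{int-dim-props}) combined with the structural properties of the $U_i$'s from Claim~\ref{general-eq}: every $E$-class in $U_i$ has dimension $d_i$, and $\pi_{d_i}$ restricts to a homeomorphism from each such class onto an open subset of $M^{d_i}$.

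For (1), the class $g\sub U_i$ is $g$-definable of dimension $d_i$, and the inverse of $\pi_{d_i}$ restricted to the class is $g$-definable as well. Since $x$ being generic in $g$ over $gA$ means $\dim(x/gA)=d_i$, and this inverse recovers $x$ from $x'$ and $g$, we immediately obtain $x\in\dcl(x'g)$, hence in particular $x''\in\dcl(x'g)$. The dimension formula then gives $\dim(x'/gA)=\dim(x/gA)=d_i$.

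For (2), since $x$ is generic in $U_i$ we have $\dim(x)=k_i$, and since $\pi_{d_i}(U_i)$ is open in $M^{d_i}$ we have $\dim(x')\le d_i$; the dimension formula therefore forces $\dim(x''/x')\ge k_i-d_i$. Conversely, $U_i(x)$ was identified with an open subset of $M^{k_i-d_i}$, so $\dim(x''/x')\le k_i-d_i$. Hence $\dim(x''/x')=k_i-d_i$ and $x''$ is generic in $U_i(x)$ over $x'$.

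For (3), let $d_j$ be the dimension of the class $h$. Applying (1) to that class gives $y''\in\dcl(y'h)$, so $\dim(y'/xh)=\dim(y/xh)=d_j$. Since $y'\in M^{d_j}$ and adding parameters only decreases dimension, this forces $\dim(y'/x')=\dim(y'/x)=d_j$. Expanding $\dim(x'',y'/x')$ in two different orders via the dimension formula,
\[
\dim(x''/x'y')+\dim(y'/x') \;=\; \dim(y'/x'x'')+\dim(x''/x') \;=\; \dim(y'/x)+(k_i-d_i),
\]
and cancelling $d_j=\dim(y'/x)=\dim(y'/x')$ yields $\dim(x''/x'y')=k_i-d_i$. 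Finally, $\dim(x''/\emptyset)\ge\dim(x''/x'y')=k_i-d_i$ by monotonicity, while the reverse inequality is immediate from $x''\in M^{k_i-d_i}$. No serious obstacle is expected: everything is routine bookkeeping with the dimension formula in $\CM^{eq}$, using only the openness of $U_i$ in $M^{k_i}$, the openness of $\pi_{d_i}(U_i)$ in $M^{d_i}$, and the homeomorphism property of $\pi_{d_i}$ on each class.
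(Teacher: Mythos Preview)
Your proof is correct and follows essentially the same route as the paper: (1) and (2) are unwound from the bijectivity of $\pi_{d_i}$ on each class (the paper simply calls these immediate), and (3) is the same double expansion of $\dim(x'',y'/x')$ via the dimension formula, cancelling the common term $d_j=\dim(y'/x')=\dim(y'/x)$. Your explicit verification that $\dim(x''/\emptyset)=k_i-d_i$ is an extra detail the paper leaves implicit.
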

 \proof (1), (2) are immediate from the fact that each class in $U_i$ projects bijectively onto the first
 $d_i$ coordinates. For (3), let $y\in U_j$ and note that by genericity, $\dim(y/x,h)=\dim([y])=d_j$ and
 therefore by (1), $\dim(y'/x,h)=d_j$. But then, since $y'\in M^{d_j}$ we clearly
 must have $$\dim(y'/x')=\dim(y'/x)=d_j.$$

 By the dimension formula
 $$\dim(y'/x'x'')+\dim(x''/x')+\dim(x'/\emptyset)=\dim(x''/x'y')+\dim(y'/x')+\dim(x'/\emptyset).$$
 Since $\dim(y'/x'x'')=\dim(y'/x')$, we have $\dim(x''/x'y')=\dim(x''/x')=k_i-d_i$,
 hence $x''$ is generic in $U_i(x)$ over $x'y'$.\qed

We assume from now on that for $i=1,\ldots, r$, we have $\dim (U_i/E)=k_i-d_i=\dim
G=n$ and for $i=r+1,\ldots, k$ we have $\dim (U_i/E)<\dim G$. Let $U=U_1$.

\begin{fact} \vlabel{continuous}Let $f:G\to G$ be a partial $A$-definable function. Let $x=\la x',x''\ra$ be a generic element of $U$
over $A$ and let $g=[x]$.
 Let $h=f(g)$ and choose $y=\la y',y''\ra$
a generic element of the class $h=f(g)$ over $x$.

If $y\in U_j$, for some $j=1,\ldots,k$, then there is an $Ax'y'$-definable open
$M^n$-neighborhood $V\sub U(x)$ such that for every $z\in V$ there is a unique
element $w\in U_j(y)$, with $f([x',z])=[y',w]$.
 We denote this local map, defined over $x'y'$, by $f^*$. The map $f^*$ is continuous
 at $x''$,
 as a function from an open subset of $M^n$ into $M^{k_j-d_j}$.
 \end{fact}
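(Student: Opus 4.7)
The plan is to realize the sought local map $f^*$ as the partial function whose graph is the $Ax'y'$-definable set
$$T := \{(z,w) \in U(x) \times U_j(y) : f([x',z]) = [y',w]\},$$
noting that $(x'',y'') \in T$ since $f([x',x'']) = h = [y',y'']$. The strategy is to compute $\dim T = n$, observe that $(x'',y'')$ is generic in $T$, and then extract $V$ and continuity via o-minimal generic cell decomposition.

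First, I would verify that $T$ is the graph of a partial function, which simultaneously establishes the uniqueness assertion. By Claim \ref{general-eq} applied to $U_j$, each $E$-class meeting $U_j$ projects homeomorphically (in particular, injectively) under $\pi_{d_j}$. Consequently $U_j(y) = \pi_{d_j}^{-1}(y') \cap U_j$ meets any given $E$-class in at most one point, so for each $z$ there is at most one $w$ with $(z,w) \in T$.

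Second, I would establish $\dim T = n$ and that $(x'',y'')$ is generic in $T$ over $Ax'y'$. The inequality $\dim T \leq n$ is immediate from the injectivity of $\pi_1\colon T \to U(x) \subseteq M^n$. For the reverse, Fact \ref{top-facts}(3) gives $\dim(x''/Ax'y') = k_1 - d_1 = n$; and $y''$ is forced to be the unique element of $M^{k_j-d_j}$ satisfying $(y',y'') \in U_j$ and $[y',y''] = f([x',x''])$ (again by the injective projection property of $U_j$-classes), so $y'' \in \dcl(Ax'x''y')$. The dimension formula then yields $\dim((x'',y'')/Ax'y') = n$, confirming that $(x'',y'')$ is generic in $T$ and that $\dim T = n$.

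Finally, I would apply o-minimal generic continuity. The $Ax'y'$-definable set of points of $T$ at which $\pi_1$ is a local homeomorphism onto an open neighborhood of $M^n$ and the associated local inverse (giving $w$ continuously in terms of $z$) is continuous is large in $T$ by cell decomposition, hence contains the generic point $(x'',y'')$. Projecting this neighborhood yields an $Ax'y'$-definable open $M^n$-neighborhood $V \subseteq U(x)$ of $x''$ on which $f^*$ is defined and continuous at $x''$, as required. The main care required is in the dimension computation, particularly the verification that $y'' \in \dcl(Ax'x''y')$ using the bijective projection of $U_j$-classes; the remainder is routine o-minimal cell decomposition.
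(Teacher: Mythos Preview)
Your proposal is correct and follows essentially the same approach as the paper: both hinge on Fact~\ref{top-facts}(3) to see that $x''$ is generic in $U(x)$ over $Ax'y'$, and then invoke o-minimal generic continuity of the resulting $Ax'y'$-definable map $z\mapsto w$. The only difference is packaging---you work with the graph $T$ and its dimension, while the paper argues directly with the formula $\phi(z)$ asserting existence and uniqueness of $w$---but the underlying argument is the same.
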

 \proof By Fact \ref{top-facts}(3), $x''$ is generic in $U(x)$ over $x'y'$.
 We now consider the formula $\phi(z)$, over the parameters $Ax'y'$,  which says that there is a unique element
 $w\in U_j(y)$ such that $[y',w]=f([x',z])$. The formula $\phi(z)$ holds for $x''$, which is generic in $U(x)$
 over $x'y'$.
 It follows that there exists an $M^n$-neighborhood $V\sub U(x)$ of $x''$ such that every $z\in V$ satisfies $\phi$.
 We therefore obtain a function, definable over $Ax'y'$, from $V$ into $U_j(y)$, and by genericity of $x''$,
 this function is continuous near $x''$.\qed

\begin{theorem} Let $x_0=\la x',x''\ra$ be a generic element in $U$  and
let $\{V_t:t\in T\}$ be a definable basis  of (sufficiently small)
$M^n$-neighborhoods of $x''$, all contained in $U(x_0)$.

Then the family $$\mathcal B=\{gV_t:g\in G,\,\, t\in T\}$$ is a basis for a topology
on $G$, making $G$ into a topological group.
\end{theorem}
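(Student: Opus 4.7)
The plan is to reduce all assertions—that $\mathcal{B}$ is a basis and that the group operations are continuous—to the continuity statement of Fact \ref{continuous} at the generic point $x_0$, using the fact that $\mathcal{B}$ is tautologically closed under left translation: $h\cdot(gV_t)=(hg)V_t\in\mathcal{B}$.

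For the basis property, given $h\in g_1V_{t_1}\cap g_2V_{t_2}$, I would left-translate by $x_0h^{-1}$ to reduce to producing a $V_{t_3}$ with $x_0\in V_{t_3}\subset g_i'V_{t_i}$ for $i=1,2$, where $g_i'=x_0h^{-1}g_i$. Arranging that $x_0$ is generic over the parameters defining the basis $\{V_t\}$ and over $g_1',g_2'$ (shrinking to an appropriate sub-basis if necessary), Fact \ref{continuous} applied to each left multiplication $\ell_{(g_i')^{-1}}\colon y\mapsto (g_i')^{-1}y$ gives a continuous local representative at $x''$ sending $x_0\mapsto g_i^{-1}h\in V_{t_i}\subset U(x_0)$. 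Pulling the $M^n$-neighborhood $V_{t_i}$ back through this local representative yields an $M^n$-open neighborhood of $x''$, and the basis $\{V_t\}$ contains a $V_{t_3}$ inside the intersection of these two preimages. Then $\ell_{(g_i')^{-1}}(V_{t_3})\subset V_{t_i}$, i.e.\ $V_{t_3}\subset g_i'V_{t_i}$, as required.

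By construction, left translation by any $h\in G$ permutes $\mathcal{B}$ and is therefore a homeomorphism in the resulting topology; this allows me to verify continuity of right translations, inversion, and multiplication at a single generic reference point and propagate to all of $G$. For right translation $r_g\colon y\mapsto yg$, inversion $\iota\colon y\mapsto y^{-1}$, and multiplication $m\colon (y_1,y_2)\mapsto y_1y_2$, I apply Fact \ref{continuous} (in its obvious two-variable extension for $m$, whose proof uses only genericity plus cell decomposition) at $x_0$, respectively $(x_0,x_0)$, to obtain continuous local representatives between the relevant $M^n$-charts (possibly a different chart $U_j$ for the target, but neighborhoods transport across charts via the homeomorphisms of Claim \ref{general-eq}). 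Pulling $M^n$-neighborhoods of $x_0g$, $x_0^{-1}$, or $x_0^2$ back through these representatives produces basis elements at $x_0$ (or at $(x_0,x_0)$) whose image lies in the target neighborhood, which is the required continuity statement.

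The principal technical obstacle is bookkeeping the genericity of $x_0$ over the various parameters entering at each step—the elements $g_i,h$ in the basis argument, the parameter $g$ in the right-translation argument, and so on—since Fact \ref{continuous} requires $x_0$ to be generic over the parameter set defining the map. This is handled uniformly by choosing the defining parameter set $A$ of the basis $\{V_t\}$ large enough, and refining $\{V_t\}$ to a sub-basis over extensions of $A$ whenever further parameters appear; this is always possible in our sufficiently saturated $\CM$.
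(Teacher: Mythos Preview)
Your overall strategy---exploit that $\mathcal{B}$ is closed under left translation to reduce every check to a continuity statement at the single reference point $x_0$, and discharge that via Fact~\ref{continuous}---has a genuine gap in the genericity bookkeeping, and your proposed fix does not close it.

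Fact~\ref{continuous} requires $x_0$ to be generic in $U$ \emph{over the parameters defining the map}. In your basis argument the map is $\ell_{(g_i')^{-1}}$ with $g_i'=g_0h^{-1}g_i$, so its parameter is $(g_i')^{-1}=g_i^{-1}hg_0^{-1}$. Since $g_0=[x_0]$, whenever $g_i^{-1}h$ lies in a small definable closure (for instance $g_i^{-1}h\in\dcl(\emptyset)$, which can certainly occur for suitable $g_i,h$) this parameter is interdefinable with $g_0$, and then $\dim\bigl(x_0/(g_i')^{-1}\bigr)\le\dim(x_0/g_0)=d_1<k_1$: the hypothesis of Fact~\ref{continuous} fails. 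The same obstruction hits $r_g$ for arbitrary $g$ (take $g=g_0$), and the two-variable version at $(x_0,x_0)$ fails outright because $\dim(x_0,x_0)=k_1<2k_1=\dim(U\times U)$. Enlarging $A$ or refining the basis cannot help: once $x_0$ is fixed you cannot make it generic over \emph{more} parameters, and the elements $g,h,g_i$ are universally quantified \emph{after} $x_0$ is chosen.

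The missing ingredient is exactly the paper's Claim~\ref{coincide}: the $t$-topology agrees with the $M^n$-topology not only near $g_0$ but near \emph{every} $g$ generic over $g_0$, in a suitable chart $U_j(y)$. That claim is proved by applying Fact~\ref{continuous} to $\sigma\colon h\mapsto gg_0^{-1}h$, and the point of the computation is that $x_0$ \emph{is} generic over the single parameter $gg_0^{-1}$ once $g$ is chosen generic over $g_0$. With Claim~\ref{coincide} (and the companion Claims~\ref{continuous2} and~\ref{continuous3}) in hand one follows the template of \cite{Mar}: for each neighborhood-basis axiom at $1$, pick a \emph{fresh} $g$ generic over all parameters in sight, translate into the chart at $g$ where the $t$-topology is the $M^n$-topology, invoke o-minimal continuity there, and translate back via left multiplication. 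This ``move to a fresh generic chart'' step is precisely what your argument lacks.
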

\proof Consider $g_0=[x_0]$ and the family $\{g_0^{-1}V_t:t\in T\}$. Just like the
proof of Lemma 2.12 in \cite{Mar}, we will prove that this family forms a basis of
neighborhoods of $1\in G$, for a group-topology on $G$ whose basis is $\mathcal B$.
Indeed, it is not hard to see that $\mathcal B$ is a basis for some topology, call
it the {\em $t$-topology on $G$}. To see that this topology makes $G$ into a
topological group, we first prove:

\begin{claim}\vlabel{coincide} Let $g$ be generic in $G$ over $g_0$ and let $y=\la y',y''\ra$ be
generic in the class $g$ over $g,g_0$. Then there is an  open $M^n$-neighborhood
$W\sub U(y)$ of $y''$ and a $t$-neighborhood $V\sub  G$ of $g$ such that the
canonical embedding of $U(x)$ into $G$ induces a homeomorphism of $W$ and $V$.

Roughly speaking, we say that the $t$-topology coincides with the $M^n$-topology in
some neighborhood of $g$.
\end{claim}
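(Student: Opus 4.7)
The plan is to apply Fact \ref{continuous} to the left-translation map $L\colon G\to G$ defined by $L(h)=gg_0^{-1}h$, which sends $g_0$ to $g$, and then to invoke o-minimal invariance of domain to upgrade the resulting locally defined continuous map to a local homeomorphism. The output is a definable homeomorphism between an $M^n$-neighborhood of $x''$ in $U(x_0)$ and an $M^n$-neighborhood of $y''$ in $U_j(y)$, where $j$ is the index with $y\in U_j$; pushing forward by $L$ identifies these with a basic $t$-neighborhood of $g$ and the embedded chart at $y$.

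First I would verify that $y$ lies in a full-dimensional stratum and is generic there. Since $\dim g=n$, the class $[y]=g$ has dimension $n$, forcing $y\in U_j$ with $k_j-d_j=n$, i.e.\ $j\le r$. The dimension formula, combined with $g=[y]\in\dcl(y)$, $\dim(y/g)=d_j$ and $\dim(g)=n$, yields $\dim(y)=d_j+n=k_j$, so $y$ is generic in $U_j$ over $\emptyset$; by Fact \ref{top-facts}(2), $y''$ is then generic in $U_j(y)$ over $y'$. I would also take $x_0$ generic in $U$ over $\{g,g_0,y\}$\scht this is harmless since one may replace the ambient generic point defining the basis without affecting the $t$-topology, or equivalently restrict attention to those $g$ independent from $x_0$ over $g_0$.

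Next I would apply Fact \ref{continuous} to $L$ with parameter set $A=\{g,g_0\}$. This produces an open $M^n$-neighborhood $V_0\sub U(x_0)$ of $x''$ and a definable continuous function $L^*\colon V_0\to U_j(y)\sub M^n$ with $L^*(x'')=y''$ and
\[
gg_0^{-1}[x',z]\;=\;L([x',z])\;=\;[y',L^*(z)]\qquad\text{for every }z\in V_0.
\]
The map $L^*$ is injective, for if $L^*(z_1)=L^*(z_2)$ then $L([x',z_1])=L([x',z_2])$, and since $L$ is a bijection of $G$ and each of $U(x_0)$ and $U_j(y)$ contains a unique representative per $E$-class, we conclude $z_1=z_2$.

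Finally, a definable continuous injection between open subsets of $M^n$ is locally open at every generic point (o-minimal invariance of domain, a consequence of cell decomposition applied to the graph). Hence after shrinking $V_0$, the map $L^*$ becomes a homeomorphism onto an open neighborhood of $y''$ in $U_j(y)$. Choosing $V_t\sub V_0$ from the basis, the set $W:=L^*(V_t)$ is an open $M^n$-neighborhood of $y''$ in $U_j(y)$, and the canonical embedding $w\mapsto[y',w]$ of $U_j(y)$ into $G$ carries $W$ bijectively onto $V:=gg_0^{-1}V_t$, a $t$-neighborhood of $g$\scht this bijection is a homeomorphism by the previous step. The main obstacle is arranging the genericity chain so that Fact \ref{continuous} applies with $x_0$ sufficiently generic over $\{g,g_0,y\}$, and correctly invoking o-minimal invariance of domain at the generic point $x''$ in order to pass from local continuity to a local homeomorphism.
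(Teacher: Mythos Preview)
Your proof follows the same strategy as the paper: both use the left-translation $\sigma(h)=gg_0^{-1}h$ to carry the chart at $g_0$ to one at $g$, and both reduce the problem to showing that the induced map between open subsets of $M^n$ is a local homeomorphism near $x''$. The one substantive difference is the final step: the paper obtains continuity of the inverse by running the \emph{same} argument symmetrically for $\sigma^{-1}$ (using that $y$ is generic in $U_j$ and that $x_0$ is generic in the class $g_0$ over the relevant parameters), whereas you instead appeal to o-minimal invariance of domain. Both routes are valid; the paper's is more self-contained, yours avoids repeating the argument.

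One caution about your use of Fact~\ref{continuous} as a black box: its hypothesis requires $x$ to be generic in $U$ \emph{over the parameter set $A$ defining $f$}. Here $f=L$ is defined over $gg_0^{-1}$, and since $g_0=[x_0]\in\dcl(x_0)$ we have $\dim(x_0/g_0)=d_1<k_1$, so $x_0$ is \emph{not} generic over $A$, and you cannot simply re-choose $x_0$ (it is the fixed point defining the $t$-topology). The paper sidesteps this by arguing directly from Fact~\ref{top-facts}(3), which only requires $\dim(x''/x'y')=n$; this holds once $g$ is taken generic over $x_0$ (not merely over $g_0$) and $y$ is taken generic in its class over $x_0,g$. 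That is the correct way to arrange the genericity chain, and you should invoke Fact~\ref{top-facts}(3) directly rather than Fact~\ref{continuous}.
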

\proof Consider the map $\si(h)=gg_0^{-1} h$. It is definable over the element
$gg_0$ and takes $g_0$ to $g$.  Consider the first-order formula $\phi(z)$ over
$x',y'$ which says that $z\in U(x)$ and there is a unique $w\in U(y)$ such that
$\si(z)=w$ (here we identify $U(x)$ and $U(y)$ with subsets of $G$). The formula
$\phi$ holds for $x''$. By Fact \ref{top-facts} (3), $x''$ is generic in $U(x)$ over
$x',y'$ hence there exists an $M^n$-neighborhood $V\sub U(x)$ of $x''$ such that for
every $z\in V$, $\si(z)\in U(y)$. Hence $\sigma$ defines a function from $V$ into $U(y)$
sending $x''$ to $y''$. By the genericity of $x''$, we can choose such $V$ so that
$\si$ is continuous, as a map from $M^n$ into $M^n$. Because $\si$ is invertible, we
can use the same argument to find $W\sub U(y)$ for which $\si^{-1}$ is also
continuous, as  a map into $U(y)$. Shrinking $V$ and $W$ if needed we may assume
that $\si:V\to W$ is a homeomorphism with respect to the $M^n$-topology. Since left
multiplication leaves $\CB$ invariant, $\si$ is also a homeomorphism  with respect
to the $t$-topology. It follows that the $t$-topology agrees with the $M^n$-topology
on $W$.\qed

\begin{claim}\vlabel{continuous2} Assume that $f:G\to M^d$
 is an $Ag_0$-definable partial  function  and $g$ is generic in $G$ over $Ag_0$.
 Then $f$ is continuous at $g$ (with respect to the $t$-topology on
$G$ and the standard topology on $M^d$).
 \end{claim}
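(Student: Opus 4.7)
The plan is to transfer the question of $t$-continuity of $f$ at the generic point $g\in G$ into a question about continuity of an induced function on an open subset of $M^n$ at an $M^n$-generic point, and then invoke the standard o-minimal fact that a definable partial map from $M^n$ to $M^d$ is continuous at every point of its domain which is generic over the parameters of definition.

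Since $\dim(g/Ag_0)=n$, the element $g$ belongs to one of the sets $U_j$ with $\dim(U_j/E)=n$; for such a $j$ we have $k_j-d_j=n$, and the fiber $U_j(y)$ is an open subset of $M^n$. Choose $y=\la y',y''\ra$ generic in the class $g$ over $Ag_0g$. We are now exactly in the setup of Claim \ref{coincide}, which provides an open $M^n$-neighborhood $W\sub U_j(y)$ of $y''$ and a $t$-neighborhood $V\sub G$ of $g$ such that the canonical map $z\mapsto [y',z]$ is a homeomorphism between $W$ (with the $M^n$-topology) and $V$ (with the $t$-topology). Hence $f$ is $t$-continuous at $g$ if and only if the composite $\tilde f:W\to M^d$, $\tilde f(z):=f([y',z])$, which is definable over $Ag_0y'$, is continuous at $y''$ in the standard $M^n$-topology.

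It remains to verify that $y''$ is generic in $M^n$ over the parameters $Ag_0y'$ defining $\tilde f$, i.e.\ that $\dim(y''/Ag_0y')=n$. From the dimension formula applied to $y',g$ over $Ag_0$ together with $\dim(y'/gAg_0)=d_j$ (genericity of $y$ in its class) and $\dim(g/Ag_0)=n$, one obtains $\dim(y'/Ag_0)=d_j$ and $\dim(g/Ag_0y')=n$. Since the projection $\pi_{d_j}$ is a homeomorphism on each class we have $y''\in\dcl(y'g)$, and since $g=[y]\in\dcl(y)$ also $\dim(g/Ag_0y'y'')=0$. A second application of the dimension formula,
\[
\dim(y''/Ag_0y')+\dim(g/Ag_0y'y'')\;=\;\dim(g/Ag_0y')+\dim(y''/Ag_0y'g),
\]
now yields $\dim(y''/Ag_0y')=n$, as needed.

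With this genericity in hand, the standard o-minimal generic continuity theorem applied to the $Ag_0y'$-definable partial function $\tilde f:M^n\rightharpoonup M^d$ gives continuity of $\tilde f$ at $y''$, and pulling back through the homeomorphism from Claim \ref{coincide} gives continuity of $f$ at $g$ in the $t$-topology. The only technical point that requires care is the dimension bookkeeping that certifies $y''$ as $M^n$-generic over the parameters of $\tilde f$; once this is done, everything else is either a direct appeal to Claim \ref{coincide} or to generic continuity in o-minimal structures.
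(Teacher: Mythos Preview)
Your proof is correct and follows essentially the same route as the paper: pick $y=\la y',y''\ra$ generic in the class $g$, use Claim~\ref{coincide} to identify a $t$-neighborhood of $g$ with an $M^n$-neighborhood of $y''$, and then appeal to generic continuity of the induced map $z\mapsto f([y',z])$. The paper's proof is a three-line sketch that simply asserts the genericity of $y''$ over $Ay'g_0$; your version supplies the dimension bookkeeping behind that assertion, which is the only nontrivial point. One cosmetic remark: you write that ``the element $g$ belongs to one of the sets $U_j$'', but strictly speaking it is the $E$-class of $g$ that lies in $U_j$; the meaning is clear, though.
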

\proof Choose $y=\la y',y''\ra$ generic in the class $g$ over $g, g_0$. Consider now
the map $f$, as a function from $U_j(y)$ into $M^d$. Since $y''$ is generic in
$U_j(y)$ over $Ay'g_0$, this map is continuous near $y''$, with respect to the
$M^n$-topology of $U_j(y)$, and hence, by Claim \ref{coincide}, also with respect to
the $t$-topology. \qed

We also have:
\begin{claim}\vlabel{continuous3} We fix $g_0$ as above. If $f:M^k\to G$ is a $g_0$-definable function
 then $f$ is continuous at every point $z$ generic in its domain (with respect to the $M^k$-topology and the $t$-topology),
 \end{claim}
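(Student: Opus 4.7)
\noindent \emph{Proof plan.} The plan is to reduce this to the situation handled in Claim \ref{continuous2} by left-translating $f$ by a suitable generic element of $G$, so that the translated function sends $z$ to a point where Claim \ref{coincide} can be applied. The main obstacle will be the bookkeeping of genericity: $g := f(z)$ need not be generic in $G$ over $g_0$ (the image of $f$ could be small-dimensional), and after introducing auxiliary generics we must verify that $z$ remains generic in $M^k$ over the enlarged parameter set.

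First, pick $g^* \in G$ generic over $\{z, g_0\}$ and consider the partial $\{g_0, g, g^*\}$-definable function $F(z') := g^* g^{-1} f(z')$, so that $F(z) = g^*$. Since left multiplication by $g (g^*)^{-1}$ is a $t$-homeomorphism of $G$, it suffices to show that $F$ is continuous at $z$ from the $M^k$-topology to the $t$-topology. Since $\dim(g^*/g_0) \geq \dim(g^*/\{z, g_0\}) = n = \dim G$, the element $g^*$ is generic in $G$ over $g_0$, so Claim \ref{coincide} applies at $g^*$. Choose $y^* = \la a, b \ra \in U_j$ (for some $j \leq r$) generic in the class $g^*$ over $\{z, g_0, g^*\}$, and obtain an $M^n$-open neighborhood $W \sub U_j(y^*)$ of $b$ and a $t$-open neighborhood $V \sub G$ of $g^*$ such that $w \mapsto [a, w]$ is a homeomorphism $W \to V$.

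Next, define the partial function $\tilde h : M^k \to M^{k_j - d_j}$, definable over $\{g_0, g, g^*, a\}$, by requiring $\la a, \tilde h(z') \ra \in U_j$ and $F(z') = [a, \tilde h(z')]$; this is well-defined by Claim \ref{general-eq}, and $\tilde h(z) = b$. A short dimension-formula computation, using $g \in \dcl(z, g_0)$, the genericity of $g^*$ over $\{z, g_0\}$, and $\dim(a/\{z, g_0, g^*\}) = d_j$, gives $\dim(z/\{g_0, g, g^*, a\}) = k$; hence $z$ remains generic in $M^k$ over the parameter set defining $\tilde h$. By generic continuity of definable functions in o-minimal structures, $\tilde h$ is defined and continuous at $z$ as a map from $M^k$ to $M^{k_j - d_j}$, with values in $W$ after a further shrinking. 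Composing with the homeomorphism $W \to V$ exhibits $F$ as $M^k$-to-$t$-continuous near $z$, and left-translating yields continuity of $f$ at $z$.
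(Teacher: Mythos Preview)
Your proof is correct and follows essentially the same route as the paper's: translate $f$ by a generic element of $G$ so that the image of $z$ becomes generic over $g_0$, apply Claim~\ref{coincide} there, and then use ordinary o-minimal generic continuity after verifying via the dimension formula that $z$ stays generic over the enlarged parameter set. Your write-up is a bit more explicit than the paper's (you spell out that $g\in\dcl(z,g_0)$ and track the parameters $\{g_0,g,g^*,a\}$ through the dimension computation), but the strategy and the key lemmas invoked are identical.
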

 \proof Let $h=f(z)$ and take $g_1$ generic in $G$ over $g_0,h,z$. Instead of considering the map $f$
 we consider $\sigma(w)=g_1h^{-1} f(w)$, which sends $z$ to $g_1$. Since left
 multiplication is a homeomorphism (as it preserves the family $\mathcal B$)
 it is sufficient to show that $\sigma$ is continuous at $z$.  Using Claim \ref{coincide}, we can reduce the problem to a map from $M^k$
 into $U_j(y)$, with $[y]=[y',y'']=g_1$
 and $y$ generic in the class  $g_1$ over all parameters. After noting that
 $\dim(z/g_0,g_1,y')=\dim(z/g_0,g_1)$, so $z$ is still generic in the domain of
 $f$ over $g_0g_1y'$, the result now follows from the theory of definable maps from
 $M^k$ into $M^n$.\qed

The above results allow us to replace in many cases the $t$-topology by the
$M^n$-topology, so we can follow the arguments from \cite{Mar} and conclude in the
same way that $\CB$ defines a group-topology on $G$.\qed

Since the $t$-topology has basis for neighborhoods given by open subsets of $M^n$,
it means that, at least locally, many properties of the o-minimal topology still
hold for the $t$-topology. A straightforward claim helps here:

\begin{claim}\label{dim-gen-nbhd}
Let $Y\subseteq G$ be $A$-definable and let $g$ be generic in $Y$ over $Ag_0$. Then
for every definable $t$-open set $V\ni g$, we have $\dim(Y\cap V)=\dim(Y)$.
\end{claim}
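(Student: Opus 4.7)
My plan is to transfer the claim, via a local coordinate chart at $g$, to the standard o-minimal fact that a definable open neighborhood of a generic point of a definable set meets it in a subset of the same dimension. I would pick a representative $y=\la y',y''\ra$ of the class $g$ generic in the class over $A\cup\{g_0,g\}$ together with any parameters defining $V$; then $y$ lies in some $U_j$, and the canonical map $\phi_j:U_j(y)\to G$, $z\mapsto[y',z]$, is a definable injection sending $y''$ to $g$.

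The first substantive step is to show that $\phi_j$ is a local $t$-homeomorphism at $y''$: there should exist an $M$-open neighborhood $W\sub U_j(y)$ of $y''$ with $\phi_j(W)\sub V$, and $\phi_j\rest W$ a homeomorphism onto a $t$-open neighborhood of $g$ in $G$. Adapting the proof of Claim \ref{coincide}, I would consider the change-of-chart map $z\mapsto \phi_j^{-1}(gg_0^{-1}[x_0',z])$, definable over $g,g_0,x_0',y'$, which sends $x_0''$ to $y''$; by Fact \ref{top-facts}(3), $x_0''$ is generic in $U(x_0)$ over $x_0'y'$, so this map and its inverse will be continuous at $x_0''$ and $y''$ respectively, yielding the desired local homeomorphism. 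Since $V$ is $t$-open and contains $g$, shrinking $W$ if necessary guarantees $\phi_j(W)\sub V$.

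Next, I would set $Y^*=\phi_j^{-1}(Y)\sub U_j(y)$. Using Fact \ref{int-dim-props} together with the genericity of $y'$ over $Ag_0g$, a direct computation yields $\dim(y''/Ag_0y')=\dim(g/Ag_0y')=\dim(Y)$; on the other hand, for every $z\in Y^*$ the bijection $z\leftrightarrow[y',z]\in Y$ forces $\dim(z/Ag_0y')\le\dim(Y)$. Hence $y''$ is a generic point of $Y^*$ in the o-minimal sense with $\dim(Y^*)=\dim(Y)$, and by standard o-minimality $\dim(Y^*\cap W)=\dim(Y^*)=\dim(Y)$. Pushing this forward through the definable injection $\phi_j$ gives a definable subset of $Y\cap V$ of interpretable dimension $\dim(Y)$, which together with the trivial bound $Y\cap V\sub Y$ completes the proof.

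The main obstacle will be the second step, extending the local-homeomorphism portion of Claim \ref{coincide} from $g$ generic in $G$ to $g$ merely generic in $Y$. When $\dim(Y)<n$ and the class of $g$ lies in some $U_j$ with $k_j-d_j<n$, the image $\phi_j(U_j(y))$ is a proper lower-dimensional subset of $G$, and the formula $\exists!\,w\in U_j(y)\;(gg_0^{-1}[x_0',z]=[y',w])$ fails for $M^n$-generic $z$; one must then work within the subspace $t$-topology on $\phi_j(U_j(y))$, where the chart argument should still go through using the generic choice of $y$ and the genericity of $x_0''$ over $x_0'y'$ from Fact \ref{top-facts}(3).
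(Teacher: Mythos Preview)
Your chart-based strategy is quite different from the paper's argument, which is much shorter and avoids charts altogether. The paper simply exploits the explicit shape of the $t$-basis: any $t$-open $V\ni g$ contains a basic set $gg_0^{-1}V_t$; one then replaces $V_t$ by a smaller $V_s$ with $s$ generic in $T$ over all the data, and observes that the parameters $gg_0^{-1},s,A$ defining $W=gg_0^{-1}V_s$ still leave $g$ with dimension $\dim(Y)$ (using that $g_0$ and $gg_0^{-1}$ are interdefinable over $g$). Since $g\in Y\cap W$ and $Y\cap W$ is definable over these parameters, $\dim(Y\cap W)=\dim(Y)$ follows immediately from the definition of dimension. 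No transfer to $M^n$ is needed.

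Your approach, by contrast, has a real gap at exactly the point you flag. To get $\phi_j(W)\sub V$ you need continuity of $\phi_j:U_j(y)\to G$ at $y''$ with respect to the $t$-topology on $G$, not merely on some subspace. But Claim~\ref{coincide} and Claim~\ref{continuous3} both require the relevant point to be \emph{generic in its domain} over the parameters of the map, and here $y''\in\dcl(y',g)$, so $\dim(y''/y',g,g_0)=0$; likewise $x_0''\in\dcl(x_0',g_0)$. When $k_j-d_j<n$, your change-of-chart map $z\mapsto\phi_j^{-1}(gg_0^{-1}[x_0',z])$ is only defined on the $(k_j-d_j)$-dimensional set $\{z:gg_0^{-1}[x_0',z]\in\phi_j(U_j(y))\}$, so no $M^n$-neighborhood of $x_0''$ lies in its domain, and the genericity argument from Claim~\ref{coincide} does not apply. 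Passing to the subspace topology on $\phi_j(U_j(y))$ does not help, because the target condition $\phi_j(W)\sub V$ is about the ambient $t$-topology. The paper's parameter-juggling argument sidesteps this entirely.
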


\begin{proof} Replace $V$ with a neighborhood $W\sub V$ of $g$ which is
definable over parameters $B$, with $\dim(g/B)=\dim(g/Ag_0)$. Indeed, this is
possible to do: We may assume that $V=gg_0^{-1} V_t$ for $t\in T$, and we have
$\dim(g/g_0,A)=\dim(g/gg_0^{-1},A)$. We now replace $V_t$ by $V_s\sub V_t$, with $s$
generic in $T$ over all parameters. We therefore have $\dim
(g/gg_0^{-1},s,A)=\dim(g/g_0,A)$.

The neighborhood $W=gg_0^{-1}V_s$ is the desired neighborhood of $g$. Since the
dimension of $g$ over the parameters defining $Y\cap W$ equals $\dim(g/A)=\dim(Y)$,
we have $\dim(Y\cap W)=\dim(Y)$.
\end{proof}

\begin{fact}\leavevmode
\begin{enumerate}
\item If $Y\sub G$ is a definable set then $\dim (\Cl(Y)\setminus Y)<\dim Y$ (the
closure here is taken with respect to the $t$-topology. \item If $H$ is a definable
subgroup of $G$ then $H$ is closed in $G$.
\end{enumerate}
\end{fact}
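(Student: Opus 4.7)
The first observation is that $\Cl(Y)$ is itself definable: since $\mathcal{B}=\{gV_t:g\in G,\,t\in T\}$ is a definable basis for the $t$-topology, we have
\[
\Cl(Y) = \{g\in G : \forall t\in T,\ gg_0^{-1}V_t\cap Y\neq\emptyset\},
\]
which is first-order in $g$. Set $Z=\Cl(Y)\setminus Y$, assume $Y$ and $Z$ are defined over $A$, and write $d=\dim Y$.

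The strategy is to analyze $Z$ through the family of charts obtained by left translation. For each $g\in G$, left multiplication by $gg_0^{-1}$ is a $t$-homeomorphism that permutes the basis $\mathcal{B}$; composing with the definable injection $U(x_0)\hookrightarrow G$, $y\mapsto[x',y]$, it yields a definable bijection $\phi_g:V_t\to gg_0^{-1}V_t$ with $\phi_g(x'')=g$. Crucially, at the specific point $x''$ the basic $t$-neighborhoods $gg_0^{-1}V_s$ of $g$ pull back under $\phi_g$ to the basic $M^n$-neighborhoods $V_s$ of $x''$ in $U(x_0)$, so that
\[
g\in\Cl(Y) \iff x'' \in \overline{\phi_g^{-1}(Y\cap gg_0^{-1}V_t)}^{M^n},
\]
and of course $g\notin Y$ iff $x''\notin \phi_g^{-1}(Y\cap gg_0^{-1}V_t)$.

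Set $Y'_g := \phi_g^{-1}(Y\cap gg_0^{-1}V_t)\subseteq V_t\subseteq M^n$, a definable family parametrized by $g$. The above gives $Z=\{g\in G : x''\in \overline{Y'_g}^{M^n}\setminus Y'_g\}$, and the standard o-minimal frontier bound in $M^n$ yields $\dim(\overline{Y'_g}^{M^n}\setminus Y'_g) < \dim Y'_g$ for each $g$. I would then pick $g$ generic in $Z$ over $A$ (so $\dim(g/A)=\dim Z$), and use that $\phi_g$ is a $g$-definable bijection between $Y\cap gg_0^{-1}V_t$ and $Y'_g$ to compare dimensions: $\dim Y'_g = \dim(Y\cap gg_0^{-1}V_t)\leq \dim Y$. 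Combining with Claim \ref{dim-gen-nbhd} to pass to a neighborhood of $g$ without decreasing $\dim(g/\text{parameters})$, the inequality transports to $\dim Z<\dim Y$.

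\textbf{Plan for (2).} Suppose toward contradiction that $H$ is not closed and pick $g\in\Cl(H)\setminus H$. Since $G$ is a topological group under the $t$-topology, right translation by any $h\in H$ is a $t$-homeomorphism that preserves $H$ set-wise, hence also $\Cl(H)$. Thus $gh\in\Cl(H)$ for every $h\in H$, giving $gH\subseteq\Cl(H)$. Because $g\notin H$ we have $gH\cap H=\emptyset$, so $gH\subseteq\Cl(H)\setminus H$. But $gH$ is a definable translate of $H$ and hence $\dim(gH)=\dim H$, contradicting the bound $\dim(\Cl(H)\setminus H)<\dim H$ from part~(1).

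\textbf{Main obstacle.} In (1), the delicate point is the final dimension transfer: one must ensure that the chart $\phi_g$, whose definition depends on the generic parameter $g$, allows one to conclude a genuine inequality $\dim Z < \dim Y$ rather than merely saying that a single specific point $x''$ lies in the $M^n$-frontier of $Y'_g$. Once (1) is in hand, (2) is a routine consequence of the topological-group structure together with the equality $\dim(gH)=\dim H$.
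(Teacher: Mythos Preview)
Your argument for (2) is exactly the standard argument the paper cites from Pillay, and it is correct once (1) is established.

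For (1), however, the gap you flag as the ``main obstacle'' is real and your sketch does not close it. The chart $\phi_g:V_t\to gg_0^{-1}V_t$ you build by translating to the \emph{fixed} base point $g_0$ only matches the $M^n$-topology and the $t$-topology at the single point $x''$: for $z\neq x''$ the pullback of a basic $t$-neighborhood of $\phi_g(z)$ is the set $\{z':g_0[x',z]^{-1}[x',z']\in V_s\}$, which has no reason to be an $M^n$-neighborhood of $z$. So all you can legitimately extract is the pointwise statement $x''\in\overline{Y'_g}^{M^n}\setminus Y'_g$; you cannot identify $\phi_g^{-1}(Z\cap gg_0^{-1}V_t)$ with (a piece of) the $M^n$-frontier of $Y'_g$, and therefore the inequality $\dim(\overline{Y'_g}\setminus Y'_g)<\dim Y'_g$ gives no bound on $\dim Z$. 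Invoking Claim~\ref{dim-gen-nbhd} only tells you that $\dim(Z\cap V)=\dim Z$ for suitable $V$; it does not produce the missing homeomorphism.

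The paper's proof avoids this by translating not to $g_0$ but to a point $h$ that is \emph{generic in $G$ over $g$}. By Claim~\ref{coincide}, near such an $h$ the $t$-topology and the $M^n$-topology agree on an entire open set $W$. One then argues by contradiction: if $\dim Z\geq\dim Y$, pick $g$ generic in $Z$ (hence also in $\Cl(Y)$), translate a small $t$-neighborhood $V\ni g$ by $hg^{-1}$ into $W$, and observe that inside $W$ one has $(\Cl(Y')\setminus Y')\cap W=hg^{-1}(Z\cap V)$ for $Y'=hg^{-1}(Y\cap V)$, since closure commutes with the $t$-homeomorphism and can now be computed in $M^n$. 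Claim~\ref{dim-gen-nbhd} gives $\dim(Z\cap V)=\dim Z$, and the $M^n$-frontier inequality gives $\dim((\Cl(Y')\setminus Y')\cap W)<\dim Y'\leq\dim Y$, the desired contradiction. The point is that moving to a generic $h$ buys you a genuine chart on an open set, which your translation to $g_0$ does not.
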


\begin{proof} We prove (1) -- the proof of (2) is as in \cite[Corollary
2.8]{Pillay2}. Assume towards contradiction that
$\dim(\Cl(Y)\setminus Y)\geq \dim Y$. In particular, $\dim
\Cl(Y)=\dim(\Cl Y\setminus Y)$. Let $g$ be generic in both $\Cl(Y)$
and $\Cl(Y)\setminus Y$, let $h\in U$ be generic in $G$ over
$g$, and let $V$ be a neighborhood of $g$ small enough that every
element of $hg^{-1}V$ is represented in an $M^n$ neighborhood of
$h$ inside $U(h)$. Using Claim \ref{dim-gen-nbhd},
$\dim((\Cl(Y)\setminus Y)\cap V)=\dim(\Cl(Y)\setminus Y)$ and
$\dim(\Cl(Y)\cap V)=\dim(\Cl(Y))$. Translating by the
 $t$-homeomorphism $x \mapsto hg^{-1} x$, we
get $Y'=hg^{-1}(Y\cap V)$, and $\Cl(Y')\setminus
Y'=hg^{-1}((\Cl(Y)\setminus Y)\cap V)$. These sets are in
definable bijection with definable sets in an $M^n$ neighborhood
of $h$ inside $U(h)$, for which the closure operation is the
standard one, so $\dim(Y')>\dim(\Cl(Y')\setminus Y')$. However,
translation is dimension-preserving so we reach a contradiction.
\end{proof}

Although we cannot obtain at this point an atlas on $G$ with
finitely many charts, we have an approximation to it: Let
$\mathcal U$ be the disjoint union  $U_1\sqcup\cdots\sqcup U_r$.
We say that $W\sub \mathcal U$ is open if $W\cap U_i$ is open for
every $i=1,\ldots, r$. We say that $X\sub \mathcal U$ is large in
$\CU$ if $X\cap U_i$ is large in $U_i$ for every $i=1,\ldots, r$.
Note that if $W\sub \CU$ is large in $\CU$ then its image in $G$
is large in $G$.

 As we showed above, if $y=\la y',y''\ra$ is generic in $U_i$, for
$i=1\ldots,r$, then the $t$-topology agrees with the $M^n$-topology on $U(y)$, near
$y''$. This property of $y$ is first order,
 so the set $\mathcal U_0$ of all $y\in U_i$, $i=1,\ldots,r$, for which the $t$-topology
 agrees with the $M^n$-topology on $U_i(y)$ near $y$, is definable and contains $y$.
  Moreover, this set is large in $\mathcal U$.

 Let
$\pi:\CU_0\to G$ be the quotient modulo $E$. By definition of
$\CU_0$, the map $\pi:\CU_0\to G$ is  open, when $\CU_0$ is
endowed with the o-minimal topology and $G$ has the $t$ -topology.
Next, we can apply Claim \ref{continuous3} and replace $\CU_0$ by
a large open subset, call it $\CU_0$ again, on which $\pi$ is
continuous, and still open. Let $W=\pi(\CU_0)$, and note as above
that $W$ is large in $G$.
 By Fact \ref{large}, finitely many $G$-translates of $W$, $h_1W,\ldots, h_mW$,  cover $G$.  We can now
conclude:
\begin{prop} \vlabel{prop1} There are finitely many $t$-open definable sets $W_1,\ldots, W_k$ whose
union covers $G$. There exist a definable set $\CU_0$ which is  a finite disjoint
union of definable open subsets of $M^{r_i}$'s and for each $i=1,\ldots, k$ a definable
surjective map $\pi_i:\CU_0\to W_i$, such that each $\pi_i$ is continuous and open
(with respect to the o-minimal topology in the domain and the $t$-topology in the
image).
\end{prop}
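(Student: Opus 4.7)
The plan is to package what has just been constructed in the two preceding paragraphs into the form demanded by the proposition. Take $\CU_0$ to be the definable open subset of $\mathcal U = U_1 \sqcup \cdots \sqcup U_r$ exhibited there: it is a finite disjoint union of open subsets of the various $M^{k_i}$ (for $i=1,\ldots,r$), it is large in $\mathcal U$, the quotient map $\pi : \CU_0 \to G$ is continuous and open with respect to the o-minimal topology on $\CU_0$ and the $t$-topology on $G$, and consequently $W := \pi(\CU_0)$ is $t$-open and large in $G$.

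Now apply Fact \ref{large} to $W$: since $W$ is large in $G$ and $\dim G = n$, there exist finitely many elements $h_1,\ldots,h_k \in G$ (with $k \le n+1$) such that
\[
G \;=\; h_1 W \,\cup\, h_2 W \,\cup\, \cdots \,\cup\, h_k W.
\]
Put $W_i := h_i W$ and define $\pi_i : \CU_0 \to W_i$ by $\pi_i(x) := h_i \cdot \pi(x)$. Each $W_i$ is $t$-open because the basis $\CB$ of the $t$-topology is by definition invariant under left multiplication by any element of $G$, so $g \mapsto h_i g$ is a $t$-homeomorphism of $G$. The map $\pi_i$ is definable, surjective onto $W_i$, and factors as $\pi$ followed by this $t$-homeomorphism; it is therefore continuous and open, which is exactly what the proposition requires.

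There is no substantial obstacle here; the proposition is a clean repackaging of the construction already performed. The only points worth double-checking are the $G$-invariance of $\CB$ (immediate from its definition $\CB = \{gV_t : g \in G,\, t \in T\}$) and the fact that $\pi(\CU_0)$ is both $t$-open and large in $G$, which is precisely what is verified in the paragraph immediately preceding the proposition (openness from $\pi$ being an open map, largeness from $\CU_0$ being large in $\mathcal U$ together with the fiber-dimension bookkeeping of Claim \ref{general-eq}).
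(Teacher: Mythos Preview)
Your proposal is correct and follows exactly the approach the paper intends: you take the $\CU_0$ and $\pi$ built in the paragraphs immediately preceding the proposition, invoke Fact~\ref{large} to cover $G$ by translates $h_iW$ of $W=\pi(\CU_0)$, and set $\pi_i = h_i\cdot\pi$, using left-invariance of $\CB$ to see that each $\pi_i$ is continuous and open onto the $t$-open set $W_i=h_iW$. This is precisely the content of the paper's ``We can now conclude,'' made explicit.
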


As a corollary we have:
\begin{cor}
 Every definable subset of $G$ has finitely many definably connected components with respect to the $t$-topology.
\end{cor}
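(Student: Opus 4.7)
The plan is to leverage Proposition \ref{prop1} to transfer the problem from $G$ (with its $t$-topology) to $\mathcal U_0$ (with its o-minimal topology), where finiteness of definably connected components is a standard consequence of o-minimality.

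Concretely, let $Y \subseteq G$ be definable. First I would cover $Y$ using the $W_i$'s: write $Y = \bigcup_{i=1}^k (Y \cap W_i)$. For each $i$, set $Z_i = \pi_i^{-1}(Y \cap W_i) \subseteq \mathcal U_0$. Since $\mathcal U_0$ is a finite disjoint union of definable open subsets of various $M^{r_j}$, and each $Z_i$ is definable (as $\pi_i$ is continuous and $Y \cap W_i$ is definable), o-minimality applied piecewise tells us that $Z_i$ has only finitely many definably connected components $Z_i^{(1)}, \ldots, Z_i^{(m_i)}$ with respect to the o-minimal topology inherited from the ambient $M^{r_j}$'s.

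Next, since $\pi_i$ is $t$-continuous and surjective onto $W_i$, the image $\pi_i(Z_i^{(s)})$ is a definably connected subset of $G$ in the $t$-topology (the standard argument works: a disconnecting pair of relatively $t$-open definable subsets of the image would pull back through continuity to such a pair in the o-minimal topology of $Z_i^{(s)}$). Moreover, $Y \cap W_i = \pi_i(Z_i) = \bigcup_{s=1}^{m_i} \pi_i(Z_i^{(s)})$. Thus $Y$ is covered by the finite collection $\{\pi_i(Z_i^{(s)}) : 1 \leq i \leq k,\ 1 \leq s \leq m_i\}$ of definably connected (in the $t$-topology) subsets.

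Finally, I would invoke the elementary observation that if a definable set $Y$ is the union of finitely many definably connected subsets $A_1, \ldots, A_N$, then $Y$ has at most $N$ definably connected components: each component $C$ of $Y$ contains some $A_j$ (for any point $x \in C$, the $A_j$ containing $x$ is definably connected and meets $C$, hence lies in $C$), so the components are in injection with a subset of $\{A_1,\ldots,A_N\}$. The only real subtlety to verify here is that continuity of $\pi_i$ suffices to preserve definable connectedness, which is immediate from the definitions; there is no serious obstacle once Proposition \ref{prop1} is in hand.
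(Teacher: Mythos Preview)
Your proposal is correct and follows essentially the same approach as the paper: pull back each $Y\cap W_i$ to $\mathcal U_0$ via the continuous definable surjection $\pi_i$, use o-minimality to get finitely many definably connected components there, and push forward by continuity. The paper's proof is terser (it simply asserts that by continuity each $Y\cap W_i$ has finitely many components), whereas you spell out the finite-union-of-connected-pieces argument explicitly, but the substance is the same.
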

\proof Fix $W_1,\ldots, W_k$ as above. Take $Y\sub G$ definable,
It is enough to see that each $Y\cap W_i$ has finitely many
definably connected components. As we saw, there is a definable
and continuous map from $U_0$ onto $W_i$. The pre-image of $Y\cap
W_i$ is a definable subset of $U_0$ so has finitely many definably
connected components (with respect to the o-minimal topology). By
continuity, $Y\cap W_i$ also has finitely many components.\qed

We can now also prove, just as in the definable case (see
\cite{Pillay1}):
\begin{lemma}\vlabel{4statements} For $G$ interpretable, and $H$ a definable subgroup of $G$, the
following are equivalent:
\begin{enumerate}
\item $H$ has finite index in $G$. \item $\dim H=\dim G$. \item $H$ contains an open
neighborhood of the identity. \item $H$ is open in $G$.
\end{enumerate}
\end{lemma}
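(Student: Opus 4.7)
The plan is to run the cycle $(1)\Rightarrow(2)\Rightarrow(3)\Rightarrow(4)\Rightarrow(1)$, using the topology and charts already set up on $G$, plus the closure/connectedness facts proved just above.

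The implication $(1)\Rightarrow(2)$ is immediate: if $[G:H]$ is finite, then $G$ is a finite union of translates of $H$, each of which has the same dimension as $H$ (left multiplication by any fixed element is a definable bijection on $G$, hence dimension preserving), so $\dim G=\dim H$. For $(3)\Rightarrow(4)$, if $V$ is a $t$-open neighborhood of the identity contained in $H$, then for every $h\in H$ the set $hV$ is an open neighborhood of $h$ contained in $H$ (left translation is a $t$-homeomorphism, since $\CB$ is stable under left multiplication), so $H$ is $t$-open. For $(4)\Rightarrow(1)$, if $H$ is open then every coset $gH$ is open, so the complement $G\setminus H=\bigcup_{gH\ne H}gH$ is open, and $H$ is clopen. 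The same argument shows that every coset is clopen. By Corollary just above, the definable set $G$ has only finitely many $t$-definably connected components, and each clopen coset is a union of such components. Since the cosets are disjoint, there are only finitely many of them, i.e.\ $[G:H]<\infty$.

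The heart of the proof is $(2)\Rightarrow(3)$. Assume $\dim H=\dim G=n$. Let $A$ be parameters over which $H$ and $g_0$ (from the construction of the topology) are defined, and choose $h\in H$ generic over $A$; so $\dim(h/A)=n$. Using Proposition \ref{prop1}, pick one of the charts $\pi_i\co\CU_0\to W_i$ with $h\in W_i$, and a generic $y=\la y',y''\ra$ in the class $h$ so that $y$ lies in the locus $\CU_0$ where $\pi_i$ is continuous and open and the $t$-topology agrees with the $M^n$-topology at $y''$. By Fact \ref{top-facts}(2), $y''$ is generic in $U(y)\sub M^n$ over $y'$ and the parameters. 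The preimage $Z:=\pi_i^{-1}(H)\cap U(y)$ is a definable subset of an open subset of $M^n$, and since $\pi_i$ is injective on $U(y)$ onto a set of representatives, $\dim Z=\dim H=n$. By o-minimality, the set of $M^n$-interior points of $Z$ is dense in $Z$ and its complement in $Z$ has dimension $<n$; since $y''$ is generic in $U(y)$ and lies in $Z$, it must be an $M^n$-interior point of $Z$. Thus there is an $M^n$-open neighborhood $W\sub Z$ of $y''$. Because $\pi_i$ is $t$-open at $y$, the set $\pi_i(W)$ is a $t$-open neighborhood of $h$ contained in $H$. Translating, $h^{-1}\pi_i(W)$ is a $t$-open neighborhood of the identity contained in $h^{-1}H=H$, which proves (3).

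The main obstacle is precisely this step $(2)\Rightarrow(3)$: one needs to know that ``the $t$-topology behaves like $M^n$ at generic points''. This is exactly what Claim \ref{coincide} and the large open chart $\CU_0$ of Proposition \ref{prop1} deliver, so once those are invoked the argument is the standard one for definable groups (cf.\ \cite{Pillay1}). Everything else in the proof is formal manipulation of cosets and uses only the already established facts that $H$ is $t$-closed and that $G$ has finitely many $t$-definably connected components.
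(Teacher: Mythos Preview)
Your proof is correct and is precisely what the paper has in mind: the paper itself gives no details, saying only that the argument is ``just as in the definable case (see \cite{Pillay1})'', and your cycle $(1)\Rightarrow(2)\Rightarrow(3)\Rightarrow(4)\Rightarrow(1)$ is exactly that standard argument, executed with the interpretable-group machinery (Claim \ref{coincide}, Proposition \ref{prop1}, and the finite-components corollary) that the paper has just established. One small notational wrinkle: the maps $\pi_i$ of Proposition \ref{prop1}$ are translates $h_i\cdot[-]$ of the quotient map, so the $y$ you pick should lie in the class of $h_i^{-1}h$ rather than of $h$; alternatively, since $\dim H=\dim G$ a generic $h\in H$ already lies in the large set $W=\pi(\CU_0)$, and you can work directly with the quotient map $\pi$ --- either way the argument goes through unchanged.
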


Exactly as in the case of definable groups, we can deduce the
descending chain condition:
\begin{cor} Every descending chain of definable subgroups of $G$ is finite.
\end{cor}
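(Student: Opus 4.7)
The plan is to imitate the standard argument for definable groups, using the $t$-topology on $G$ and the tools just established (Lemma \ref{4statements} and the corollary that every definable subset of $G$ has only finitely many definably connected components).

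Suppose $G = G_0 \supseteq G_1 \supseteq G_2 \supseteq \cdots$ is a descending chain of definable subgroups. First, the sequence $\dim G_i$ is non-increasing in $\{0,1,\dots,\dim G\}$, so it eventually stabilizes; by truncating, I may assume $\dim G_i = d$ for all $i$. For each $i$, let $G_i^0$ denote the definably connected component of the identity in $G_i$, which is well defined by the corollary on finitely many components, and is a definable subgroup of $G_i$ of the same dimension $d$.

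Next I would show that $G_i^0 = G_{i+1}^0$ for all $i$. Indeed, $G_{i+1}^0$ is a definable subgroup of $G_i^0$ of the same dimension, so by Lemma \ref{4statements} applied inside $G_i^0$, $G_{i+1}^0$ is open in $G_i^0$. Any open subgroup of a topological group is also closed (its complement is a union of cosets, each open), so $G_{i+1}^0$ is clopen in the definably connected group $G_i^0$, forcing $G_{i+1}^0 = G_i^0$. Call this common subgroup $H$.

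Finally, since $G_{i+1} \subseteq G_i$ and both have $H$ as connected component of the identity, the indices $[G_i : H]$ form a non-increasing sequence of positive integers (each $G_i$ is a finite disjoint union of cosets of $H$, by the finite-component corollary applied to $G_i$, combined with Lemma \ref{4statements}), and so stabilize at some $i_0$. Once $[G_i : H] = [G_{i+1}:H]$ and $G_{i+1} \subseteq G_i$ with $G_{i+1}$ a union of $H$-cosets inside $G_i$, we conclude $G_i = G_{i+1}$, giving termination.

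The only subtlety I would expect is verifying that ``open subgroup is closed'' really works in the $t$-topology; but since $G$ is a topological group in the $t$-topology (proved in the previous subsection), left translation is a homeomorphism, so the complement of an open subgroup is a union of open cosets, exactly as in the classical case. Everything else is a mechanical transcription of the definable-group DCC argument, using Lemma \ref{4statements} in place of its definable analogue.
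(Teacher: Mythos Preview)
Your argument is correct and is precisely the standard DCC argument for definable groups that the paper is alluding to; the paper gives no explicit proof here, only the remark ``Exactly as in the case of definable groups, we can deduce the descending chain condition.'' One small remark: when you invoke Lemma \ref{4statements} ``inside $G_i^0$'' to get that $G_{i+1}^0$ is open in $G_i^0$, you can avoid any worry about which topology $G_i^0$ carries by arguing directly in $G$'s $t$-topology---$G_{i+1}^0$ is closed in $G$ (definable subgroups are closed), hence closed in $G_i^0$, and since $[G_i^0:G_{i+1}^0]$ is finite (same dimension, by the purely dimension-theoretic part of Lemma \ref{4statements}), its finitely many cosets are all closed and hence all open, giving clopenness without leaving the ambient topology.
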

\subsection{Definable compactness}
{\em Below, all limits in $G$ are taken with respect to the
$t$-topology}

Our goal now is to review briefly several fundamental notions and results in the
theory of definable groups and to verify that these results hold for interpretable
$G$ as well. The intention is to collect just those results which will allow us to
prove that $G$ is definably isomorphic to a definable group.

Recall that every definable one-dimensional subset of $G$ is in definable bijection
with finitely many points and open intervals (Corollary \ref{EI1}).
\begin{defn} We say that $G$ is {\em definably compact} if for every definable
$f$ from an open interval $(a,b)$ into $G$, the limits of $f(x)$
as $x$ tends to $a$ and to $b$  exist in $G$.
\end{defn}

As in the case of definable groups (\cite{PetStein}) we have:
\begin{lemma} \vlabel{noncompact} If $G$ is not definably compact then it contains a definable,
torsion-free one-dimensional subgroup $H\sub G$.
\end{lemma}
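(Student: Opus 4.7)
The plan is to adapt the Peterzil--Steinhorn construction from the definable to the interpretable setting, using Corollary \ref{EI1} and Theorem \ref{inter-group1} to reduce one-dimensional analysis in $G$ to analysis on an interval of $M$. Failure of definable compactness gives a definable $f\colon(a,b)\to G$, continuous in the $t$-topology, such that $\lim_{t\to a^+}f(t)$ does not exist in $G$; after restricting $(a,b)$ I may take $f$ injective. The image $C=f((a,b))$ is one-dimensional in $G$, hence gp-short by Theorem \ref{inter-group1} and, via Corollary \ref{EI1}, in definable bijection with a definable subset of $M$. So up to this identification $f$ becomes a monotone continuous injection from $(a,b)$ onto an open interval $I\subseteq M$, and the divergence of $f$ at $a$ becomes divergence toward an endpoint $e$ of $I$.

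Next, I would transport the partial multiplication of $G$ restricted to $C\times C$ to a partial operation $+$ on $I\times I$. The left translations $x\mapsto f(s)\cdot x$ on $G$, restricted to $C$ and transported to $I$, are definable, injective, and $t$-continuous; by Claim \ref{continuous2} they correspond to continuous, strictly monotone partial maps $I\to I$. Performing the Peterzil--Steinhorn local analysis near a $\shcl$-generic point of $I$ (which is now just o-minimal calculus on an interval in $M$) yields a sub-interval $I'\subseteq I$ endowed with the structure of an \emph{unbounded} generalized group-interval, the unboundedness coming from the non-existence of $\lim_{t\to a^+}f(t)$ in $G$. Lemma \ref{intervals-groups} then embeds $I'$ into a definable one-dimensional group $H$, which is linearly ordered and hence torsion-free. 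Finally, the partial embedding $I'\hookrightarrow G$ is a partial group homomorphism whose image is a generic one-dimensional subset of a one-dimensional set in $G$, so it extends, using the $t$-topology continuity of the group operations and the descending chain condition on definable subgroups (Lemma \ref{4statements} and its corollary), to a definable injective homomorphism $H\to G$, exhibiting a torsion-free one-dimensional subgroup of $G$.

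The main obstacle is reproducing the Peterzil--Steinhorn extraction of the group-interval structure $I'$ in this setting; the reason it goes through is precisely the reduction to $M$ via Corollary \ref{EI1}, together with the fact, established in Claim \ref{coincide}, that on the chart containing a generic element of $C$ the $t$-topology on $G$ agrees with the ambient $M^n$-topology, so monotonicity and continuity arguments for the translations transfer without change. A secondary subtlety is the extension step from a partial group-interval embedding to a global group embedding $H\to G$, which requires checking that the image is a subgroup (not merely a partial one) — here one uses that any definable subgroup of $G$ of the same dimension as its ambient one-dimensional set must be open in it, forcing the partial embedding to close up.
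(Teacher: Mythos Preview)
Your proposal has a genuine gap at its core. You write that you will ``transport the partial multiplication of $G$ restricted to $C\times C$ to a partial operation $+$ on $I\times I$'', and then analyze left translations $x\mapsto f(s)\cdot x$ as maps $I\to I$. But the product $C\cdot C$ is not contained in $C$: the curve $C=f((a,b))$ is one-dimensional, while $C\cdot C$ is typically two-dimensional in $G$. So the group operation of $G$ does not restrict to a binary operation on $I$, and the left translations by elements of $C$ do not send $C$ to $C$. There is simply no partial operation on $I$ to which the Peterzil--Steinhorn group-interval analysis could be applied. (Note also that by Theorem \ref{inter-group1} the interval $I$ is gp-short, so the $\shcl$-generic point you invoke is not relevant here.)

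The second half compounds the problem. Even if you somehow manufactured a group-interval $I'$ and used Lemma \ref{intervals-groups} to produce an abstract one-dimensional group $H$, you would have no homomorphism $H\to G$: the inclusion $I'\subseteq I\cong C\subseteq G$ is a map of \emph{sets}, not compatible with any group structure on either side. Your ``extension step'' presupposes a partial group homomorphism that was never constructed, and the appeal to Lemma \ref{4statements} and DCC cannot conjure one.

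The paper takes a completely different route, defining $H$ directly as a subset of $G$: it is the set of all $h\in G$ such that every $t$-neighbourhood of $h$ contains $f(x)f(x')^{-1}$ for $x,x'$ arbitrarily close to $b$. One then verifies by hand that this limit set is a subgroup, bounds its dimension between $1$ and $1$ using a boundary-of-a-box argument for the lower bound and the frontier of $\{(x,x',f(x)f(x')^{-1})\}$ for the upper bound, and checks torsion-freeness as in \cite{PetStein}. No external group is built; $H$ lives in $G$ from the start.
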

\proof  We review briefly the proof as suggested in \cite{torfree}. Assume that the
limit $\lim_{x\to b}f(x)$ does not exist in $G$. By Lemma \ref{continuous3}, we may
assume that $f$ is continuous on $(a,b)$.
 The group $H$ is defined to be the set of all possible limits of $f(t)f(s)^{-1}$,
 as $t$ and $s$ tend to $b$ in the interval $(a,b)$. More precisely,
{\em  $H$ is the collection of all $h\in G$ such that for every $t$-neighborhood
$V\ni h$
 and every $a_0\in (a,b)$ there exist $x,x'\in (a_0,b)$ for which $f(x)f(x')^{-1}\in
 V$.}

Since $G$ has a definable basis for the $t$-topology, $H$ is definable. Note that by
o-minimality, if $h\in H$, $V\ni h$ and $a_0\in (a,b)$, then {\em for every $x'\in
(a_0,b)$} sufficiently close to $b$ there exist $x\in (a_0,b)$ with
$f(x)f(x')^{-1}\in V$.

  To see that $H$ is a subgroup, take $g,h\in H$ and show that
$gh^{-1}\in H$: Fix $V\ni gh^{-1}$ and find $t$-neighborhoods $V_1\ni g$ and $V_2\ni
h$ such that $V_1V_2^{-1}\sub V$. By the above, there exists $x'\in (a_0,b)$
sufficiently close to $b$ and there are $x_1,x_2\in (a_0,b)$ such that both
$f(x_1)f(x')^{-1}\in V_1$ and $f(x_2)f(x')^{-1}\in V_2$. It follows that
$f(x_1)f(x_2)^{-1}\in V_1V_2^{-1}\sub V$ as required, so $gh^{-1}\in H$.

The proof that $H$ has dimension at least one is similar to the proof in \cite[Lemma
3.8]{PetStein} because the identity element  of $G$ has a neighborhood $R$
homeomorphic to a rectangular  open subset of $M^n$: For every $a_0\in (a,b)$ we
have $f(a_0)f(a_0)^{-1}\in R$ and since $f(x)$ has no limit in $G$  as $x$ tends to
$b$, for all  $x'\in (a_0,b)$ close enough to $b$, we have $f(a_0)f(x')^{-1}\notin
R$, if $R$ is chosen sufficiently small. It follows that there exists $x''\in (a_0,b)$ with $f(a_0)f(x'')^{-1}\in \bd(R)$.
Because $\bd(R)$ is definably compact, as $a_0$ tends to $b$, the set of all of these
points in $\bd(R)$ has a limit point which belongs to $H$. We therefore showed that
every sufficiently small rectangular box $R\ni 1$ has a point from $H$ on its
boundary, so $\dim(H)\geq 1$.

Let's see that $\dim(H)\leq 1$: The set $D=\{\la x,x',f(x)f(x')^{-1}\ra \in
(a,b)^2\times G\}$ has dimension two and therefore its frontier $\fr(D)\sub
[a,b]^2\times G$ has dimension at most $1$. The group $H$ is contained in the
projection of $\fr(D)$ onto the $G$-coordinate.

The fact that $H$ is torsion-free is proved similarly to
\cite{PetStein}.\qed

On the definably compact side we need:

\begin{theorem}\vlabel{SDC} If  $G$ is definably compact then it has strong definable choice
(possibly over a fixed set of parameters)
for subsets of $G$ definable in $\CM^{eq}$. Namely, there is a fixed set $B\sub M$
such that if $\{Y_t:t\in T\}$ is a $\emptyset$-definable family of subsets of $G$, with $T$
definable in $M^{eq}$, then there is a $B$-definable map $\sigma:T\to G$ such that
for each $t\in T$, we have $\sigma(t)\in Y_t$, and if $Y_t=Y_s$ then
$\sigma(t)=\sigma(s)$.

Equivalently, if $Y\sub G$ is definable over $A\sub \CM^{eq}$ then $\dcl(AB)\cap
Y\neq \emptyset$.
\end{theorem}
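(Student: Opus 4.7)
The plan is to adapt Edmundo's proof of definable choice for definably compact definable groups \cite[Proposition 7.2]{ed-solv} to the interpretable setting, making use of the $t$-topology and chart data $\pi_i \colon \CU_0 \to W_i$ from Proposition \ref{prop1}, together with the canonical-point result Fact \ref{defcomp*}.

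First I would reduce the uniform family statement to the following per-set version: for every definable $Y \subseteq G$ with $Y$ defined over $\bar a \subseteq \CM^{eq}$, there is a fixed parameter set $B \subseteq M$ (containing the chart parameters $B_0$) such that $Y \cap \dcl(\bar a B) \neq \emptyset$. The family statement then follows by taking $\bar a$ to be the canonical parameter $t^*$ of $Y_t$ (so that $Y_t = Y_s$ forces $t^* = s^*$, hence $\sigma(t) = \sigma(s)$), with uniformity of the selection extracted by a compactness argument in a saturated model.

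I would then argue by induction on $d = \dim Y$, the base case $d = 0$ being immediate from DEQ. For the induction step, let $i_0$ be the least index with $\dim(Y \cap W_{i_0}) = d$ (a choice definable over $\bar a B_0$), and set $Z = \pi_{i_0}^{-1}(Y \cap W_{i_0})$, a definable subset of the open set $\CU_0 \cap U_{i_0} \subseteq M^r$ with $r = r_{i_0}$. Here is the step where definable compactness is used: by combining the minimality of $i_0$ with the definable compactness of $G$, I would show that $Z$ can be replaced by a definable subset whose closure $K$ in $M^r$ is closed and bounded, with $\pi_{i_0}$ extending appropriately to send $K$ into $\Cl(Y)$. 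The intuition is that any definable curve in $Z$ that escapes to the boundary of $\CU_0 \cap U_{i_0}$ in $M^r$ must, by definable compactness of $G$, converge in $G$ to a point of some other chart $W_j$; the minimality of $i_0$ together with the finiteness of the cover rules out enough of these escape directions to allow a bounded cut-down.

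Then I would apply Fact \ref{defcomp*} to $K$ to obtain a canonical point $x_0 \in K \cap \dcl(\bar a B)$, and set $y_0 = \pi_{i_0}(x_0) \in \Cl(Y)$ (closure in the $t$-topology). If $y_0 \in Y$ we are done. Otherwise $y_0 \in \Cl(Y) \setminus Y$, a set of dimension strictly less than $d$ by the fact stated after Claim \ref{dim-gen-nbhd}. In this case I would use the $\bar a B$-definable point $y_0$ to cut out an $\bar a B$-definable subset of $Y$ of dimension $< d$ (for example, the set of points of $Y$ lying on a canonically selected definable curve emanating from $y_0$ inside $W_{i_0}$), and invoke the inductive hypothesis. \emph{The main obstacle} is the construction of the closed bounded set $K$ in $M^r$: the preimage $Z$ lives in the typically unbounded open set $\CU_0 \cap U_{i_0}$, so one must carefully combine minimality of $i_0$ with definable compactness of $G$ and the open-continuous structure of $\pi_{i_0}$ to trim down to a bounded region whose canonical point projects meaningfully into $\Cl(Y)$. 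This is precisely the interpretable analogue of Edmundo's technique and the technical heart of the argument.
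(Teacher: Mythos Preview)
Your proposal has a real gap at precisely the point you flag as the main obstacle. The preimage $Z = \pi_{i_0}^{-1}(Y \cap W_{i_0})$ lives in an open (and in general unbounded) subset of $M^{r_{i_0}}$, and there is no evident way to trim it to a closed bounded $K$ while guaranteeing that the canonical point of $K$ supplied by Fact \ref{defcomp*} lies in the domain of $\pi_{i_0}$, or that $\pi_{i_0}$ extends continuously to $K$. Definable compactness of $G$ tells you that a curve in $Z$ escaping the chart has a limit \emph{in $G$}, not a limit in $M^{r_{i_0}}$, so it does not help you stay inside a single chart; and the minimality of $i_0$ says nothing about boundedness of $Z$ within $\CU_0$. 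Your fallback step when $y_0\in\Cl(Y)\setminus Y$ --- ``canonically select a curve from $y_0$ into $Y$'' and induct on its intersection with $Y$ --- is itself a definable-choice problem of the same kind you are trying to solve, and you do not resolve it.

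The paper's proof avoids both difficulties by splitting the task in two. First (Lemma \ref{SDC1}) it proves only that $\dcl(A)\cap \Cl(Y)\neq\emptyset$: this is a clean induction on the dimension of a preimage $Y^*\subseteq M^k$ of $Y$ under the quotient map, using definable compactness directly at the one-dimensional step (limits of definable curves into $G$ exist), with no appeal to Fact \ref{defcomp*} or to any closed bounded $K$ in a chart. Second (Lemma \ref{SDC2}), a fixed neighborhood $U_0\ni 1$ is, after shrinking, identified with a product of bounded group-intervals (as in \cite[Lemma 1.28]{PPS}), and therefore has strong definable choice over some $B$ by Fact \ref{SDC-short}. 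To finish, take $h\in\dcl(A)\cap\Cl(Y)$ from the first lemma, note that $h^{-1}Y\cap U_0$ is nonempty and $AB$-definable, and apply the second lemma to it. The key idea your proposal is missing is this second step: rather than trying to descend inductively from a point of $\Cl(Y)$ to a point of $Y$, one translates to the identity and exploits the group-interval structure available there.
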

\proof  Let us note why the two statements are indeed equivalent. Assume that we
proved strong definable choice over $B$ for families parameterized by a definable
subset of $M^{eq}$ and assume that $Y$ is definable over $a\sub \CM^{eq}$. In this
case there is a $B$-definable family of sets $\{Y_t:t\in T\}$, for some
$B$-definable set $T\sub \CM^{eq}$, with $a\in T$ and $Y_a=Y$. Strong definable
choice implies that $Y\cap \dcl(aB)\neq\emptyset$. As for the converse, assume that
we are given the family $\{Y_t:t\in T\}$ and consider the equivalence relation on
$T$ given by $s\sim t$ if and only if $Y_s=Y_t$. We now obtain a new family
$\{Y_{[t]}:[t]\in T/\sim\}$, with $Y_{[t]}=Y_t$. By our assumption,  for every
$[t]$, we have $Y_{[t]}\cap \dcl(B[t])\neq \emptyset$. But for each $t\in T$, $[t]\in
\dcl(Bt)$, and therefore $Y_{[t]}\cap \dcl(Bt)\neq \emptyset$. Strong definable choice
over $B$ follows by compactness.
\\

We now prove the theorem.
The strategy of our proof is taken from Edmundo's \cite{ed-solv}.

\begin{lemma}\vlabel{SDC1} For $G=X/E$ definably compact, let $Y\sub G$ be a definable set over  $A\sub M^{eq}$.
Then $\dcl(A)\cap \Cl(Y)\neq \emptyset$.
\end{lemma}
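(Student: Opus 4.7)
The plan is to prove the lemma by induction on $d = \dim Y$. For the base case $d = 0$, the set $Y$ is finite and $A$-definable, so DEQ applied to the trivial equivalence relation on $Y$ forces every element to be in $\dcl(A)$; any element of $Y \subseteq \Cl(Y)$ works.

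For the inductive step, I would first reduce to $Y$ being $t$-closed: if $Z = \Cl(Y) \setminus Y \neq \emptyset$, then $Z$ is $A$-definable with $\dim Z < d$ by the fact established after Claim \ref{dim-gen-nbhd}, so the inductive hypothesis gives a point of $\dcl(A) \cap \Cl(Z) \subseteq \Cl(Y)$. So assume $Y = \Cl(Y)$. Now invoke Proposition \ref{observ-eq}, absorbing finitely many parameters into $A$, to produce $A$-definable maps $\phi_1,\ldots,\phi_k : G \to M$ whose joint behaviour separates $E$-classes. Since $\dim Y > 0$, at least one $\phi_j$ has image of positive dimension (otherwise $Y$ maps to a single tuple and is finite); rename this $\phi_1$ and let $m = \inf \phi_1(Y) \in \dcl(A)\cup\{-\infty\}$. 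By o-minimality there is an $A$-definable $\epsilon > 0$ with $(m,m+\epsilon) \subseteq \phi_1(Y)$.

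For each $t \in (m, m+\epsilon)$ the fiber $Y \cap \phi_1^{-1}(t)$ is $At$-definable, and at generic $t$ has dimension strictly less than $d$ (since $\phi_1(Y)$ contains an interval). Applying the inductive hypothesis uniformly in $t$, I would construct an $A$-definable curve $\gamma : (m,m+\epsilon) \to G$ with $\gamma(t) \in \Cl(Y \cap \phi_1^{-1}(t)) \subseteq \Cl(Y) = Y$. Definable compactness of $G$ then guarantees that $g := \lim_{t \to m^+} \gamma(t)$ exists in $G$; this limit lies in the $t$-closed set $Y$ and is $A$-definable as the limit of an $A$-definable function, yielding the desired point. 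The case $m = -\infty$ is handled identically by approaching the other endpoint of $(-\infty, m+\epsilon)$.

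The main obstacle is the uniformization step: the inductive hypothesis, as stated, asserts the existence of some $\dcl(At)$-point in each fiber's closure, but constructing a single $A$-definable curve $\gamma$ requires that this choice can be made uniformly in $t$. To avoid this circularity I would either strengthen the induction to a parameterized form (asserting the existence of $A$-definable selectors for $A$-definable families $\{Y_t : t \in T\}$, which is essentially the statement of Theorem \ref{SDC} and whose base case still reduces to DEQ), or extract Skolem-like functions directly from the inductive proof using the $\CM^{eq}$-formalism. A secondary technicality is checking that generic fibers indeed drop in dimension; this is ensured by the positive-dimensionality of $\phi_1(Y)$, with the degenerate case of $\phi_1|_Y$ being generically constant handled by iterating the construction with $\phi_2,\ldots,\phi_k$.
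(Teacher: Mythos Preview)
Your overall strategy---induct on dimension, slice into lower-dimensional fibers via a map to $M$, apply the hypothesis to each fiber, uniformize, then use definable compactness to extract a limit point---matches the paper's. But your execution has two gaps. First, Proposition~\ref{observ-eq} does not give what you claim: it produces a bijection of $G$ with a \emph{quotient} $Y'/E'$, not an injection into $M^k$, so the maps $\phi_j$ need not jointly separate points of $G$ and your assertion ``otherwise $Y$ maps to a single tuple and is finite'' is unjustified. Second, even if the maps behaved as you wish, Proposition~\ref{observ-eq} comes ``possibly over parameters,'' so after absorbing those you prove only $\dcl(AB)\cap\Cl(Y)\neq\emptyset$ for some fixed $B$, which is weaker than the lemma as stated. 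The paper sidesteps both problems by changing the object of induction: instead of working with $\dim Y$ inside $G$ and searching for maps $G\to M$, it pulls $Y$ back to its preimage $Y^*\sub X\sub M^k$ under the quotient map and proves, by induction on $\dim Y^*$, that $\dcl(A)\cap\Cl(g(Y^*))\neq\emptyset$ for any $A$-definable $g:Y^*\to G$. The slicing is then done by ordinary coordinate projections in $M^{\ell-1}$, which are $\emptyset$-definable and need no Proposition~\ref{observ-eq}; the quotient map itself is definable over the parameters defining $G$, so no extra parameters enter.

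Your ``main obstacle''---uniformizing the pointwise inductive conclusion into a single definable selector---is in fact not an obstacle, and the paper dispatches it in one line. If for each $t$ there is a point of $\Cl(g(Y^*_t))$ in $\dcl(At)$, then each $t$ is witnessed by some $A$-definable partial function; by compactness of first-order logic finitely many such functions cover all $t$, and case-splitting on which one succeeds yields an $A$-definable $\sigma:T\to G$. No strengthening of the induction hypothesis is needed.
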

\proof First, note that $\Cl(Y)$ is also definably compact.

We are going to prove a slightly different statement: {\em For
every $A$-definable set $Y^*\sub M^k$ (for some $k$) and for every
$A$-definable function $g:Y^*\to G$, we have $\dcl(A)\cap
\Cl(g(Y^*))\neq \emptyset$} (to apply this statement to our case
take $Y^*\sub X$ the pre-image of $Y$ under the quotient map).

 We prove the result by induction on $\ell=\dim Y^*$. If $\ell=0$ then $Y^*$ is finite so every element of $Y^*$ is in $\dcl(A)$
 (see the earlier property DEQ) and therefore $Y\sub \dcl(A)$.

 Assume now that $\dim Y^*=\ell>0$. If $\ell=1$ then
$Y^*$ is a finite union of $A$-definable open intervals and the
restriction of $g$ to one of these gives an $A$-definable function
$g:(a,b)\to G$. Its image is either finite, so again in $\dcl(A)$
(see \cite{Pillay1}), or infinite in which case, by definable
compactness, the limit point of $g(y)$ as $y$ tends to $b$, exists
in $\Cl(g(Y^*))$ and is $A$-definable.

Assume then that $\ell>1$.  We find a projection, $\pi^*:Y^*\to M^{\ell-1}$  whose
image has dimension $\ell-1$. For every $t\in \pi^*(Y^*)$, let $Y^*_t\sub Y^*$ be
the pre-image of $t$ under $\pi^*$. By dimension considerations, we can find an
$A$-definable set $T\sub \pi^*(Y^*)$ such that for every $t\in T$, $\dim(Y^*_t)=1.$
Because $\dim Y^*_t=1<\ell$, we have, by induction, $\dcl(At)\cap \Cl(g(Y^*_t)) \neq
\emptyset$. Using compactness, we get an $A$-definable function $\sigma :T\to G$
with $\sigma(t)\in \Cl(g(Y^*_t))$ for every $t\in T$. Because $\dim T <\ell$, we can
apply induction and obtain $$\dcl(A)\cap \Cl(\si(T))\neq \emptyset.$$ But $\si(T)\sub
\Cl(g(Y^*)$, so we are done.\qed

\begin{lemma} \vlabel{SDC2} There exists a finite set $B$ and a $B$-definable neighborhood $U_0\ni
1$ in $G$ such that $G$ has strong definable choice over $B$, for definable subsets
of $U_0$.
\end{lemma}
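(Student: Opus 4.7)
The plan is to construct $U_0$ as the $B$-definable image of an open box $V\subset M^n$ in $G$ via a chart map, and then transfer strong definable choice from $V$ (viewed as a product of group-short intervals) to $U_0$ using Fact~\ref{SDC-short}. The role of Theorem~\ref{inter-group1} will be crucial: it is what forces the coordinate intervals of the chart to be group-short.

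First I would choose $x_0=\langle x',x''\rangle$ generic in $U=U_1$ and set $g_0=[x_0]$. Since $U(x_0)$ is an open subset of $M^n$, I can pick an open box $V=I_1\times\cdots\times I_n\subseteq U(x_0)$ containing $x''$, small enough so that $\{x'\}\times V$ lies inside the basic $M^n$-neighborhoods of $x_0$ used to define the $t$-topology. Let $B$ be the finite set consisting of $x_0$ together with the endpoints of $I_1,\dots,I_n$. The map $\psi\colon V\to G$, $\psi(y)=g_0^{-1}\cdot[x',y]$, is a $B$-definable injection onto a $t$-open neighborhood $U_0:=\psi(V)\ni 1$; by the construction of the $t$-topology together with Claim~\ref{coincide}, $\psi$ is in fact a homeomorphism from $V$ (with the $M^n$-topology) onto $U_0$ (with the $t$-topology).

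The crucial step is to show that each coordinate interval $I_j$ is group-short in $M$. Fix $j$ and fix arbitrary $a_i\in I_i$ for $i\neq j$, and consider the one-dimensional subset
\[
S_j=\psi\bigl(\{a_1\}\times\cdots\times I_j\times\cdots\times\{a_n\}\bigr)\subseteq G.
\]
Since $\dim(S_j)=1$, Theorem~\ref{inter-group1} implies that $S_j$ is group-short as a subset of $G$, i.e.\ it is in definable bijection with a group-short subset of some $M^\ell$. Composing with the $B$-definable bijection $\psi^{-1}$ and the projection to the $j$-th coordinate, we obtain a definable bijection between $S_j$ and $I_j$, so $I_j$ itself is a group-short interval in $M$. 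After enlarging $B$ to include the parameters witnessing the group-short structure of each $I_j$ (still a finite set), Fact~\ref{SDC-short} gives strong definable choice over $B$ for every definable family of subsets of the product $V=\prod_j I_j$.

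Finally, I would transfer strong definable choice back to $U_0$: given any $A$-definable family $\{Y_t:t\in T\}$ of subsets of $U_0$ (with $T\subseteq\CM^{eq}$), apply SDC to the family $\{\psi^{-1}(Y_t):t\in T\}$ in $V$ to obtain an $AB$-definable $\sigma'\colon T\to V$ with $\sigma'(t)\in\psi^{-1}(Y_t)$ and $\sigma'(t_1)=\sigma'(t_2)$ whenever $\psi^{-1}(Y_{t_1})=\psi^{-1}(Y_{t_2})$, and set $\sigma=\psi\circ\sigma'$. The only substantive obstacle is the verification that each $I_j$ is group-short as a subset of $M$; once Theorem~\ref{inter-group1} is invoked, this reduces to unwinding the definition of group-shortness across the definable bijection $\psi$. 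Everything else is transport of structure through the chart, so the lemma follows.
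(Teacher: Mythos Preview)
Your proof is correct, but the route differs from the paper's. The paper works directly with the local group structure that $G$ induces on a neighborhood $U_0$ of the identity: by \cite[Lemma 1.28]{PPS}, after shrinking, $U_0$ becomes a product of intervals each carrying a definable bounded group-interval operation coming from the group multiplication itself, and then Fact~\ref{SDC-short} applies immediately. You instead take an arbitrary box $V$ in a chart and invoke Theorem~\ref{inter-group1} to force each coordinate interval $I_j$ to be gp-short, via its bijection with a one-dimensional subset $S_j$ of $G$.

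Your argument has the advantage of being self-contained within the paper, avoiding the external reference to \cite{PPS}; on the other hand it leans on Theorem~\ref{inter-group1}, which is a substantial result built on the whole gp-long dimension machinery and Corollary~\ref{injective}. The paper's approach is lighter in that respect: the group-interval structure on each coordinate is produced constructively from the local multiplication, so Lemma~\ref{SDC2} itself does not depend on Theorem~\ref{inter-group1}. Either way the conclusion is the same, and your transfer step through $\psi$ is fine.
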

\proof Start with a fixed neighborhood $U_0$ of $1\in G$, which we may assume is a
subset of $M^n$. The group $G$ induces on $U_0$ the structure of a local group, so
just like in \cite[Lemma 1.28]{PPS}, we may assume, by further shrinking $U_0$, that
$U_0$ is a product of intervals, each endowed with the structure of a bounded group-interval (this might require the parameter set $B$). By Lemma \ref{SDC-short}, $U_0$
has definable choice.\qed

We can now complete the proof of the theorem. Take an
$A$-definable $Y\sub G$. By Lemma \ref{SDC1}, there exists $h\in
\dcl(A)\cap \Cl(Y)$. We can now replace $Y$ by $Y_1=h^{-1}Y\cap
U_0$. The set $Y_1$ is $AB$-definable and because $h\in \Cl(Y)$,
the set $Y_1$ also non-empty. By Lemma \ref{SDC2}, we have
$\dcl(ABh)\cap Y_1\neq \emptyset$. But $h$ is in $\dcl(A)$ so we
have $\dcl(AB)\cap Y\neq \emptyset$.\qed

\subsection{Interpretable groups are definable}

\begin{theorem}\vlabel{THE THEOREM} \begin{enumerate} \item If $G$ is an interpretable group then it is
definably isomorphic, over parameters, to a definable group.

\item If $G$ is a definable group then there are generalized group-intervals
$I_1,\ldots, I_k$ and a definable injection $\si:G\to I_1\times\cdots \times I_k$.
Namely, $G$ is definably isomorphic, over parameters, to a a definable group in a
cartesian product of generalized group-intervals. We can also replace each
group-intervals $I_j$ with a one-dimensional definable group $H_j$.
\end{enumerate}
\end{theorem}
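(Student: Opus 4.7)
The plan is to prove (1) and (2) together by induction on $n=\dim G$. For (1), let $G=X/E$ be interpretable; by Proposition \ref{observ-eq} we may assume $X\sub I_1\times\cdots\times I_k$ with each $I_j\sub M$ the image of $G$ under a definable map $f_j\scht G\to M$. The key reduction is to show that each $I_j$ is gp-short. Once this is established, $\prod_j I_j$ is a gp-short set and Fact \ref{SDC-short} supplies strong definable choice on it, allowing us to select one representative from each $E$-class in $X$. The resulting definable set $X_0\sub X$ is in definable bijection with $G$ and inherits the group operation, giving the desired definable group.

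To verify that each $I_j$ is gp-short, split on definable compactness. If $G$ is definably compact, Theorem \ref{SDC} furnishes a definable section $s\scht I_j\to G$ of $f_j$; its image $s(I_j)$ is a one-dimensional subset of $G$, hence gp-short by Theorem \ref{inter-group1}, and this transfers to $I_j$ through the bijection $f_j\rest s(I_j)\scht s(I_j)\to I_j$. If $G$ is not definably compact, Lemma \ref{noncompact} yields a definable torsion-free one-dimensional subgroup $H\le G$. The interpretable group $G/H$ has strictly smaller dimension, so by induction it is definably isomorphic to a definable group. The $H$-cosets themselves are gp-short (as one-dimensional subsets of $G$), so combining the two -- via a definable section of $G\to G/H$ over a sufficiently large open piece of $G/H$ in the $t$-topology developed in Proposition \ref{prop1}, and then covering $G$ by finitely many translates of this piece via Fact \ref{large} -- shows that $f_j(G)$ is a finite union of gp-short pieces and hence gp-short. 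This combining step is the main obstacle, requiring the standard analysis of groups definable in o-minimal structures (as extensions of semisimple groups by torsion-free abelian ones) to control how $f_j$ interacts with the $H$-cosets while preserving gp-shortness under the patching.

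Given (1), part (2) follows quickly. If $G$ is definable then, by (1), $G$ is in definable bijection with a subset of $\prod_j I_j$ for gp-short $I_j$'s; writing each $I_j$ as a finite disjoint union of points and open generalized group-intervals, and absorbing the finitely many points at the cost of extra coordinates (item (3) of the Note following Definition \ref{def-gp-short}), produces the required definable injection $\si\scht G\to I'_1\times\cdots\times I'_m$ with each $I'_i$ a generalized group-interval. The final statement -- replacing each $I'_i$ by a one-dimensional definable group $H_i$ -- is then immediate from Lemma \ref{intervals-groups}, whose embeddings can be composed with $\si$ coordinatewise to obtain the map $h\scht G\to\prod_i H_i$.
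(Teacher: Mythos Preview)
Your definably compact case matches the paper's Lemma \ref{main-inter1} exactly. The gap is in the non-compact case: the one-dimensional torsion-free subgroup $H$ produced by Lemma \ref{noncompact} is \emph{not} known to be normal in $G$. Hence $G/H$ is merely a coset space, not an interpretable group, and your inductive hypothesis---which is stated for interpretable groups---does not apply to it. Your sentence ``the interpretable group $G/H$ has strictly smaller dimension'' is therefore unjustified, and the subsequent ``combining step'' (sections over a large piece of $G/H$, then translates) never gets off the ground. This is not a detail that the ``standard analysis of groups as extensions of semisimple by torsion-free abelian'' can patch after the fact; that analysis is exactly what is needed \emph{instead of} the direct $H$-argument.

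The paper's route is to prove a clean transfer lemma (Lemma \ref{main-inter2}): if $H_1\trianglelefteq G$ is a \emph{normal} definable subgroup and both $H_1$ and $G/H_1$ are already known to be definably isomorphic to gp-short definable groups, then so is $G$; the proof is a one-line application of Lemma \ref{gp-short1} to $f(G)=\bigcup_{y\in G/H_1} f(\pi^{-1}(y))$. With this in hand, the induction on $\dim G$ runs as follows: reduce to $G$ definably connected; if $G$ has an infinite definable normal subgroup, apply induction to it and to the quotient and use Lemma \ref{main-inter2}; otherwise, after modding out the (finite) center, $G$ is either abelian or definably simple. The abelian case (Lemma \ref{main-inter3}) is where your idea actually works, since every subgroup is normal and one can build a chain $A_1\leq\cdots\leq A_k\leq G$ with one-dimensional successive quotients and $G/A_k$ definably compact. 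The definably simple case (Lemma \ref{main-inter4}) requires a genuinely different argument: the adjoint embedding into $\GL(n,R)$ for a definable real closed field $R$, via \cite{PPS}. Your outline mentions the semisimple/torsion-free dichotomy but does not actually invoke it; in particular it contains no substitute for Lemma \ref{main-inter4}.

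Your derivation of (2) from the gp-shortness statement, and the final passage to one-dimensional groups via Lemma \ref{intervals-groups}, are correct and agree with the paper.
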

\proof We are going to prove the following statement, which incorporates both (1)
and (2): {\em Every interpretable group $G$ is definably isomorphic to a definable
group which is gp-short.}

We prove the results through several lemmas.
\begin{lemma}\vlabel{main-inter1} The result holds for $G$ definably compact.
\end{lemma}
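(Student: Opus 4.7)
The strategy is to eliminate the quotient presentation of $G$ using Proposition \ref{observ-eq}, and to pass from gp-shortness inside $G$ to gp-shortness of honest subsets of $M$ via the strong definable choice supplied by Theorem \ref{SDC} (which uses definable compactness). Once this is done, the quotient $Y/E'$ is eliminated and the group structure transfers.

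By Proposition \ref{observ-eq}, I may assume $G$ is in definable bijection with an $\CM$-quotient $Y/E'$ where $Y\sub I_1\times\cdots\times I_k$ and each $I_j=f^j(G)\sub M$ is the image of a definable map $f^j\scht G\to M$. The crucial step is to show that each $I_j$ is a gp-short subset of $M$. Since $G$ is definably compact, Theorem \ref{SDC} gives strong definable choice for $\CM^{eq}$-definable subsets of $G$ (over some fixed parameter set $B$). Applied to the definable family $\{(f^j)^{-1}(y)\scht y\in I_j\}$ of non-empty subsets of $G$, it yields a definable section $\si_j\scht I_j\to G$ of $f^j$. Its image $Z_j=\si_j(I_j)\sub G$ is a definable subset of $G$ in definable bijection with $I_j\sub M$, so $\dim Z_j\leq 1$. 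Theorem \ref{inter-group1} then implies $Z_j$ is gp-short, and transporting back along $\si_j$ shows that $I_j$ is a gp-short subset of $M$.

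Once each $I_j$ is gp-short, the product $I_1\times\cdots\times I_k$ is a gp-short set, and hence so is $Y$. Via the definable bijection of $\prod_j I_j$ with a subset of a product of gp-short intervals, Fact \ref{SDC-short} supplies a definable strong choice function on the $E'$-classes of $Y$, yielding a definable injection $Y/E'\hookrightarrow Y\sub M^k$. Composing with the bijection $G\cong Y/E'$ produces a definable bijection $\varphi\scht G\to G'$, where $G'\sub M^k$ is a definable gp-short set. Pushing the multiplication of $G$ forward along $\varphi$ gives an operation on $G'$ whose graph is a definable subset of $M^{3k}$ (since both $\varphi$ and the graph of multiplication in $G$ are definable in $\CM^{eq}$), turning $G'$ into a definable gp-short group definably isomorphic to $G$.

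The main obstacle is the second step, showing that the one-dimensional images $I_j\sub M$ are themselves gp-short. This is precisely where definable compactness is used: without the strong definable choice of Theorem \ref{SDC}, one cannot lift $I_j$ back to a one-dimensional subset of $G$ in order to apply Theorem \ref{inter-group1}. The non-definably-compact case therefore requires a separate treatment, presumably by the induction-on-dimension strategy outlined in the introduction.
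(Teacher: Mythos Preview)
Your proof is correct and follows essentially the same route as the paper: apply Proposition \ref{observ-eq} to present $G$ as $Y/E'$ with $Y\sub\prod_j I_j$, use the strong definable choice from Theorem \ref{SDC} to lift each $I_j$ to a one-dimensional subset of $G$, invoke Theorem \ref{inter-group1} to conclude these lifts (hence the $I_j$) are gp-short, and then eliminate $Y/E'$ via Fact \ref{SDC-short}. Your write-up is slightly more explicit about the final transfer of the group operation, but the argument is the same.
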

\proof By Theorem \ref{SDC}, $G$ has strong definable choice.
 By Proposition \ref{observ-eq}, there are intervals $J_i\sub M$, $i=1\ldots, k$,
each the image of $G$ under a definable map $f_i:G\to J_i$ and a definable set
$Y\sub \Pi_i J_i$ with a definable equivalence relation $E'$ on $Y$, such that $G$
is definably bijective to $Y/E'$.

Since $G$ has strong definable choice, there are definable $1$-dimensional subsets
of $G$, call them $I_1,\ldots, I_k\sub G$, such that $f_i|I_i:I_i\to J_i$  is a
bijection.  By Theorem \ref{inter-group1}, every $I_i$ is gp-short and therefore
each $J_i$ is group-short. It follows that $\Pi_i J_i$ has strong definable choice,
so $Y/E'$ is in definable bijection with a definable subset of $\Pi_i J_i$.\qed

\begin{lemma}\vlabel{main-inter2} Assume that $H_1\sub G$ is a definable normal
subgroup, and assume that $H_1$ and $G/H_1$ are each definably isomorphic to a
definable, gp-short group. Then so is $G$.
\end{lemma}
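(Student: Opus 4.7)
My plan is to build the definable gp-short realization of $G$ as a twisted product of the given realizations of $H_1$ and $G/H_1$, via a definable set-theoretic section of the quotient map.

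First I would fix definable group isomorphisms $\alpha:H_1\to H_1'$ and $\beta:G/H_1\to K'$ where $H_1'$ and $K'$ are definable gp-short groups. Since $K'$ is in definable bijection with a subset of a product of gp-short intervals, Fact \ref{SDC-short} gives strong definable choice for definable families of subsets of $K'$ (over some fixed parameter set $B$). Transporting via $\beta$, the same holds for $G/H_1$. I then apply strong definable choice to the definable family of cosets $\{gH_1:\bar g\in G/H_1\}$ (the definable set $\{\la \bar g,x\ra:[x]=\bar g\}\sub G/H_1\times G$) to obtain a $B$-definable section $s:G/H_1\to G$ with $s(\bar g)\in \bar g$.

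Next I would define the map $\phi:G\to H_1\times G/H_1$ by
\[
\phi(g)=\bigl(g\cdot s([g])^{-1},\,[g]\bigr).
\]
Normality of $H_1$ is not even needed for this to land in $H_1\times G/H_1$: indeed $[g\cdot s([g])^{-1}]=[e]$, so the first coordinate lies in $H_1$. The map $\phi$ is a definable bijection with inverse $(h,\bar g)\mapsto h\cdot s(\bar g)$. Composing with $\alpha\times\beta$ yields a definable bijection $\Phi:G\to H_1'\times K'$. Transporting the group operation of $G$ through $\Phi$ gives a definable group operation on $H_1'\times K'$ (explicitly involving the cocycle $c(\bar g_1,\bar g_2)=s(\bar g_1)s(\bar g_2)s(\bar g_1\bar g_2)^{-1}\in H_1$, which is definable). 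Thus $G$ is definably isomorphic to a definable group whose universe is $H_1'\times K'$.

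Finally I need to check that $H_1'\times K'$ is gp-short. By definition, $H_1'$ is in definable bijection with a subset of $\Pi_i I_i$ and $K'$ with a subset of $\Pi_j J_j$, for gp-short intervals $I_i,J_j$. The product of these two bijections embeds $H_1'\times K'$ definably into $\Pi_i I_i\times \Pi_j J_j$, which is a finite product of gp-short intervals. Hence $H_1'\times K'$ is gp-short, and $G$ is definably isomorphic to a definable gp-short group as required.

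The only nontrivial step is the construction of the section $s$, and that is precisely where the gp-short hypothesis on $G/H_1$ is used (through strong definable choice, Fact \ref{SDC-short}); the remainder of the argument is the standard cocycle trivialisation of an extension in the presence of a definable section.
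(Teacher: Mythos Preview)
There is a genuine gap in your construction of the section $s:G/H_1\to G$. Fact \ref{SDC-short} gives strong definable choice for definable families of subsets of a product $\Pi_j J_j$ of gp-short intervals; transporting via $\beta$, this yields strong definable choice for definable families of subsets of $G/H_1$. But the family you want to choose from, $\{gH_1:\bar g\in G/H_1\}$, consists of subsets of $G$, not of $G/H_1$. At this stage $G$ is still only an interpretable group $X/E$, and nothing you have established gives definable choice for subsets of $G$ (or even for subsets of $X\sub M^m$, since $\CM$ is an arbitrary o-minimal structure). So the section $s$ is not available by the argument you give, and without it the bijection $\phi$ cannot be constructed.

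The paper's proof sidesteps precisely this difficulty. Rather than seeking a section, it reduces (via Proposition \ref{observ-eq}, as in Lemma \ref{main-inter1}) to showing that every definable map $f:G\to M$ has gp-short image. For this it writes $f(G)=\bigcup_{y\in G/H_1} f(G_y)$ with $G_y=\pi^{-1}(y)$. Each fiber $G_y$ is in definable bijection with $H_1$, hence gp-short, so $f(G_y)$ is gp-short; and since $G/H_1$ can be reindexed by the gp-short definable set $K'\sub M^k$, Lemma \ref{gp-short1} gives that the union $f(G)$ is gp-short. The point is that Lemma \ref{gp-short1} only needs each $f(G_y)$ to be gp-short---it does not require a \emph{uniform} bijection $G_y\to H_1$, which is exactly what a section would provide. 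Once every $f(G)$ is gp-short, the product $\Pi_i J_i$ from Proposition \ref{observ-eq} has strong definable choice (Fact \ref{SDC-short}), and the quotient $Y/E'$ is eliminated. Your cocycle argument would work perfectly well if a section existed, but the paper's route is needed precisely because, at this point in the argument, one does not.
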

\proof As in the proof of Lemma \ref{main-inter1}, it is sufficient to prove, for
every definable map $f:G\to M$, that $f(G)$ is gp-short. Let $\pi:G\to G/H_1$ be the
quotient map. For each $y\in G/H_1$, $G_y=\pi^{-1}(y)$ is in definable bijection
with $H_1$ and therefore it is in definable bijection with a gp-short definable set.
We now write $f(G)$ as a definable union $\bigcup_{y\in G/H_1} f(G_y)$. Each set
$f(G_y)$ is gp-short and the parameter set $G/H_1$ is gp-short, so by Lemma
\ref{gp-short1}, the union $f(G)$ is gp-short.

\begin{lemma} \vlabel{main-inter3} If $G$ is abelian then $G$ is definably isomorphic to a definable
group, which is gp-short.
\end{lemma}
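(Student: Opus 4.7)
The plan is to argue by induction on $\dim G$. If $G$ is definably compact then Lemma \ref{main-inter1} provides the conclusion directly (this covers in particular the base case $\dim G=0$, where $G$ is finite). Otherwise, by Lemma \ref{noncompact}, $G$ contains a definable torsion-free one-dimensional subgroup $H$, and this $H$ is normal in $G$ because $G$ is abelian. I will use the decomposition of $G$ by $H$ to reduce to Lemma \ref{main-inter2}.

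For the hypothesis of Lemma \ref{main-inter2}, I need each of $H$ and $G/H$ to be definably isomorphic to a gp-short definable group. For $H$: Theorem \ref{inter-group1} applied to the one-dimensional subset $H\sub G$ produces a definable bijection $\phi\colon H\to H'$ onto a gp-short set $H'\sub M^n$; transporting the group operation of $H$ through $\phi$ turns $H'$ into a definable gp-short group and $\phi$ into a definable group isomorphism, so $H$ is definably isomorphic to the gp-short definable group $H'$. For $G/H$: since $G=X/E$ is interpretable and $H$ corresponds to a definable set in $X$, the equivalence relation given by $H$-cosets refines to yield an interpretable realization of $G/H$, which is again abelian; by the dimension formula $\dim(G/H)=\dim G-1<\dim G$, so the inductive hypothesis applies to $G/H$ to give the required gp-short definable group isomorphic to it.

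Lemma \ref{main-inter2} then immediately produces a definable gp-short group definably isomorphic to $G$, completing the induction. The principal subtlety is arranging the induction cleanly: checking that $G/H$ is interpretable, abelian, and of strictly smaller dimension requires only routine use of the dimension formula and the definability and normality of $H$, while the transport of the group operation through a definable bijection onto a gp-short set is immediate from the definitions. All the substantive mathematical content has been packaged into Lemmas \ref{main-inter1}, \ref{main-inter2}, \ref{noncompact} and Theorem \ref{inter-group1}, so the present lemma is essentially an organizational statement gluing them together in the abelian case, with the torsion-free one-dimensional subgroup $H$ serving as the building block that unlocks Lemma \ref{main-inter2}.
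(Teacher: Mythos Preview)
Your proof is correct and follows essentially the same approach as the paper: both use Lemma~\ref{noncompact} to peel off a one-dimensional normal subgroup, handle it via Theorem~\ref{inter-group1} (the paper also cites Corollary~\ref{EI1} explicitly), and reduce to the definably compact case via Lemma~\ref{main-inter1}, gluing with Lemma~\ref{main-inter2}. The only cosmetic difference is that you frame this as an induction on $\dim G$, whereas the paper first builds the full chain $A_1\leq\cdots\leq A_k\leq G$ with one-dimensional successive quotients and $G/A_k$ definably compact, then applies Lemma~\ref{main-inter2} along the chain; these are the same argument unrolled differently.
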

\proof By Lemma \ref{noncompact}, we can find a chain of definable groups
$A_1\leq\cdots\leq A_k\leq G$, such that $\dim(A_i/A_{i-1})=1$ and $G/A_k$ is
definably compact. By Corollary \ref{EI1}, each one-dimensional group is definably
isomorphic to a definable group, and  by Theorem \ref{inter-group1}, each such
group is
gp-short. So, using Lemma \ref{main-inter2}, we see that $A_k$ is definably
isomorphic to a definable, gp-short group. By Lemma \ref{main-inter1}, $G/A_k$ is
definably isomorphic to a definable (gp-short) group, so again by \ref{main-inter2},
the group $G$ is definably isomorphic to a definable gp-short group.\qed

\begin{lemma} \vlabel{main-inter4} If $G$ is definably simple (namely, $G$ is non-abelian and has no
definable non-trivial normal subgroup) and definably connected then $G$ is definably isomorphic
to a definable group which is gp-short.
\end{lemma}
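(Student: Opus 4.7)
The plan is to case-split on definable compactness, handling the compact case with earlier results and the non-compact case via the classification of definably simple groups in o-minimal structures.

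If $G$ is definably compact, Lemma \ref{main-inter1} applies directly and we are done, so assume $G$ is not definably compact. By Lemma \ref{noncompact}, $G$ contains a definable one-dimensional torsion-free subgroup $H$. By Corollary \ref{EI1}, $H$ is in definable bijection with a definable one-dimensional subset of $M$; by Theorem \ref{inter-group1}, this subset is gp-short. In particular, the Trichotomy Theorem is realized in its non-trivial form at a generic point of $H$, ensuring the presence of definable generalized group-intervals around generic points of $G$.

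I would then invoke the classification of definably simple groups in o-minimal structures due to Peterzil-Pillay-Starchenko: a definably simple, definably connected group is definably isomorphic to a linear group $\tilde G \leq \GL_m(R)$ for some real closed field $R$ interpretable in $\CM$. To adapt this statement to an interpretable $G$, I would use the $t$-topology developed in the previous subsection. Proposition \ref{prop1} supplies finite atlases of charts from open subsets of $M^n$ onto definable neighborhoods in $G$; Theorem \ref{SDC} supplies strong definable choice in the definably compact pieces that appear in the argument; and Lemmas \ref{main-inter2} and \ref{main-inter3} give us the gp-short-definable structure of the solvable subgroups encountered along the way. With these tools, the usual field-construction and linearization arguments should go through essentially as in the definable case. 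Once $\tilde G \leq \GL_m(R)$ is in hand, the field $R$ is an ordered group in which every bounded open interval carries the structure of a bounded generalized group-interval; thus $R$ itself is gp-short, and so is $R^{m^2}$. By Fact \ref{gp-short2}, the definable subset $\tilde G$ is gp-short, and we conclude that $G$ is definably isomorphic to a definable gp-short group.

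The main obstacle is verifying that the Peterzil-Pillay-Starchenko arguments transfer to the interpretable setting. Their construction of $R$ proceeds from a $2$-dimensional solvable subgroup acting on $H$, and the linearization step uses the action of $G$ on quotients by maximal solvable subgroups; both steps are geometric and local, and the machinery we have (the $t$-topology and its charts, strong definable choice, and the extension lemmas above) should suffice, but a careful step-by-step check of the original arguments in the interpretable context is the substantial work.
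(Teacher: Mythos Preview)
Your approach would ultimately work, but it takes a detour the paper avoids. The paper does \emph{not} case-split on definable compactness, and it does not invoke Lemma~\ref{noncompact}, the one-dimensional subgroup $H$, Theorem~\ref{SDC}, or Lemmas~\ref{main-inter2}--\ref{main-inter3}. Instead it observes that the entire Peterzil--Pillay--Starchenko argument from \cite{PPS} is local: it only uses basic facts about (interpretable) groups together with the existence of an $M^n$-neighbourhood $U_0$ of the identity. That neighbourhood is already supplied by the $t$-topology (via Proposition~\ref{prop1}), so the PPS machinery runs verbatim. Concretely: since $G$ is definably simple it is centerless, so by \cite[Theorem 3.1]{PPS} one writes $U_0$ as a product of pairwise orthogonal rectangular boxes; definable simplicity forces there to be a single box, whose intervals are mutually non-orthogonal and each supports a definable real closed field $R$ (\cite[Theorem 3.2]{PPS}); one then constructs the Lie algebra $L(G)$ working only near $1$, and the adjoint representation embeds $G$ into $\GL(n,R)$, which is manifestly gp-short.

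Your description of the PPS route is also slightly off: the linearisation is via the adjoint representation on the Lie algebra, not via an action on cosets of maximal solvable subgroups, and the field $R$ comes from the local analysis of the centerless neighbourhood rather than from a $2$-dimensional solvable subgroup acting on $H$. Dropping the compactness dichotomy and the auxiliary subgroup $H$ would align your proof with the paper's and spare you the ``careful step-by-step check'' you flag as the main obstacle.
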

\proof We  fix $U_0\ni 1$ a definable neighborhood which we may
assume to be an open subset of $M^n$.  The rest of the argument is
identical to the proof in \cite{PPS}, because all that was used
there was the basic facts about definable groups (whose analogues
we proved here for interpretable groups) together with the
existence of an $M^n$-neighborhood of the identity in $G$. To
recall, the fact that $G$ is centerless implies that we can write
$U_0$ as a cartesian product of open rectangular boxes, pairwise
orthogonal,  $R_1\times \cdots \times R_s$, where each $R_j$ is
itself a cartesian product of intervals  which are non-orthogonal
to each other (see Theorem 3.1 in \cite{PPS}). Since $G$ is
definably simple we can show that there is only one such box, so
we may write $U_0$ as a single cartesian product of pairwise
non-orthogonal group-intervals. Moreover, each interval supports
the structure of a definable real closed field and all these real
closed fields must be definably isomorphic to each other (see
\cite[Theorem 3.2]{PPS}). We now have a neighborhood $U_0$ of
$1\in G$ which we may assume to be a neighborhood of $0\in R^n$
for a definable real closed field $R$. We repeat the construction
of the Lie algebra $L(G)$ in $R$ (which only requires working in a
neighborhood of $1$), and finally embed $G$ into $GL(n,R)$ using
the adjoint embedding. Clearly, the group $GL(n,\RR)$ is
gp-short.\qed

We can now prove Theorem \ref{THE THEOREM}: We use induction on $\dim G$.
 By Lemma
\ref{main-inter2}, we may assume that $G$ is definably connected. If $G$ has a
definable infinite normal subgroup $H_1$ then, by induction, both $H_1$ and $G/H_1$
satisfy the result so again by \ref{main-inter2} we are done. So, we may assume that
no such infinite definable normal subgroup exists.

Assume then that $G$ has some finite normal subgroup. In this case, by DCC and the
connectedness of $G$, this subgroup must be contained in $Z(G)$, which by
assumption must be finite. Again, using Lemma \ref{main-inter2}, we can replace $G$
by $G/Z(G)$, which now has no definable non-trivial normal subgroup. We are left
with two possibilities: either $G$ is abelian or definably simple, so we are done by
\ref{main-inter3} and \ref{main-inter4}.

To replace each $I_j$ with a definable one-dimensional group, use Lemma
\ref{intervals-groups}. \qed

\section{Appendix: A uniform cell decomposition}
\begin{lemma}\vlabel{uniform} let $\{X_t:t\in T\}$ be a $\emptyset$-definable family of
subsets of $M^k$. Then there are finitely many $\emptyset$-definable collections
$\{X_t^i:t\in T\}$, $i=1,\ldots,m$, such that: (i) For each $i=1,\ldots, m$ and each
$t\in T$, $X_t^i\sub M^k$ is a cell. (ii) For each $t\in T$, $X_t$ is the disjoint
union of $X_t^1,\ldots X_t^m$. (iii) For each $t,s\in T$, and $i=1,\ldots, m$, if
$X_t=X_s$ then $X_t^i=X_s^i$.
\end{lemma}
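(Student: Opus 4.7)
My plan is induction on the ambient dimension $k$, exploiting the fact that the usual cell-decomposition recipe is driven entirely by intrinsic data of the set being decomposed, so the cells it produces ought to depend only on $X_t$ and not on the particular $t$ chosen to name it. For the base case $k=1$, each $X_t \sub M$ is, by o-minimality, a finite disjoint union of isolated points and maximal open intervals; this decomposition is manifestly determined by $X_t$ alone, and uniform o-minimality gives a bound on the number of pieces and makes the resulting families $\emptyset$-definable.

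For the inductive step, let $\pi \colon M^{k+1} \to M^k$ be the projection onto the first $k$ coordinates. Since $X_t = X_s$ forces $\pi(X_t) = \pi(X_s)$, I would apply the inductive hypothesis to the family $\{\pi(X_t) : t \in T\}$ to obtain cells $\{U_t^j\}_{j=1}^m$ with $\pi(X_t) = \bigsqcup_j U_t^j$, each $U_t^j$ determined by $X_t$. For each such $U_t^j$ and each $x \in U_t^j$, the fiber $(X_t)_x = \{y : (x,y) \in X_t\}$ is a uniformly bounded finite union of points and open intervals whose ordered boundary functions $\beta_1(x) < \cdots < \beta_r(x)$ are definable from $x$ and $X_t$. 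I would then refine $U_t^j$ into subcells on which $r$ is constant and each $\beta_i$ is continuous---this amounts to a further application of the inductive hypothesis to the discontinuity loci, which are themselves subsets of $U_t^j$ defined purely from $X_t$. Over each resulting subcell, the cells of $X_t$ lying above it are exactly the graphs of the $\beta_i$'s and the bands between consecutive ones, all determined by $X_t$ alone.

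The main obstacle I anticipate is maintaining canonicality at every refinement stage: each auxiliary object that appears (projection, boundary functions, discontinuity sets) must itself be definable from $X_t$ as an element of $\CM^{eq}$, so the inductive hypothesis can be reapplied cleanly without secretly reintroducing a dependence on $t$. A secondary bookkeeping task is showing that the total number of cells is uniformly bounded in $t$, which follows from uniform definability of the quantities above together with standard o-minimal uniformity; this allows the enumeration $\{X_t^i\}_{i=1}^m$ with the properties (i)--(iii) as required.
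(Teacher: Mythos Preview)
Your approach is correct and is a genuinely different route from the paper's. You carry the standard cell-decomposition recursion through explicitly, checking at each stage that every auxiliary object (the projection $\pi(X_t)$, the fiber boundary functions $\beta_i$, their discontinuity loci, the locus where the fiber type changes) depends only on the set $X_t$ and not on the index $t$; one small point you should make explicit is that at the refinement step you need a canonical decomposition of $U_t^j$ compatible with \emph{several} auxiliary subsets simultaneously, so your inductive hypothesis should be stated for finite tuples of families rather than a single family (a harmless strengthening, obtained by passing to Boolean atoms). The paper instead reduces, via the equivalence $t\sim s \Leftrightarrow X_t=X_s$ and compactness, to showing that any $X$ definable over $A\sub \CM^{eq}$ admits an $A$-definable cell decomposition, and proves this by a trick: pass to the o-minimal reduct $\CM_X=\la M,<,X\ra$, apply ordinary cell decomposition there to obtain $\emptyset$-definable cells, and observe that these cells, being invariant under every automorphism of $\CM$ fixing $X$ setwise, are in particular invariant under every $A$-automorphism and hence $A$-definable. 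Your argument is more constructive and elementary; the paper's reduct trick sidesteps all the inductive bookkeeping at the price of an automorphism/saturation argument to pass from invariance to $A$-definability.
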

\proof It is sufficient to prove: Assume that $X\sub M^n$ is definable over a
parameter set $A\sub \CM^{eq}$. Then there is a cell decomposition of $X$ that is
definable over $A$. Indeed, if we do that then we can define on the above $T$ the
equivalence relation $t\sim s$ iff $X_t=X_s$. We replace the original family with
$\{X_{[t]}:t\in T/\sim\}$, with $X_{[t]}=X_t$. If each $X_{[t]}$ has a
$[t]$-definable cell decomposition then, by compactness, there is a uniform cell
decomposition of the $X_t$'s parameterized by $T/\sim$. This easily gives us the
required result.

We now fix $X\sub M^n$
 and consider the o-minimal structure $\CM_X=\la M,<,X\ra$ with a
new predicate for $X$. Since the standard cell decomposition theorem holds in this
structure there are 0-definable, pairwise disjoint cells $C_1,\ldots, C_m$ whose
union is $X$. Each $C_i$ is clearly invariant under every automorphism of $\CM_X$.
Each $C_i$ is given by a formula $\xi_i(x)$ in the structure $\CM_X$. If we now
return to $\CM$, each $\xi_i(x)$ can be transformed into an $\CM$-formula, possibly
with parameters, which we call $\xi_i(x,a_i)$.

Each set $\xi_i(M^k,a_i)$ is invariant under any automorphism of $\CM$ which fixes
$X$ set-wise, so in particular under any automorphism which fixes $A$
point-wise.\qed

\begin{bibdiv}
\begin{biblist}
\normalsize

\bib{vdd}{book}{
   author={van den Dries, Lou},
   title={Tame topology and o-minimal structures},
   series={London Mathematical Society Lecture Note Series},
   volume={248},
   publisher={Cambridge University Press},
   place={Cambridge},
   date={1998},
   pages={x+180},
}

\bib{ed-solv}{article}{
   author={Edmundo, M{\'a}rio J.},
   title={Solvable groups definable in o-minimal structures},
   journal={J. Pure Appl. Algebra},
   volume={185},
   date={2003},
   number={1-3},
   pages={103--145}
   }

\bib{EO}{article}{
    author={Edmundo, M.},
    author={Otero, M.},
    title={Definably compact abelian groups},
   journal={Journal of Math. Logic},
    volume={},
      date={2004},
    number={4},
     pages={163\ndash 180},
}

\bib{el-sbd}{article}{
   author={Eleftheriou, Pantelis E.},
   title={Local analysis for semi-bounded groups},
   journal={preprint},
   volume={},
   date={2010},
   number={},
   pages={},
   issn={},
}

\bib{ga}{article}{
   author={Gagelman, Jerry},
   title={Stability in geometric structures},
   journal={Ann. Pure Appl. Logic},
   volume={132},
   date={2005},
   number={2-3},
   pages={103--145},
    }

\bib{HPP}{article}{
   author={Hrushovski, Ehud},
   author={Peterzil, Ya'acov}
   author={Pillay, Anand},
   title={Central extensions and definably compact groups in o-minimal structures},
   journal={J. of Algebra},
   volume={327},
   date={2011},
   pages={71--106},
   }

\bib{HP}{article}{
   author={Hrushovski, Ehud},
   author={Pillay, Anand},
   title={On NIP and invariant measures},
   journal={preprint},
}

\bib{PL}{article}{
author={Loveys, James}
 author={Peterzil, Ya'acov}
 title={Linear o-minimal structures}
 journal={Israel J. of Math.}
 volume={81}
 date={1993}
 pages={1-30}
}

\bib{Mar}{article}{
   author={Marikova, Jana},
      title={Type-definable and invariant groups in o-minimal structures},
   journal={JSL},
   volume={72},
   date={2007},
   number={1},
   pages={67--80},
    }

\bib{ms}{article}{
    author={Marker, D.},
    author={Steinhorn, C.},
    title={Definable types in ordered structures},
    journal={J. of Symbolic Logic},
    volume={51},
    number={},
    pages={185-198},
    }

\bib{MRS}{article}{
   author={Mekler, A.},
   author={Rubin, M.},
   author={Steinhorn, C.}
   title={Dedekind completeness and the algebraic complexity of o-minimal structures},
   journal={Canadian J. Math},
   volume={44},
   date={1992},
   pages={843--855},
}
\bib{pet-sbd}{article}{
   author={Peterzil, Ya'acov},
   title={Returning to semi-bounded sets},
   journal={J. Symbolic Logic},
   volume={74},
   date={2009},
   number={2},
   pages={597--617},
}

\bib{PPS}{article}{
author={Peterzil, Ya'acov},
   author={Pillay, Anand},
   author={Starchenko, Sergei},
   title={Definably simple groups in o-minimal structures},
   journal={Trans. Amer. Math. Soc. },
   volume={352},
   date={2000},
   number={10},
   pages={4397-4419},
   }

\bib{pest-tri}{article}{
   author={Peterzil, Ya'acov},
   author={Starchenko, Sergei},
   title={A trichotomy theorem for o-minimal structures},
   journal={Proceedings of London Math. Soc.},
   volume={77},
   date={1998},
   number={3},
   pages={481--523},
}

\bib{torfree}{article}{
   author={Peterzil, Ya'acov},
   author={Starchenko, Sergei},
   title={On torsion-free groups in o-minimal structures},
   journal={Illinois Journal of Mathematics},
   volume={49},
   date={2005},
   number={4},
   pages={1299--1321},
}

\bib{PetStein}{article}{
   author={Peterzil, Ya'acov},
   author={Steinhorn, Charles},
   title={Definable compactness and
definable subgroups of o-minimal groups},
   journal={Journal of London Math. Soc.},
   volume={69},
   date={1999},
   number={2},
   pages={769--786},
}

\bib{Pillay1}{article}{
   author={Pillay, Anand},
   title={Some remarks on definable equivalence relations in o-minimal structure},
   journal={J. of Sym. Logic},
   volume={51},
   date={1986},
   number={3},
   pages={709--714},
}

\bib{Pillay2}{article}{
   author={Pillay, Anand},
   title={On groups and fields definable in o-minimal structures},
   journal={J. Pure Appl. Algebra},
   volume={53},
   date={1988},
   number={3},
   pages={239--255},
}

\end{biblist}
\end{bibdiv}

\end{document}